\newtheorem{thm}{Theorem}[section]
\newtheorem{prop}[thm]{Proposition}
\newtheorem{lem}[thm]{Lemma}
\newtheorem{cor}[thm]{Corollary}
\newtheorem{rmk}[thm]{Remark}
\newtheorem{example}[thm]{Example} 
\newtheorem{defn}[thm]{Definition}
\newtheorem{con}[thm]{Conjecture} 
\newtheorem{ex}[thm]{Example} 
\newtheorem*{theorem*}{Theorem}
\newcommand{\T}{\mathbb{T}}
\newcommand{\bd}{\partial}
\newcommand{\Z}{\mathbb{Z}}
\newcommand{\Q}{\mathbb{Q}}
\newcommand{\C}{\mathbb{C}}
\newcommand{\R}{\mathbb{R}}
\def\H{\mathbb{H}}
\newcommand{\latt}{\mathcal{L}}
\newcommand{\tri}{\mathcal{T}}
\newcommand{\ind}{\mathcal{I}\nobreak\hspace{.06em}}
\let\Im\relax
\DeclareMathOperator{\Im}{Im}
\DeclareMathOperator{\Ker}{Ker}
\newcommand{\E}{\mathcal{E}}
\newcommand{\cusp}{\mathcal{C}}
\newcommand{\itet}{I_{\Delta}}
\newcommand{\jtet}{J_{\Delta}}
\newcommand{\qex}{\mathcal{E}_q}
\newcommand{\qsin}{\mathcal{S}_q}
\newcommand{\qcos}{\mathcal{C}_q}
\newcommand{\hh}{\mathrm{H}}
\renewcommand{\L}{\ell}
\def\quads{\square} 
\author{Daniele Celoria}
\author{Craig D.~Hodgson}
\author{J.~Hyam Rubinstein}
\title{The $3$D index and Dehn filling}
\begin{document}
\begin{abstract}
We provide a rigorous proof of the Gang-Yonekura formula describing the transformation of the $3$D index under Dehn filling a cusp in an orientable $3$-manifold. The $3$D index, originally introduced by Dimofte, Gaiotto and Gukov, is a physically inspired $q$-series that encodes deep topological and geometric information about cusped 3-manifolds. 
Building on the interpretation of the $3$D index as a generating function over $Q$-normal surfaces, we introduce a relative version of the index for ideal triangulations with exposed boundary. This notion allows us to formulate a relative Gang-Yonekura formula, which we prove by developing a gluing principle for relative indices and establishing an inductive framework in the case of layered solid tori. Our approach makes use of Garoufalidis-Kashaev's meromorphic extension of the index, along with new identities involving $q$-hypergeometric functions. As an application, we study the limiting behaviour of the index for large fillings. 
We also develop code to perform certified computations of the index, guaranteeing correctness up to a specified accuracy. Our extensive computations support the topological invariance of the $3$D index and suggest a well-defined extension to closed manifolds.
\end{abstract}
\maketitle
\tableofcontents 

%&&&&&&&&&&&&&&&&&&&&&&&&&&&&&&&&&&&&&&&&&&&&&&&&&&&&&&&&&&&&&&&&&&&&&&&&&&&&&
%&&&&&&&&&&&&&&&&&&&&&&&&&&&&&&&&&&&&&&&&&&&&&&&&&&&&&&&&&&&&&&&&&&&&&&&&&&&&&

\section{Introduction}

The study of quantum invariants associated with $3$-manifolds has seen remarkable developments over the past few decades, particularly through the interplay of geometric, topological, and physics-inspired techniques. Among these, Dimofte, Gaiotto and Gukov's $3$D index~\cite{dimofte20133, dimofte2014gauge}, has emerged as a powerful invariant, encoding both topological and geometric information about orientable, cusped $3$-manifolds. The $3$D index stems from superconformal field theory, and is defined as a formal power series with integer coefficients in the parameter $q^\frac12$. In its original formulation, it is constructed from combinatorial data associated to ideal triangulations of a $3$-manifold with torus cusps. 
The $3$D index is known to coincide with the Frohman-Kania-Bartoszynska invariant~\cite{frohman2008quantum, garoufalidis2023fkb}, and is closely related to quantum invariants such as the $\hat{A}$-polynomial~\cite{garoufalidis2022periods} and skein modules~\cite{garoufalidis20243d}.

Physically motivated arguments suggest that the $3$D index is invariant under changes of triangulation. However, despite partial progress~\cite{garoufalidis20163d_angle,garoufalidis20151,garoufalidis2019meromorphic}, a complete proof of its topological invariance is still lacking. Moreover, the index is only well-defined for a restricted class of triangulations of cusped manifolds. It was proved in~\cite{garoufalidis20151} that the index converges if and only if the triangulation is $1$-\textit{efficient}, 
that is it does not contain any embedded closed normal surfaces of non-negative Euler characteristic, except for vertex-linking tori. 
Following Thurston's hyperbolisation theorem, a manifold admitting such a triangulation can only be hyperbolic or a small Seifert fibred space. 

The relation with $1$-efficiency ultimately led to the reinterpretation by Garoufalidis, Hodgson, Hoffman and  Rubinstein of the $3$D index as a `generating function' over certain normal surfaces~\cite{garoufalidis20163d}. In this setting, the index is refined to a family of invariants $\ind^\gamma_\tri(q)$, indexed by boundary homology classes $\gamma \in \mathrm{H}_1(\partial M; \mathbb{Z})$. These encode contributions from all surfaces with prescribed boundary $\gamma$.\\

Dehn filling is a fundamental operation in $3$-manifold topology, used to construct new manifolds by attaching a solid torus to torus cusps. In our setting, Dehn filling a cusp involves finding a triangulation with standard combinatorics near the cusp and replacing it with an appropriate layered solid torus with specified meridian curve~\cite{jaco2006layered}. 

In~\cite{gang2018symmetry}, based on physical considerations, Gang and Yonekura proposed a formula (see equation~\eqref{eqn:GY_original}) determining the change of the index under Dehn filling of one cusp. The main aim of this paper is to give a mathematically sound proof of this result. More precisely we will prove the following (see Theorem~\ref{thm:main} for the precise statement).
\begin{theorem*}
Let $M$ be a compact orientable $3$-manifold with boundary consisting of at least two tori, and let $T$ be one component of $\partial M$. Let $\tri$ be a $1$-efficient triangulation of $M$ with a standard cusp at $T$.
Given a simple closed curve $\alpha \subset T$, let $\tri(\alpha)$ be the ideal triangulation of $M(\alpha)$ obtained from $\tri$ by replacing the standard cusp with a layered solid torus with filling curve $\alpha$. 
If $\tri(\alpha)$ is $1$-efficient, then \begin{equation*}
\ind_{\tri(\alpha)}(q) = \frac12 \left( \sum_{\substack{\gamma \in \hh_1(T;\Z)\\ \alpha \cdot \gamma = 0}} (-1)^{|\gamma|} \left( q^{\frac{|\gamma|}{2}} + q^{-\frac{|\gamma|}{2}}\right) \ind_{\tri}^\gamma(q) -  \sum_{\substack{\gamma \in \hh_1(T;\Z)\\ \alpha \cdot \gamma = \pm 2}} (-1)^{|\gamma|} \ind_{\tri}^\gamma(q) \right),
\end{equation*}
where $|\gamma|$ denotes the number of components of $\gamma$, and $\cdot$ denotes the algebraic intersection number on $T.$ 
\end{theorem*}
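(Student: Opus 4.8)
The plan is to deduce the closed formula for $\ind_{\tri(\alpha)}(q)$ by combining three ingredients: the reinterpretation of the index as a sum over $Q$-normal surfaces refined by boundary class, a \emph{gluing principle} expressing the index of a manifold assembled along a torus from the relative indices of its pieces, and an explicit computation of the relative index of a layered solid torus with a given filling slope. Concretely, I would first set up the relative index $\ind_{\tri}^{\gamma}(q)$ for the triangulation with exposed boundary obtained by removing the standard cusp neighbourhood at $T$ from $\tri$; here $\gamma\in\hh_1(T;\Z)$ records the boundary of the admissible normal/$Q$-normal surfaces being counted. The decomposition $M(\alpha) = M \cup_{T} (\text{layered solid torus})$ should then give $\ind_{\tri(\alpha)}(q) = \sum_{\gamma} \ind_{\tri}^{\gamma}(q)\,\cdot\,\mathrm{LST}_\alpha^{\gamma}(q)$, where $\mathrm{LST}_\alpha^{\gamma}(q)$ is the relative index of the layered solid torus with meridian $\alpha$ and boundary class $\gamma$; matching boundary classes on the two sides of $T$ is what makes the sum over a single index $\gamma$ legitimate (using that gluing reverses orientation on $T$, so the classes must agree up to sign).

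Second, I would compute $\mathrm{LST}_\alpha^{\gamma}(q)$ by induction on the number of layers of the solid torus, exactly as the abstract advertises. The base case is a single tetrahedron (or the minimal layered solid torus), whose relative index is built from the tetrahedral index $\itet$; each additional layer corresponds to a diagonal exchange, whose effect on the relative index is governed by the pentagon-type and other $q$-hypergeometric identities for $\itet$ referenced in the excerpt, together with Garoufalidis--Kashaev's meromorphic extension which is needed to make the intermediate (a priori divergent) contour-integral/residue manipulations rigorous. The outcome I expect is that $\mathrm{LST}_\alpha^{\gamma}(q)$ is supported on classes $\gamma$ with $\alpha\cdot\gamma \in \{0,\pm 2\}$ (a surface in a solid torus meeting $T$ has boundary that is null-homologous in the solid torus, forcing strong constraints after one accounts for the $Q$-normal/quad coordinates), with $\mathrm{LST}_\alpha^{\gamma}(q) = \tfrac12(-1)^{|\gamma|}(q^{|\gamma|/2}+q^{-|\gamma|/2})$ when $\alpha\cdot\gamma=0$ and $\mathrm{LST}_\alpha^{\gamma}(q) = -\tfrac12(-1)^{|\gamma|}$ when $\alpha\cdot\gamma=\pm 2$. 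Substituting this into the gluing formula yields the stated identity verbatim.

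The main obstacle I anticipate is twofold and lies in the rigor of the gluing step rather than in guessing the shape of the answer. First, the relative indices $\ind_{\tri}^{\gamma}(q)$ and $\mathrm{LST}_\alpha^{\gamma}(q)$ are individually given by (potentially non-convergent) sums or integrals, so the product-and-sum formula must be justified as an identity of formal $q$-series — this is where the meromorphic extension and careful bookkeeping of which normal surfaces in $M(\alpha)$ decompose as pairs of relative surfaces (matching along $T$, with the vertex-linking and thin parts of $Q$-normal surfaces handled correctly) become essential; in particular one must check no surface is double-counted or missed when the triangulation of $M(\alpha)$ is cut along the image of $T$. Second, the inductive step for layered solid tori requires showing that a single layering move transforms $\mathrm{LST}^{\gamma}$ in a way compatible with a change of slope by the corresponding $SL_2(\Z)$ generator, and that the $q$-hypergeometric identities for $\itet$ assemble into exactly the sign $(-1)^{|\gamma|}$ and the symmetric Laurent factor $q^{|\gamma|/2}+q^{-|\gamma|/2}$; tracking the parity $|\gamma|$ and the intersection number $\alpha\cdot\gamma$ through the induction, and confirming that the $\alpha\cdot\gamma=\pm2$ terms arise precisely from surfaces that are non-separating in the solid torus, is the delicate combinatorial heart of the argument. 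Once the base case and the single-layer step are verified, the general formula follows by induction and the gluing principle, and the restriction to $1$-efficient $\tri(\alpha)$ guarantees convergence of the final series so that the formal identity is an honest identity of $q$-series.
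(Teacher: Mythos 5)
Your high-level plan — set up a relative index for triangulations with exposed boundary, prove a gluing principle, and compute the relative index of the layered solid torus inductively over layering moves using pentagon-type and $q$-hypergeometric identities — does match the architecture of the paper's proof. But there is a concrete error in your middle step that would collapse the whole argument.

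You propose that the gluing principle takes the form
$\ind_{\tri(\alpha)}(q) = \sum_{\gamma} \ind_{\tri}^{\gamma}(q)\,\mathrm{LST}_\alpha^{\gamma}(q)$,
a convolution over boundary homology classes $\gamma\in\hh_1(T;\Z)$, and then claim that $\mathrm{LST}_\alpha^{\gamma}(q)$ equals the sieve weight $\tfrac12(-1)^{|\gamma|}(q^{|\gamma|/2}+q^{-|\gamma|/2})$ when $\alpha\cdot\gamma=0$, and $-\tfrac12(-1)^{|\gamma|}$ when $\alpha\cdot\gamma=\pm2$, by appealing to the fact that surfaces in a solid torus have constrained boundary. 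Neither claim is correct, and together they would make the theorem essentially a tautology. The actual gluing formula (Theorem~\ref{thm:gluing}) is a convolution over \emph{edge weights} $\beta_0\colon\E_0\to\Z$ on the once-punctured-torus interface $\tri_0$, not over boundary homology classes: once you delete the standard cusp, the surfaces in $\tri_1$ and the solid torus must match along $\tri_0$ with identical integer half-edge coefficients $\underline{b}=(b_1,b_2,b_3)$. The class $\gamma\in\hh_1(T;\Z)$ lives on the cusp torus $T$, which is present in $\tri$ but not in $\tri(\alpha)$, so it is not the matching datum for the gluing of $\tri(\alpha)$. Consequently the object you would need is $\ind^{rel}_{\text{LST}(\alpha)}(\underline{b})$, and this is a genuine $q$-series in the boundary edge weights (computed in Proposition~\ref{prop:LST_change} as an iterated sum of $\jtet$'s), not a delta function on a homology class. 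The delta-function-shaped expression you wrote down is the Gang--Yonekura kernel $\mathcal{K}$ itself, not the relative index of the solid torus. The content of the main theorem is precisely the non-obvious identity
$\ind^{rel}_{\text{LST}(\alpha)}(\underline{b}) = \tfrac12\sum_{\gamma}\mathcal{K}(\gamma)\,\ind^{rel}_\cusp(\gamma;\underline{b})$
(Theorem~\ref{thm:main_relative}), where the relative \emph{cusp} index $\ind^{rel}_\cusp(\gamma;\underline{b})$ from Proposition~\ref{relcuspindex} supplies the dependence on $\gamma$ on the right-hand side. By identifying the left side with the kernel directly, you have assumed the hard part.

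The inductive scheme you sketch for the layering moves and the base case (meromorphic extension, residues, $q$-hypergeometric identities) is in the right spirit and does match Propositions~\ref{prop:inductive_step_cusp}, \ref{prop:LST_change}, and Theorem~\ref{thm:gang112}. To repair the argument you would need to (i) replace the $\gamma$-convolution by the edge-weight convolution of Theorem~\ref{thm:gluing}; (ii) compute $\ind^{rel}_\cusp(\gamma;\underline{b})$ explicitly as a product of two $\jtet$'s and use it to define $GY(\alpha;\underline{b})$; (iii) show by a single pentagon-identity computation that an $R$- or $L$-layering acts on $\ind^{rel}_{\text{LST}(w)}$ and on $GY(\alpha(w);\underline{b})$ in the same way; and (iv) establish the base case $GY(-\lambda+\mu;\underline{b})=(-q^{1/2})^{-b_1}\delta_{b_1,b_2}$ by the generating-function and $q$-Pythagoras identity route, taking care (Section~\ref{sec:collar_eff}, Appendix~\ref{sec:gang011}) to start from $\mathrm{LST}(1,1,2)$ rather than the degenerate $\mathrm{LST}(0,1,1)$, whose filling is never $1$-efficient unless the filled manifold is a solid torus or $T^2\times I$.
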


We start by showing in Theorem~\ref{thm:algorithm_efficiency} that Dehn filling a $1$-efficient triangulation will, with at most finitely many exceptions, yield a $1$-efficient triangulation. Furthermore, there exists an algorithm to determine which slopes yield non-$1$-efficient triangulations. 

We then introduce a special notion related to efficiency, which we call \textit{collar efficiency}. In Theorem~\ref{thm:collar_eff} we prove that the degenerate Dehn fillings obtained by filling along a edge loop in the boundary of a standard cusp neighbourhood can  yield either a non-collar efficient triangulation,  $S^1 \times B^2$ or $S^1 \times S^1 \times (0,1)$. This result will be key for understanding the behaviour of the $3$D index on these degenerate surgeries. 

We define a \textit{relative index} for manifolds admitting triangulations with ideal vertices and exposed boundary. Roughly speaking, the definition is constructed by keeping track of surfaces in the manifold, as well as some boundary edge weights. We use this relative version of the index to prove a general \textit{gluing formula} (Theorem~\ref{thm:gluing}), describing how to obtain the $3$D index of a triangulation from the relative indices of a decomposition. 

In order to prove the main result, we consider a \textit{relative version} of the Gang-Yonekura formula (Proposition~\ref{prop:gang_cusp}), and prove it in the case of the replacement of a standard cusp with a layered solid torus. This proof takes the form of an induction over the number of tetrahedra in the triangulation of the layered solid torus,  
and occupies most of the paper. 

Another key component of the proof involves extracting specific generating functions for quantities related to the index from the meromorphic extension of the $3$D index developed by Garoufalidis and Kashaev~\cite{garoufalidis2019meromorphic}. This viewpoint will lead us through deep connections with the theory of $q$-hypergeometric functions; in the course of the proof we will derive certain relations (see \textit{e.g.}~Theorems~\ref{thm:new_thm_112}, \ref{thm:gang112} and Corollary~\ref{cor:linear_diagonal_relations}) between $q$-series that are of independent interest.\\

As a consequence of our main result, we can easily understand the limiting behaviour of the $3$D index for manifolds obtained as \textit{large} Dehn fillings. In Theorem~\ref{thm:index_asymptotics} we look at Dehn fillings along curves $(a_n,b_n)$ approaching $\infty$, where the slopes $\frac{a_n}{b_n}$ approach a \emph{rational} limit and find, quite surprisingly, that 
the asymptotic behaviour of the $3$D index depends on the \emph{direction} in which the curves $(a_n,b_n)$ approach $\infty$. 
This behaviour is quite different to  Thurston’s hyperbolic Dehn surgery theory~\cite{thurston1982three}, where only the size of the surgery coefficients is important. On the other hand, in Theorem~\ref{thm:real_fillings} 
we examine Dehn fillings along curves $(a_n,b_n)$ where the slopes $\frac{a_n}{b_n}$ approach an \emph{irrational} limit, and show 
the $3$D index of the Dehn fillings converge to the index of the unfilled manifold.

We then turn to computations. Using our main result, we partially confirm a conjecture from~\cite{dimofte20133}, by explicitly computing the $3$D index of alternating torus knots. We have also implemented~\cite{github} \textit{certified code} to compute the index up to some threshold. We use this code to provide strong experimental confirmation of the index's invariance. 
For example, we compute in Section~\ref{ssec:whitehead} the first terms of the $3$D index for surgeries on one cusp of the Whitehead link, recovering the index of census manifolds and the index of twist knot complements.

Notably, this approach allows extremely quick computations of the index for many cusped manifolds admitting a surgery description on a low-complexity manifold.
We also determine in Section~\ref{ssec:magic} that the first terms of the index for Dehn fillings on one cusp of the `magic' manifold~\cite{martelli2006dehn, thompson2025triangulations} resulting in census manifolds, are correct. \\

The next part of this paper takes a more speculative point of view.  There is currently no mathematical definition of the $3$D index for closed 3-manifolds. Nevertheless, by applying the Gang-Yonekura formula to Dehn filling on a singly-cusped manifold, one can `force' an extension of the definition to the closed case  (\textit{cf.}~\cite{gang2018symmetry}). Surprisingly, this procedure does appear to consistently yield coherent and meaningful answers.
Again, we support this claim with extensive computations. In Sections~\ref{sec:filling_trefoil}
and~\ref{sec:filling_fig8} we provide \textit{exact computations} of the $3$D index for certain closed manifolds, obtained as surgeries on the complements of $3_1$ and $4_1$.  We then determine the low degree terms of the index for several surgeries on the complement of $4_1$ and $12n_{242}$, including all their exceptional fillings. 
These computations are a first step towards a mathematically well-founded definition of the index for closed $3$-manifolds.

The last section contains a thorough description of the computational aspects of this project, including the implementation of our certified code and a description of the datasets associated to the paper.\\

The paper contains two appendices dwelling on certain technical aspects of computations related to small and degenerate layered solid tori. More precisely, in Appendix~\ref{sec:gang011} we prove that $(0,1,1)$-surgeries will (when the index is defined) only result in trivial $3$D index, consistently with the relation between degenerate surgeries and collar efficiency proved in Section~\ref{sec:collar_eff}. In Appendix~\ref{sec:relative_degenerate}, as a sanity check, we directly compute the relative index of certain degenerate layered solid tori using alternative non-minimal triangulations, thus giving an independent confirmation of the initial case in our main proof. Finally, Appendix~\ref{computation:fig8} presents a detailed and explicit account of the steps involved in computing the Gang-Yonekura formula in the case of the figure eight knot complement.~\\

\noindent\textbf{Acknowledgments:}
The authors wish to thank Ben Burton, Tudor Dimofte, Jan de Gier, Neil Hoffman, Rinat Kashaev, Christoph Koutschan, Daniel Mathews, Jonathan Spreer, and Ole Warnaar  for interesting discussions and for sharing their expertise. This research has been partially supported by grant DP190102363 from the Australian Research Council.

%&&&&&&&&&&&&&&&&&&&&&&&&&&&&&&&&&&&&&&&&&&&&&&&&&&&&&&&&&&&&&&&&&&&&&&&&&&&&&
%&&&&&&&&&&&&&&&&&&&&&&&&&&&&&&&&&&&&&&&&&&&&&&&&&&&&&&&&&&&&&&&&&&&&&&&&&&&&&

\section{Triangulations of 3-manifolds and normal surfaces}\label{sec:$3$-man}

In what follows, $M$ will denote a compact, oriented $3$-manifold, whose boundary consists of $r$ tori. Removing the boundary yields a \emph{cusped manifold} with $r$ cusps.  

It is well-known that the interior of any such manifold $M$ can be triangulated with ideal tetrahedra. We let $\tri$ be such an oriented ideal triangulation. If $\tri$ is obtained by gluing together $n$ ideal tetrahedra $\Delta_1, \ldots, \Delta_n$, then a standard Euler characteristic argument implies that there are $n$ edge classes in $\tri$. These edge classes will be denoted by $\mathcal{E}$.

In each tetrahedron there are three \emph{normal quadrilateral types}, each specified by the pair of opposite edges in the tetrahedron facing the quadrilateral (see Figure~\ref{fig:quad_types}).  So the set of normal quadrilateral types  $\quads$ in $\tri$ has $3n$ elements.
\begin{figure}[!htbp]
\centering
\includegraphics[width=0.7\linewidth]{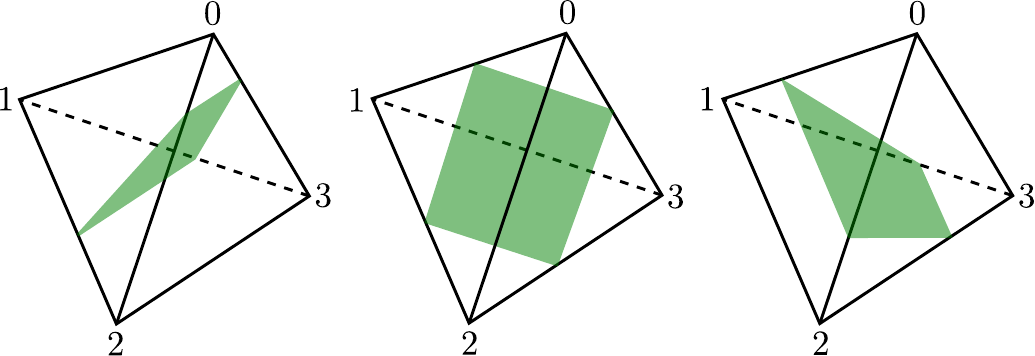}
\caption{The three normal quad types in a tetrahedron: separating vertices 01/23, 02/13 and 03/12 respectively.}
\label{fig:quad_types}
\end{figure}

Following Thurston \cite{Thu78notes}, hyperbolic structures on $M$ can be constructed by finding shapes of ideal hyperbolic tetrahedra satisfying Thurston's gluing equations, consisting of edge equations with coefficients $E_i$, $i=1,\ldots,n$, and peripheral curve equations with coefficients $M_k, L_k$, $k=1,\ldots,r$. 
After ordering the tetrahedra and the normal quadrilateral types in each tetrahedron, these equations can be encoded in a matrix with columns indexed by normal quadrilateral types and rows indexed by edge classes and a choice of two peripheral curves for each cusp.  
The $j$-th coefficient in the edge equation $E_i$ records the number of times the edges opposite quadrilateral $j$ appear in the $i$-th edge class; the coefficients in each peripheral equation record the number of times the peripheral curve winds anti-clockwise around the quadrilaterals.
The coefficients of these equations are the rows of the $(3n+2k)\times 3n$ gluing matrix $G_\tri$ output by the
\texttt{gluing\_equations()} command in SnapPy~\cite{SnapPy}.~\\

%&&&&&&&&&&&&&&&&&&&&&&&&&&&&&&&&&&&&&&&&&&&&&&&&&&&&&&&&&&&&&&&&&&&&&&&&&&&&&
%&&&&&&&&&&&&&&&&&&&&&&&&&&&&&&&&&&&&&&&&&&&&&&&&&&&&&&&&&&&&&&&&&&&&&&&&&&&&&

\subsection{Q-normal surfaces}\label{ssec:Q-normal}~\\

Next we recall some background on $Q$-normal surfaces in an ideal triangulation $\tri$. This material will be used throughout the paper, as we will usually regard the $3$D-index as a kind of generating function for $Q$-normal surfaces. 

Denote by $\mathcal{Q}(\tri;\Z) \subset \Z^\quads \cong \Z^{3n}$ the lattice of integer solutions to Tollefson's $Q$-matching equations~\cite{tollefson1998normal}, 
and by $\mathcal{Q}(\tri;\Z_+)$ the subset consisting of non-negative integer solutions.
Each element $x \in \mathcal{Q}(\tri;\Z_+)$ determines a (possibly singular) \textit{spun normal surface} $S_x$ in $\tri$;
see \cite[Sec.~6]{garoufalidis20163d} for the details.
Note that $S_x$ is embedded if and only if $x$ has at most one non-zero entry for each tetrahedron in $\tri$. 

Let $\L: \R^{3n} \to \R^{3n}$ be the linear map 
\begin{equation}
\L(a_1, b_1, c_1,  \ldots , a_n, b_n, c_n) 
= (-b_1+c_1, -c_1+a_1,-a_1+b_1, \ldots, -b_n+c_n, -c_n+a_n,-a_n+b_n),
\end{equation}
which converts quad coordinates to corresponding \emph{shear parameters} as in \cite[Sec.~6.1]{garoufalidis20163d} or \emph{leading-trailing deformations} as in \cite{FuterGueritaud}.
Then the $Q$-matching equations for a normal surface $S$ are given by 
\begin{equation}\label{Q-matching-eqns}
\L(E_i) \cdot S = 0,\,\, \text{ for } i=1, \ldots , n,
\end{equation}  
(see \cite{garoufalidis20163d}, \cite{tillmann}).

Further, the skew-symmetric pairing of Neumann-Zagier~\cite{NZ85} is given by
\begin{equation}\label{eqn:pairing}
\omega : \R^{3n} \times \R^{3n} \to \R, \qquad \omega(x,y)= \ell(x) \cdot y,
\end{equation}
where $\cdot$ denotes the dot product. Then the {\em Neumann-Zagier symplectic relations}  say that all 
symplectic products of $E_i, M_k, L_k$ for $i=1,\ldots, n$, 
$k=1,\ldots,r$ are {\em zero} except that
\begin{equation}
\omega(L_k,M_k) = 2 = -\omega(M_k,L_k)  \text{ for each }k=1, \ldots , r .
\end{equation}

It follows that the $Q$-matching equations are satisfied by the following vectors:  
\begin{itemize}
\item edge solutions $E_i$ (whose coefficients give the edge equation for the $i$-th edge),
\item tetrahedron solutions $T_j$ (that are $(1,1,1)$ in the $j$-th tetrahedron and $0$ otherwise),
\item two peripheral curve solutions $M_k, L_k$ for each cusp, corresponding to a basis of $\hh_1(T;\R)$ for each component $T \subseteq \partial M$. 
\end{itemize}
In fact, by \cite{kangrubinstein}, these span the vector space $\mathcal{Q}(\tri;\R) \subset \R^\quads \cong \R^{3n}$ of all real solutions to the $Q$-matching equations. 

Therefore, every \emph{$Q$-normal class} $S\in\mathcal{Q}(\tri;\R)$ can be written as 
\begin{equation}
S = \sum_{i=1}^n \left(x_i E_i+ y_i T_i\right) + \sum_{k=1}^r \left( p_k M_k + q_k L_k \right),
\end{equation}
for some coefficients $x_i,y_i,p_k,q_k \in \R$.\\

\medskip
There are two important functionals (\textit{cf.}~\cite[Sec.~6.2]{garoufalidis20163d}) on $\mathcal{Q}(\tri;\R) $ that arise when studying the degree of terms in $3$D index sums.
The first is the linear map $\chi\colon \mathcal{Q}(\tri;\R) \rightarrow \R$ known as the \textit{formal Euler characteristic}. This is simply defined as 
\begin{equation}\chi(S) = -\sum_{i = 1}^n \left(2x_i +y_i\right) .\end{equation}
Note that $\chi$ reduces to the usual Euler characteristic when $S$ is an embedded closed or spun normal surface.

The second functional, the \textit{double arc function} $\delta \colon \mathcal{Q}(\tri;\R)  \rightarrow \R$, is a quadratic function:
 if $$S = (a_1, b_1, c_1, \ldots, a_n,b_n,c_n),$$ then 
\begin{equation}\delta (S) = \sum_{i = 1}^r \left( a_i b_i + b_i c_i + c_i a_i\right).\end{equation}
It is easy to see that $\delta$ is keeping track of self-intersections of quads, hence $\delta (S) = 0$ if and only if $S$ is embedded. 
There is also an associated bilinear function
$$
\delta : Q(\tri;\R) \times Q(\tri;\R) \to \R
$$
such that 
$$
\delta(S+S')=\delta(S)+\delta(S') + 2 \delta(S,S') 
\text{ for all } S, S' \in Q(\tri;\R) .
$$

Finally, we are going to consider the boundary map $$\partial\colon \mathcal{Q}(\tri;\R) \rightarrow \hh_1(\partial M;\R)$$
associating to $S$ the formal sum 
\begin{equation}\partial (S) = 2 \sum_{k=1}^r \left( p_k M_k + q_k L_k \right).\end{equation}
 This quantity can also be computed from the quad coordinates of $S$ in terms of the skew-symmetric pairing~\eqref{eqn:pairing}:
\begin{equation} 
\bd(S)= \sum_k \left(  -\omega(S,L_k) M_k + \omega(S,M_k) L_k \right),
\end{equation} 
 as explained in \cite[Sec.~6.2]{garoufalidis20163d}.~\\ 

%&&&&&&&&&&&&&&&&&&&&&&&&&&&&&&&&&&&&&&&&&&&&&&&&&&&&&&&&&&&&&&&&&&&&&&&&&&&&&
%&&&&&&&&&&&&&&&&&&&&&&&&&&&&&&&&&&&&&&&&&&&&&&&&&&&&&&&&&&&&&&&&&&&&&&&&&&&&&

\subsection{Standard cusps}\label{sec:standard_cusp}~\\

A \textit{standard cusp} $\cusp$ is the simplest possible triangulated neighbourhood of a cusp, consisting of two tetrahedra glued together as shown in Figure~\ref{fig:std_cusp}. Topologically such a neighbourhood is a solid torus with its core removed and with a puncture on its boundary.  \\

\begin{figure}[!htbp]
\centering
\includegraphics[width=0.5\linewidth]{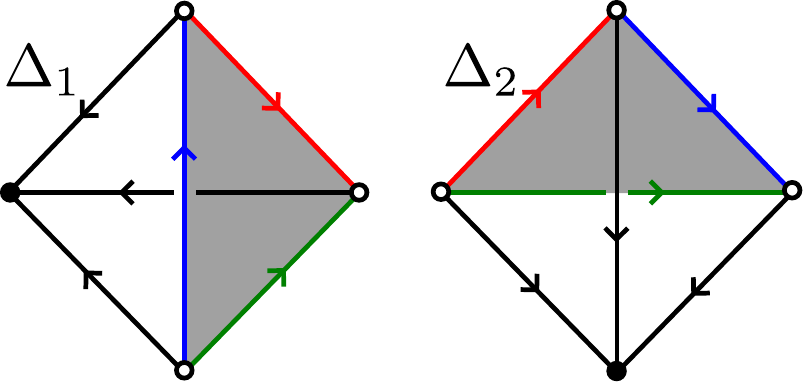}
\caption{Gluing instructions for the two tetrahedra forming a standard cusp. The cusp vertex is in black, and the punctured torus forming the boundary of its neighbourhood is shaded.}
\label{fig:std_cusp}
\end{figure}
In Table~\ref{table:cusp_gluing} we show the gluing matrix for the two tetrahedra ideal triangulation of $\cusp$, using the labelling conventions of Figure~\ref{fig:cusp_conventions}. 
The vertical edge $e_0$ is the only one connecting the cusp within the standard neighbourhood to the cusp on the punctured torus boundary. Moreover, since all the coefficients in the corresponding edge equation $E_0$ are $1$, the matching equation along this edge is always automatically satisfied. Therefore all of the $Q$-normal classes for $\cusp$ are generated by the six quads in the two tetrahedra.

\begin{figure}[!htbp]
\centering
\includegraphics[width= 4cm]{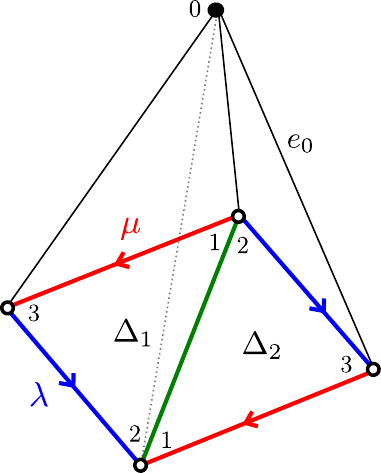}
\caption{The labelling conventions used for $\cusp$. The unique internal edge is $e_0$, while $\lambda$ and $\mu$ are chosen as the longitude and meridian for the punctured torus boundary.} 
\label{fig:cusp_conventions}
\end{figure}

\begin{center}
\begin{table}[!htbp]
\begin{tabular}{|c | c c c | c c c|} 
 \,tet &  & $\Delta_1$ &  && $\Delta_2$& \\\hline
 quad & {\small 01\slash 23} & {\small 02\slash 13} & {\small 03\slash 12} & {\small 01\slash 23} & {\small 02\slash 13} & {\small 03\slash 12}\\
\hline
 $E_0$ & 1 & 1 & 1 &1& 1& 1\\
 $E_1$ & 1 & 0 & 0 & 1 & 0 & 0 \\
 $E_2$ & 0 & 1 & 0 & 0 & 1 & 0 \\
 $E_3$ & 0 & 0 & 1 & 0 & 0 & 1 \\
\hline
 $L$ & -1 & 0 & 0 & 1 & 0 & 0 \\
 $M$ & 0 & -1 & 0 & 0 & 1 & 0 
\end{tabular}
\caption{Gluing equations for $\cusp$, with rows corresponding to the labelling of curves from Figure~\ref{fig:cusp_conventions}.}
\label{table:cusp_gluing}
\end{table}
\end{center}

The triangulation $\cusp$ has many symmetries, including an extension of the hyperelliptic involution on the boundary which interchanges the two tetrahedra. This gives an involution on the set of $Q$-normal classes in $\cusp$ which acts on quad coordinates by 
$$(a_1,b_1,c_1,a_2,b_2,c_2) \mapsto (a_2,b_2,c_2,a_1,b_1,c_1)$$ 
and reverses the boundary map. Geometrically, each non-trivial curve on the punctured torus boundary of $\cusp$ admits exactly two extensions to spun annuli in the interior of the cusp, with opposite spinning directions; see Figure~\ref{fig:spun_quad} for an example.

\begin{figure}[!htbp]
\includegraphics[width=10cm]{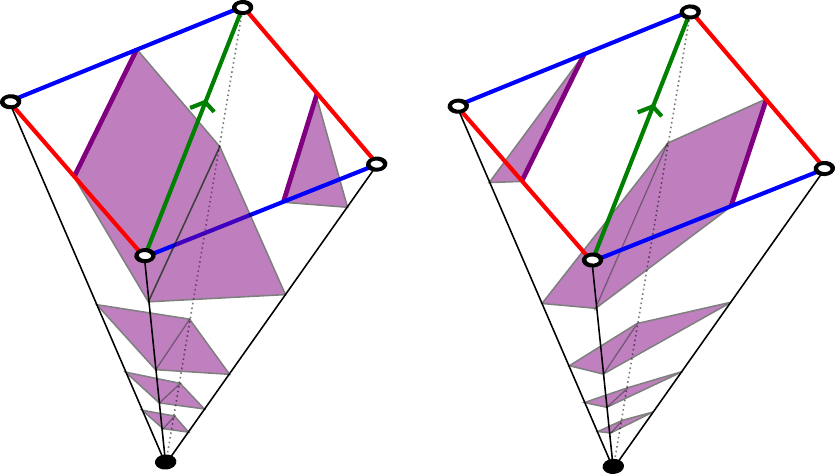}
\caption{There are two ways of completing a boundary curve to a spun annulus in the standard cusp. The two possibilities spin in opposite directions, and are related by an involution. }
\label{fig:spun_quad}
\end{figure}

\medskip
Given that standard cusps are quite special from a combinatorial point of view, it is reasonable to ask which manifolds admit triangulations whose cusps are standard. The following result guarantees the existence of such triangulations for all the manifolds we are presently interested in.

\begin{prop}[\cite{howie2020polynomials}]\label{std_cusp1}
Let $M$ be a connected, compact, orientable, irreducible, $\partial$-irreducible $3$-manifold with boundary consisting of $r \ge 2$ tori. Then, for any torus boundary component $T$, there exists an ideal triangulation $\mathcal{T}$ of the interior of $M$ such that all cusps corresponding to components in $\partial M \setminus T$ are standard.
\end{prop}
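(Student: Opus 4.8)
The plan is to begin with an arbitrary ideal triangulation of the interior of $M$ and then, one cusp at a time, surgically replace a neighbourhood of each cusp other than $T$ by the two--tetrahedron model $\cusp$. The guiding observation is that $\cusp$ is just a triangulated collar of a cusp, meeting the rest of the manifold along a once--punctured torus carrying exactly two triangles; equivalently, as exploited repeatedly later in the paper, it is the ideal--triangulated complement of the core of the simplest layered solid torus of \cite{jaco2006layered}. Thus ``making a cusp $C$ standard'' reduces to producing an ideal triangulation of $M$ which, along some once--punctured torus separating $C$ from the rest of the manifold, is as small as possible, and then gluing $\cusp$ onto it.

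Concretely I would proceed as follows. First, choose any ideal triangulation $\tri_0$ of the interior of $M$; such a triangulation exists precisely because $M$ is compact, orientable, irreducible and $\partial$--irreducible with $r\geq 2$ toroidal ends, these hypotheses being exactly what rules out the small pieces (such as $S^1\times D^2$) that admit no ideal triangulation. Second, fix $C\in\{C_2,\dots,C_r\}$ and, using the standard simplification repertoire for triangulations with boundary --- boundary $2$--$3$ and $3$--$2$ Pachner moves together with ``book''/``$2$--$0$'' moves removing redundant boundary edges and vertices --- reduce the vertex link of $C$ to the minimal one--vertex, two--triangle torus, with all moves kept away from $T$ and from cusps already processed. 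Third, truncate $\tri_0$ at this minimal cusp: this yields an ideal triangulation of $M$ with a once--punctured--torus exposed boundary component $\Sigma_C$ at $C$ carrying exactly two triangles, of exactly the type studied in the relative--index part of the paper. Fourth, glue $\cusp$ onto $\Sigma_C$ along an identification of the two matching two--triangle punctured tori; the inner ideal vertex of $\cusp$ then becomes the cusp $C$, now with a standard neighbourhood, and the remaining ``puncture'' vertex is absorbed into the ambient triangulation by a routine local move. Carrying out the second through fourth steps for $C_2,\dots,C_r$ in turn, each time confined to a neighbourhood of the current cusp so that the previously standardised cusps and $T$ are untouched, produces the required triangulation $\tri$.

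The heart of the argument is the simplification in the second step: one must show that the vertex link of any prescribed cusp can always be brought to the minimal torus by moves that leave $T$ (and the already--standard cusps) alone. I would establish this by a monotonicity/termination argument --- assign a complexity to the cusp region, say the number of normal triangles in the link with suitable tie--breaks, and verify that whenever this link is not already minimal some admissible local move strictly decreases it --- together with an explicit treatment of the finitely many small manifolds in which such an argument would stall; those are exactly the ones excluded by irreducibility, $\partial$--irreducibility and $r\geq 2$. By comparison, the bookkeeping in the truncation and re--gluing steps --- tracking edge classes, vertex links and the exposed boundary --- is routine. An alternative route, which I expect to run into the same essential difficulty, is to Dehn fill $C_2,\dots,C_r$ along generic slopes to obtain an irreducible, $\partial$--irreducible one--cusped manifold, take an ideal triangulation in which each filling solid torus is realised as a \emph{minimal} layered solid torus, and then drill the surgery cores; there the work is to guarantee that the layered solid tori can indeed be taken minimal.
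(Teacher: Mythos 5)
The paper does not prove this proposition; it is cited from~\cite{howie2020polynomials} with no argument reproduced, so there is no ``paper's own proof'' to compare your proposal against. Judged on its own, your plan has the right intuition (bring each cusp cross-section down to the two-triangle torus, then splice in $\cusp$), but there are two genuine gaps. The first is step two, which you yourself call ``the heart of the argument'' and leave as a sketch. Reducing the link of a prescribed ideal vertex to the two-triangle torus by Pachner moves is far from automatic: $3$--$2$ moves require an interior edge of valence three, $0$--$2$ moves require a pair of parallel faces, and there is no guarantee that one of these configurations exists near $C$ at each stage, nor that a complexity function of the kind you invoke cannot get stuck at a local minimum that is not the two-triangle torus. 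This is not a routine termination argument; it is the entire technical content of the proposition, and it remains unproved in your write-up.

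The second gap is a geometric confusion in steps three and four. Truncating an ideal triangulation at the ideal vertex $C$ yields a compact manifold with a \emph{torus} boundary component triangulated by the cusp link; it does not yield a once-punctured torus exposed boundary. The once-punctured torus $\Sigma_C$ that bounds $\cusp$ is a different surface: its puncture sits at an ideal vertex distinct from $C$ (see Figure~\ref{fig:std_cusp}, where $\cusp$ has two ideal vertices, the cusp vertex and the boundary puncture). For such a $\Sigma_C$ to be a subcomplex of the two-skeleton of $\tri_0$ separating a copy of $\cusp$ from the rest of $M$, the two tetrahedra incident to $C$ would already have to be glued exactly as in $\cusp$ --- which is the conclusion you are trying to reach, so the gluing step as written is circular. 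Making the cusp link a two-triangle torus is a necessary but not sufficient condition for the cusp neighbourhood to be combinatorially $\cusp$. Your alternative route via Dehn filling and re-drilling is closer in spirit to constructions in the literature, but as you note it displaces the difficulty to arranging that the filling solid tori are minimal layered solid tori, which would itself require an argument.
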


Even better, such triangulations of cusped hyperbolic $3$-manifolds turn out to be quite generic from a geometric point of view. 

\begin{prop}\label{std_cusp2}
Let $M$ be a cusped hyperbolic $3$-manifold with at least two cusps, and assume that one cusp $T$ is generic, \textit{i.e.}~the corresponding Euclidean torus is not rectangular and there exists a unique shortest geodesic from $T$ to itself. Then $M$ admits a geometric ideal triangulation with a standard cusp at $T$.
\end{prop}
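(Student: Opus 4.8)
The plan is to produce the desired triangulation by a controlled modification of the canonical (Epstein--Penner) cell decomposition of $M$, localised near the cusp $T$. First I would choose the horospherical cross-sections at the cusps so that the horoball at $T$ is expanded maximally while the others are kept small; by the genericity hypothesis on $T$ (non-rectangular Euclidean torus, unique shortest return geodesic), the first tangency of the $T$-horoball is with itself, along a single geodesic arc, and this tangency is transverse and isolated. The Epstein--Penner construction applied with this choice of horoball sizes then yields a canonical polyhedral decomposition in which the combinatorics in a neighbourhood of $T$ is governed entirely by that single self-tangency: the link of $T$ is tiled in a prescribed way, and after subdividing the canonical cells into ideal tetrahedra (coning as needed, keeping all positively oriented since the decomposition is geometric) one obtains a geometric ideal triangulation whose induced triangulation of the cusp cross-section at $T$ is the standard one. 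The point is that ``one shortest self-geodesic, non-rectangular torus'' is exactly the combinatorial input that forces the two-tetrahedron standard cusp $\cusp$ of Figure~\ref{fig:std_cusp} to appear at $T$.

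More concretely, I would argue as follows. Let $\delta_T$ be the unique shortest geodesic from $T$ to itself and expand the horoball $H_T$ until $\partial H_T$ becomes tangent to itself along $\delta_T$; keep every other horoball $H_i$ so small that no other tangency has yet occurred and $H_T$ stays embedded. Running Epstein--Penner with these horoballs gives a canonical decomposition $\mathcal{D}$. Near $T$, the only canonical cell incident to $T$ with ``full'' incidence is the one dual to $\delta_T$; because the torus at $T$ is not rectangular, the two ``diagonals'' of the associated pattern on $\partial H_T$ are inequivalent, so the local picture on the cusp torus is the one-vertex two-triangle pattern. Subdividing $\mathcal{D}$ into positively oriented ideal tetrahedra (inserting flat cells only where forced, then removing them by the usual $2$--$3$ / $0$--$2$ moves which preserve geometricity and do not disturb a small neighbourhood of $T$), the two tetrahedra meeting $T$ glue up precisely along the pattern of Figure~\ref{fig:std_cusp}, i.e. form a standard cusp $\cusp$ at $T$, with internal edge $e_0$ the edge dual to $\delta_T$ and the remaining edge equations matching Table~\ref{table:cusp_gluing}.

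The main obstacle I anticipate is the step of turning the canonical \emph{polyhedral} decomposition into a genuine geometric ideal \emph{triangulation} while both (a) keeping all tetrahedra positively oriented and (b) not destroying the standard combinatorics forced at $T$. Generic cells of $\mathcal{D}$ are simplices, but non-generic cusps can produce larger polyhedra, and coning them can introduce degenerate (flat) tetrahedra; one must invoke the fact that away from $T$ such degeneracies can be removed by geometric Pachner moves (this is where one uses that the \emph{other} cusps are allowed to be non-generic, so no control is claimed there), while the neighbourhood of $T$, being already a cone on the standard cusp triangulation, needs no modification. A clean way to package this is: prove first that the $T$-local picture is standard using only the genericity of $T$, then cite the (now standard) fact that any geometric polyhedral decomposition of a hyperbolic manifold can be refined to a geometric ideal triangulation by subdivisions supported away from any prescribed geometric sub-triangulation. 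I would also double-check the edge-equation bookkeeping against Table~\ref{table:cusp_gluing} to confirm that the internal edge $e_0$ indeed has the automatically-satisfied all-ones edge equation, which is the real content of ``standard cusp at $T$''.
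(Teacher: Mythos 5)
The core strategy --- expand the horoball at $T$ to self-tangency and apply the Epstein--Penner construction, using the genericity of $T$ to force the two-tetrahedron standard cusp to appear near $T$ --- is the same as the paper's. The paper's proof is a single sentence: expand the $T$-horoball until it bumps itself, then expand the other cusps, and cite Gu\'eritaud--Schleimer~\cite{gueritaud2010canonical} for the claim that the resulting canonical decomposition is a geometric ideal triangulation with a standard cusp at $T$.

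Where your proposal has a genuine gap is the passage from the canonical \emph{polyhedral} decomposition to a geometric ideal \emph{triangulation}, which you correctly flag as the main obstacle but then resolve incorrectly. You assert that flat tetrahedra arising from subdivision can be removed ``by the usual $2$--$3$ / $0$--$2$ moves which preserve geometricity'', but Pachner moves do not preserve positivity of tetrahedron shapes in general. You then invoke ``the (now standard) fact that any geometric polyhedral decomposition of a hyperbolic manifold can be refined to a geometric ideal triangulation'' --- this is not a known fact: whether every cusped hyperbolic $3$-manifold admits a strictly geometric ideal triangulation is a well-known open problem, and subdividing a canonical cell decomposition is only guaranteed to produce a \emph{partially flat} triangulation without substantial extra input. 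The paper avoids this entirely by appealing to Gu\'eritaud--Schleimer, who establish the triangulation claim for the specific horoball configuration used. Note also that they, and the paper, \emph{expand} the other cusps rather than keeping them small as you propose; that choice is part of what makes their argument apply. Your local analysis of the standard cusp pattern at $T$ is fine; it is the global polyhedral-to-triangulation step that needs the citation rather than the subdivision-and-Pachner sketch.
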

\begin{proof}
By first expanding an embedded horospherical cusp at $T$ until it bumps into itself, then expanding the other cusps, we obtain a canonical (Epstein-Penner~\cite{epstein1988euclidean}) ideal triangulation $\tri$ with a standard cusp at $T$, as in Gu\'{e}ritaud-Schleimer~\cite{gueritaud2010canonical}. 
\end{proof}

Recall from the introduction that for computations of the $3$D-index we need \emph{$1$-efficient} triangulations.
\begin{defn}\label{def:1eff}
A triangulation $\tri$ of a cusped $3$-manifold is called $1$-efficient if it does not contain any embedded closed normal surfaces of non-negative Euler characteristic, except for vertex-linking tori or Klein bottles.
\end{defn}
The triangulations given by Proposition~\ref{std_cusp1} are not guaranteed to be $1$-efficient, while the geometric triangulations given by Proposition~\ref{std_cusp2} are. These geometric triangulations can be found explicitly using Snap (see~\cite{GHH08}) or SnapPy~\cite{SnapPy}. 
Ben Burton~\cite{benprivate} has also produced a census of $1$-efficient triangulations with up to 8 ideal tetrahedra of cusped $3$-manifolds, which is very useful for finding $1$-efficient triangulations with a standard cusp.~\\ 

%&&&&&&&&&&&&&&&&&&&&&&&&&&&&&&&&&&&&&&&&&&&&&&&&&&&&&&&&&&&&&&&&&&&&&&&&&&&&&
%&&&&&&&&&&&&&&&&&&&&&&&&&&&&&&&&&&&&&&&&&&&&&&&&&&&&&&&&&&&&&&&&&&&&&&&&&&&&&

\subsection{Layered solid tori}\label{sec:LSTs}~\\

In order to implement Dehn filling on triangulations, we will perform the following operation: we first remove the standard cusp, and replace it with a special one-vertex triangulation of a solid torus. This solid torus will be constructed in such a way that the curve we are filling along will bound a disc. There is a combinatorial way of creating these triangulated solid tori, known as \textit{layered solid tori} extensively studied in~\cite{jaco2006layered} and \cite{gueritaud2010canonical}.
For our purposes, these provide a two-parameter family, denoted LST$(p,q,p+q)$, of one-vertex ideal triangulations of the solid torus with one boundary point removed $V$, whose boundary is a once-punctured torus $T_0=\partial V$. 

Recall that there is a unique curve in $\partial V$, the so-called \textit{meridional slope}, that bounds a disc in $V$. 
Let $\tri_0$ denote a one vertex triangulation for the boundary $T_0$, consisting of two triangles as shown in Figure~\ref{fig:cusp_conventions}. 
Then a normal representative for the meridional slope in LST$(p,q,p+q)$ will intersect the three edges in 
$\tri_0$ in a triple $\{p,q,p+q\}$ of non-negative integers, where $p,q$ are relatively prime.

Each triangulation (with $p,q \ge 1$) can be constructed by choosing a path of triangles in the Farey tessellation of the hyperbolic plane $\mathbb{H}^2$. Portions of this are shown in the Poincar\'e disc model in Figure~\ref{fig:farey_tree} and in the upper half-plane model in Figure~\ref{fig:farey_plane}. 

\begin{figure}[!htbp]
\centering
\includegraphics[width=0.5\linewidth]{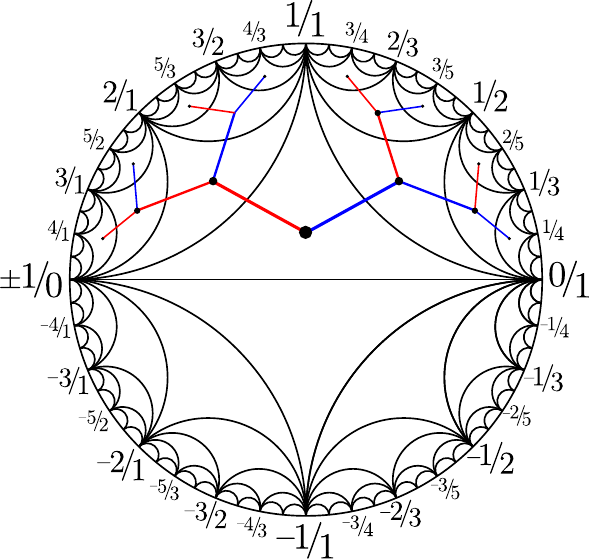}
\caption{The Farey ideal tessellation of the hyperbolic plane. Vertices are in bijection with reduced fractions in $\Q \cup \infty$. The dual tree is drawn with red/blue corresponding to left/right moves.}
\label{fig:farey_tree}
\end{figure}

\begin{figure}[!htbp]
\centering
\includegraphics[width=0.9\linewidth]{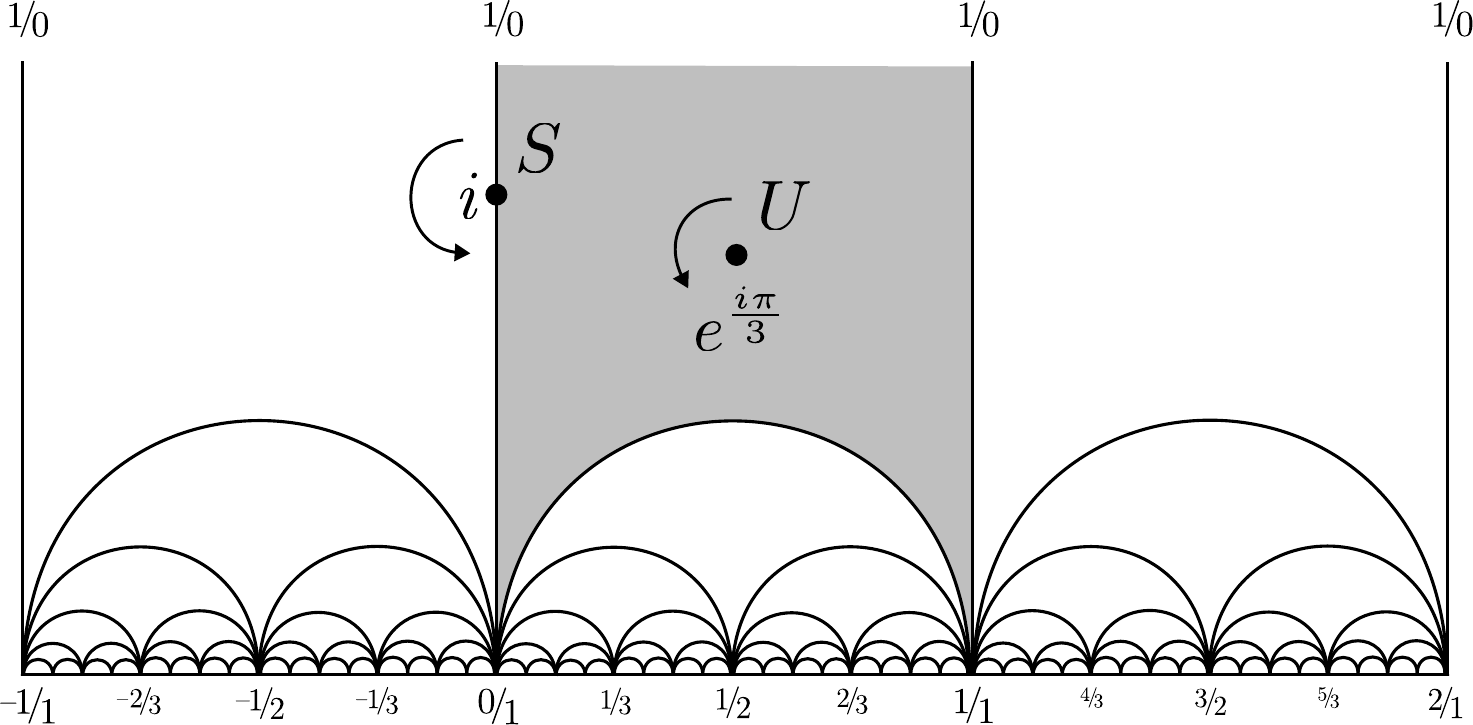}
\caption{Part of the Farey ideal tessellation of the upper half plane. The centres of the two symmetries $S$ and $U$, of orders two and three respectively, are shown.}
\label{fig:farey_plane}
\end{figure}

Each triangle $\Delta$ in the Farey tessellation induces a two-triangle triangulation of the torus $T_0$ using the edges with slopes given by the three vertices of $\Delta$ in $\R \cup \infty$ as follows. We choose an initial triangle. 
Then moving along a path from the initial triangle (without backtracking) describes a way to attach a sequence of tetrahedra  by \emph{layering above} via right and left (often abbreviated as $R\slash L$) moves as shown in the top part of Figure~\ref{fig:LST_layer}).  Folding the initial triangulation of $T_0$ onto a M\"obius band as in the lower part of Figure~\ref{fig:LST_layer} then gives a layered triangulation of the solid torus $V$.

\begin{figure}[!htbp]
\centering
\includegraphics[width=0.8\linewidth]{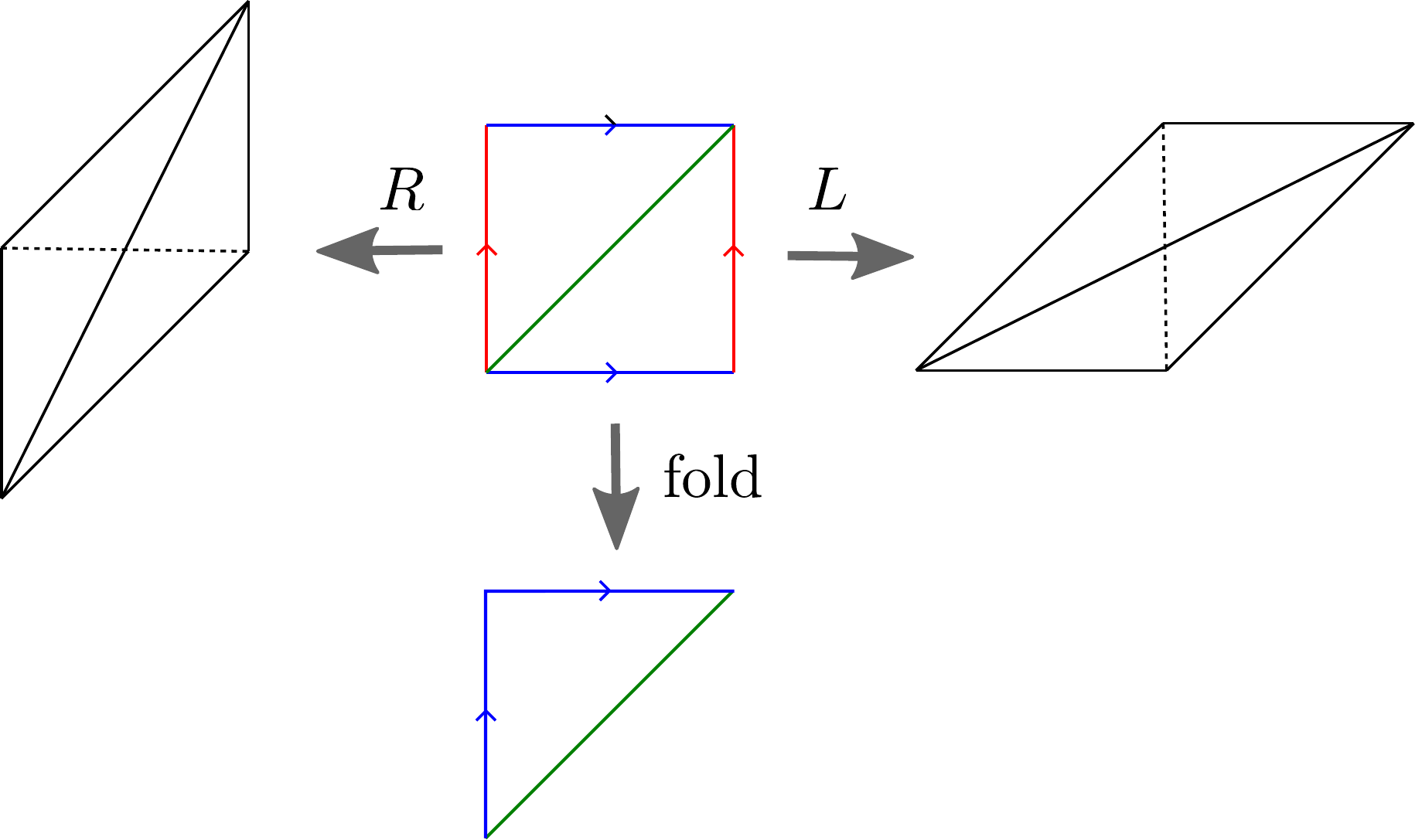}
\caption{Schematic for the $R\slash L$ layering. The torus in the middle of the figure is folded underneath to form a M\"obius strip. }
\label{fig:LST_layer}
\end{figure}

To describe the construction more explicitly, we first recall some well-known facts about the Farey tessellation $F$ and its relation to $\mathrm{SL}_2(\Z)$. \\
Each triangle in $F$ has vertices with slopes $\frac{a}{b},\frac{c}{d},\frac{a+c}{b+d}
$ where $a,b,c,d$ are integers with $ad-bc=1$. 
This gives a triangulation of $T_0=(\R^2\setminus \Z^2)/\Z^2$ with edges given by the vectors $\begin{pmatrix}
a \\b
\end{pmatrix}, \begin{pmatrix}
c \\d
\end{pmatrix}, \begin{pmatrix}
a+c \\b+d
\end{pmatrix}$, and 
corresponds to the matrix  $$A = \begin{pmatrix}a &c \\b &d\end{pmatrix} \in \mathrm{SL}_2(\Z).$$
Moving ``right" or ``left" in the Farey tessellation corresponds to multiplication by the matrices 
$$R=
\begin{pmatrix}1 &0 \\1 &1\end{pmatrix} \,\,\,\text{ and }\,\, L = \begin{pmatrix}1 &1 \\0 &1\end{pmatrix} \in \mathrm{SL}_2(\Z)$$ acting on column vectors. Equivalently, these matrices act on slopes in $\Q \cup \infty$ by linear fractional transformations.

The Farey tessellation is highly symmetrical. One symmetry is implemented by the matrix 
$$S =\begin{pmatrix}0& -1\\1& 0\end{pmatrix},$$  which corresponds to the rotation by $\pi$ about $i=\sqrt{-1}$ in the upper half-plane $\H^2$, and takes the central triangle to the triangle with vertices $\frac01, \frac10=-\frac10, -\frac11$. 

Another is the matrix $$U = \begin{pmatrix} 0& -1\\ 1& -1\end{pmatrix},$$ which corresponds to a rotation by $2\pi/3$ about the central point $\rho = e^{\frac{\pi i}3}$ of the central triangle. Figure~\ref{fig:farey_plane} shows the centres of these two finite order symmetries acting on $F$.

\medskip
To construct a general layered solid torus, we start with an initial ``central"  triangle with vertices at slopes  $\frac10,\frac01, \frac11$, corresponding to the triangulation of $T_0=(\R^2 \setminus \Z^2)/\Z^2$ with edges $e_1=(1,0), e_2=(0,1)$ and $e_3=(1,1)$. \\At the \emph{bottom} of our construction we fold across one of these three edges to form a M\"obius strip. Overall, there are 3 cases:\\~\\
Case 1. Folding across $e_3=(1,1)$ gives a meridian $m_1=(1,-1)$ on the bottom of our construction. Here we can apply any word in $R$ and $L$ to get a matrix $\begin{pmatrix}a &c \\b &d\end{pmatrix}$ with $ad-bc=1$ and $a,b,c,d >0$. This gives a triangulation of $T_0$ with edges $e_1'=(a,b), e_2'=(c,d), e_3'=(a+c,b+d)$.

The meridian weights with respect to this new triangulation are the geometric intersection numbers of $m_1$ with the edges $e_1', e_2', e_3'$, giving $w_1=a+b$, $w_2=c+d$, $w_3=w_1+w_2=a+b+c+d$. (In other words, the meridian weight on an edge is just the sum of the numerator and denominator of the edge slope.)

The \textit{meridian slope} on the boundary of the layered solid torus is $$s = A^{-1} m_1 = -\frac{c+d}{a+b}=-\frac{p}{q},$$ say. (This will match the filling curve in the standard cusp.) This lies in the interval $(-\infty,0)$ containing the meridian slope $-\frac11$, and we get all such slopes.

Conversely, given the boundary slope $s=-\frac{p}{q}$ we can recover the matrix $A$ as follows.
Apply the rotation by $\pi$ in $\H^2$ given by $S$ to get $s'=\frac{q}{p}$, as shown in Figure~\ref{fig:rotation_relation}. Then the “inner” triangle in the Farey tessellation with vertex $s'$ gives A: if $\frac{a}{b}>s'>\frac{c}{d}$ 
are the three vertices of this triangle, then $A = \begin{pmatrix}a &c \\b &d\end{pmatrix}$. 
\begin{figure}[!htbp]
\centering
\includegraphics[width=0.5\linewidth]{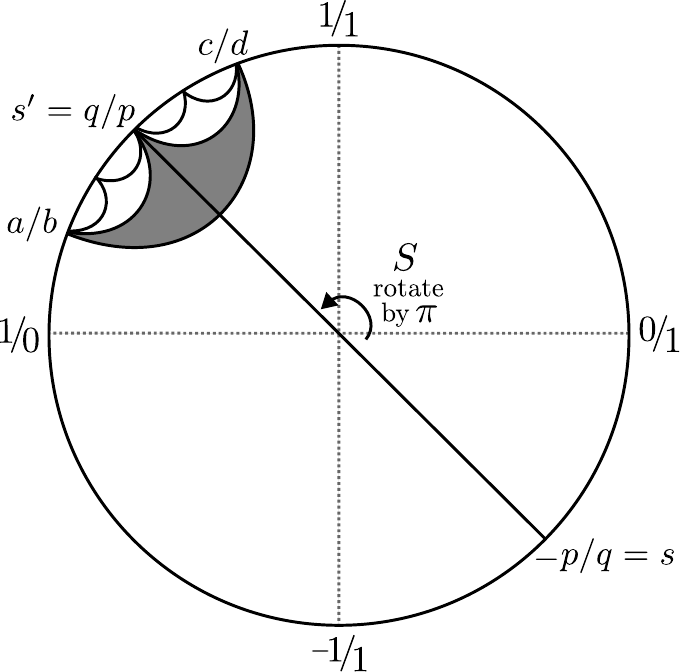}
\caption{Constructing the matrix $A$ from the meridian slope $s$.}
\label{fig:rotation_relation}
\end{figure}

Algebraically, we can find $a,b,c,d $ by applying Euclid’s algorithm to get the smallest positive integers $a, c$ such that $ap-cq=1$, then take $b=q-a$ and $d=p-c$. 
Here $A$ can be written uniquely as a word in $R$ and $L$; this follows since the Farey tessellation is dual to a trivalent tree.\\
\noindent The other two cases are similar:\\
Case 2. Folding across $e_1=(1,0)$ gives a meridian $m_2=(1,2)$. The corresponding boundary slopes $s = A^{-1} m_2$ lie in the interval $(0,1)$ containing the meridian slope $\frac12$.\\
Case 3. Folding across $e_2=(0,1)$ gives a meridian $m_3=(2,1)$. The corresponding boundary slopes $s = A^{-1} m_3$ lie in the interval $(1,\infty)$ containing the meridian slope $\frac21$.\\
The three cases are related by the order $3$ symmetry $U$ taking case (1) to (2) to (3) and back to (1). \\

In general, each layered solid torus triangulation occurs twice in each of the three cases, so occurs six times altogether; these correspond to all possible permutations of the three edge weights. Exactly one boundary slope corresponds to each of these.

When we refer to a layered solid torus we  normally mean a minimal non-degenerate layered solid torus LST$(p,q,p+q)$ obtained through the construction outlined above, and with $\{p,q\} \neq \{0,1\}$ or $\{1,1\}$.

The degenerate case of LST$(1,1,2)$ is special -- we just have a torus folded across an edge to form a M\"obius strip and there are no added tetrahedra. This corresponds to taking $A$ as the identity matrix in the three cases above and occurs only three times: giving  boundary slopes of the meridians $-\frac11$, $\frac12$ and $\frac21$. 
This matches the initial cases for relative index of the standard cusp to be used in the inductive arguments below in Section~\ref{sec:attaching_solid_tori}. Regina's triangulation of LST$(1,1,2)$ is discussed in Appendix~\ref{sec:relative_degenerate}.

The special case where $(p,q,p+q) = (0,1,1)$ corresponds to a particularly degenerate case (see~\cite[Sec.~4]{jaco2006layered}), which we treat separately in Section~\ref{sec:collar_eff} and Appendix~\ref{sec:gang011}. Instead of dealing directly with these degenerate LSTs for Dehn fillings, we are going to use the well-behaved triangulations described in Appendix~\ref{sec:relative_degenerate}.

\begin{rmk}\label{LSTnotation} In light of the correspondence between paths in the Farey graph, continued fractions, and layered solid tori, we will adopt whichever of the three standard notations is most convenient in context: edge weights, as in LST$(p,q,p+q)$; slopes or meridian curves, as in LST$(\alpha)$ and words in $L$ and $R$, as in LST$(w)$.
\end{rmk}

Layered solid tori and standard cusps are very well suited to construct triangulations of Dehn fillings~\cite{gueritaud2010canonical}. More precisely, let $M$ be a $3$-manifold with at least two cusps, $\tri$ a $1$-efficient triangulation of $M$ with a standard cusp $\cusp$ corresponding to a torus $T \subset \bd M$, and fix a basis $\mu,\lambda$ for $\hh_1(T;\Z)$.  
Then each $(a,b) \in \Z^2$ with $\gcd(a,b)=1$ describes an isotopy class of simple closed curve $\alpha$ on $T$ with \emph{homology class} $[\alpha]=a \mu+b\lambda \in \hh_1(T;\Z)$ and corresponding \emph{slope} $s_\alpha = \frac{a}{b} \in \Q \cup\infty$. 
Provided the context is clear, the terms filling curve and slope may be used interchangeably. We denote by $\tri (\alpha)$ the triangulation obtained by removing the standard cusp from $\tri$, and replacing it with the layered solid torus LST$(\alpha)$ with meridian curve $\alpha$. Topologically, this operation corresponds to the Dehn filling $M(\alpha)$ of $M$ along $\alpha$ on $T$. 

%&&&&&&&&&&&&&&&&&&&&&&&&&&&&&&&&&&&&&&&&&&&&&&&&&&&&&&&&&&&&&&&&&&&&&&&&&&&&&
%&&&&&&&&&&&&&&&&&&&&&&&&&&&&&&&&&&&&&&&&&&&&&&&&&&&&&&&&&&&&&&&&&&&&&&&&&&&&&

\section{1-efficiency and Dehn fillings}\label{sec:1eff}

Our starting point is a $1$-efficient triangulation of a cusped $3$-manifold on which we will perform Dehn filling. Since the $3$D index is not well-defined for non-$1$-efficient triangulations, we need to investigate the $1$-efficiency of ideal triangulations obtained by replacing a standard cusp with a layered solid torus. 

The following result shows that Dehn filling a $1$-efficient triangulation will,
with at most finitely many exceptions, yield a $1$-efficient triangulation. Furthermore,
there exists an algorithm to determine which slopes yield non-$1$-efficient triangulations.

\begin{thm}\label{thm:algorithm_efficiency}
Let $M$ be a compact, irreducible, atoroidal, orientable $3$-manifold with boundary consisting of at least two tori. 
Let $\tri$ be a $1$-efficient triangulation of $M$ with a standard cusp $\cusp$. 
Given a filling curve $\alpha \subset \cusp$, let $\tri(\alpha)$ be the ideal triangulation of the Dehn filled manifold $M(\alpha)$ obtained from $\tri$ by replacing the standard cusp with a layered solid torus with meridian $\alpha$.  Then the triangulation $\tri(\alpha)$ is $1$-efficient except for a finite number of choices of $\alpha$.  

Moreover, there is an algorithm to determine which filling curves $\alpha$ give a non-$1$-efficient  triangulation under Dehn filling. 
\end{thm}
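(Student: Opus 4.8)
The plan is to reduce the problem to a finiteness statement about normal surfaces in $\tri(\alpha)$ and then exploit the special structure of the layered solid torus part. First I would recall that $\tri(\alpha)$ fails to be $1$-efficient precisely when it carries an embedded closed normal surface $S$ of non-negative Euler characteristic that is not vertex-linking; since $M$ is irreducible and atoroidal, and since $M(\alpha)$ is (for all but finitely many $\alpha$, by Thurston's hyperbolic Dehn surgery theorem) hyperbolic or at worst a small Seifert fibered space, such a surface must be a sphere, torus, projective plane, or Klein bottle, and the only candidates are essential tori/Klein bottles, non-vertex-linking tori bounding a knotted solid torus, spheres bounding balls, or one-sided surfaces. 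The key observation is that such an $S$, if it exists, meets the layered solid torus LST$(\alpha)$ in a collection of normal discs, annuli, or M\"obius bands (since LST$(\alpha)$ is a solid torus), and meets the fixed complementary piece $M \setminus \cusp$ in a normal surface with boundary on the once-punctured torus $T_0$. The boundary slopes and weights of this restriction lie in a controlled set.

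The main steps I would carry out are: (i) Decompose any putative bad surface $S$ along the torus $\partial(\text{LST}(\alpha))$ into a piece $S_0 \subset M\setminus\cusp$ and a piece $S_1 \subset \text{LST}(\alpha)$; the gluing forces $\partial S_0 = \partial S_1$ as curve systems on $T_0$. (ii) Use $1$-efficiency of $\tri$ (equivalently of the fixed part $M\setminus\cusp$ with its standard-cusp triangulation) to bound the possibilities for $S_0$: in a $1$-efficient triangulation there are only finitely many normal surface ``types'' with bounded Euler characteristic, and here $\chi(S_0) = \chi(S) - \chi(S_1) \geq \chi(S_1)$ where $\chi(S_1)\le 0$ with equality only for annuli/M\"obius bands, so $\chi(S_0)$ is bounded below. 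Each such $S_0$ has a fixed boundary slope and weight on $T_0$. (iii) Observe that a normal surface $S_1$ in LST$(\alpha)$ with boundary a given curve system on $T_0$ essentially only exists when the boundary is compatible with the meridian disc structure — and, crucially, for a fixed curve system $c$ on $T_0$, the set of slopes $\alpha$ for which $c$ bounds (or is carried by) a normal surface in LST$(\alpha)$ is finite, because the meridian weights $(p,q,p+q)$ grow without bound along the Farey path and eventually exceed any fixed bound on the intersection of $c$ with the edges of $\tri_0$. Combining (ii) and (iii): only finitely many $S_0$ can occur, each forces $\alpha$ into a finite set, so $\tri(\alpha)$ is $1$-efficient for all but finitely many $\alpha$.

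For the algorithmic statement, I would make each of the above steps effective: normal surface theory in a fixed $1$-efficient triangulation gives, via the Haken/Jaco–Oertel machinery and the fact that vertex normal surfaces can be enumerated, a finite computable list of candidate boundary curve systems for $S_0$ with the relevant Euler characteristic constraint; for each such curve system one can algorithmically check, using the continued-fraction / Farey-path description of LST$(\alpha)$ in Section~\ref{sec:LSTs}, the finite list of slopes $\alpha$ for which a matching $S_1$ exists; and then for each of those finitely many $\alpha$ one runs a direct (terminating) $1$-efficiency test on $\tri(\alpha)$, e.g.\ by enumerating vertex normal surfaces and checking Euler characteristics. The union of the bad slopes found is the desired finite set.

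The step I expect to be the main obstacle is (iii): rigorously controlling which curve systems on $T_0$ can bound a normal surface inside LST$(\alpha)$, and proving the weights genuinely grow so that the set of compatible $\alpha$ is finite. One must handle spun pieces carefully, rule out the degenerate LSTs (LST$(1,1,2)$ and LST$(0,1,1)$, which are dealt with separately via collar efficiency in Sections~\ref{sec:collar_eff}), and be careful that an annulus component of $S_1$ running parallel to $\partial V$ does not produce an unexpected family of bad surfaces — this is exactly where the hypothesis that $M$ is atoroidal and the core of LST$(\alpha)$ is not exceptional gets used. Once the growth of meridian weights along the Farey path is pinned down, the finiteness follows cleanly.
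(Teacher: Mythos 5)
Your strategy — decompose a bad surface $S$ across the boundary of the layered solid torus and control each piece separately — is in the right spirit, but the two finiteness steps you invoke are not sound as stated, and the key tool the paper uses is missing.

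In step (ii) you assert that a $1$-efficient triangulation has only finitely many normal surface ``types'' with bounded Euler characteristic, and conclude from this that finitely many boundary slopes can occur for $S_0$. That first assertion is false: for instance, all non-negative integer multiples of a vertex-linking torus are normal surfaces with $\chi = 0$, and more seriously there is no a priori reason that the boundary slopes of properly embedded normal annuli (all with $\chi = 0$) in the truncated manifold $\widehat{M}=M\setminus\cusp$ should form a finite set. The finiteness of boundary slopes is exactly the content of Jaco--Sedgwick's Proposition~3.7 in~\cite{jaco2003decision}, which says that two \emph{compatible} properly embedded normal surfaces have identical or complementary boundary slopes; this bounds the set of slopes occurring on any face of the projective solution space by two, and hence yields a finite, computable set $\mathcal{B}$ of all boundary slopes. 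That is the load-bearing lemma and it does not appear in your argument. In step (iii) you claim that for a fixed curve system $c$ on $T_0$, only finitely many slopes $\alpha$ admit a normal surface in $\mathrm{LST}(\alpha)$ carrying $c$. This is also not correct on its face: a $\partial$-parallel annulus in the solid torus realizes essentially any boundary curve system, independent of $\alpha$, so without ruling out the case where $S_1$ consists entirely of such annuli you cannot conclude that $c$ pins down $\alpha$. You flag this as your main obstacle, but it is not a technicality to be dispatched — the argument genuinely does not close without a different finiteness input.

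The paper's proof avoids both issues by never analysing $S_1 = S\cap\mathrm{LST}(\alpha)$ at all. It works entirely in the fixed manifold $\widehat{M}$, which does not depend on $\alpha$: if $\tri(\alpha)$ is not $1$-efficient, the bad surface $S$ restricts to a properly embedded normal surface $\widehat{S}$ in $\widehat{\tri}$, and either all its boundary curves are trivial (in which case capping off produces a bad surface in $\tri$, contradicting $1$-efficiency of $\tri$) or some boundary curve has slope $\alpha$. By the Jaco--Sedgwick argument, only the finitely many, algorithmically computable slopes in $\mathcal{B}$ can arise as boundary slopes of normal surfaces in $\widehat{\tri}$, so $\alpha\in\mathcal{B}$, and one simply checks $1$-efficiency of $\tri(\alpha)$ directly for each $\alpha\in\mathcal{B}$. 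To repair your proof you would need to cite the Jaco--Sedgwick boundary-slope finiteness (or reprove it), at which point your decomposition becomes unnecessary extra machinery.
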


The proof closely follows the arguments of Jaco-Sedgwick in~\cite{jaco2003decision}, and will use the following terminology:
\begin{defn}
Consider an ideal triangulation of a once-punctured torus with one ideal vertex and two triangles.
Two normal simple closed curves are called \emph{complementary} if their Haken sum is a multiple of the trivial vertex linking curve. 
\end{defn}

\begin{rmk}
If simple closed normal curves on a once punctured torus triangulated as above are parametrised by edge weights, then complementary pairs correspond to edge weight triples $(p,q,p+q)$ and $(r+s,s,r)$ where $p+q+r=p+r+s=q+s$ where $p\le q, r\le s$ without loss of generality. So $q=s=p+r$. An example is given by the curves with edge weights $(2,3,5)$ and $(4,3,1)$. 
\end{rmk}

\begin{proof} 
Let $\widehat{M}$ denote the result of truncating the standard cusp $\cusp$ of $M$, and $\widehat{\tri}$ the triangulation obtained from $\tri$ by removing $\cusp$. 

The first step is to observe that \cite[Prop.~3.7]{jaco2003decision} applies to properly embedded normal (and almost normal) surfaces in $\widehat{\tri}$. 
(The paper~\cite{jaco2003decision} studies a finite triangulation of a knot manifold restricting to a $1$-vertex triangulation of its boundary torus,  
but the result in our situation follows by an identical argument.) 
The conclusion is that any two compatible properly embedded normal surfaces have either the same slope or complementary slopes. (Compatible means there is no tetrahedron where the surfaces have different quadrilateral types.)

From this it follows immediately that if $\mathcal P$ is the projective solution space of closed and proper normal surfaces in $\widehat{\tri}$, then any face of $\mathcal P$ with all elements embedded surfaces has at most two boundary slopes. If there are two slopes, they are complementary. Finally if there are two complementary slopes, these will occur at vertices of the face. 

Note that a corollary, as in~\cite{jaco2003decision}, is that there is a computable finite set of boundary slopes $\mathcal{B}$ for properly embedded normal surfaces, with at most two slopes for each maximal face of embedded surfaces of $\mathcal P$.

Now, we note that if  $\tri(\alpha)$ is not $1$-efficient, then it must admit an embedded normal surface
$S$ with $\chi(S) \ge 0$ which is not an ideal vertex linking normal torus.  
Then $\widehat{S} = S \cap \widehat{M}$ is a properly embedded normal surface in $\widehat{\tri}$ with at least one boundary curve having non-trivial slope $\alpha$. Otherwise if $\widehat{S}$ has all boundary curves with trivial slope, then $\widehat{S}$ can be capped off in $M$, to give a normal surface in $\tri$ which contradicts the $1$-efficiency assumption (see \cite[Sec.~5]{jaco2003decision}).

Finally we observe that to check non-$1$-efficiency, it suffices to look at the slopes $\alpha \in \mathcal{B}$, and for each of these check the $1$-efficiency of the triangulation $\tri(\alpha)$. This last step can be efficiently carried out by standard methods, \textit{e.g.}~using Regina~\cite{regina}.
\end{proof}

%&&&&&&&&&&&&&&&&&&&&&&&&&&&&&&&&&&&&&&&&&&&&&&&&&&&&&&&&&&&&&&&&&&&&&&&&&&&&&
%&&&&&&&&&&&&&&&&&&&&&&&&&&&&&&&&&&&&&&&&&&&&&&&&&&&&&&&&&&&&&&&&&&&&&&&&&&&&&

\section{Collar efficiency}\label{sec:collar_eff}

In the course of the proof of our main Theorem~\ref{thm:main}, we will touch on a special version of $1$-efficiency for ideal triangulations, which we call \textit{collar efficiency}. Collar efficiency is closely related to annular efficiency of finite triangulations of manifolds with boundary, and is implied by $1$-efficiency (see~\cite{BJR}). 

Let us say that a triangulation is the result of $(0,1,1)$-Dehn filling, if it is obtained by gluing a degenerate layered soid torus LST$(0,1,1)$ in place of a cusp, as described in Section~\ref{sec:LSTs} (\textit{cf.}~Appendix~\ref{sec:relative_degenerate}). Note that  a $(0,1,1)$-Dehn filling can be performed by \textit{e.g.}~adding a two or three tetrahedra triangulation of a solid torus, as explained in~\cite[Fig.~5]{jaco2006layered}.
We prove that if $(0,1,1)$-Dehn filling is performed on a standard cusp in an ideal triangulation with at least two cusps, then either the resulting triangulation is not collar efficient or the filled manifold is a solid torus. So the filled triangulation is $1$-efficient only in the case the manifold is a solid torus or the product of a torus and an interval.

In the rest of this section we assume that all manifolds are irreducible, atoroidal, orientable and with non-empty boundary consisting of tori. 

\begin{defn}
An ideal triangulation is \emph{collar efficient} if it is $0$-efficient and any embedded normal torus is either a peripheral torus or is not isotopic to a peripheral torus. 
\end{defn}
\begin{lem}
Suppose $M$ is not a solid torus or a product of a torus and an open interval. 
If an ideal triangulation of $M$ has an edge which is contained in a topological collar of a peripheral torus, then the triangulation is not collar efficient.
\end{lem}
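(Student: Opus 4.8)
The plan is to manufacture, from the edge $e$, an embedded normal torus isotopic to the peripheral torus $T$ which is not a vertex-linking torus; this contradicts the torus clause in the definition of collar efficiency, so no further appeal to $0$-efficiency is needed. \emph{Set-up and reformulation.} Let $T$ be the peripheral torus in question, $v$ the corresponding ideal vertex, and $L$ the vertex-linking normal torus at $v$. Truncating all cusps, let $P\cong M$ be the resulting compact manifold, so that $L$ is the component of $\partial P$ coming from $T$. Since $M$ is irreducible, atoroidal and not a solid torus, every torus component of $\partial P$ is incompressible; in particular $L$ is incompressible in $P$. The hypothesis that $e$ lies in a topological collar of $T$ means, after truncation, that some sub-arc $\hat e\subset P$ of $e$ is a properly embedded arc with $\partial\hat e\subset L$ which is $\partial$-parallel: there is an embedded disc $D\subset P$ with $\partial D=\hat e\cup\beta$, where $\beta\subset L$ is an embedded arc, $\mathrm{int}(D)\cap L=\varnothing$, and $D\cap\partial P=\beta$. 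Using irreducibility of $M$ and incompressibility of $L$, we may further arrange $D$ to lie inside a collar of $L$ in $P$.

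\emph{Constructing and normalising the torus.} Enlarge the cusp neighbourhood $\overline{N(v)}$ bounded by $L$ across a regular neighbourhood of the disc $D$, setting $N^{+}:=\overline{N(v)}\cup N(D)$. As $D$ meets $L=\partial\overline{N(v)}$ only along the arc $\beta$, the region $N^{+}$ is again homeomorphic to a cusp neighbourhood, so its frontier $\Theta:=\partial N^{+}$ in $\mathrm{int}(M)$ is an embedded torus isotopic to $T$; by construction the arc $\hat e$ lies in $\mathrm{int}(N^{+})$, hence in the region between $L$ and $\Theta$. Normalising $\Theta$ yields a normal surface $\Theta^{*}$; since $\Theta$ is incompressible (it is isotopic to the incompressible torus $T$) and $M$ is irreducible and atoroidal, $\Theta^{*}$ is again a single normal torus isotopic to $T$.

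\emph{Deriving the contradiction.} Assume the triangulation were collar efficient. Then $\Theta^{*}$ would be normally isotopic to the vertex-linking torus $L$, so $L$ and a normal copy of $\Theta^{*}$ would cobound a region $R^{*}\cong T^{2}\times[0,1]$; in a $0$-efficient triangulation of an irreducible atoroidal $3$-manifold, such a region is a combinatorial product (a union of layered $T^{2}\times I$ blocks). One then argues that the $\partial$-parallel edge $\hat e$, being separated from $v$ by $L$, is forced into the interior of this product region — whereas a combinatorial product between two normal tori has no $\partial$-parallel edge in its interior, which is the contradiction we want. Here the hypothesis that $M$ is neither $S^{1}\times D^{2}$ nor $T^{2}\times(0,1)$ is precisely what excludes the degenerate case where $R^{*}$ could be all of $M$, which is the only way the argument could fail.

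\emph{Main obstacle.} The delicate step is the last one: showing that the normalisation $\Theta^{*}$ of $\Theta$ genuinely cannot be pushed past the edge $\hat e$, and hence cannot coincide with the vertex-linking torus. The clean way to make this rigorous is to run the normalisation while keeping track of $\hat e$, and to invoke the structure theory of product regions between parallel normal tori in efficient triangulations from the annular-efficiency framework of~\cite{BJR}; everything else is bookkeeping with incompressibility and irreducibility.
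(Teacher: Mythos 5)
Your high-level plan is the same as the paper's: normalise a torus isotopic to the peripheral torus, sitting on the far side of the edge, and argue that the result cannot be the vertex-linking torus. But the execution has two real gaps, and the construction is more elaborate than it needs to be.

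First, the auxiliary construction via the disc $D$ and the enlarged neighbourhood $N^+$ is unnecessary. The paper simply takes $T$ to be the outer boundary $S^1\times S^1\times\{1\}$ of the given collar; this torus already has the property you need (it is isotopic to the peripheral torus and the edge lies strictly between it and the cusp), with no need to first produce a $\partial$-parallelism disc for $\hat e$. Your version is correct but costs extra work and introduces the question of whether $D$ really can be taken inside a collar of $L$.

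Second, and more seriously, the step you label the ``main obstacle'' is the entire content of the lemma and is not carried out. The paper's proof is precisely a \emph{barrier argument}: it lists the elementary normalisation moves and checks, move by move, that each can be performed without creating an intersection of the surface with the edge $E$. That is what makes the edge a barrier, and it applies uniformly to any normalisation sequence (which matters, since ``the normalisation $\Theta^*$'' is not a canonical object). You acknowledge the step but defer it to ``running the normalisation while keeping track of $\hat e$'' plus a citation; that is exactly the part that has to be proved.

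Third, the contradiction you propose to extract from collar efficiency does not go through as stated. If the triangulation were collar efficient, collar efficiency would force $\Theta^*$ to literally \emph{be} a vertex-linking torus (the definition rules out normal tori isotopic but not normally parallel to a peripheral one). In that case there is no product region $R^*\cong T^2\times[0,1]$ cobounded by $L$ and a normal copy of $\Theta^*$ to analyse, so the ``combinatorial product has no $\partial$-parallel edge in its interior'' claim — itself unproved — never gets applied. What one actually wants is the barrier conclusion: the normalised surface is disjoint from $E$, whereas the vertex-linking torus at $v$ meets $E$ (both ends of $E$ are at $v$), so the two cannot be normally parallel. The hypothesis excluding $S^1\times D^2$ and $T^2\times(0,1)$ is used to guarantee the normalisation produces a non-trivial normal surface (torus, Klein bottle, sphere or projective plane) rather than disappearing, not to rule out a degenerate product region.

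In short: right idea, but the barrier step must be proved directly (as in the paper) rather than cited, and the product-region contradiction as formulated does not reach the conclusion.
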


\begin{proof}
Suppose an ideal triangulation $\tri_M$ of $M$ has an edge $E$ contained in a topological collar $C = S^1 \times S^1 \times (0,1]$ of a peripheral torus. Let us denote by $T$ the torus boundary $S^1 \times S^1 \times \{1\}$ of $ C$.

We claim that the edge $E$ is a barrier to the normalisation of $T$ (see \cite[Sec.~10.3]{garoufalidis20163d} and~\cite{jaco20030}). 
Since $M$ is not a solid torus nor a product of a torus and an open interval,  normalisation of $T$ produces either a normal Klein bottle, normal $2$-sphere, normal projective plane or a normal torus which is isotopic to a peripheral torus but not normally equivalent to this peripheral torus.  Hence the triangulation of $M$ is not collar efficient. 

Note that if there is a normal Klein bottle, then the boundary of a small regular neighbourhood is a normal torus.

It is then sufficient to show that each normalisation move on $T$ can be performed without creating an intersection with $E$. We list these, allowing more than the minimal number of moves. 

\begin{enumerate}
\item Discard a component in the interior of a tetrahedron.
\item Perform a compression in the interior of a tetrahedron.
\item Isotope a disc across a face of a tetrahedron, where the boundary of the disc is in the face, and the interior of the disc is in the interior of the tetrahedron
\item Assume there is an arc $\gamma$ of intersection of $T$ with a face $F$ which has both ends on an edge; let $\delta$ be the subarc of this edge with $\partial \gamma = \partial \delta$. Suppose that the bigon bounded by $\gamma \cup \delta$  in $F$ has interior disjoint from $T$. Perform an isotopy in the neighbourhood of $F$, so that $\gamma$ slides across the bigon and across $\delta$. This removes two intersection points of $T$ with the edge. 
\item Assume there is a disc $D$ embedded in a tetrahedron so that $D \cap T = \gamma$ is an arc contained in $\partial D$ and $\partial D = \gamma \cup \delta$ with $\delta$ an arc in a face $F$ of the tetrahedron. Moreover suppose that $\partial \gamma =  \partial \delta$ are points on two different normal arcs of $T \cap F$.  Perform an isotopy of $T$ so that $\gamma$ slides along $D$ and across $\delta$.
\end{enumerate}

To conclude, we just need to observe that none of these moves will produce an intersection of $T$ and the edge $E$.
\end{proof}

\begin{rmk}
If we inflate an ideal triangulation which is collar efficient, the result is easily seen to be annular efficient. Conversely, if we crush the boundary torus of a finite triangulation, where there are no normal tori which are isotopic to the boundary but not normally parallel to the boundary, the result will be a collar efficient ideal triangulation.  The former condition clearly implies annular efficiency, by a simple barrier argument as in the above lemma. (See~\cite{BJR} for further details.) 
\end{rmk}
Finally, we apply this to the case of $(0,1,1)$-Dehn filling.

\begin{thm}\label{thm:collar_eff}
Suppose $M$ has an ideal triangulation with at least two cusps and one is standard. Perform $(0,1,1)$-Dehn filling on the standard cusp. 
If the resulting manifold is not an open solid torus or a product of a torus and an open interval then the triangulation is not collar efficient, hence not $1$-efficient. 
\end{thm}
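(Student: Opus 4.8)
The plan is to show that the triangulation $\tri(\alpha)$ resulting from $(0,1,1)$-filling on the standard cusp contains an edge lying in a topological collar of the remaining peripheral torus, and then invoke the previous lemma. First I would recall the explicit combinatorics of LST$(0,1,1)$ as described in Section~\ref{sec:LSTs} and \cite[Sec.~4]{jaco2006layered}: unlike the non-degenerate layered solid tori, this ``filling'' does not actually fill anything in the topological sense -- gluing LST$(0,1,1)$ onto the standard cusp $\cusp$ either produces something homotopy-equivalent to the original boundary collar or, in the extreme degenerate situation, crushes the solid-torus piece. Concretely, the edge of slope $0$ (the $0$-weight edge of the triple $(0,1,1)$) becomes an edge of $\tri(\alpha)$ that bounds a disc pushed off the boundary, and after the gluing this edge, together with edges inherited from the standard cusp neighbourhood $\cusp$, sits inside an embedded product region $S^1\times S^1\times(0,1]$ collaring the torus $T$ that was the boundary of $\cusp$ before filling.

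The key steps, in order, would be: (1) Set up the standard cusp $\cusp$ with its labelling from Figure~\ref{fig:cusp_conventions}, recalling that $\cusp$ is itself a collar of a torus with its core removed and one boundary puncture, so every edge of $\cusp$ except $e_0$ lies in the punctured-torus boundary; (2) Describe the gluing of LST$(0,1,1)$ precisely, identifying which boundary curve of the LST is glued to which curve on the punctured torus $T_0=\partial\cusp$, and observe that in the $(0,1,1)$ case the ``core'' of the would-be solid torus degenerates -- so the union $\cusp \cup \mathrm{LST}(0,1,1)$ is a (possibly degenerate) product neighbourhood of the torus $\partial\cusp$ rather than a genuinely filled solid torus; (3) Exhibit an explicit edge $E$ of $\tri(\alpha)$ -- either the $0$-slope edge of the LST or the vertical edge $e_0$ of $\cusp$, depending on the exact combinatorial model -- and check that $E$ is contained in a topological collar $C=S^1\times S^1\times(0,1]$ of the peripheral torus; (4) Handle the exceptional topological types: if $M(\alpha)$ happens to be an open solid torus $S^1\times B^2$ or $S^1\times S^1\times(0,1)$, the hypothesis of the lemma fails and we land in the stated exceptions, so assume otherwise; (5) Apply the previous lemma to conclude that $\tri(\alpha)$ is not collar efficient, and hence (since $1$-efficiency implies collar efficiency) not $1$-efficient.

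The main obstacle I expect is step (2)--(3): namely pinning down the precise combinatorial model of $(0,1,1)$-filling well enough to name a concrete edge and certify that it lies in a product collar of the peripheral torus. The subtlety is that $(0,1,1)$ is genuinely degenerate -- there are several non-minimal triangulations realising it (the one-, two-, and three-tetrahedron models of \cite[Sec.~4]{jaco2006layered}, and the alternative triangulations of Appendix~\ref{sec:relative_degenerate}) -- and I would need to argue that the conclusion is model-independent, or else fix one convenient model and verify the collar condition there. A clean way to organise this is to observe that, topologically, performing $(0,1,1)$-filling on a torus boundary component never changes the manifold in an essential way other than possibly producing a solid torus or a torus$\times$interval; so in all remaining cases the glued-in region can be absorbed into a collar of $T$, and any edge introduced by or passing through that region becomes a barrier edge for the normalisation of $T$ exactly as required by the lemma. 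Once that topological picture is secured, the rest is a direct citation of the lemma and of the implication ``$1$-efficient $\Rightarrow$ collar efficient'' noted at the start of the section.
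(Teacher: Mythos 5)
Your overall plan is sound — invoke the preceding lemma by exhibiting an edge of $\tri(\alpha)$ contained in a collar of a peripheral torus, identify the weight-$0$ edge as that edge, and dispose of the two exceptional manifold types — and this is exactly the structure of the paper's two-sentence proof. But two points in your setup would break the argument. First, you misremember the construction: the standard cusp $\cusp$ is \emph{removed} from $\tri$ and LST$(0,1,1)$ is glued in its place along the exposed once-punctured torus $T_0$, so the expression $\cusp \cup \mathrm{LST}(0,1,1)$ is not meaningful, and in particular the internal edge $e_0$ of $\cusp$ does not survive into $\tri(\alpha)$ and cannot serve as the candidate edge; only the boundary edges of $T_0$ remain.

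Second, and more seriously, you place the weight-$0$ edge $E$ in a collar of ``the torus $T$ that was the boundary of $\cusp$ before filling.'' That is the wrong torus: after filling, the capped-off $T_0$ is an \emph{interior} surface of $M(\alpha)$, bounding the solid torus LST$(0,1,1)$ on one side; it is not peripheral, and a tubular neighbourhood of it is not the kind of collar the lemma needs. The relevant peripheral torus is the one at the ideal vertex $v$ (the puncture of $T_0$), which is a genuine cusp of $M(\alpha)$. The geometric input the paper is using is that $E$ has the same slope as the meridian of LST$(0,1,1)$ — both have weight $0$ on $E$ — so $E$ bounds a disc through $v$ inside the solid torus; truncating at $v$ shows $E$ is boundary-parallel to $v$'s peripheral torus and hence lies in a collar of it, which is what the lemma requires. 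Relatedly, $(0,1,1)$-filling is a genuine Dehn filling along the slope of $E$ and can change the topology arbitrarily; what degenerates is the \emph{triangulation}, not the topological operation, so the heuristic that the filling ``never changes the manifold in an essential way'' should be discarded.
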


\begin{proof}
The edge $E$ with weight $0$ on the boundary of the truncated standard cusp becomes isotopic into a peripheral torus after filling. Hence there is a topological collar containing $E$ and the triangulation is not collar efficient. 
\end{proof}
\begin{rmk}
Let $M$ be a hyperbolic $3$-manifold and $\tri$ a $1$-efficient triangulation of $M$ with a standard cusp. 
If Dehn filling along one of the three boundary edges gives a hyperbolic manifold, then $\tri$ gives an example of a triangulation which is collar efficient but not $1$-efficient. Otherwise the $3$D index of the filled hyperbolic manifold would be $0$, by Theorem~\ref{thm:trivial_slope}, but the index of a hyperbolic manifold is non-zero~\cite{garoufalidis20163d}. 
\end{rmk}

%&&&&&&&&&&&&&&&&&&&&&&&&&&&&&&&&&&&&&&&&&&&&&&&&&&&&&&&&&&&&&&&&&&&&&&&&&&&&&
%&&&&&&&&&&&&&&&&&&&&&&&&&&&&&&&&&&&&&&&&&&&&&&&&&&&&&&&&&&&&&&&&&&&&&&&&&&&&&

\section{3D index}\label{sec:3d_index}

We will provide two definitions of the $3$D index, following~\cite{garoufalidis20163d}. 
The first one will be based on $Q$-normal surfaces, while the second approach will instead use a state sum based on assignments of integer coefficients to edges. 
In the former approach we are interpreting the index as a generating function for normal surfaces with a given boundary.
For the latter, roughly speaking, just as $\tri$ is built from gluing tetrahedra as prescribed by the gluing matrix, the $3$D index will be analogously defined as a suitable sum of products of the $3$D indices' `building blocks', known as tetrahedral indices.~\\

%&&&&&&&&&&&&&&&&&&&&&&&&&&&&&&&&&&&&&&&&&&&&&&&&&&&&&&&&&&&&&&&&&&&&&&&&&&&&&
%&&&&&&&&&&&&&&&&&&&&&&&&&&&&&&&&&&&&&&&&&&&&&&&&&&&&&&&&&&&&&&&&&&&&&&&&&&&&&

\subsection{The tetrahedral index}\label{sec:tretrahedral_index}~\\

The ``building block'' of the $3$D index is the \emph{tetrahedral index}; formally this is the function $$\itet \colon \Z^2 \longrightarrow \Z(\!(q^{\frac12})\!)$$
defined as
\begin{equation}\label{eqn:tetrahedral_index}
\itet (m,e)(q) = \sum_{n = e_+}^\infty (-1)^n \frac{q^{\frac{n(n+1)}{2} - \left(n +\frac{e}{2} \right)m}}{(q;q)_n (q;q)_{n+e}},
\end{equation}
where $e_+ = \frac12 \left(|e| - e\right)$. Throughout the text, we will mostly suppress the dependency on $q$ in $\itet$ for ease of notation. 

The notation $(\alpha ;q^c)_n$, with $n \in \Z \cup \infty$, $\alpha \in \C$ and $c \in \Z$,  denotes the $q$-\emph{shifted factorial} (also known as the $q$-\emph{Pochhammer symbol}), defined as $$(\alpha ;q^c)_n = \prod_{i = 0}^{n-1} (1- \alpha q^{c i}).$$

We adopt the convention that $1/(q;q)_n = 0$ for negative values of $n$. \\Note that this convention can be seen as a direct consequence of the alternative definition $(a;q)_n = (a;q)_\infty /(a q^n;q)_\infty$. When clear from context, we will use the shorthand $(\alpha)_n$ for $(\alpha; q)_n$.

The tetrahedral index $\itet (m,e)$ can be shown (\cite[Sec.~3]{dimofte20133},\cite[App.~B]{garoufalidis20163d_angle}) to coincide with the coefficient of $z^e$ in the quotient
\begin{equation}\label{enq:generating_function_tetra}
\frac{(q^{1-\frac{m}{2}} z^{-1})_\infty}{(q^{-\frac{m}{2}} z)_\infty} = \sum_{e \in \Z} \itet (m,e) z^e .
\end{equation}

We will often make use of a more symmetric version of $\itet$, defined as the function $$\jtet\colon \Z^3 \longrightarrow \Z(\!(q^{\frac12})\!)$$
such that
\begin{equation}\label{eqn:simmetries_Itet}
\begin{gathered}
\jtet(a,b,c) =  \left(-q^\frac12\right)^{-b} \itet(b-c,a-b) = \left(-q^\frac12\right)^{-c} \itet(c-a,b-c) \\
 =\left(-q^\frac12\right)^{-a} \itet(a-b,c-a). 
\end{gathered}
\end{equation}
Note that $\jtet$ is unchanged under all permutations of its arguments. Moreover,
\begin{equation}\label{eqn_add_tets}
\jtet(a,b,c) = \left(-q^\frac12\right)^{s} \jtet(a+s,b+s,c+s),
\end{equation}
for all $s \in \Z$ (see~\cite{garoufalidis20151}).\\

The tetrahedral index is known to satisfy many relations and identities~(see \textit{e.g.}~\cite{garoufalidis20163d_angle}), which will be used throughout. For a more comprehensive approach on these relations, we refer to~\cite[Sec.~3]{garoufalidis20163d_angle}. 
The most straightforward identities are symmetries induced by an $\mathrm{SL}_2(\Z)$ action
\begin{equation}\label{eqn:tetrahedral_index_symmetries}
\itet (m,e) = \itet (-e,-m) = \left(-q^\frac12\right)^{-e} \itet(e,-e-m) = \left(-q^\frac12\right)^{m} \itet(-e-m,m).
\end{equation}

Next, there are two series of $3$-term relations, which combine the values of $\itet$ on three consecutive or adjacent and orthogonal lattice points in $\Z^2$ respectively:
\begin{equation}\label{eqn:index_3term_consecutive}
\begin{gathered}
\itet(m,e+1)  +\left( q^{e+\frac{m}{2}}  - q^{-\frac{m}{2}} -q^{\frac{m}{2}} \right)\itet(m,e)  + \itet(m,e-1) = 0\\
\itet(m+1,e)  +\left( q^{-m-\frac{e}{2}}  - q^{-\frac{e}{2}} -q^{\frac{e}{2}} \right)\itet(m,e)  + \itet(m-1,e) = 0
\end{gathered}
\end{equation}

\begin{equation}\label{eqn:index_3term_adjacent}
\begin{gathered}
q^\frac{e}{2} \itet(m+1,e)  +q^{-\frac{m}{2}} \itet(m,e+1)  - \itet(m,e) = 0\\
q^\frac{e}{2} \itet(m-1,e)  +q^{-\frac{m}{2}} \itet(m,e-1)  - \itet(m,e) = 0
\end{gathered}
\end{equation}
The two relations in equation~\eqref{eqn:index_3term_adjacent} can be combined to produce this symmetric version
\begin{equation}\label{eqn:four_term_relation}
q^{-\frac{m}2} \left[\itet(m, e-1) - \itet(m, e+1) \right] + q^{\frac{e}2}\left[\itet(m-1, e) - \itet(m+1, e) \right] =0.
\end{equation}
There are two other important relations, which play a crucial role in proving some form of invariance of the $3$D index~\cite{garoufalidis20151}. The first can be regarded as being a sort of orthogonality between series of tetrahedral indices, and is usually referred to as \emph{quadratic identity}; explicitly, if $m,c \in \Z$
\begin{equation}\label{eqn:quadratic_identity}
\sum_{e \in \Z} q^e \itet (m,e) \itet (m,e+c) = \delta_{c,0}.
\end{equation}
The last relation we need is referred to as \emph{pentagon relation}, and holds for $m_1,m_2,x_1,x_2,x_3 \in \Z$: 
\begin{equation}\label{eqn:pentagon_identity}
\begin{gathered}
\sum_{e \in \Z} q^e \itet (m_1 ,e +x_1) \itet (m_2 ,e+x_2) \itet (m_1 +m_2, e + x_3) =\\ q^{-x_3} \itet (m_1-x_2+x_3,x_1-x_3) \itet (m_2-x_1+x_3, x_2-x_3).    
\end{gathered}
\end{equation}
 
Finally, if $P(q,z)$ is a Laurent series in the variables $q$ and $z$, and $k \in \Z$, we will use the notation $\left[ P(q,z)\right]_{z^k}$ to denote the coefficient of $z^k$ in $P(q,z)$. \\

%&&&&&&&&&&&&&&&&&&&&&&&&&&&&&&&&&&&&&&&&&&&&&&&&&&&&&&&&&&&&&&&&&&&&&&&&&&&&&
%&&&&&&&&&&&&&&&&&&&&&&&&&&&&&&&&&&&&&&&&&&&&&&&&&&&&&&&&&&&&&&&&&&&&&&&&&&&&&

\subsection{Surface approach}\label{ssec:surface_index}~\\

To use $Q$-normal classes to define the $3$D index, we start by multiplicatively extending $\jtet$ to a function $\jtet:\Z^{3n} \rightarrow \Z(\!(q^{\frac12})\!)$ given by 
\begin{equation}
\jtet(a_1,b_1,c_1, \ldots, a_n,b_n,c_n) = \prod_{j=1}^n \jtet (a_j,b_j,c_j).
\end{equation}

Then, for $S \in \mathcal{Q}(\tri;\Z)$ define 
\begin{equation}\label{Isurface_def}
I(S) = \left(-q^{\frac12}\right)^{-\chi(S)} \jtet(S).
\end{equation}

This function $I$ is well-defined on the quotient $\mathcal{Q}(\tri;\Z)/\mathbb{T}$, where $\mathbb{T}$ denotes the subgroup spanned by tetrahedral solutions. We will next define the $3$D index as a sum of contributions $I(S)$ over suitable $Q$-normal classes $S$. 

\medskip

Each integer $Q$-normal class $S \in \mathcal{Q}(\tri;\Z)$  gives a 2-chain modulo $2$, representing a homology
class $[S]_2 \in \hh_2(M,\bd M; \Z_2)$, and its boundary gives a class in $[\bd S] \in \hh_1(\bd M;\Z)$ (see~\cite[Sec.~7,8]{garoufalidis20163d}). 
Taken together, they give a \emph{homology homomorphism} 
\begin{equation}
h\colon \mathcal{Q}(\tri;\Z) \longrightarrow \hh_2(M,\bd M; \Z_2) \times \hh_1(\bd M;\Z), \qquad S\mapsto ([S]_2,[\bd S]),
\end{equation}
with 
\begin{equation}\label{eq:kerh}
\Ker h = \mathbb{E}+\mathbb{T},
\end{equation}
where $\mathbb{E}$ is the subgroup of $\mathcal{Q}(\tri;\Z)$ generated by edge solutions. Moreover, 
\begin{equation}\label{eq:imh}
\Im h = \{ (a,b) \in \hh_2(M,\bd M; \Z_2) \times \hh_1(\bd M;\Z) \mid \bd_* a = b \mod{2} \},
\end{equation}
where $\bd_*: \hh_2(M,\bd M;\Z_2) \to \hh_1(\bd M;\Z_2)$ is the connecting homomorphism in the long exact sequence of the pair $(M,\bd M)$.
\begin{defn}
For $(a,b) \in \Im h$, we define the $3$D index with homology data $(a,b)$ as 
\begin{equation}
\ind_{\tri}^{(a,b)} (q) = \sum_{\substack{[S] \in\mathcal{Q}(\tri;\Z)/\mathbb{T} \\ [S]_2=a,\, \bd[S]=b}} I(S).
\end{equation}
In other words, this is sum of the index contributions $I(S)$ from coset representatives $S \in \mathcal{Q}(\tri;\Z)$ of cosets $[S] \in\mathcal{Q}(\tri;\Z)/\T$. 
\end{defn}

\begin{thm}[\cite{garoufalidis20163d}]\label{index_well_defined}
$\ind_\tri^{(a,b)}(q)$ is well-defined for every choice of $(a,b) \in \Im h$ if and only if $\tri$ is $1$-efficient. 
\end{thm}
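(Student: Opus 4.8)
The plan is to prove both directions of the equivalence by controlling which cosets $[S] \in \mathcal{Q}(\tri;\Z)/\T$ can contribute a nonzero, convergent term $I(S)$ to the sum defining $\ind_{\tri}^{(a,b)}(q)$, and to relate divergence of this sum to the existence of bad embedded normal surfaces. The key point is that $I(S) = (-q^{1/2})^{-\chi(S)}\jtet(S)$ is a well-defined function on $\mathcal{Q}(\tri;\Z)/\T$ (already noted in the excerpt, using \eqref{eqn_add_tets}), but that the \emph{sum} over an infinite coset family is a formal power series in $q^{1/2}$ only if infinitely many of the $I(S)$ contribute to any fixed degree is impossible. So I would first extract from the definition of $\itet$ in \eqref{eqn:tetrahedral_index} the precise lowest-degree estimate: the minimal $q$-degree of $\itet(m,e)$ grows quadratically in $|m|+|e|$ away from a bounded region, so $\jtet(S)$ has $q$-degree growing like a positive-definite quadratic form on the quad coordinates of $S$, while $\chi(S)$ is only linear. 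Consequently, within a fixed homology coset, $I(S)$ has $q$-degree tending to $+\infty$ unless the quad coordinates of $S$ stay in a region where this quadratic lower bound degenerates --- and that degeneration locus is governed exactly by the rays on which $\delta$ and $\chi$ both fail to force growth, i.e.\ by embedded ($\delta(S)=0$) normal classes with $\chi(S)\ge 0$.

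The forward direction ($1$-efficient $\Rightarrow$ well-defined) is then the assertion that when $\tri$ is $1$-efficient the only such classes are vertex-linking tori (and Klein bottles), which lie in $\mathbb{E}+\T = \Ker h$ and hence do not generate an infinite family \emph{within a fixed coset}; more precisely, I would show that the set of $S$ in a given coset with $\operatorname{val}_q I(S) \le N$ is finite for every $N$. This is where I would invoke the normal-surface-theoretic input: using the projective solution space $\mathcal{P}$ of normal/almost normal surfaces (as in the proof of Theorem~\ref{thm:algorithm_efficiency}), the vertices of $\mathcal{P}$ corresponding to embedded surfaces with $\chi\ge 0$ are exactly the vertex-linking tori under $1$-efficiency, so any direction of ``escape to infinity'' in a coset must pass near one of those rays, along which $\chi$ stays bounded but $\delta$ (and hence the quadratic part of $\operatorname{val}_q\jtet$) becomes strictly positive --- giving the needed finiteness. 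I would organise this as: (i) a quadratic lower bound $\operatorname{val}_q I(S) \ge c\,\delta(S) - C(\|\text{linear part}\|+1)$ valid on $\mathcal{Q}(\tri;\Z)$, deduced termwise from \eqref{eqn:tetrahedral_index}; (ii) a structural description of the set $\{\delta(S)=0\}\cap\{\chi(S)\ge 0\}$ inside each coset, using that embedded representatives are Haken sums of vertex normal surfaces; (iii) combine to conclude each partial sum is a finite Laurent polynomial, so the total is in $\Z(\!(q^{1/2})\!)$.

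For the converse ($\tri$ not $1$-efficient $\Rightarrow$ some $\ind_{\tri}^{(a,b)}$ diverges), I would take an embedded closed normal surface $F$ with $\chi(F)\ge 0$ not a vertex-linking torus, and exhibit an infinite family $\{S_0 + nF\}_{n\ge 0}$ lying in a single coset of $\mathcal{Q}(\tri;\Z)/\T$ (same $[\cdot]_2$ and $[\bd\,\cdot]$, since $[F]_2$ and $\bd F$ are $2$-torsion/zero appropriately --- one must check $F$ can be chosen with $[F]_2 = 0$ or else work in the coset $(a, b)$ with $a = [F]_2$, $b = \partial_* a$) along which $\delta(S_0+nF) = \delta(S_0) + 2n\,\delta(S_0,F) + n^2\delta(F) = \delta(S_0)+2n\,\delta(S_0,F)$ stays bounded (since $F$ embedded gives $\delta(F)=0$, and $\delta(S_0,F)$ can be arranged to vanish or stay controlled), while $\chi(S_0+nF)=\chi(S_0)+n\chi(F)$ does not blow up since $\chi(F)\ge 0$; then each $I(S_0+nF)$ has $q$-valuation bounded above independent of $n$, so infinitely many terms contribute to some fixed degree and the sum is not a well-defined Laurent series. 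The finer bookkeeping --- that the $n^2$ term in $\operatorname{val}_q\jtet(S_0+nF)$ really vanishes, rather than merely the $\delta$-part --- requires the quadratic-form analysis of $\itet$ once more, now as an \emph{upper} bound on the valuation along the ray.

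\textbf{Main obstacle.} The crux, and the part I expect to be most delicate, is making the termwise estimates on $\itet(m,e)$ precise enough to pin the valuation of $\jtet(S)$ to a quadratic form whose kernel is \emph{exactly} the embedded-with-$\chi\ge 0$ locus --- in particular controlling the interaction between the quadratic growth from the $q^{n(n+1)/2}$ in \eqref{eqn:tetrahedral_index} and the possible cancellation in the alternating sum, so that one gets a genuine lower bound $\operatorname{val}_q\itet(m,e)$ rather than just a heuristic. The topological side (identifying the degenerate rays with normal surfaces and producing the divergent family) is essentially the Jaco--Sedgwick / Garoufalidis--Hodgson--Hoffman--Rubinstein machinery already available; the analytic side is where the real work lies, and I would isolate it as a separate lemma giving sharp two-sided valuation bounds for $\itet$ and $\jtet$ on $\mathcal{Q}(\tri;\Z)$.
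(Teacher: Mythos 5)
The paper cites this theorem from \cite{garoufalidis20163d} without reproducing the argument, so the comparison is really against that reference. Your proposal follows the same overall route as the cited proof: relate the growth of $q$-degree of $I(S)$ over a coset family to the formal Euler characteristic and double arc functionals, and use the structure of normal surfaces with $\chi\ge 0$ to characterise when that degree function is proper.

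The place where your proposal overestimates the difficulty is precisely the part you flag as the ``main obstacle.'' There is no cancellation to worry about in the alternating sum \eqref{eqn:tetrahedral_index}: the minimal $q$-exponent among the terms is attained at a unique $n$, so the lowest-degree coefficient of $\itet(m,e)$ is $\pm 1$. This yields a clean piecewise-quadratic formula for $\deg\itet(m,e)$, which the present paper recalls in the proof of Lemma~\ref{lem:growth_cusp_index}, and which upgrades your proposed inequality into the \emph{exact} identity (valid for minimal non-negative coset representatives)
\begin{equation*}
\deg I(S) = -\chi(S) + \delta(S),
\end{equation*}
quoted in Section~\ref{ssec:computation} from \cite{garoufalidis20163d}. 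This equality is the single lemma you should isolate: it simultaneously provides the lower bound you need for properness of the degree function under $1$-efficiency (forward direction) and the upper bound you need along the ray of multiples of a bad surface (converse). Framing this as a two-sided estimate with interacting quadratic and linear errors, to be extracted from the raw series, makes the analytic side look much harder than it is.

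Two smaller corrections to the converse. First, the family $\{S_0+nF\}_{n\ge 0}$ does \emph{not} lie in a single coset of $\mathcal{Q}(\tri;\Z)/\T$ (if it did, it would be a single term of the index sum); what is constant along the family is the image under $h$, i.e.\ the homology pair $(a,b)$, which is what your parenthetical says. Second, rather than ``arranging $\delta(S_0,F)$ to vanish,'' take $S_0 = 0$ and the even multiples $2nF$: since $F$ is embedded, $\delta(2nF)=4n^2\delta(F)=0$, and $\chi(2nF)=2n\chi(F)\ge 0$, so $\deg I(2nF)=-2n\chi(F)\le 0$ for all $n$, and all these classes contribute to $\ind_\tri^{(0,0)}(q)$. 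Since $F$ is not a vertex-linking torus, $F\notin\T$, so the $2nF$ give infinitely many distinct cosets contributing at degree $\le 0$, and the index diverges.
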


By appealing to $3$d mirror symmetry, the authors of~\cite{dimofte2014gauge} argued that the index should not depend on the choice of a triangulation (with the exception of a subset of ``hard'' triangulations containing valence one edges), and therefore provide a topological invariant of $M$. The following result gives a restricted proof of this fact:
\begin{thm}\cite[Sec.~9]{garoufalidis20163d}\label{thm:pachner}
Each $\ind_\tri^{(a,b)}(q)$ is invariant under $2$-$3$ and $0$-$2$ Pachner moves and their inverses, provided all triangulations involved are $1$-efficient.
\end{thm}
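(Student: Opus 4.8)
The plan is to argue directly from the surface-theoretic definition of $\ind_\tri^{(a,b)}$. Each Pachner move is local: the two triangulations $\tri,\tri'$ agree outside a small region, and a $Q$-normal class restricts to a $Q$-normal class on the common part, determining the system of normal arcs on the boundary of that region. The index contribution factors as (complement contribution) times (local contribution), and the whole point is that summing the local contribution over the ``internal'' degree of freedom introduced by the move collapses, via an identity for $\itet$, to the local contribution of the other triangulation. For the $2$--$3$ move the collapsing identity is the pentagon relation~\eqref{eqn:pentagon_identity}; for the $0$--$2$ move it is the quadratic identity~\eqref{eqn:quadratic_identity}. Invariance under the inverse moves is then formal. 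Note that $1$-efficiency is used twice: via Theorem~\ref{index_well_defined} to know $\ind^{(a,b)}$ is defined on both sides, and to guarantee that every sum in sight converges absolutely, so that reorganising the double sum (summing over the internal variable first) is legitimate.

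\textbf{The $2$--$3$ move.} Here $\tri$ restricts to two tetrahedra $\Delta_1,\Delta_2$ glued along a common triangle to form a bipyramid $B$, while $\tri'$ restricts to three tetrahedra $\Delta_1',\Delta_2',\Delta_3'$ filling $B$ and sharing a new interior edge $e$; $\tri'$ has one more tetrahedron and one more edge class than $\tri$. I would first set up the correspondence: restricting a $Q$-normal class of $\tri'$ to the unchanged tetrahedra and capping $B$ off by the induced normal arcs gives a well-defined map on cosets mod $\mathbb{T}$ to $Q$-normal classes of $\tri$, which induces the identity on $\hh_2(M,\partial M;\Z_2)\times\hh_1(\partial M;\Z)$ (the new edge $e$ contributes to $\Ker h$ and must be tracked here). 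Over a fixed $\tri$-coset, the fibre is parametrised by how the surface spins around $e$. Writing the local $\jtet$-factors of $\Delta_1',\Delta_2',\Delta_3'$ in terms of $\itet$ using~\eqref{eqn:simmetries_Itet}--\eqref{eqn_add_tets}, carrying along the $(-q^{1/2})^{-\chi}$ prefactors, and reading off $m_1,m_2,x_1,x_2,x_3$ from the boundary arc coordinates of $B$, the fibre sum becomes exactly the left-hand side of~\eqref{eqn:pentagon_identity}, while the right-hand side is precisely $I$ of the corresponding $\tri$-class. Summing over all $\tri$-cosets with homology data $(a,b)$ yields $\ind_{\tri'}^{(a,b)}=\ind_\tri^{(a,b)}$.

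\textbf{The $0$--$2$ move.} The argument is parallel but simpler. Now $\tri'$ has two new tetrahedra $\Delta_1',\Delta_2'$ glued to each other along two faces, with a new edge $e$, and the complement is $\tri$ itself. The local factor is a sum over the spinning parameter of an expression of the form $q^{(\cdot)}\,\itet(m,e+x_1)\,\itet(m',e+x_2)$; compatibility of the normal arcs on the two glued faces forces $m=m'$, and the quadratic identity~\eqref{eqn:quadratic_identity} collapses the sum to $\delta_{x_1,x_2}$ --- equal to $1$ exactly when the boundary arcs close up (which is exactly the condition for the corresponding $\tri$-class to exist) and to $0$ otherwise. Hence again $\ind_{\tri'}^{(a,b)}=\ind_\tri^{(a,b)}$.

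\textbf{Main obstacle.} The substance of the proof is bookkeeping rather than any deep new idea. The delicate points are: (i) checking the class-correspondence is well-defined modulo $\mathbb{T}$ and induces the identity on $\hh_2(M,\partial M;\Z_2)\times\hh_1(\partial M;\Z)$, which means controlling how the new edge class alters $\Ker h$; (ii) pinning down the exact arguments of $\itet$ in the local factor in terms of the boundary normal-arc coordinates of the (bi)pyramid, together with all powers of $-q^{1/2}$ coming from $\chi$ and from the conversions~\eqref{eqn:simmetries_Itet}--\eqref{eqn_add_tets}, so that~\eqref{eqn:pentagon_identity} and~\eqref{eqn:quadratic_identity} apply verbatim; and (iii) the analytic justification, using $1$-efficiency, that all series converge absolutely so the reorganisation of the double sum is valid. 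One could alternatively deduce the $2$--$3$ case from the state-sum invariance in~\cite{garoufalidis20151} after identifying the two definitions of the index, but the refined invariant $\ind^{(a,b)}$ and the $0$--$2$ move still require the normal-surface argument above, so it is cleanest to run everything through~\cite{garoufalidis20163d}'s framework directly.
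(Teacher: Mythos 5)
This paper does not prove Theorem~\ref{thm:pachner}; it is stated with a citation to \cite[Sec.~9]{garoufalidis20163d}, and the relevant argument lives there. Your sketch is nonetheless faithful to the proof in that reference: the $2$--$3$ move is handled by the pentagon identity~\eqref{eqn:pentagon_identity} and the $0$--$2$ move by the quadratic identity~\eqref{eqn:quadratic_identity}, with the local-versus-complement factorisation, the coset correspondence modulo $\mathbb{T}$, the compatibility of the homology homomorphism $h$ across the move, and the appeal to $1$-efficiency (via Theorem~\ref{index_well_defined}) for absolute convergence all present in the original argument. Your alternative remark — that the $2$--$3$ case can also be read off the state-sum invariance of \cite{garoufalidis20151} after matching the two definitions — is also accurate, and you are right that it does not by itself handle the refinement to $\ind^{(a,b)}$ or the $0$--$2$ move, which is why the normal-surface route is the one actually taken.

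One combinatorial detail worth tightening if you were to write this out in full: for an ideal triangulation with one material edge class per tetrahedron, the $0$--$2$ move changes the count by two tetrahedra and two edge classes, not one (one new interior edge inside the inserted pillow, plus the splitting of an existing edge). This affects the bookkeeping of the admissible edge-weight parametrisation and which new solutions lie in $\Ker h$, but not the shape of the argument — after fixing the complement's data, exactly one free integer parameter remains, and the quadratic identity collapses it as you describe.
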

Invariance of the index would then follow from the connectedness of the $1$-efficient Pachner graph, \textit{i.e.}~the graph whose vertices are $1$-efficient triangulations of a fixed manifold and edges are $2$-$3$ and $0$-$2$ Pachner moves preserving $1$-efficiency.

In the special case of a cusped hyperbolic manifold $M$, it is shown in~\cite{garoufalidis20151} that 
the index gives a \textit{topological invariant} $\ind_M$ by considering its value on any triangulation obtained by a regular subdivision of the \textit{Epstein-Penner} decomposition~\cite{epstein1988euclidean}. All these triangulations are related by $1$-efficient Pachner moves, hence the index is the same by Theorem~\ref{thm:pachner}.

Using a meromorphic extension of the index~\cite{garoufalidis2019meromorphic}, Garoufalidis and Kashaev were able to prove that the value of the $3$D index is constant on all triangulations of a manifold admitting a \textit{strict angle structure}.~\\

In this paper we henceforth consider the following \emph{total index for each peripheral class}: 
\begin{defn}
For $b \in \hh_1(\partial M;\Z)$ we define the $3$D index with boundary data $b$ as 
\begin{equation}\label{main_index_def}
\ind_{\tri}^b (q) =\sum_{\substack{[S] \in\mathcal{Q}(\tri;\Z)/\mathbb{T}\\ \bd[S]=b}} I([S]).
\end{equation}
\end{defn}

This is the sum $\sum_a \, \ind_{\tri}^{(a,b)} (q)$ over all $a \in  \hh_2(M,\bd M; \Z_2)$ such that $\bd_* a = b \mod 2$, so the results of Theorems~\ref{index_well_defined} and \ref{thm:pachner} also apply to this total index. This version of the index is also used in~\cite[App.~A]{hodgson2021asymptotics}, where it is denoted by $I^{\rm tot}_\tri(b)$.

Note also that the index $\ind_{\tri}^b (q)$ vanishes unless $b$ lies in the kernel 
$$K=\Ker(\hh_1(\bd M;\Z) \to \hh_1(M;\Z_2)).$$
This follows from the homology exact sequence for the pair $(M,\bd M)$.
In the special case that $M$ is the complement of a knot in $S^3$ (or any integral homology sphere), this is 
$$K=\{ 2 x \mu + y \lambda : x,y \in \Z \},$$
where $\mu$ and $\lambda$ represent the standard meridian and longitude.~\\

%&&&&&&&&&&&&&&&&&&&&&&&&&&&&&&&&&&&&&&&&&&&&&&&&&&&&&&&&&&&&&&&&&&&&&&&&&&&&&
%&&&&&&&&&&&&&&&&&&&&&&&&&&&&&&&&&&&&&&&&&&&&&&&&&&&&&&&&&&&&&&&&&&&&&&&&&&&&&

\subsection{Edge weight approach}\label{ssec:edge_index}~\\

Recall from Section~\ref{ssec:Q-normal} that each $Q$-normal class in $\mathcal{Q}(\tri;\R)$ can be written as a real linear combinations of edge, tetrahedral and peripheral curve solutions. For an integer class in $\mathcal{Q}(\tri;\Z)$ this can be refined using the work of Neumann~\cite{neumann92}, as explained in \cite[Sec.~7]{garoufalidis20163d}.

In general, by  \cite[Rmk.~7.6]{garoufalidis20163d}, every class in $\mathcal{Q}(\tri;\Z)$  can be written as 
\begin{equation}\label{half-int-combination}
\sum_{i=1}^n \left(x_i E_i+ y_i T_i\right) + \sum_{k=1}^r \left( p_k M_k + q_k L_k \right),
\end{equation}
for some \emph{half-integer} coefficients $x_i,y_i,p_k,q_k \in \frac12 \Z$.~\\

Let $$\mathcal{N}(\tri;\Z)= \{ S \in \mathcal{Q}(\tri;\Z) : \bd S = 0\}$$ denote the set of {\em closed integer $Q$-normal classes}. \\ 
Now, by~\cite[Rmk~7.2]{garoufalidis20163d}, $\mathbb{E}+\T \subseteq \mathcal{N}(\tri;\Z)$ is a subgroup of finite index with quotient $\mathcal{N}(\tri;\Z)/(\mathbb{E}+\T)  \cong \hh^1(\widehat{M};\Z_2)$, where $\widehat{M}$ is obtained from $M$ by coning each cusp to a point.

The simplest case is when the induced map $\hh_1(\bd M; \Z_2) \to \hh_1(M;\Z_2)$ is onto, 
for example, when $M$ is a knot or link exterior in a $\Z_2$ homology sphere.  Then
$\mathbb{E}+\T = \mathcal{N}(\tri;\Z)$ gives all closed normal classes (by~\cite[Rmk.~7.3]{garoufalidis20163d}), and we can compute the index $\ind_{\tri}^b (q)$
as follows.  Choose a set of $n-r$ edge solutions as in~\cite{garoufalidis20151}, say $E_1, \ldots, E_{n-r}$, whose cosets form an integer basis for $(\mathbb{E}+\T)/\T$. Given $b \in \Ker(\hh_1(\bd M;\Z) \to \hh_1(M;\Z_2))$ we can also choose a normal class $S_0 \in \mathcal{Q}(\tri;\Z)$ of the form~\eqref{half-int-combination} with $[\bd S_0]=b$.   Then
\begin{equation}
\ind_{\tri}^b (q) =\sum_{(k_1,\ldots,k_{n-r})\in\Z^{n-r}}\, I\left(S_0+\sum_{i=1}^{n-r} \, k_iE_i\right).
\end{equation}
We can think of this as a \emph{state sum} with integer weights $k_i$ assigned to the edges of $\tri$.

In general, we extend $\mathbb{E}+\T$ by adding half-integer multiples of edge and tetrahedral solutions to obtain $b_1, \ldots, b_{n-r} \in \mathcal{N}(\tri;\Z)$ giving an integer basis for $\mathcal{N}(\tri;\Z)/\T$.  Then 
\begin{equation}
\ind_{\tri}^b (q) =\sum_{(k_1,\ldots,k_{n-r})\in\Z^{n-r}}\, I\left(S_0+\sum_{i=1}^{n-r} \, k_i  b_i\right).
\end{equation}
See Section~\ref{ssec:computation} for more details and implementations.

%&&&&&&&&&&&&&&&&&&&&&&&&&&&&&&&&&&&&&&&&&&&&&&&&&&&&&&&&&&&&&&&&&&&&&&&&&&&&&
%&&&&&&&&&&&&&&&&&&&&&&&&&&&&&&&&&&&&&&&&&&&&&&&&&&&&&&&&&&&&&&&&&&&&&&&&&&&&&

\section{The Gang-Yonekura formula}\label{sec:GYformula}

Dehn filling is a fundamental operation relating $3$-manifolds, so it is natural to study how the $3$D index transforms under it. 
In~\cite{gang2018symmetry} Gang and Yonekura proposed a formula expressing the $3$D index of the manifold $M(\alpha)$ obtained by Dehn filling along a single primitive closed curve $\alpha$ on one torus boundary component $T \subset \partial M$ (see also~\cite{gang2018quantum}). 

This formula gives the index for $M(\alpha)$ as an infinite linear combination of the $3$D indices $\ind_M^\gamma$ for $M$, over certain boundary classes $\gamma \in \hh_1(T;\Z)$. 

More precisely, let $\mu, \lambda$ be a basis of $\hh_1(T;\Z)$, such that $\lambda \in \Ker\left(\hh_1(T;\Z) \to \hh_1 (M;\Z_2)\right)$, and $\alpha =x\mu + y \lambda $ be a primitive homology class in $\hh_1(T;\Z)$. Then the formula of~\cite{gang2018symmetry} to compute the index of a Dehn filling is presented in the following form
\begin{equation}\label{eqn:GY_original}
\ind_{M(\alpha)}(q) = \sum_{(m,e) \in \Z^2} \mathcal{K}(m,e,x,y,q) \ind^{(x,y)}_M (m,e;q).
\end{equation}
The key part of this formula is the kernel $\mathcal{K}$, which acts as a ``sieve'' on the pairs $(m,e)$:
\begin{equation}\label{eqn:kernel}
\mathcal{K}(m,e,x,y,q) = \frac{(-1)^{rm +2se}}{2}  \left( \delta_{xm+2ye,0} (q^\frac{rm+2se}{2} + q^{-\frac{rm+2se}{2}}) - \delta_{xm+2ye,2}  - \delta_{xm+2ye,-2}\right).
\end{equation}
Here $(r,s)$ are the coefficients representing a curve $\alpha^*$ on $T$ which is dual to the filling curve $\alpha$. 

In other words, assume the algebraic intersection number is $i(\lambda,\mu)=+1$, as with the usual orientation conventions for meridian, longitude of a knot. Then $\alpha^* = r \mu + s \lambda$ being dual to $\alpha$, means  
$$i(\alpha,\alpha^*)= \begin{vmatrix} r & s \\ x & y\end{vmatrix}=1.$$

Now $(m,e)\in \Z$ corresponds to $\gamma = 2e \mu- m \lambda \in \hh_1(\bd M; \Z)$ (see~\cite{garoufalidis20163d}), and
the classes $\gamma$ of this form are precisely the elements of $$K_T = \Ker\left(\hh_1(T;\Z) \to \hh_1\left(M;\Z_2\right) \right).$$
Then equation~\eqref{eqn:kernel} becomes 
\begin{align*}
{\mathcal K}(m,e,r,s,q) 
& = \frac{1}{2}(-1)^{ i(\alpha^*,\gamma)}\left(\delta_{i(\alpha,\gamma),0}(q^{\frac{1}{2} i(\alpha^*,\gamma)}+q^{-\frac{1}{2}i(\alpha^*,\gamma)})-\delta_{i(\alpha,\gamma),-2}-\delta_{i(\alpha,\gamma),2}\right).
\end{align*}
So any non-zero contribution to $\ind_{M(\alpha)}$ corresponds exactly to those $\gamma \in K_T$ such that $i(\alpha,\gamma)=0$ or $i(\alpha,\gamma)=\pm 2$.  

Denote by $|\gamma|$ be the minimal number of components for an embedded representative of $[\gamma]$. Then we can re-write equation~\eqref{eqn:GY_original} as
\begin{equation}\label{eqn:GYformula}
\ind_{\tri(\alpha)}(q) = \frac12 \left( \sum_{\substack{\gamma \in \hh_1(T;\Z)\\ \alpha \cdot \gamma = 0}} (-1)^{|\gamma|} \left( q^{\frac{|\gamma|}{2}} + q^{-\frac{|\gamma|}{2}}\right) \ind_{\tri}^\gamma(q) -  \sum_{\substack{\gamma \in \hh_1(T;\Z)\\ \alpha \cdot \gamma = \pm 2}} (-1)^{|\gamma|} \ind_{\tri}^\gamma(q) \right),
\end{equation}
where we interpret $\ind_\tri^\gamma$ to be $0$ if $\gamma \notin K_T$.

Here, for simplicity, we assume that the boundary data is $0$ on all components of $\partial M \setminus T$. 
(More generally, equation~\eqref{eqn:GYformula} extends to the case where we prescribe a homology class $\omega \in \hh_1(\partial M \setminus T;\Z)$.)\\ 

There are several issues with the original formulation of equation~\eqref{eqn:GYformula}; to mention a few: triangulations are not used throughout, $1$-efficiency is not used or defined for closed manifolds, and non-peripheral $\Z_2$ homology does not appear in the picture. 
Our main objective is to rigorously prove a version of equation~\eqref{eqn:GYformula}, and to address the ways in which it can fail to give a correct answer. 

\begin{rmk}\label{rmk:other_slopes}
A normal surface-based interpretation of equations~\eqref{eqn:GYformula} is not at all straightforward; indeed one would expect contributions from \textit{all} surfaces in the cusped manifold that extend to the filled in solid torus. Instead the only contributions appearing are from those surfaces whose boundary slopes algebraically intersect the filling slope $0$ or $\pm 2$ times  -- rather than every even intersection. It will follow from our general gluing formula (see Section~\ref{sec:gluing_formula}) that the contributions to the index from all other slopes cancel each other out.
\end{rmk}

We can finally state our main result:
\begin{thm}[Main theorem]\label{thm:main}
Let $M$ be a compact orientable $3$-manifold with boundary consisting of at least two tori, and let $T$ be one component of $\partial M$. Let $\tri$ be a $1$-efficient triangulation of $M$ with a standard cusp at $T$.
Given a filling curve $\alpha \subset T$, let $\tri(\alpha)$ be the ideal triangulation of $M(\alpha)$ obtained from $\tri$ by replacing the standard cusp with a layered solid torus with slope $\alpha$. 
If $\tri(\alpha)$ is $1$-efficient, the Gang-Yonekura formula~\eqref{eqn:GYformula} holds.
\end{thm}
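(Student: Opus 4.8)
The plan is to cut $M(\alpha)$ along the once-punctured torus $T_0=\partial\cusp$ and to reduce the theorem to a purely local statement about the layered solid torus, which is then established by induction on the number of its tetrahedra.

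\textbf{Reduction to a relative statement.} Write $\widehat M$ for the complement of the open standard cusp $\cusp$ in $M$, and $\widehat\tri$ for the induced triangulation with exposed boundary $T_0$, so that $M=\widehat M\cup_{T_0}\cusp$ and $M(\alpha)=\widehat M\cup_{T_0}\mathrm{LST}(\alpha)$. I would apply the gluing formula of Theorem~\ref{thm:gluing} to both decompositions: it expresses each $\ind_\tri^\gamma(q)$, $\gamma\in\hh_1(T;\Z)$, and $\ind_{\tri(\alpha)}(q)$ as a pairing of the relative index of $\widehat\tri$ against the relative index of $\cusp$, respectively $\mathrm{LST}(\alpha)$, summed over matching boundary data on $T_0$ (with $0$ prescribed on $\partial M\setminus T$). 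Since the relative index of $\widehat\tri$ is common to both computations, it can be eliminated, yielding $\ind_{\tri(\alpha)}(q)=\sum_\gamma c_\gamma(\alpha,q)\,\ind_\tri^\gamma(q)$ with coefficients $c_\gamma$ depending only on the relative indices of $\mathrm{LST}(\alpha)$ and of $\cusp$. The $1$-efficiency of $\tri$ and of $\tri(\alpha)$ is exactly what makes all series in sight converge, by Theorem~\ref{index_well_defined}, and keeps the $\Z_2$ peripheral bookkeeping (the constraint $\gamma\in K_T$) consistent on both sides. Thus the theorem becomes the relative Gang–Yonekura formula, Proposition~\ref{prop:gang_cusp}: that $c_\gamma(\alpha,q)=\tfrac12(-1)^{|\gamma|}(q^{|\gamma|/2}+q^{-|\gamma|/2})$ when $\alpha\cdot\gamma=0$, equals $-\tfrac12(-1)^{|\gamma|}$ when $\alpha\cdot\gamma=\pm2$, and vanishes otherwise.

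\textbf{The induction on the layered solid torus.} The relative index of $\cusp$ is an explicit finite expression in the six quad coordinates of Table~\ref{table:cusp_gluing}, so it remains to compute the relative index of $\mathrm{LST}(\alpha)=\mathrm{LST}(w)$ as a function of the boundary data on $T_0$; I would do this by induction on the length of the word $w$ in $R,L$, i.e. on the number of layered tetrahedra. The base case is the three degenerate presentations $\mathrm{LST}(1,1,2)$ with $A=\mathrm{Id}$ (boundary meridians $-\tfrac11,\tfrac12,\tfrac21$): here no new tetrahedra are added, the relative index agrees with that of the standard cusp under the explicit identification of boundary data, and the formula holds by inspection. The truly degenerate case $\mathrm{LST}(0,1,1)$ is treated apart: by Theorem~\ref{thm:collar_eff} such a filling can be $1$-efficient only when $M(\alpha)$ is a solid torus or $T^2\times I$, and Appendix~\ref{sec:gang011} confirms the index is then trivial, in accordance with the formula. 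For the inductive step, layering one tetrahedron to pass from $\mathrm{LST}(w)$ to $\mathrm{LST}(wR)$ or $\mathrm{LST}(wL)$ replaces the boundary triangulation of $T_0$ by a neighbouring Farey triangle and, at the level of relative indices, convolves the old relative index against the tetrahedral index $\jtet$ (equivalently $\itet$) of the new tetrahedron, with the $(-q^{1/2})$-power dictated by $\chi$; one then checks that this convolution carries the claimed closed form for $\mathrm{LST}(w)$ to the claimed closed form for $\mathrm{LST}(wR)$ and $\mathrm{LST}(wL)$.

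\textbf{The main obstacle.} The crux is this inductive step: propagating through the convolution the precise combinatorics of the Gang–Yonekura sieve — the Kronecker deltas $\delta_{\alpha\cdot\gamma,0}$, $\delta_{\alpha\cdot\gamma,\pm2}$ together with the weights $(-1)^{|\gamma|}(q^{|\gamma|/2}+q^{-|\gamma|/2})$ — under the transformation $\alpha\mapsto R^{\pm1}\alpha,\ L^{\pm1}\alpha$ of the filling slope. The lattice sums defining the relative indices are unwieldy, so I would pass to the meromorphic extension of the index of Garoufalidis–Kashaev~\cite{garoufalidis2019meromorphic}, which repackages them as generating functions in auxiliary variables, so that stability of the closed form under layering becomes a family of $q$-hypergeometric identities; proving these (the content of Theorems~\ref{thm:new_thm_112} and~\ref{thm:gang112} and Corollary~\ref{cor:linear_diagonal_relations}, built from the three-term and pentagon relations \eqref{eqn:index_3term_consecutive}–\eqref{eqn:pentagon_identity}) is the technical heart of the argument and occupies the bulk of the paper. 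Throughout the induction one must also watch convergence and the $\Z_2$-homology constraint, and this is precisely where the $1$-efficiency hypotheses on $\tri$ and $\tri(\alpha)$ enter.
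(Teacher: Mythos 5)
Your overall architecture matches the paper's: split $M(\alpha)$ along $T_0=\partial\cusp$, apply the Gluing Theorem~\ref{thm:gluing} to both $M=\widehat M\cup\cusp$ and $M(\alpha)=\widehat M\cup\mathrm{LST}(\alpha)$, eliminate the common factor $\ind^{rel}_{\widehat\tri}$, reduce the theorem to the relative statement $\ind^{rel}_{\mathrm{LST}(\alpha)}(\underline b)=GY(\alpha;\underline b)$ of Theorem~\ref{thm:main_relative}, and prove that by induction on the Farey word. The $\mathrm{LST}(0,1,1)$ discussion via Theorem~\ref{thm:collar_eff} and Appendix~\ref{sec:gang011} is also exactly as in the paper.

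However, you have inverted where the technical difficulty sits, and this is a genuine gap, not just a misstatement. You claim the base case for $\mathrm{LST}(1,1,2)$ ``holds by inspection'' because no new tetrahedra are added. This is false: the relative index of $\mathrm{LST}(1,1,2)$ is the finite expression $(-q^{1/2})^{-b_1}\delta_{b_1,b_2}$, but the Gang--Yonekura side $GY(-\lambda+\mu;\underline b)$ is an infinite sum of products of two $\itet$'s over $k\in\Z$, and showing that this infinite sum collapses to a delta function is precisely Theorem~\ref{thm:new_thm_112} (equivalently Theorem~\ref{thm:gang112}). The paper explicitly calls this ``the main technical hurdle of the paper''; it is what forces the passage to the generating functions $\varphi_r(z,q)$, the Garoufalidis--Kashaev meromorphic extension, the residue extraction, the identification with ${}_3\phi_3$'s, and finally the $q$-Pythagoras lemma (Theorem~\ref{thm:trigonometry}). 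Conversely, the inductive step — that an $R$ or $L$ layering acts the same way on both the relative cusp index $GY(\alpha;\underline b)$ and the LST relative index — is a comparatively short computation using the pentagon identity~\eqref{eqn:pentagon_identity} alone (Propositions~\ref{prop:inductive_step_cusp} and~\ref{prop:LST_change}); it does not use the meromorphic extension or any $q$-hypergeometric machinery. Your citation of Theorems~\ref{thm:new_thm_112} and~\ref{thm:gang112} for the inductive step is a mis-attribution: those results are the base case. As written, if you actually tried to carry out this outline you would hit a wall at the base case (which is not ``by inspection'') and deploy heavy machinery on the inductive step where the pentagon identity suffices. A secondary, minor omission: once Proposition~\ref{prop:inductive_step} covers slopes in $(-\infty,0)$, the paper invokes the order-$3$ symmetry $U$ of the Farey tessellation (equations~\eqref{eq:Usymmetry}, \eqref{eqn:GY_conjecture2}, \eqref{eqn:GY_conjecture3}) to cover $(0,1)$ and $(1,\infty)$; you gesture at the three meridians $-\frac11,\frac12,\frac21$ but do not make this reduction explicit.
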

The proof of this theorem will occupy the next three sections. 

\begin{rmk}
The main theorem applies as stated even in the degenerate cases corresponding to {\rm LST}$(0,1,1)$ and {\rm LST}$(1,1,2)$. However, care must be taken in interpreting the gluing construction in these settings. See Section~\ref{sec:collar_eff} and appendices~\ref{sec:gang011}, \ref{sec:relative_degenerate} for a detailed discussion on these degenerate fillings.
\end{rmk}

\begin{cor}
If $M$ and $\tri$ are as above, then the Gang-Yonekura formula~\eqref{eqn:GYformula} holds for almost all slopes $\alpha$. 
\end{cor}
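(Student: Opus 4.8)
The plan is to obtain the corollary by combining the main theorem with the finiteness statement of Theorem~\ref{thm:algorithm_efficiency}, with essentially no new content. First I would read ``almost all slopes $\alpha$'' as ``all isotopy classes of filling curves $\alpha$ outside a finite set''. The key input is Theorem~\ref{thm:algorithm_efficiency}, which says that $\tri(\alpha)$ fails to be $1$-efficient for only finitely many $\alpha$; whenever $\tri(\alpha)$ \emph{is} $1$-efficient, Theorem~\ref{thm:main} yields the Gang-Yonekura formula~\eqref{eqn:GYformula} verbatim.

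The one point that needs care is that Theorem~\ref{thm:algorithm_efficiency} carries the extra hypotheses that $M$ be irreducible and atoroidal, while the corollary only inherits the hypotheses of Theorem~\ref{thm:main} (namely $M$ compact orientable with at least two torus boundary components and $\tri$ a $1$-efficient triangulation with a standard cusp). So the first step of the write-up is to note that these extra hypotheses are automatic here: as recalled in the introduction, a cusped manifold admitting a $1$-efficient ideal triangulation is either hyperbolic or a small Seifert fibred space, and in either case $M$ is irreducible and atoroidal (equivalently, one may argue directly that $1$-efficiency rules out essential normal spheres and essential normal tori). Thus Theorem~\ref{thm:algorithm_efficiency} applies to $M$ and $\tri$.

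With this in hand the argument closes immediately: Theorem~\ref{thm:algorithm_efficiency} provides a finite (indeed computable) set $\mathcal{B}$ of exceptional slopes, and for every $\alpha \notin \mathcal{B}$ the triangulation $\tri(\alpha)$ is $1$-efficient, so Theorem~\ref{thm:main} gives~\eqref{eqn:GYformula}. The only ``obstacle'' is the hypothesis bookkeeping just described; there is no further geometric or $q$-series input beyond the two quoted theorems. One could sidestep even that by restating the corollary with ``$M$ irreducible and atoroidal'' made explicit. As a consistency check one should observe that the degenerate fillings flagged in the remark following Theorem~\ref{thm:main} (those producing LST$(0,1,1)$ or LST$(1,1,2)$) are finite in number: in particular, in line with Theorem~\ref{thm:collar_eff}, the three boundary-edge slopes producing LST$(0,1,1)$ must lie in $\mathcal{B}$ unless $M(\alpha)$ is a solid torus or a product of a torus and an interval, so they do not affect an ``almost all'' statement.
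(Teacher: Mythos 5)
Your proposal is correct and follows exactly the paper's route: the paper's proof is the single line ``This is a straightforward consequence of Theorem~\ref{thm:main} together with Theorem~\ref{thm:algorithm_efficiency}.'' Your extra step of checking that the irreducibility and atoroidality hypotheses of Theorem~\ref{thm:algorithm_efficiency} are automatic for a $1$-efficient triangulation is a sensible piece of bookkeeping that the paper leaves implicit, but it does not change the argument.
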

\begin{proof}
This is a straightforward consequence of Theorem~\ref{thm:main} together with Theorem~\ref{thm:algorithm_efficiency}. 
\end{proof}

\begin{rmk}
Note that by Theorem~\ref{thm:algorithm_efficiency}, the finite number of slopes that do not yield a $1$-efficient triangulation can be algorithmically determined.
\end{rmk}

\begin{cor}
Let $M$ be a cusped hyperbolic $3$-manifold with at least two cusps, and assume that one cusp $T$ is generic, \textit{i.e.}~the corresponding Euclidean torus is not rectangular and there exists a unique shortest geodesic from $T$ to itself. Then the Gang-Yonekura formula~\eqref{eqn:GYformula} holds for almost all slopes $\alpha$ on $T$ and gives the topological invariants $\ind_M = \ind_\tri$ and $\ind_{M(\alpha)} = \ind_{\tri (\alpha)}$. 
\end{cor}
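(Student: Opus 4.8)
The plan is to deduce the statement from Proposition~\ref{std_cusp2}, Theorem~\ref{thm:algorithm_efficiency} and Theorem~\ref{thm:main}, in the same spirit as the preceding corollary but supplying the extra geometric input that produces a $1$-efficient triangulation with a standard cusp. First I would apply Proposition~\ref{std_cusp2}: its genericity hypothesis on $T$ --- the Euclidean torus is not rectangular and there is a unique shortest geodesic from $T$ to itself --- is exactly what is assumed here, so $M$ admits a geometric (Epstein--Penner) ideal triangulation $\tri$ with a standard cusp $\cusp$ at $T$.

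Next I would record that $\tri$ is $1$-efficient. This is the remark made immediately after Proposition~\ref{std_cusp2}: a geometric ideal triangulation carries a strict angle structure, hence cannot contain an embedded closed normal surface of non-negative Euler characteristic other than a vertex-linking torus (equivalently, the $3$D index converges on $\tri$, so $\tri$ is $1$-efficient by Theorem~\ref{index_well_defined} together with the convergence criterion of~\cite{garoufalidis20151}). Since $M$ is hyperbolic it is in particular compact with torus boundary, irreducible, atoroidal and orientable, with boundary consisting of at least two tori; thus the triple $(M,\tri,\cusp)$ satisfies all the hypotheses of Theorem~\ref{thm:algorithm_efficiency}.

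Finally I would invoke Theorem~\ref{thm:algorithm_efficiency} to conclude that $\tri(\alpha)$ is $1$-efficient for every filling curve $\alpha\subset T$ outside a finite set of slopes, and then Theorem~\ref{thm:main} to conclude that for each such $\alpha$ the Gang--Yonekura formula~\eqref{eqn:GYformula} holds (reading $\ind_\tri^\gamma$ as $0$ whenever $\gamma\notin K_T$). Since ``almost all'' means ``all but finitely many'', this completes the proof. There is no substantive obstacle here: the content is an assembly of the three cited results, and the only points that need a line of justification are that the genericity hypothesis matches exactly that of Proposition~\ref{std_cusp2} and that the geometric triangulation $\tri$ is itself $1$-efficient before Theorem~\ref{thm:algorithm_efficiency} can be applied --- both of which are standard.
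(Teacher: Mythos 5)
Your argument is correct, but it follows a genuinely different route from the paper's. You get from Proposition~\ref{std_cusp2} to the Epstein--Penner triangulation $\tri$ with a standard cusp at $T$, note that such a geometric triangulation is $1$-efficient, and then apply Theorem~\ref{thm:algorithm_efficiency} to conclude that $\tri(\alpha)$ is $1$-efficient outside a finite set of slopes, at which point Theorem~\ref{thm:main} finishes. The paper instead invokes Gu\'eritaud--Schleimer~\cite{gueritaud2010canonical}: after observing that $\tri$ has a standard cusp, it uses their result that for almost all slopes $\alpha$ the canonical triangulation of $M(\alpha)$ is literally isomorphic to $\tri(\alpha)$. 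That yields $1$-efficiency of $\tri(\alpha)$ automatically (canonical triangulations of cusped hyperbolic manifolds admit strict angle structures), but it also buys something more: it identifies both $\ind_\tri$ and $\ind_{\tri(\alpha)}$ with the topological invariants $\ind_M$ and $\ind_{M(\alpha)}$, so the Gang--Yonekura relation becomes a statement purely about the manifolds, not just about one particular pair of triangulations. Your version is more self-contained within the machinery already developed in the paper (and in fact comes bundled with an algorithm to locate the finitely many excluded slopes, via Theorem~\ref{thm:algorithm_efficiency}), whereas the paper's version trades that for a stronger conclusion at the level of topological invariants. Both establish the corollary as stated.

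One small caveat worth flagging: you assert that geometric triangulations are $1$-efficient, and sketch two justifications (strict angle structure implies no bad normal surfaces; or convergence of the index plus Theorem~\ref{index_well_defined}). Both are fine, but neither is formally stated as a result in this paper --- it is asserted in the prose after Proposition~\ref{std_cusp2} with no citation. If you want the argument to be fully self-contained you should cite the relevant result (e.g.\ the consequence of a strict angle structure, as in~\cite{kangrubinstein} or~\cite{garoufalidis20151}) rather than treating it as given.
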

\begin{proof}
By the proof of Proposition~\ref{std_cusp2}, the canonical 
ideal triangulation $\tri$ has a standard cusp at $T$. 
Then the work of Gu\'{e}ritaud-Schleimer~\cite{gueritaud2010canonical} shows that for almost all slopes $\alpha$ on $T$, the canonical triangulation of $M(\alpha)$ is isomorphic to the triangulation $\tri (\alpha)$. 
\end{proof}

To assist with navigating the structure of the proof of Theorem~\ref{thm:main}, we include the following outline. We begin by introducing a definition of the relative index for ideal triangulations with exposed boundary (Definition~\ref{def:relative_index}). In Section~\ref{sec:gluing_formula}, we develop a gluing formula for relative indices, which enables us to formulate a relative version of the Gang-Yonekura formula (Proposition~\ref{prop:gang_cusp}). The key idea is to compare the effect on the relative index of modifying the layered solid torus versus altering the filling curve for a standard cusp.

The proof proceeds via induction on the number of tetrahedra in the added layered solid torus. 
The inductive step (Proposition~\ref{prop:inductive_step}) follows from an application of the pentagon identity (equation~\ref{eqn:pentagon_identity}). The base case, treated in Section~\ref{ssec:initial_cusp_formula}, is significantly more involved and requires new techniques drawn from the meromorphic extension of the $3$D index~\cite{garoufalidis2019meromorphic} and the theory of $q$-hypergeometric functions, as detailed in Section~\ref{sec:proof}.

%&&&&&&&&&&&&&&&&&&&&&&&&&&&&&&&&&&&&&&&&&&&&&&&&&&&&&&&&&&&&&&&&&&&&&&&&&&&&&
%&&&&&&&&&&&&&&&&&&&&&&&&&&&&&&&&&&&&&&&&&&&&&&&&&&&&&&&&&&&&&&&&&&&&&&&&&&&&&

\section{A gluing formula for the 3D index}\label{sec:gluing_formula}

Let $\tri$ be an ideal triangulation of $M$ and $\tri_0$ be a union of triangles in the $ 2$-skeleton of $\tri$, splitting $\tri$ into ideal triangulations $\tri_1,\tri_2$ of submanifolds $M_1, M_2$ respectively, as in Figure~\ref{fig:splitting}. (Note that $\tri_1$ and $\tri_2$ have exposed boundary, but with only ideal vertices.)
\begin{figure}[!htbp]
\centering
\includegraphics[scale=0.6]{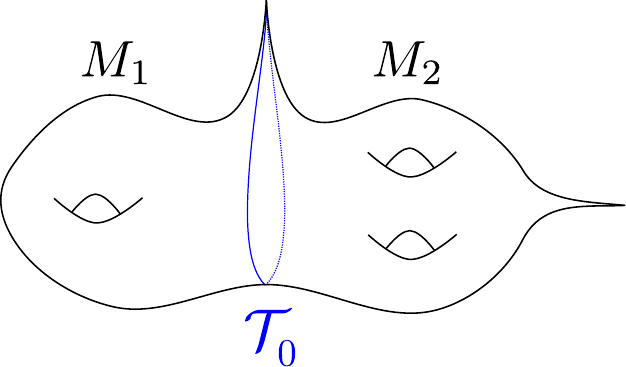}
\caption{A schematic depiction of the decomposition of $M$.}
\label{fig:splitting}
\end{figure}

Fix some $\omega \in 
K = \Ker\left(\hh_1(\partial M;\Z) \to \hh_1\left(M;\Z_2\right) \right)$
such that $\omega$ restricts to the trivial homology class on the cusps in $\tri_0$.

As explained in Section~\ref{ssec:Q-normal}, we regard an integer $Q$-normal class $S \in \mathcal{Q}(\tri;\Z)$ as a solution to the $Q$-matching equations along edges of $\tri$  which assigns an integer to each quad type in the triangulation. Restricting from quad types in $\tri$ to quad types in $\tri_1$ and $\tri_2$ gives natural projections from $\mathcal{Q}(\tri;\Z)$ to the subgroups $\mathcal{Q}(\tri_1;\Z)$ and $\mathcal{Q}(\tri_2;\Z)$, which give the solutions to the $Q$-matching equations on the \textit{interior} edges of $\tri_1,\tri_2$.

Given a $Q$-normal class $S\in \mathcal{Q}(\tri;\Z)$, this yields a decomposition $S = S_1 + S_2$, where $S_i \in  \mathcal{Q}(\tri_i;\Z)$ is supported on the quadrilaterals of $S$ contained in $\tri_i$. The boundary homology class $\omega = \partial S$ also decomposes as $\omega = \omega_1+\omega_2$, where $\omega_i \in \hh_1(\partial M \cap M_i;\Z)$.

As before, let $\E$ be the set of all edge classes of $\tri$, and \textit{choose} a collection $\E'$ of $r$ edges ``omitted'' in computing the index of $\tri$. The omitted edges need to be chosen in such a way that their union is a ``maximal tree with an odd cycle'' in the $1$-skeleton of $\tri$, see~\cite{garoufalidis20151}. Let $\E_i$ be the set of edges in $\tri_i$, and $\E_i'=\E_i \cap \E'$ for $i = 0,1,2$.

By~\cite[Rmk.~7.6]{garoufalidis20163d}, each integer $Q$-normal class $S \in\mathcal{Q}(\tri;\Z)$ can be written as a linear combination 
\begin{equation}
S=\sum_{i=1}^n \frac12 \left(x_i E_i+ y_i T_i\right) + \sum_{k=1}^r \frac12\left( p_k M_k + q_k L_k \right),
\end{equation}
where $x_i,y_i,p_k,q_k \in \Z$.
We can regard this as a linear combination of half-edge solutions $\frac12 E_i$, half-tetrahedral solutions $\frac12 T_j$ and half-peripheral curve solutions $\frac12 M_k,\frac12 L_k$, with coefficients in $\Z$. We can also require that \textit{the edge coefficients are $0$ on the set of omitted edges $\E'$}; then we say that the linear combination is \textit{admissible}. Note that an admissible linear combination is unique. 

This gives a projection $\beta_0$ taking $S \in \mathcal{Q}(\tri;\Z)$ to the coefficients of half-edge solutions for $S$ along $\E_0$, 
$$\beta_0(S) \colon \E_0 \to  \Z.$$ 

Then the matching equations along edges in $\tri_0$ show that $S_1, S_2$ satisfy the ``compatibility condition'' $\beta_0(S_1)=\beta_0(S_2)$. 
Conversely, any $S_1, S_2$ satisfying this compatibility condition gives a unique class $S \in \mathcal{Q}(\tri;\Z)$ with $\beta_0(S)=\beta_0(S_1)=\beta_0(S_2)$.
 
The class $S$ has formal Euler characteristic 
$$\chi(S)=\chi_1(S_1)+\chi_2(S_2)+\chi_0(\beta_0(S)),$$
where the contributions $\chi_1, \chi_2$ and $\chi_0$ are given as follows:   
\begin{equation}\label{eqn:chi_S_i}
\begin{aligned}
\chi_i(S_i) = & -2( \text{sum of coefficients for internal edge solutions in } \tri_i)\\  &- (\text{sum of coefficients for all tetrahedral solutions in }\tri_i),
\end{aligned}
\end{equation}
and
\begin{align}\label{eqn:chi_b_0}
\chi_0(\beta_0(S)) = -2(\text{sum of coefficients for edge solutions in }\tri_0).
\end{align}

Let $\mathcal{Q}^\omega(\tri;\Z)$ denote the set of $S\in \mathcal{Q}(\tri;\Z)$ with $\partial S = \omega$, and let $\mathcal{Q}_{\beta}^{\omega_i}(\tri_i;\Z)$ be the set consisting of $S_i \in \mathcal{Q}(\tri_i;\Z)$ with boundary $\omega_i$ and half-edge coefficients $\beta_0(S_i)=\beta$ on the edges of $\tri_0$. 
Then there are decompositions 
$$\mathcal{Q}^\omega(\tri;\Z) = \bigcup_{\substack{ \text{admissible}\\\beta\colon \E_0 \to \Z }}\left(\mathcal{Q}^{\omega_1}_{\beta}(\tri_1;\Z)\times \mathcal{Q}^{\omega_2}_{\beta}(\tri_2;\Z) \right)$$
and $$\mathbb{T} = \mathbb{T}_1 \oplus \mathbb{T}_2,$$ where $\mathbb{T}_i$ is the subgroup of $\mathcal{Q}(\tri_i;\Z)$ spanned by tetrahedral solutions in $\tri_i$.

\begin{defn}\label{def:relative_index}
We define the relative index for $\tri_i$ with half-edge coefficients $\beta\colon \E_0 \to \Z$ on $\tri_0$ as
$$\ind^{rel}_{\tri_i}(\omega;\beta)= \sum_{[S_i] \in \mathcal{Q}^\omega_{\beta}(\tri_i;\Z)/\mathbb{T}_i} \left(-q^{\frac12}\right)^{-\chi_i(S_i)} \jtet(S_i),$$
summed over coset representatives $S_i$  for $\mathcal{Q}^\omega_{\beta}(\tri_i;\Z)$ modulo the subgroup $\mathbb{T}_i$. 
Define the relative index to be zero if $\mathcal{Q}^\omega_{\beta}(\tri_i;\Z) = \emptyset$. 
\end{defn}

\begin{thm}[Gluing Theorem]\label{thm:gluing}
Assume that $\tri$ is $1$-efficient, and choose a set of omitted edges $\E' \subset \E$. Then there is a gluing formula:
\begin{equation}\label{eqn:gluing_formula}
\ind_\tri^\omega 
= \sum_{\substack{ \text{admissible}\\\beta\colon \E_0 \to \Z }}  \left(-q^{\frac12}\right)^{-\chi_0(\beta)} \ind^{rel}_{\tri_1}(\omega_1;\beta) \ind^{rel}_{\tri_2}(\omega_2;\beta).
\end{equation}
\end{thm}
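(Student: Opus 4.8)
The plan is to reduce the gluing formula to a bookkeeping identity on the decomposition $\mathcal{Q}^\omega(\tri;\Z) = \bigcup_{\beta_0}\left(\mathcal{Q}^{\omega_1}_{\beta_0}(\tri_1;\Z)\oplus \mathcal{Q}^{\omega_2}_{\beta_0}(\tri_2;\Z)\right)$ already established above, and then verify that passing to $\T$-cosets on both sides is compatible with this decomposition. First I would recall that since $\tri$ is $1$-efficient, Theorem~\ref{index_well_defined} guarantees $\ind_\tri(\omega)$ is well-defined, so the left-hand side is a convergent sum; the main content is to reorganise its terms. Given $S \in \mathcal{Q}^\omega(\tri;\Z)$, write $S = S_1 + S_2$ via the restriction-to-quad-types projections, so that $\beta_0(S_1) = \beta_0(S_2) =: \beta_0$ by the matching equations along $\tri_0$, and conversely each compatible pair $(S_1, S_2)$ assembles to a unique $S$. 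The crucial numerical input is the additivity of the formal Euler characteristic, $\chi(S) = \chi_1(S_1) + \chi_2(S_2) + \chi_0(\beta_0)$, from~\eqref{eqn:chi_S_i}–\eqref{eqn:chi_b_0}, together with the multiplicativity of $\jtet$ over tetrahedra: $\jtet(S) = \jtet(S_1)\jtet(S_2)$, since the quad types of $S_1$ and $S_2$ partition those of $S$ (the faces in $\tri_0$ carry no quads). Hence $I(S) = \left(-q^{\frac12}\right)^{-\chi(S)}\jtet(S) = \left(-q^{\frac12}\right)^{-\chi_0(\beta_0)}\left(-q^{\frac12}\right)^{-\chi_1(S_1)}\jtet(S_1)\cdot\left(-q^{\frac12}\right)^{-\chi_2(S_2)}\jtet(S_2)$, which is exactly the product of the summands defining $\ind^{rel}_{\tri_1}$ and $\ind^{rel}_{\tri_2}$, weighted by $\left(-q^{\frac12}\right)^{-\chi_0(\beta_0)}$.

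Next I would handle the quotient by tetrahedral solutions. Using $\T = \T_1 \oplus \T_2$, a set of coset representatives for $\mathcal{Q}^\omega(\tri;\Z)/\T$ indexed by a fixed admissible $\beta_0$ is obtained by choosing coset representatives for $\mathcal{Q}^{\omega_1}_{\beta_0}(\tri_1;\Z)/\T_1$ and $\mathcal{Q}^{\omega_2}_{\beta_0}(\tri_2;\Z)/\T_2$ independently; here I would check that the admissibility normalisation (edge coefficients vanishing on the omitted set $\E'$, with $\E_i' = \E_i \cap \E'$) makes $\beta_0$ a genuine invariant of the $\T$-coset, so the outer sum over $\beta_0$ is well-defined on cosets. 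Since adding a tetrahedral solution does not change $\chi$ (indeed $\chi(T_j) = -3$ but $I$ is $\T$-invariant by construction, as noted after the definition of $I$), the summand is constant on cosets, so the sum over $\mathcal{Q}^\omega(\tri;\Z)/\T$ splits as
\begin{equation*}
\ind_\tri(\omega) = \sum_{\text{admissible }\beta_0} \left(-q^{\frac12}\right)^{-\chi_0(\beta_0)} \left(\sum_{S_1} \left(-q^{\frac12}\right)^{-\chi_1(S_1)}\jtet(S_1)\right)\left(\sum_{S_2} \left(-q^{\frac12}\right)^{-\chi_2(S_2)}\jtet(S_2)\right),
\end{equation*}
which is precisely~\eqref{eqn:gluing_formula} once the inner sums are identified with $\ind^{rel}_{\tri_1}(\omega;\beta_0)$ and $\ind^{rel}_{\tri_2}(\omega;\beta_0)$ (with the convention that an empty $\mathcal{Q}^{\omega_i}_{\beta_0}$ contributes a zero relative index, consistent with Definition~\ref{def:relative_index}).

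The main obstacle is convergence and the legitimacy of the interchange of summation: a priori the outer sum over $\beta_0$ is infinite, and for a fixed $\beta_0$ neither relative index need converge on its own, nor is it obvious that only finitely many $\beta_0$ give a nonzero contribution. I would address this by exploiting $1$-efficiency of $\tri$: the argument in~\cite[Sec.~8–10]{garoufalidis20163d} showing $\ind_\tri(\omega)$ converges proceeds by controlling, for each power of $q$, the finitely many $Q$-normal classes contributing — and this control descends to the decomposition, bounding both the relative Euler characteristics and the range of $\beta_0$ that can contribute below any fixed degree. Concretely, I would show that for each $N$, only finitely many $(\beta_0, S_1, S_2)$ contribute to the coefficient of $q^{N/2}$, so the rearrangement into the double sum is an equality of formal power series (equivalently, $q$-series in $\Z(\!(q^{1/2})\!)$) term by term. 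This is the step that genuinely uses the $1$-efficiency hypothesis and is where I would spend the most care; the rest is the linear/multiplicative bookkeeping sketched above.
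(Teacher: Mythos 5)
Your argument is essentially the same as the paper's: factor $I(S)$ via $S=S_1+S_2$ using additivity of $\chi$ and multiplicativity of $\jtet$, pass to $\T=\T_1\oplus\T_2$ cosets, and invoke $1$-efficiency of $\tri$ for absolute convergence to justify the rearrangement into the double sum. One small slip: $\chi(T_j)=-1$, not $-3$ (from $\chi(S)=-\sum(2x_i+y_i)$ with $y_j=1$), though your conclusion that $I$ is $\T$-invariant is correct and is what the argument actually uses.
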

\begin{proof}
Using Definition~\ref{def:relative_index}, the right-hand side of equation~\eqref{eqn:gluing_formula} is the multiple sum
$$\sum_{\text{admissible } \beta} \, \sum_{[S_1] \in \mathcal{Q}^{\omega_1}_{\beta}(\tri_1;\Z)/\mathbb{T}_1} \,  \sum_{[S_2] \in \mathcal{Q}^{\omega_2}_{\beta}(\tri_2;\Z)/\mathbb{T}_2}\left(-q^{\frac12}\right)^{-\chi_0(\beta)-\chi_1(S_1)-\chi_2(S_2)} \jtet(S_1)  \jtet(S_2).$$

Now the result follows from the expression for the index of $\tri$ in terms of $Q$-normal surfaces(see \eqref{main_index_def} and \eqref{Isurface_def}). Indeed we can write 
 $$\ind_\tri^\omega = \sum_{[S] \in \mathcal{Q}^\omega(\tri;\Z)/\mathbb{T}}  \left(-q^{\frac12}\right)^{-\chi(S)} \jtet(S)$$
summed over a set of coset representatives $S$ for integer $Q$-normal classes in $\tri$ modulo the subgroup $\mathbb{T}$ spanned by tetrahedral solutions.

Each term with $S = S_1 + S_2$, $\omega=\omega_1+\omega_2$ and $\beta_0(S_1)=\beta_0(S_2)=\beta$ can be factored as
$$\left(-q^{\frac12}\right)^{-\chi(S)} \jtet(S) = \left(-q^{\frac12}\right)^{-\chi_0(\beta)} \left(-q^{\frac12}\right)^{-\chi_1(S_1)} \jtet(S_1) \left(-q^{\frac12}\right)^{-\chi_2(S_2)} \jtet(S_2), $$
and the sum is absolutely convergent by $1$-efficiency of $\tri$, so can be rearranged as the above multiple sum.
\end{proof}

\begin{rmk}
It was proved in~\cite[Ex.~11.1]{garoufalidis20163d} that all $1$-efficient triangulations of the open solid torus with $\le 6$ tetrahedra have trivial $3$D index. On the other hand, we note here that  -- crucially for our results -- the relative index of layered solid tori can be non-trivial.
\end{rmk}

%&&&&&&&&&&&&&&&&&&&&&&&&&&&&&&&&&&&&&&&&&&&&&&&&&&&&&&&&&&&&&&&&&&&&&&&&&&&&&
%&&&&&&&&&&&&&&&&&&&&&&&&&&&&&&&&&&&&&&&&&&&&&&&&&&&&&&&&&&&&&&&&&&&&&&&&&&&&&

\section{Attaching layered solid tori}\label{sec:attaching_solid_tori}

Recall from Section~\ref{sec:LSTs} the setting we are using to implement Dehn fillings; given a triangulation $\tri$ of a cusped manifold $M$ with at least two cusps, and a surgery curve $\alpha$ in a standard cusp $\cusp$ of $\tri$, let $\tri(\alpha)$ be the ideal triangulation obtained from $\tri$ by replacing $\cusp$ by the layered solid torus LST$(\alpha)$.
\begin{figure}[!htbp]
\centering
\includegraphics[width=0.75\linewidth]{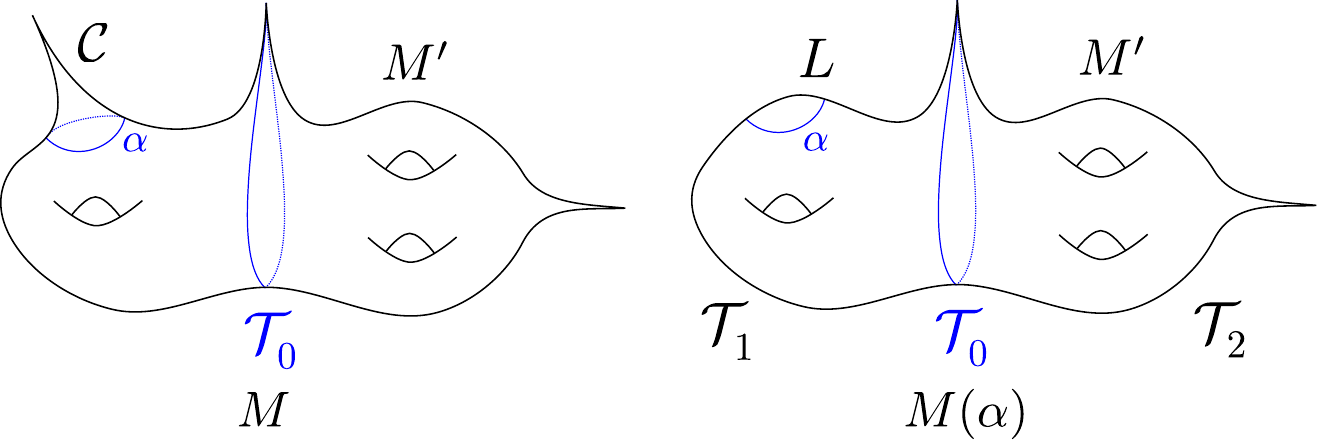}
\caption{A schematic for the effect of replacing a standard cusp $\cusp$ with a layered solid torus $L$ along the filling curve $\alpha$.}
\label{fig:before_after}
\end{figure} 

We compute the index of $\tri(\alpha)$ by applying the gluing formula in Theorem~\ref{thm:gluing}, where now $\tri_2$ is obtained from $\tri$ by removing the standard cusp, $\tri_1$ is a suitable layered solid torus, and the intersection $\tri_0$ is a once-punctured torus (as in Figure~\ref{fig:before_after}). 
Applying the gluing formula, with boundary data $\gamma$ on $\cusp$ and zero on all other cusps, gives
\begin{equation} \label{original_gluing}
\ind_\tri^{\gamma,0} 
= \sum_{\substack{ \text{admissible}\\\beta\colon \E_0 \to \Z }}  \left(-q^{\frac12}\right)^{-\chi_0(\beta)} \ind^{rel}_{\cusp}(\gamma;\beta) \ind^{rel}_{\tri_2}(0;\beta)
\end{equation}
and
\begin{equation} \label{filled_gluing}
\ind_{\tri(\alpha)}^{0} 
= \sum_{\substack{ \text{admissible}\\\beta\colon \E_0 \to \Z }}  \left(-q^{\frac12}\right)^{-\chi_0(\beta)} \ind^{rel}_{\text{LST}(\alpha)}(\beta) \ind^{rel}_{\tri_2}(0;\beta).
\end{equation}
On the other hand, applying~\eqref{original_gluing} to each term in the Gang-Yonekura formula gives
\begin{equation}\label{GY_gluing}
\sum_{\substack{ \text{admissible}\\\beta\colon \E_0 \to \Z }}  \left(-q^{\frac12}\right)^{-\chi_0(\beta)} GY(\alpha,\beta) \ind^{rel}_{\tri_2}(0;\beta),
\end{equation}
where $GY(\alpha,\beta)$ is obtained by applying the Gang-Yonekura formula~\eqref{eqn:GYformula} to the relative index for the standard cusp $\cusp$ (see Proposition~\ref{prop:gang_cusp} below).

Thus, comparing \eqref{filled_gluing} and \eqref{GY_gluing}, to prove the Gang-Yonekura formula for $\tri(\alpha)$ it suffices to show that the relative index of a layered solid torus agrees with the Gang-Yonekura formula applied to the relative index of a standard cusp:
$$GY(\alpha,\beta)=\ind^{rel}_{\text{LST}(\alpha)},$$
for all filling curves $\alpha$ in $\cusp$ and all admissible $\beta\colon \E_0 \to \Z$.  This is the main result of the section and will be proved in Theorem~\ref{thm:main_relative} below.

%&&&&&&&&&&&&&&&&&&&&&&&&&&&&&&&&&&&&&&&&&&&&&&&&&&&&&&&&&&&&&&&&&&&&&&&&&&&&&
%&&&&&&&&&&&&&&&&&&&&&&&&&&&&&&&&&&&&&&&&&&&&&&&&&&&&&&&&&&&&&&&&&&&&&&&&&&&&&

\subsection{Gang-Yonekura formula for the  standard cusp}\label{ssec:GY_for_cusp}~\\

In this section we first compute the relative index for the standard cusp $\cusp$ introduced in Section~\ref{sec:standard_cusp}, then use the resulting expression to study a relative version of the Gang-Yonekura formula for $\cusp$. We use the triangulation for  $\cusp$ with peripheral curves $\lambda, \mu$ and edges labelled as in Figure~\ref{fig:cusp_conventions}. 

\begin{prop}\label{relcuspindex}
The relative index for the standard cusp $\cusp$ with homological boundary $\alpha = x\lambda +y \mu$
and \textit{boundary half-edge coefficients} for $e_1, e_2,e_3$  given by $\underline b = (b_1,b_2,b_3)$ is defined for $b_1,b_2,b_3,x,y \in \Z$ and satisfies 
\begin{equation}
\label{eqn:cusp_index2}
\ind^{rel}_\cusp (\alpha;\underline{b}) = \left(-q^{\frac12}\right)^{-b_3}\itet\left( \frac{ x- b_1+b_3}{2}, \frac{ -y+b_2-b_3}{2}\right)\itet\left( \frac{-x-b_1+b_3}{2}, \frac{ y+ b_2-b_3}{2}\right),
\end{equation}
when $b_1 -b_3 \equiv x\mod{2}$,  $b_2-b_3 \equiv y  \mod{2}$, and is zero otherwise.
\end{prop}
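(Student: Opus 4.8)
The plan is to compute $\ind^{rel}_\cusp(\alpha;\underline b)$ directly from Definition~\ref{def:relative_index}, using the very explicit combinatorics of the standard cusp $\cusp$. Recall from Section~\ref{sec:standard_cusp} that $\cusp$ consists of exactly two tetrahedra $\Delta_1,\Delta_2$, so a $Q$-normal class is a vector $S=(a_1,b_1,c_1,a_2,b_2,c_2)\in\Z^6$, and (as observed there) the only interior edge $e_0$ imposes no constraint since all of its edge-equation coefficients are $1$. Hence $\mathcal{Q}(\cusp;\Z)=\Z^6$, generated freely (modulo nothing) by the six quads, with $\mathbb{T}=\mathbb{T}_1\oplus\mathbb{T}_2$ the rank-two lattice spanned by the two tetrahedral solutions $(1,1,1,0,0,0)$ and $(0,0,0,1,1,1)$. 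First I would fix, using the gluing table (Table~\ref{table:cusp_gluing}), how the three boundary-edge weights $e_1,e_2,e_3$, the boundary slope $\alpha=x\lambda+y\mu$, and the ``half edge coefficients'' $\underline b=(b_1,b_2,b_3)$ are read off from $S$. Concretely, $e_i$-weight and the peripheral pairings $\omega(S,L)$, $\omega(S,M)$ are integer-linear in the quad coordinates, so the conditions $[\partial S]=\alpha$ and (boundary) edge-coefficients $=\underline b$ cut out an affine sublattice of $\Z^6$; the admissibility normalisation (edge coefficient $0$ on the single omitted edge) picks a unique representative in each $\mathbb{T}$-coset.

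Next I would parametrise $\mathcal{Q}^\alpha_{\underline b}(\cusp;\Z)/\mathbb{T}$. After quotienting by the two tetrahedral solutions, the quad data in each tetrahedron is captured by the two differences $a_i-b_i$, $b_i-c_i$ (or equivalently by $\jtet$'s translation-invariance, equation~\eqref{eqn_add_tets}); the six linear conditions from $\underline b,\alpha$ should pin down these differences completely in terms of $b_1,b_2,b_3,x,y$, leaving the sum of coordinates in each tetrahedron as the free summation variable $n$ that the tetrahedral index $\itet$ sums over via~\eqref{eqn:tetrahedral_index}. The parity conditions $b_1-b_3\equiv x$, $b_2-b_3\equiv y\pmod 2$ are exactly the integrality constraints for this solve to have a solution in $\Z^6$ (when they fail, $\mathcal{Q}^\alpha_{\underline b}(\cusp;\Z)=\emptyset$ and the relative index is $0$ by definition). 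Then $\ind^{rel}_\cusp(\alpha;\underline b)=\sum_S (-q^{1/2})^{-\chi(S)}\jtet(S)$ factors as a product over the two tetrahedra, because $\jtet$ is multiplicative and $\chi$ is additive over tetrahedra plus a fixed correction from the internal edge coefficient; each factor, after using the dictionary~\eqref{eqn:simmetries_Itet} between $\jtet$ and $\itet$, becomes one of the two $\itet$ factors in~\eqref{eqn:cusp_index2}. The overall prefactor $(-q^{1/2})^{-b_3}$ should emerge from the $\chi_0$-type contribution of the internal edge $e_0$ together with the chosen normalisation of half-edge coefficients.

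The main obstacle I anticipate is purely bookkeeping: getting the signs, the factors of $\tfrac12$ in the arguments, and the precise identification of $(x,y)$ versus $(m,e)$-type data exactly right, and matching the Neumann--Zagier/admissibility conventions so that the unique coset representative is the one producing the stated arguments $\tfrac{x-b_1+b_3}{2}$, $\tfrac{-y+b_2-b_3}{2}$, etc. A secondary subtlety is verifying that the sum over the two free integers $n_1,n_2$ indeed reproduces the defining sums of $\itet$ with the correct lower summation limits $e_+$ — i.e.\ that no extra or missing terms appear when one passes from ``$S$ ranges over a coset of $\mathbb{T}$'' to ``$n_1,n_2$ range over $\Z$.'' Once the linear algebra over $\Z$ is set up cleanly (and the parity obstruction is identified with the emptiness clause), the identity~\eqref{eqn:cusp_index2} follows by specialising the symmetric relation~\eqref{eqn:simmetries_Itet} in each tetrahedron; I would present the computation tetrahedron-by-tetrahedron and then multiply.
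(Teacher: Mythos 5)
Your plan follows essentially the same route as the paper (parametrize $Q$-normal classes of $\cusp$ explicitly, fix boundary and half-edge data, reduce modulo $\mathbb{T}$, read off the product of $\jtet$'s via~\eqref{eqn:simmetries_Itet}). But there is one genuine conceptual error in the middle that would derail the write-up. You claim that after fixing the differences $a_i-b_i$, $b_i-c_i$ in each tetrahedron, the sum of coordinates in each tetrahedron remains as ``the free summation variable $n$ that the tetrahedral index $\itet$ sums over via~\eqref{eqn:tetrahedral_index},'' and you then flag as a ``secondary subtlety'' the task of checking that the sum over $n_1,n_2$ reproduces the defining series of $\itet$. That is not what happens, and there is nothing of the kind to verify. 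The relative index in Definition~\ref{def:relative_index} is a sum over the \emph{coset space} $\mathcal{Q}^\omega_{\beta_0}(\cusp;\Z)/\mathbb{T}$, not over representatives within a coset; since $(-q^{1/2})^{-\chi(S)}\jtet(S)$ is invariant under shifting $S$ by tetrahedral solutions, each coset contributes a single, well-defined term. For the standard cusp, once $(b_1,b_2,b_3,x,y)$ are fixed there is at most one such coset (the five linear conditions, together with the $\mathbb{T}$-quotient, exhaust the six quad coordinates). So the relative cusp index is a single product $(-q^{1/2})^{-b_3}\,\itet(\cdot,\cdot)\itet(\cdot,\cdot)$, not a sum. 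The internal $n$-summation in $\itet$ has nothing to do with the normal-class enumeration; it is just the $q$-series defining the function.

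Concretely, the paper fixes a representative by using the half-integer decomposition $S = \tfrac12(t_1 T_1 + t_2 T_2 + b_1 E_1 + b_2 E_2 + b_3 E_3 + x L + y M)$ from~\cite[Rmk.~7.6]{garoufalidis20163d}, notes the integrality forces $t_1\equiv t_2\equiv b_3$, $x\equiv b_1+b_3$, $y\equiv b_2+b_3 \pmod 2$ (your parity condition, correctly identified), and then simply sets $t_1=t_2=-b_3$ to extract a clean formula. Your quad-coordinate solve would produce exactly the same single term; you just need to delete the phantom sum over $n_1,n_2$. The remaining parts of your plan — reading $\underline{b}, x, y$ from Table~\ref{table:cusp_gluing}, factoring over the two tetrahedra, converting $\jtet$ to $\itet$ via~\eqref{eqn:simmetries_Itet}, and identifying the prefactor $(-q^{1/2})^{-b_3}$ — are sound and match the paper.
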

We will also sometimes write this relative index as $\ind^{rel}_\cusp (x,y;b_1,b_2,b_3)$.

\begin{proof}
To compute the relative index of the standard cusp $\cusp$, we omit the internal edge $e_0$ shown in Figure~\ref{fig:cusp_conventions} and label the tetrahedral, edge and peripheral solutions as consistently with Table~\ref{table:cusp_gluing}. 
As in~\cite[Rmk~7.6]{garoufalidis20163d}, each integer normal class $S \in \mathcal{Q}(\cusp;\Z)$ can be written as an \emph{integer} linear combination of half-tetrahedron, half-edge and half-peripheral solutions: 
\begin{equation}\label{eqn:linear_combinations_cusp}
\begin{aligned}
S & =  \frac12 \left(t_1 T_1 + t_2 T_2 + b_1 E_1 + b_2 E_2 + b_3 E_3 + x L + y M \right)  \\ &= 
\frac12\left(b_1+t_1-x,b_2+t_1-y,b_3+t_1,b_1+t_2+x,b_2+t_2+y,b_3+t_2\right),
\end{aligned}
\end{equation}
where $t_1,t_2,b_1,b_2,b_3,x,y \in \Z$.
Note this has integer entries if and only if $t_1\equiv t_2\equiv b_3 \mod{2}, \,x\equiv b_1+b_3 \mod{2}$, and  $y\equiv b_2+b_3 \mod{2}$.

Conversely, given any $S=(q_1,q_2,q_3,q_4,q_5,q_6) \in \Z^6$, we 
can write $S$ as a linear combination as in equation~\eqref{eqn:linear_combinations_cusp}, with   $t_1 = 2 q_3$, $t_2 = 2 q_6$, $b_1 = q_1 - q_3 + q_4 - q_6$, $b_2 = 
 q_2 - q_3 + q_5 - q_6$, $b_3=0$, $x = -q_1 + q_3 + q_4 - q_6$, $y = -q_2 + q_3 + q_5 - q_6$.\\

To compute the relative index we work with integer normal classes \textit{modulo integer linear combinations of tetrahedral solutions}; this means half-tetrahedral solutions are needed as coset representatives if $b_3 \equiv 1 \mod{2}$.

The homological boundary of $S$ is equal to $\alpha = \bd S= x\lambda +y \mu $,
the boundary half-edge coefficients for $e_1, e_2,e_3$ are given by $\underline b = (b_1,b_2,b_3),$ and the contribution to Euler characteristic in \eqref{eqn:chi_S_i} is given by
$-\frac12(t_1+t_2)$. \\

From~\eqref{eqn:linear_combinations_cusp}, we see that the contribution of $S$ to the relative index of $\cusp$ 
is given by 
\begin{equation}\label{eqn:contribution_of_S}
\left(-q^{\frac12}\right)^{\frac12(t_1+t_2)}\jtet\left(\frac{b_1 +t_1 - x}{2}, \frac{b_2 +t_1 - y}{2}, \frac{b_3+t_1}{2}\right)\jtet\left( \frac{b_1 + t_2+ x}{2}, \frac{b_2 + t_2+y}{2}, \frac{b_3+t_2}{2}\right),
\end{equation}
for any choice of $t_1,t_2 \in \Z$ such that $t_1\equiv t_2\equiv b_3 \mod{2}$. \\
Taking $t_1=t_2=-b_3$ gives
\begin{equation}
\label{eqn:cusp_index}
\begin{aligned}
& \ind^{rel}_\cusp (\alpha;\underline{b}) = \ind^{rel}_\cusp (x, y ;b_1,b_2,b_3) \\ &=
\left(-q^{\frac12}\right)^{b_3}\jtet\left(\frac{b_1 -b_3 - x}{2}, \frac{b_2 -b_3 - y}{2}, 0\right)\jtet\left( \frac{b_1 - b_3 + x}{2}, \frac{b_2 -b_3+y}{2}, 0\right).
\end{aligned}
\end{equation}
Equivalently, we can rewrite this in terms of $\itet$ using equation~\eqref{eqn:simmetries_Itet} to give the result.
\end{proof}

\begin{rmk}\label{rmk:cusp_index_symmetries}
Equation~\eqref{eqn_add_tets} implies that we can reduce to the case where $b_3=0$: 
\begin{equation}\label{eq:cusp_index_b3=0}
\ind^{rel}_\cusp (x, y ;b_1,b_2,b_3)= \left(-q^{\frac12}\right)^{-b_3} \ind^{rel}_\cusp (x, y ;b_1-b_3,b_2-b_3,0).
\end{equation}
Furthermore, the relative cusp index exhibits several symmetries; it follows from~\eqref{eqn:cusp_index}  that 
\begin{equation}\label{eqn:cusp_symmetry1}
\ind^{rel}_\cusp (x,y;\underline{b}) = \ind^{rel}_\cusp (-x,-y;\underline{b}),
\end{equation}
while using  equation~\eqref{eqn:tetrahedral_index_symmetries} we get
\begin{equation}\label{eqn:cusp_symmetry2}
\ind^{rel}_\cusp (x,y;b_1,b_2,b_3) = \ind^{rel}_\cusp (y,x;b_2,b_1,b_3)  
\end{equation}
and 
\begin{equation}\label{eqn:cusp_symmetry3}
\ind^{rel}_\cusp (x, y; b_1, b_2,b_3)  =  \ind^{rel}_\cusp(-y, x - y; b_3,b_1,b_2)= \ind^{rel}_\cusp(y-x, -x; b_2,b_3,b_1).
\end{equation}
\end{rmk}

Since the Gang-Yonekura formula in equation~\eqref{eqn:GYformula} involves contributions of the form $q^{-\frac{|\gamma|}{2}}$ (which might introduce terms with diverging negative exponents), it is important to understand the growth of the minimal degree of cusp's relative index. 
\begin{lem}\label{lem:growth_cusp_index}
The minimal $q^{\frac{1}{2}}$-degree of $\ind^{rel}_\cusp (x, y ;b_1,b_2,b_3)$ grows quadratically as a function of $x$ and $y$. More precisely, for any fixed $b_1,b_2,b_3 \in \Z$, and $(x,y) \in \Z^2$ satisfying $x \equiv b_1 +b_3  \mod{2}$ and $y  \equiv b_2+b_3 \mod{2}$, we have 
\begin{equation}\label{cusp_deg_asympt}\lim_{h(x,y)\to +\infty} \frac{\deg \ind^{rel}_\cusp (x, y ;b_1,b_2,b_3)}{h(x,y)^2} = \frac14,\end{equation}
where $h$ denotes the ``hexagonal norm'' on $\Z^2$ defined by
$$h(x,y)= \max(|x|,|y|,|x-y|).$$
\end{lem}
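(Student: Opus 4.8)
The plan is to reduce everything to the explicit product formula of Proposition~\ref{relcuspindex} and then to pin down the minimal $q^{\frac12}$-degree of a single tetrahedral index. Write $\deg$ for the minimal $q^{\frac12}$-degree as in the statement. Since the minimal degree of a product of two nonzero Laurent series in $q^{\frac12}$ is the sum of the two minimal degrees, equation~\eqref{eqn:cusp_index2} gives
\begin{equation*}
\deg\ind^{rel}_\cusp(x,y;b_1,b_2,b_3)=-b_3+\deg\itet(m_1,e_1)+\deg\itet(m_2,e_2),
\end{equation*}
where $m_1=\tfrac{x-b_1+b_3}{2}$, $e_1=\tfrac{-y+b_2-b_3}{2}$, $m_2=\tfrac{-x-b_1+b_3}{2}$, $e_2=\tfrac{y+b_2-b_3}{2}$ (these are integers, by the parity hypotheses). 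So it suffices to determine $\deg\itet(m,e)$ and then let $(m_i,e_i)$ range over the arithmetic progressions above.

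First I would prove the exact formula
\begin{equation*}
\deg\itet(m,e)=(\beta-\alpha)(\gamma-\alpha)-\alpha,
\end{equation*}
where $\alpha\le\beta\le\gamma$ is the sorted triple $\{0,-m,e\}$. For $m\le 0\le e$ this is immediate from~\eqref{eqn:tetrahedral_index}: the exponent $\tfrac{n(n+1)}{2}-(n+\tfrac e2)m$ has first difference $n+1-m\ge 1$, hence is strictly increasing in the summation index $n$; so the $n=0$ term contributes the lowest power $q^{-em/2}$ with coefficient $+1$ and nothing cancels, giving $\deg\itet(m,e)=-em$, which matches the formula since here $\alpha=0$ and $\{\beta,\gamma\}=\{-m,e\}$. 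For general $(m,e)$ I would use $\itet(m,e)=\jtet(0,-m,e)$ (from~\eqref{eqn:simmetries_Itet}), the invariance of $\jtet$ under permutations of its arguments, and the shift relation~\eqref{eqn_add_tets} with $s=-\alpha$ to rewrite $\itet(m,e)=(-q^{\frac12})^{-\alpha}\,\itet(-(\beta-\alpha),\gamma-\alpha)$; the second factor lies in the region just treated, because $\beta-\alpha,\gamma-\alpha\ge 0$, and the prefactor contributes $-\alpha$ to the degree.

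With this formula in hand, the triples attached to $(m_1,e_1)$ and $(m_2,e_2)$ are the sorted versions of $\{0,-\tfrac x2+c_1,-\tfrac y2+c_2\}$ and $\{0,\tfrac x2+c_1,\tfrac y2+c_2\}$, where $c_1=\tfrac{b_1-b_3}{2}$, $c_2=\tfrac{b_2-b_3}{2}$ are fixed. Since the sorting map is $1$-Lipschitz in $\ell^\infty$, these differ entrywise by $O(1)$ from the sorted versions of $\{0,-\tfrac x2,-\tfrac y2\}$ and $\{0,\tfrac x2,\tfrac y2\}$, which are negatives of one another: writing $(\alpha_0,\beta_0,\gamma_0)$ for the sorted triple $\{0,\tfrac x2,\tfrac y2\}$, the triple $\{0,-\tfrac x2,-\tfrac y2\}$ sorts as $(-\gamma_0,-\beta_0,-\alpha_0)$. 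Because each of $\beta-\alpha$, $\gamma-\alpha$ is $O(h(x,y))$, a bounded entrywise perturbation changes $\deg\itet$ by $O(h(x,y))$, so
\begin{equation*}
\deg\ind^{rel}_\cusp(x,y;b_1,b_2,b_3)=(\gamma_0-\beta_0)(\gamma_0-\alpha_0)+(\beta_0-\alpha_0)(\gamma_0-\alpha_0)+O(h(x,y))=(\gamma_0-\alpha_0)^2+O(h(x,y)).
\end{equation*}
Finally $\gamma_0-\alpha_0=\max\{0,\tfrac x2,\tfrac y2\}-\min\{0,\tfrac x2,\tfrac y2\}=\tfrac12\bigl(\max\{0,x,y\}-\min\{0,x,y\}\bigr)=\tfrac12\max\{|x|,|y|,|x-y|\}=\tfrac12 h(x,y)$, the third equality being the elementary fact that the diameter of a finite set of reals equals its largest pairwise distance. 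Hence $\deg\ind^{rel}_\cusp=\tfrac14 h(x,y)^2+O(h(x,y))$, which gives the stated limit.

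The main obstacle is the second step, the closed formula for $\deg\itet(m,e)$. The naive lower bound — the smallest exponent over the summation index of~\eqref{eqn:tetrahedral_index} — is hopelessly too small, because for most $(m,e)$ the leading terms of the series cancel extensively (for instance the naive bound for $\itet(m,0)$ is negative of order $m^2$, whereas its true minimal degree is positive of order $m^2$). The resolution is to recognise that this cancellation disappears completely in the region $m\le 0\le e$, and that the $\mathrm{SL}_2(\Z)$-type symmetries~\eqref{eqn:tetrahedral_index_symmetries} — equivalently, the permutation and shift symmetries of $\jtet$ — move any $(m,e)$ into that region at the cost of an explicit monomial factor $(-q^{\frac12})^{-\alpha}$, which is what produces the formula. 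The remainder is bookkeeping and a short case analysis identifying the hexagonal norm with $2\bigl(\max-\min\bigr)$ of $\{0,\tfrac x2,\tfrac y2\}$.
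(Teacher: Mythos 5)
Your proof is correct. The overall strategy coincides with the paper's — reduce $\deg\ind^{rel}_\cusp$ to a sum of two tetrahedral degrees via equation~\eqref{eqn:cusp_index2}, work out a formula for $\deg\itet$, and treat the general $\underline{b}$ as a lower-order perturbation of the $\underline{b}=0$ case — but you supply two things the paper leaves implicit. First, the paper simply cites the piecewise quadratic formula for $\deg\itet(m,e)$ from~\cite[Sec.~4.2]{garoufalidis20163d}, whereas you prove it: you observe that in the sector $m\le 0\le e$ the exponents $\frac{n(n+1)}{2}-(n+\frac e2)m$ are strictly increasing in $n$ so the $n=0$ term is never cancelled, and then transport this to all of $\Z^2$ via the $\jtet$-permutation and shift symmetries. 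Second, your sorted-triple packaging $\deg\itet(m,e)=(\beta-\alpha)(\gamma-\alpha)-\alpha$ with $(\alpha,\beta,\gamma)=\operatorname{sort}\{0,-m,e\}$ collapses the paper's three-case verification (done for $\underline{b}=0$ in the sector $0\le b\le a$ and then extended by cusp symmetries) into a single computation, and makes the hexagonal norm emerge as ``diameter $=$ max pairwise distance'' of $\{0,x,y\}$ rather than by a case check. The Lipschitz-in-$\ell^\infty$ argument for sorting then absorbs the $\underline{b}$-dependence uniformly, matching the paper's ``piecewise affine perturbation'' remark but in a form that needs no separate verification. One small nit: your constants $c_1=\frac{b_1-b_3}{2}$, $c_2=\frac{b_2-b_3}{2}$ are half-integers in general, not integers; this is harmless since all you need is that they are fixed and bounded, but it is worth not calling them integers if you write this up.
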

\begin{proof}
From~\cite[Sec~4.2]{garoufalidis20163d},  the minimal $q^{\frac12}$ degree of $\itet(m,e)$ is given by the piecewise quadratic function 
\begin{equation}\deg \itet(m,e) = \begin{cases} m(e+m)+m, &\text{if } m\ge0 \text{ and } e+m\ge0\\
 -e m, &\text{if } m\le 0 \text{ and } e\ge 0\\
 e(e+m)-e, &\text{if } e\le 0 \text{ and } e+m \le 0.
 \end{cases}\end{equation}

It follows, using \eqref{eqn:cusp_index2}, that
\begin{equation}\label{exact_cusp_index_degree}
\deg \ind^{rel}_\cusp (x, y ;0,0,0)= \frac14 h(x,y)^2+ \frac12 h(x,y) \text{ for all } x,y\in 2\Z.\end{equation}
To prove this, we consider $$f(a,b) = q^{\frac12}\text{-degree  of } \itet(a,-b)\itet(-a,b).$$
If $a,b$ are integers with $0\le b \le a$, then  
\begin{align*} f(a,b) &= \deg \itet(a,-b) + \deg \itet(-a,b) \\ 
&=a(a-b)+a + ab 
 = a^2 +a \\
&=h(a,b)^2+h(a,b),\end{align*} since $h(a,b)=a$. Using the symmetries of the relative cusp index from Remark~\ref{rmk:cusp_index_symmetries} and the symmetries of the hexagonal norm, it follows that $f(a,b)=h(a,b)^2+h(a,b)$ for all $a,b \in \Z$. Putting $a=\frac{x}{2}, b=\frac{y}{2}$ gives equation~\eqref{exact_cusp_index_degree}. In particular, this shows that equation~\eqref{cusp_deg_asympt} holds when $b_1=b_2=b_3=0$.

In general, $\deg \ind^{rel}_\cusp (x, y ;b_1,b_2,b_3)$ differs from $\deg \ind^{rel}_\cusp(x, y ;0,0,0)$ by at most a piecewise affine function of $|x|,|y|$ with coefficients depending on $b_1,b_2,b_3$. This gives the result. 
\end{proof}

Next we study the relative Gang-Yonekura formula for the standard cusp. 

\begin{prop}\label{prop:gang_cusp}
Let $\alpha\in \hh_1(\partial \cusp ;\Z)$ denote the homology class of a filling curve, and $\beta$ a dual class such that $\alpha\cdot\beta=\pm1$. 
Then the relative version 
of the Gang-Yonekura formula~\eqref{eqn:GYformula} for the standard cusp $\cusp$ with boundary half-edge coefficients $\underline{b} = (b_1,b_2,b_3)$ is given by 
\begin{equation}\label{eqn:GY_for_cusp}
GY(\alpha;\underline{b}) = \frac{1}{2} \left[\sum_{k \in \Z} (-1)^k\left(\left(q^\frac{k}{2} + q^{-\frac{k}{2}}\right)\ind^{rel}_\cusp (k \alpha;\underline{b}) - \ind^{rel}_\cusp (k \alpha + 2\beta;\underline{b}) - \ind^{rel}_\cusp (k \alpha - 2\beta;\underline{b})\right) \right] .
\end{equation}
\end{prop}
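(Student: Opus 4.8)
The proposition asserts two things about the right-hand side of~\eqref{eqn:GY_for_cusp}: that it is a well-defined element of $\Z(\!(q^{\frac12})\!)$ (independent of the chosen dual class $\beta$ and of the orientation of the filling curve), and that it vanishes identically unless $(b_1-b_3,b_2-b_3)$ is congruent mod $2$ to $(0,0)$ or to $(x,y)$. I would begin by recording the bookkeeping that identifies~\eqref{eqn:GY_for_cusp} with the substitution of $\ind^{rel}_\cusp(\gamma;\underline b)$ for $\ind_\tri^\gamma$ in~\eqref{eqn:GYformula}: the classes $\gamma$ with $\alpha\cdot\gamma=0$ are exactly $\gamma=k\alpha$ for $k\in\Z$ (here $\alpha$ primitive forces uniqueness and $|\gamma|=|k|$), and those with $\alpha\cdot\gamma=\pm2$ are exactly $\gamma=k\alpha\pm2\beta$; a short parity check using that $(x,y)\not\equiv(0,0)\bmod 2$ shows $(-1)^{|\gamma|}=(-1)^k$ throughout, and $q^{|\gamma|/2}+q^{-|\gamma|/2}=q^{k/2}+q^{-k/2}$, so the substitution yields precisely~\eqref{eqn:GY_for_cusp}.

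\emph{Reductions, symmetries and vanishing.} By~\eqref{eq:cusp_index_b3=0} every summand carries the common factor $(-q^{\frac12})^{-b_3}$, so it suffices to treat $b_3=0$. Replacing $\beta$ by $\beta+n\alpha$ changes $\ind^{rel}_\cusp(k\alpha\pm2\beta;\underline b)$ to $\ind^{rel}_\cusp((k\pm2n)\alpha\pm2\beta;\underline b)$; reindexing the sum over $k$ and using $(-1)^{k\mp2n}=(-1)^k$ shows the value is independent of the dual class, while $k\mapsto-k$ together with~\eqref{eqn:cusp_symmetry1} shows it is unchanged under $(\alpha,\beta)\mapsto(-\alpha,-\beta)$. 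For the vanishing, note that by Proposition~\ref{relcuspindex} the term $\ind^{rel}_\cusp(p\lambda+q\mu;\underline b)$ is zero unless $(b_1-b_3,b_2-b_3)\equiv(p,q)\bmod 2$; every class appearing in the $k$-th summand of~\eqref{eqn:GY_for_cusp} is of the form $k\alpha+j\beta$ with $j\in\{0,\pm2\}$, hence $\equiv k\alpha\bmod 2$, so that summand vanishes unless $(b_1-b_3,b_2-b_3)\equiv k(x,y)\bmod 2$. Since $\alpha$ is primitive, $k(x,y)\bmod 2$ takes only the values $(0,0)$ ($k$ even) and $(x,y)\bmod 2$ ($k$ odd); hence outside the two admissible parity classes every summand is zero and $GY(\alpha;\underline b)=0$, while inside them only the $k$ of a single fixed parity survive.

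\emph{Convergence — the main point.} For $\alpha\neq0$ the hexagonal norm $h$ of Lemma~\ref{lem:growth_cusp_index} has $h(\alpha)>0$ and satisfies the triangle inequality, so $h(k\alpha+j\beta)\ge|k|\,h(\alpha)-|j|\,h(\beta)$, which is positive for $|k|$ large. Combining this with Lemma~\ref{lem:growth_cusp_index} and the linear error estimate in its proof, $\deg\ind^{rel}_\cusp(v;\underline b)=\tfrac14 h(v)^2+O(h(v))$, we obtain constants $c>0$ and $C\ge0$ depending only on $\alpha,\beta,\underline b$ with
\[
\deg\ind^{rel}_\cusp(k\alpha+j\beta;\underline b)\ \ge\ c\,k^2-C|k|-C \qquad (j\in\{0,\pm2\}).
\]
The prefactors $q^{\pm k/2}$ shift the minimal $q^{\frac12}$-degree of the $k$-th summand by at most $|k|$, so that degree is still bounded below by $c\,k^2-(C+1)|k|-C$. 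This tends to $+\infty$ as $|k|\to\infty$, so only finitely many $k$ contribute any fixed power of $q^{\frac12}$, and, being a quadratic with positive leading coefficient on $\Z$, it is bounded below; hence~\eqref{eqn:GY_for_cusp} converges coefficientwise in $\Z(\!(q^{\frac12})\!)$. This last step is the only genuine obstacle: the factors $q^{-k/2}$ would, against merely linear degree growth, drive the degrees of the summands to $-\infty$, and it is precisely the \emph{quadratic} growth of $\deg\ind^{rel}_\cusp$ supplied by Lemma~\ref{lem:growth_cusp_index} that both forces convergence and confines $GY(\alpha;\underline b)$ to the Laurent series ring rather than to some larger completion.
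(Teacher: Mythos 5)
Your proof is correct, and the core step---identifying the classes $\gamma$ with $\alpha\cdot\gamma=0$ as $k\alpha$ and those with $\alpha\cdot\gamma=\pm2$ as $k\alpha\pm2\beta$, and verifying $(-1)^{|\gamma|}=(-1)^k$---is exactly the paper's argument. The extra material you supply is sound but is placed by the paper in surrounding text rather than inside the proof environment: the vanishing for bad parities of $(b_1-b_3,b_2-b_3)$ is the content of Remark~\ref{parity_of_k}, and the coefficientwise convergence via quadratic growth in the hexagonal norm is precisely the role played by Lemma~\ref{lem:growth_cusp_index} (stated and motivated just before the proposition); your observations on independence of $\beta$ and reduction to $b_3=0$ are additional bonuses not found in the paper's proof.
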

If $\alpha = x \lambda + y \mu$, we will also write $GY(x,y;b_1,b_2,b_3)$ to denote $GY(\alpha;\underline{b})$.
\begin{proof}
We apply the Gang-Yonekura formula~\eqref{eqn:GYformula} where $\gamma \in \hh_1(\partial \cusp ;\Z)$. Note that if $\alpha\cdot\gamma=0$ then $\gamma=k \alpha$ where $k \in \Z$ and $(-1)^{|\gamma|} = (-1)^k$. 
Further, since $\alpha\cdot\beta=\pm1$, if $\alpha\cdot\gamma=\pm 2$ then $\gamma = k \alpha \pm 2 \beta$ where $k \in \Z$ and $(-1)^{|\gamma|} = (-1)^k$.
\end{proof}

\begin{rmk} \label{parity_of_k}
By Proposition~\ref{relcuspindex}, we note that the only non-zero terms in the sum~\eqref{eqn:GY_for_cusp}  occur when $(b_1-b_3,b_2-b_3)\equiv(0,0) \mod{2}$ and $k$ is even, or when  $(b_1-b_3,b_2-b_3)\equiv(x,y) \mod{2}$ and $k$ is odd. 
\end{rmk}

We can again reduce to the case where $b_3=0$ using \eqref{eq:cusp_index_b3=0}, which gives 
\begin{equation}\label{eq:GY_b3=0}
GY(x, y; b_1, b_2, b_3) = \left(-q^{\frac12}\right)^{-b_3} GY(x, y; b_1 - b_3, b_2 - b_3,0).
\end{equation} 
Further, the symmetries of the relative cusp index given in Remark~\ref{rmk:cusp_index_symmetries} also give corresponding symmetries in the standard cusp's Gang-Yonekura formula; for example 
\begin{equation}\label{eq:GYsymmetry1}
GY(x, y; b_1, b_2, b_3)=GY(-x, -y; b_1, b_2, b_3),
\end{equation}
and 
\begin{equation}\label{eq:Usymmetry}
GY(x, y; b_1, b_2, b_3) 
= GY(-y, x - y; b_3, b_1, b_2)
= GY(y - x, -x; b_2, b_3, b_1) .
\end{equation}

In particular, by the symmetry~\eqref{eqn:cusp_symmetry1}, equation~\eqref{eqn:GY_for_cusp} is equivalent to the compact version 
\begin{equation}\label{eqn:compact_version}
GY(\alpha;\underline{b}) = \sum_{k\in\Z} (-1)^k \left(q^{\frac{k}{2}} \ind_\cusp^{rel}(k \alpha;\underline{b}) - \ind^{rel}_\cusp(k \alpha +2\beta;\underline{b}) \right),
\end{equation}
where as before $\alpha\cdot\beta=\pm1$. 

\medskip 
The starting point for our inductive argument to prove our main result Theorem~\ref{thm:main}, will involve Dehn filling along the curve $\alpha = -\lambda + \mu$. As previously discussed in Section~\ref{sec:LSTs}, this is a degenerate case, corresponding to the layered solid torus LST$(1,1,2)$. We need to choose this layered solid torus as our base case as a consequence of the results in Section~\ref{sec:collar_eff} (see also Appendix~\ref{sec:gang011}); starting from the more `natural' LST$(0,1,1)$ yields either a trivial or non-converging index. 

We use the notation from the start of Section~\ref{sec:attaching_solid_tori}, where $\tri_2$ is obtained from $\tri$ by removing the standard cusp. We label the three edges $e_1,e_2,e_3$ in the boundary of $\tri_2$ to match the notation for the standard cusp $\cusp$ in Figure~\ref{fig:cusp_conventions}.
Then the triangulation $\tri(\alpha)$ is obtained from $\tri_2$ by folding the two triangles in the torus boundary across the edge $e_3$, as in Case 1 of Section~\ref{sec:LSTs}. 
To compute the index of the filled triangulation $\tri(\alpha)$, we take $Q$-normal classes in $\tri_2$ which are compatible with the folding operation; 
this means that the boundary half-edge coefficients $b_1$ and $b_2$ must satisfy $b_1 = b_2$. Then the contribution from the formal Euler characteristic of the edge $e_1 = e_2$ is given by $\left(-q^{\frac12}\right)^{-b_1} =\left(-q^{\frac12}\right)^{-b_2} $. 
We also choose $e_3$ as an excluded edge. 
This leads to the following formula for the index of the filled triangulation $\tri(\alpha)$:
\begin{equation}\label{eqn:tria_alpha_index}
I_{\tri(\alpha)} = \sum_{b_1,b_2 \in \Z} \left(-q^{\frac12}\right)^{-b_1} \delta_{b_1,b_2} \ind^{rel}_{\tri_2}(b_1,b_2,0) .
\end{equation}
Comparing this with the Gluing Theorem~\ref{thm:gluing}, we think of the the factor 
$\left(-q^{\frac12}\right)^{-b_1} \delta_{b_1,b_2}$ as the relative index of the solid torus $\ind^{rel}_{\text{\rm LST}(1,1,2)}(b_1,b_2,0)$. Using equation~\eqref{eq:cusp_index_b3=0}, this also gives 
$$\ind^{rel}_{\text{\rm LST}(1,1,2)}(b_1,b_2,b_3)=\left(-q^{\frac12}\right)^{-b_1} \delta_{b_1,b_2} \text{ for all } (b_1,b_2,b_3)\in \Z^3.$$ (This agrees with the computation in Appendix~\ref{sec:relative_degenerate} using Regina's two tetrahedron triangulation for LST$(1,1,2$.)

\medskip
Therefore to start our induction, we want to show the following 
\begin{thm}\label{thm:new_thm_112}
For all $\underline b = (b_1,b_2,b_3)\in \Z^3$ we have
\begin{equation}\label{eqn:GY_conjecture}
 GY(-\lambda + \mu;\underline{b}) = 
\left(-q^{\frac12}\right)^{-b_1} \delta_{b_1,b_2} = \left(-q^{\frac12}\right)^{-b_2} \delta_{b_1,b_2}. 
\end{equation}
\end{thm}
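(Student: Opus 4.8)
The plan is to reduce Theorem~\ref{thm:new_thm_112} to a statement purely about the tetrahedral index $\itet$ by substituting the explicit formula for $\ind^{rel}_\cusp$ from Proposition~\ref{relcuspindex} into the compact form~\eqref{eqn:compact_version} of $GY$. Taking the filling curve $\alpha = -\lambda + \mu$ and dual curve $\beta = \mu$ (so $\alpha\cdot\beta = \pm1$), we have $k\alpha = -k\lambda + k\mu$ and $k\alpha + 2\beta = -k\lambda + (k+2)\mu$. By the symmetry reduction~\eqref{eq:GY_b3=0} it suffices to treat $b_3 = 0$, writing $b_1, b_2$ for the remaining edge coefficients; by Remark~\ref{parity_of_k} the only surviving terms have $k \equiv b_1 \equiv b_2 \pmod 2$, and we handle the even and odd cases together by keeping the parity of $k$ tied to that of $b_1, b_2$. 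Plugging~\eqref{eqn:cusp_index2} into~\eqref{eqn:compact_version} turns $GY(-\lambda+\mu; b_1, b_2, 0)$ into an infinite sum over $k$ of products of two tetrahedral-index values, with arguments that are affine in $k$; the goal becomes to show this sum collapses to $(-q^{1/2})^{-b_1}\delta_{b_1, b_2}$.

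The next step is to recognize the resulting $k$-sum as an instance of one of the known tetrahedral-index identities. A sum of the shape $\sum_{k} q^{*} \itet(\,\cdot\,, \,\cdot + x_1)\,\itet(\,\cdot\,, \,\cdot + x_2)$ with a linear shift in $k$ is exactly what the quadratic identity~\eqref{eqn:quadratic_identity} governs (when the two $m$-arguments coincide) and what a degenerate case of the pentagon identity~\eqref{eqn:pentagon_identity} governs more generally; I expect the correct tool here to be the quadratic identity, since the right-hand side $\delta_{b_1,b_2}$ has precisely the Kronecker-delta shape produced by~\eqref{eqn:quadratic_identity}. Concretely I would: (i) use the $SL_2(\Z)$ symmetries~\eqref{eqn:tetrahedral_index_symmetries} and~\eqref{eqn:simmetries_Itet} to bring the two $\itet$-factors into a common form $\itet(m, e)\itet(m, e + c)$ where $c$ depends on $b_1 - b_2$ and the summation variable runs over $e$; (ii) track the power of $q$ and the sign $(-1)^k$ and the prefactors $(q^{k/2} + q^{-k/2})$ carefully so that the combination matches the weighting $q^e$ in~\eqref{eqn:quadratic_identity}; (iii) split off the contribution of the $\pm 2\beta$ terms, which should shift the internal argument by $\pm 2$ and thus contribute the $\delta_{c, \pm 2}$-type terms that the three-term relations~\eqref{eqn:index_3term_consecutive} or~\eqref{eqn:index_3term_adjacent} combine with the main term; and (iv) conclude that everything cancels except when $b_1 = b_2$, where the quadratic identity leaves the single monomial $(-q^{1/2})^{-b_1}$ coming from the $(-q^{1/2})^{-\chi}$ normalization.

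The main obstacle I anticipate is bookkeeping rather than conceptual: one must correctly reconcile the half-integer arguments $\tfrac{x - b_1 + b_3}{2}$ appearing in~\eqref{eqn:cusp_index2} with the integrality of the arguments to $\itet$, using the parity constraints, and then match the precise powers of $q^{1/2}$ and signs so that the combination of the three pieces (the $(q^{k/2}+q^{-k/2})\ind^{rel}_\cusp(k\alpha)$ piece and the two $\ind^{rel}_\cusp(k\alpha \pm 2\beta)$ pieces) assembles into exactly one application of~\eqref{eqn:quadratic_identity} plus possibly~\eqref{eqn:index_3term_adjacent}. It is plausible that the cleanest route is instead to first prove a slightly more general identity (a "$GY$ for a single degenerate fold" stated in terms of $\itet$ alone) by combining the pentagon and quadratic identities, and then specialize; in that case the pentagon identity~\eqref{eqn:pentagon_identity} with two of the shift parameters set equal will do the heavy lifting, and the obstacle shifts to verifying that the specialization of the pentagon's right-hand side degenerates to a Kronecker delta. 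Either way, the convergence of the $k$-sum is guaranteed by Lemma~\ref{lem:growth_cusp_index}, since the minimal degree of $\ind^{rel}_\cusp(k\alpha; \underline b)$ grows quadratically in $k$ while the $q^{-k/2}$ factors grow only linearly, so all rearrangements are justified.
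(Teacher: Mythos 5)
Your proposed attack has a genuine gap. After substituting Proposition~\ref{relcuspindex} into~\eqref{eqn:compact_version} with $\alpha = -\lambda+\mu$, the $k$-th summand is a product of the form
$$\itet\Bigl(\tfrac{-k-b_1}{2},\tfrac{-k+b_2}{2}\Bigr)\,\itet\Bigl(\tfrac{k-b_1}{2},\tfrac{k+b_2}{2}\Bigr),$$
in which the summation variable $k$ enters \emph{both} the $m$- and $e$-arguments of \emph{both} factors. This is a \emph{diagonal} sum: in each factor the difference $e-m = \tfrac{b_1+b_2}{2}$ is fixed while $m$ and $e$ drift together with $k$. The quadratic identity~\eqref{eqn:quadratic_identity} and the pentagon identity~\eqref{eqn:pentagon_identity} both require the first argument of every $\itet$-factor to be \emph{fixed} across the sum, i.e.\ they govern ``vertical'' sums, not diagonal ones. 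Moreover, step (i) of your plan — using the symmetries~\eqref{eqn:tetrahedral_index_symmetries} to rotate the diagonal into a vertical — cannot succeed: those symmetries generate only a finite (order-six) group acting on the $(m,e)$-lattice, not all of $\mathrm{SL}_2(\Z)$, and the orbit of the diagonal direction under that finite group never hits the $m=\mathrm{const}$ direction. So there is no reduction to~\eqref{eqn:quadratic_identity} or~\eqref{eqn:pentagon_identity}, and no combination of the three-term relations~\eqref{eqn:index_3term_consecutive}--\eqref{eqn:index_3term_adjacent} is known to close up the telescoping either (contrast Appendix~\ref{sec:gang011}, where $\alpha=\mu$ gives $k\alpha = (0,k)$, so each factor has a \emph{fixed} first argument and the quadratic/three-term identities do apply — that is precisely why the $(0,1,1)$ case admits an elementary proof while this one does not).

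The paper takes a fundamentally different route precisely because of this obstruction. It packages the left-hand side into the diagonal generating functions $\varphi_r(z,q)=\sum_e \itet(e-r,e)z^e$ of~\eqref{eqn:seriesindexshifted}, shows in Proposition~\ref{prop:generating_function_noncompact} that $GY(-\lambda+\mu;\underline b)$ is the coefficient of $(-z)^{b_2}$ in $\varphi_r(zq^{1/2},q)\varphi_r(zq^{-1/2},q)-\varphi_{r+1}(z,q)\varphi_{r-1}(z,q)$, and then proves (Theorem~\ref{thm:trigonometry}) that this entire two-variable function is identically $(zq^{-1/2})^r$. That final step is where the real work sits: $\varphi_r$ is extracted from the meromorphic index~\eqref{eqn:generating_all} by residue calculus (Lemma~\ref{lem:residue_to_0}), identified with a ${}_3\phi_3$ basic hypergeometric series~\eqref{eqn:phi_expression}, rewritten in terms of $q$-trigonometric functions, and killed by the $q$-Pythagoras identity (Lemma~\ref{lem:q-pythagoras}). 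None of this is visible from the quadratic/pentagon toolkit alone; indeed the paper points out that the new linear diagonal relations of Corollary~\ref{cor:linear_diagonal_relations}, which are corollaries of this machinery, are \emph{not} implied by the previously known linear identities for $\itet$. Your observation about convergence via Lemma~\ref{lem:growth_cusp_index} is correct and is implicitly used, but the algebraic core of your plan — recognizing the sum as an instance of~\eqref{eqn:quadratic_identity} or~\eqref{eqn:pentagon_identity} — will not go through.
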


\begin{rmk} Exploiting the symmetries given in equation~\eqref{eq:Usymmetry} we can deduce the following equivalent results: 
\begin{equation}\label{eqn:GY_conjecture2}
 GY(2\lambda + \mu;\underline{b}) = 
\left(-q^{\frac12}\right)^{-b_3} \delta_{b_1,b_3} = \left(-q^{\frac12}\right)^{-b_1} \delta_{b_1,b_3}  ,
\end{equation}
\begin{equation}\label{eqn:GY_conjecture3}
 GY(\lambda + 2\mu;\underline{b}) = 
\left(-q^{\frac12}\right)^{-b_3} \delta_{b_2,b_2} = \left(-q^{\frac12}\right)^{-b_2} \delta_{b_2,b_3} ,
\end{equation}
\end{rmk}

The proof of Theorem~\ref{thm:new_thm_112} turns out to be the main technical hurdle of the paper, and it will occupy Section~\ref{ssec:initial_cusp_formula} and most of Section~\ref{sec:proof}.\\

%&&&&&&&&&&&&&&&&&&&&&&&&&&&&&&&&&&&&&&&&&&&&&&&&&&&&&&&&&&&&&&&&&&&&&&&&&&&&&
%&&&&&&&&&&&&&&&&&&&&&&&&&&&&&&&&&&&&&&&&&&&&&&&&&&&&&&&&&&&&&&&&&&&&&&&&&&&&&

\subsection{Proof of initial cusp index formula}\label{ssec:initial_cusp_formula}~\\

The initial case of the Gang–Yonekura formula for the standard cusp given in Theorem~\ref{thm:new_thm_112} is equivalent, using \eqref{eq:cusp_index_b3=0} and \eqref{eqn:compact_version} with $\alpha = -\lambda + \mu$ and $\beta = -\lambda$, to the following statement. Its proof will occupy the rest of this section, as well as Section~\ref{sec:proof}.

\begin{thm}\label{thm:gang112}
For every $\underline{b} = (b_1,b_2,0) \in \Z^3$
\begin{equation}\label{eqn:compact_for_112}
\sum_{k \in \Z} (-1)^k q^{\frac{k}{2}} \ind^{rel}_\cusp (-k,k;\underline{b}) - (-1)^k\ind^{rel}_\cusp (-k,k-2;\underline{b})
= \left(-q^{\frac12}\right)^{-b_2}\delta_{b_1,b_2}.
\end{equation}
\end{thm}

Rather than tackling the theorem directly, our approach begins with constructing a generating function in the variable $z$, where the coefficient of a specific power of $z$ yields the left-hand side of equation~\eqref{eqn:compact_for_112}.\\

More precisely, for every $r \in \Z$, we  define the generating function
\begin{equation}\label{eqn:seriesindexshifted}
\varphi_r(z,q) = \sum_{e \in \Z}  \itet (e-r,e) z^e. 
\end{equation}
In other words, $\varphi_r(z,q)$ is keeping track of tetrahedral indices arranged along a shifted diagonal in the $\Z^2$ lattice.\\ We will prove functional relations involving $\varphi_r(z,q)$, and prove they imply further relations for $\ind^{rel}_\cusp $, which will ultimately lead to a proof of 
Theorem~\ref{thm:gang112} and Theorem~\ref{thm:new_thm_112}.

\begin{rmk}\label{rmk:symmetry_and_compact_version}
The symmetry $\itet (m,e) = \itet(-e,-m)$ implies that $$\varphi_r (z,q) = z^r \varphi_r (z^{-1},q).$$
More generally, relations for $\itet$ from Section~\ref{sec:tretrahedral_index} imply relations for $\varphi_r (z,q)$.
\end{rmk}

With this in place, we can showcase the main functional equation involving $\varphi_r(z,q)$ we will need in what follows:
\begin{prop}\label{prop:generating_function_noncompact}
The left-hand side of equation~\eqref{eqn:compact_for_112} is the coefficient of $(-z)^{b_2}$ in the following generating function: 
\begin{equation}\label{eqn:generating_function_noncompact}
\varphi_r (z q^{\frac12},q) \varphi_r (z q^{-\frac12},q) - \varphi_{r+1} (z,q) \varphi_{r-1} (z,q),
\end{equation}
where $r = \frac{b_1 + b_2}2$ and $b_1 \equiv b_2\mod{2}$.
\end{prop}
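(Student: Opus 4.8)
The plan is to rewrite each of the three sums on the left-hand side of~\eqref{eqn:generalised_sum} as the coefficient of a single power of $z$ in a product of two of the diagonal generating functions~\eqref{eqn:seriesindexshifted}. We may assume $b_1\equiv b_2\pmod 2$, so that $r=\frac{b_1+b_2}{2}\in\Z$ and $\varphi_r,\varphi_{r\pm1}$ are defined: if $b_1\not\equiv b_2\pmod2$ then by Proposition~\ref{relcuspindex} each of $\ind^{rel}_\cusp(-k,k;\underline b)$ and $\ind^{rel}_\cusp(-k\pm2,k;\underline b)$ vanishes for every $k$, and there is nothing to prove. Here $\underline b=(b_1,b_2)$, that is $b_3=0$, so by~\eqref{eqn:cusp_index2}
$$\ind^{rel}_\cusp(x,y;b_1,b_2,0)=\itet\Big(\tfrac{x-b_1}{2},\tfrac{b_2-y}{2}\Big)\,\itet\Big(\tfrac{-x-b_1}{2},\tfrac{b_2+y}{2}\Big),$$
which vanishes unless $x\equiv b_1$ and $y\equiv b_2\pmod2$. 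The key point is that for $y=k$ and $x\in\{-k,\,-k-2,\,-k+2\}$ both tetrahedral factors are of the form occurring in the summand of~\eqref{eqn:seriesindexshifted} for a suitable value of the diagonal index: putting $e=\frac{b_2-k}{2}$ and $e'=\frac{b_2+k}{2}$ (so $e+e'=b_2$ and $e'-e=k$), a direct computation from the displayed formula gives
$$\ind^{rel}_\cusp(-k,k;\underline b)=\itet(e-r,e)\,\itet(e'-r,e'),$$
$$\ind^{rel}_\cusp(-k-2,k;\underline b)=\itet\big(e-(r+1),e\big)\,\itet\big(e'-(r-1),e'\big),$$
$$\ind^{rel}_\cusp(-k+2,k;\underline b)=\itet\big(e-(r-1),e\big)\,\itet\big(e'-(r+1),e'\big).$$

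Next I would extract coefficients. As $(e,e')$ ranges over all integer pairs with $e+e'=b_2$, the difference $k=e'-e$ runs over exactly the integers with $k\equiv b_2\pmod2$, which are precisely the $k$ for which the three quantities above are non-zero. Comparing with the definition~\eqref{eqn:seriesindexshifted} and using the Cauchy product of Laurent series then gives
$$\sum_{k\in\Z}\ind^{rel}_\cusp(-k-2,k;\underline b)=\big[\varphi_{r+1}(z,q)\,\varphi_{r-1}(z,q)\big]_{z^{b_2}}=\sum_{k\in\Z}\ind^{rel}_\cusp(-k+2,k;\underline b),$$
the second equality because multiplication of Laurent series is commutative. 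For the remaining sum, the substitutions $z\mapsto zq^{\pm1/2}$ contribute the monomial $(zq^{1/2})^{e}(zq^{-1/2})^{e'}=z^{e+e'}q^{(e-e')/2}$, i.e.\ a weight $q^{-k/2}$, and symmetrising the resulting Cauchy product under $e\leftrightarrow e'$ replaces this by $\frac12\big(q^{k/2}+q^{-k/2}\big)$; hence
$$\big[\varphi_r(zq^{1/2},q)\,\varphi_r(zq^{-1/2},q)\big]_{z^{b_2}}=\tfrac12\sum_{k\in\Z}\big(q^{k/2}+q^{-k/2}\big)\,\ind^{rel}_\cusp(-k,k;\underline b).$$
All these manipulations are legitimate because the $q^{1/2}$-degree of $\itet(m,e)$ grows quadratically in $(m,e)$ (see~\cite[Sec.~4.2]{garoufalidis20163d}, used already in Lemma~\ref{lem:growth_cusp_index}), so each coefficient of a power of $z$ in each of these products is a well-defined element of $\Z(\!(q^{1/2})\!)$ and the reindexings converge $q$-adically.

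Finally I would assemble the pieces. Since every non-zero term in the sum~\eqref{eqn:generalised_sum} has $k\equiv b_2\pmod2$, the factor $(-1)^k$ may be replaced throughout by $(-1)^{b_2}$; combining the three identities above then shows that the left-hand side of~\eqref{eqn:generalised_sum} equals
$$(-1)^{b_2}\Big[\varphi_r(zq^{1/2},q)\,\varphi_r(zq^{-1/2},q)-\varphi_{r+1}(z,q)\,\varphi_{r-1}(z,q)\Big]_{z^{b_2}},$$
which is exactly the coefficient of $(-z)^{b_2}$ in $\varphi_r(zq^{1/2},q)\varphi_r(zq^{-1/2},q)-\varphi_{r+1}(z,q)\varphi_{r-1}(z,q)$, as claimed. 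I do not expect a genuine obstacle in this proposition: its content is the bookkeeping of diagonal shifts and parities sketched above, together with a routine convergence check. The hard part is what comes next --- establishing the functional equations satisfied by the $\varphi_r$ and using them, via Garoufalidis--Kashaev's meromorphic extension of the index and $q$-hypergeometric identities, to evaluate the generating function~\eqref{eqn:generating_function_noncompact} and thereby prove Theorem~\ref{thm:gang112}.
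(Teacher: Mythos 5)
Your proof is correct and follows essentially the same approach as the paper's: both identify the three terms on the left of~\eqref{eqn:generalised_sum} with coefficients of $z^{b_2}$ in products of two copies of the diagonal generating function $\varphi_r$, using the substitutions $z\mapsto zq^{\pm1/2}$ to account for the $q^{\pm k/2}$ factors. The only cosmetic difference is that the paper first rewrites the sum in its ``compact'' form~\eqref{eqn:compact_version} (using the symmetry $\ind^{rel}_\cusp(x,y;\underline b)=\ind^{rel}_\cusp(-x,-y;\underline b)$) and then identifies it with a specialisation of a double sum $\varphi_r(z_1,q)\varphi_r(z_2,q)$, whereas you keep the symmetric three-term form and observe directly that the Cauchy product, being invariant under $e\leftrightarrow e'$, reproduces the symmetric weight $\tfrac12(q^{k/2}+q^{-k/2})$; the two routes are equivalent.
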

\begin{proof}

Consider the left-hand side of equation~\eqref{eqn:compact_for_112}.
By Remark~\ref{parity_of_k}, the only non-zero terms occur when $(b_1,b_2) \mod{2} $ is a multiple of $(-1,1) \mod{2}$, so there are two cases to consider:
\begin{itemize}
\item For $b_1\equiv b_2 \equiv 0 \mod{2}$ the non-zero terms have $k$ even and the sign $(-1)^k$ is always $+1$;
\item For $b_1 \equiv b_2\equiv 1 \mod{2}$ the non-zero terms have $k$ odd and the sign $(-1)^k$ is always $-1$. 
\end{itemize}
So we can write the left-hand side of equation~\eqref{eqn:compact_for_112} explicitly as 
\begin{align*}\label{eqn:explicit_compact_for_112}
\,\,& 
\varepsilon(\underline{b})\left[\, \sum_{\substack{k \in \Z\\ k\equiv b_1 \equiv b_2 \mod{2}}} q^{\frac{k}{2}} \itet \left(\frac{k-b_1}{2},\frac{k+b_2}{2}\right)\itet \left(\frac{-k-b_1}{2},\frac{-k+b_2}{2}\right) \right.\\ & \hspace{1cm} \left.-   \itet \left(\frac{k-b_1+2}{2},\frac{k+b_2}{2}\right) \itet \left(\frac{-k-b_1-2}{2},\frac{-k+b_2}{2}\right) \right],
\end{align*}
where $\varepsilon(\underline{b})$ is $1$ when $b_1\equiv b_2 \equiv 0 \mod{2}$,  $-1$ if $b_1 \equiv b_2\equiv 1 \mod{2}$, and $0$ otherwise.\\
Observe that $\varepsilon(\underline{b})=(-1)^{b_2}$ if $b_1\equiv b_2 \mod{2}$, and is $0$ otherwise.\\

Now we note that
$$\itet \left(\frac{k-b_1}{2},\frac{k+b_2}{2}\right) =\itet(e-r,e) \,\,\text{ and } \,\,
\itet \left(\frac{-k-b_1}{2},\frac{-k+b_2}{2}\right) =\itet(e'-r,e'),$$ 
where $e=\frac{k+b_2}{2}$, $e'=\frac{-k+b_2}{2}$ and $r=\frac{b_1+b_2}{2}$ are integers. \\
Further, $e$ and $e'$ both vary over all of $\Z$ when $k$ varies over the integers satisfying $k \equiv b_2 \mod{2}$.

It follows from~\eqref{eqn:seriesindexshifted} that the double sum
\begin{gather*}
\sum_{\substack{k_1,k_2 \in \Z\\ k_1\equiv k_2 \equiv b_2 \mod{2}}}
\itet \left(\frac{k_1-b_1}{2},\frac{k_1+b_2}{2}\right) \itet \left(\frac{-k_2-b_1}{2},\frac{-k_2+b_2}{2}\right) z_1^{\frac{k_1+b_2}{2}} z_2^{\frac{-k_2+b_2}{2}}
\end{gather*}
coincides with the product of generating functions  $$\varphi_r (z_1,q) \varphi_r (z_2,q).$$
Therefore the evaluation $z_1 = q^{\frac12} z$ and $z_2 = q^{-\frac12} z$ yields
\begin{gather}\label{eqn:double_sum1}
z^{b_2}
\sum_{\substack{k_1,k_2 \in \Z\\ k_1\equiv k_2 \equiv b_2 \mod{2}}}
q^{\frac{k_1+k_2}{4}} \itet \left(\frac{k_1-b_1}{2},\frac{k_1+b_2}{2}\right) \itet \left(\frac{-k_2-b_1}{2},\frac{-k_2+b_2}{2}\right) z^{\frac{k_1-k_2}{2}}.
\end{gather}
Similarly, the product of generating functions  
$$\varphi_{r-1}(z_1,q) \varphi_{r+1}(z_2,q)$$
gives the double sum
\begin{gather*}
\sum_{\substack{k_1,k_2 \in \Z\\ k_1\equiv k_2 \equiv b_2 \mod{2}}}
\itet \left(\frac{k_1-b_1+2}{2},\frac{k_1+b_2}{2}\right) \itet \left(\frac{-k_2-b_1-2}{2},\frac{-k_2+b_2}{2}\right) z_1^{\frac{k_1+b_2}{2}} z_2^{\frac{-k_2+b_2}{2}}.
\end{gather*}
This time, using the specialisation $z_1 = z_2 = z$ in the double sum gives
\begin{gather}\label{eqn:double_sum2}
z^{b_2}
\sum_{\substack{k_1,k_2 \in \Z\\ k_1\equiv k_2 \equiv b_2 \mod{2}}}
\itet \left(\frac{k_1-b_1+2}{2},\frac{k_1+b_2}{2}\right) \itet \left(\frac{-k_2-b_1-2}{2},\frac{-k_2+b_2}{2}\right) z^{\frac{k_1 -k_2}{2}}.
\end{gather}
Putting everything together, the left-hand side of equation~\eqref{eqn:compact_for_112} is $\varepsilon(\underline{b})$ times the coefficient of $z^{b_2}$ in the generating function $$\varphi_r (z q^{\frac12},q) \varphi_r (z q^{-\frac12},q) - \varphi_{r+1} (z,q) \varphi_{r-1} (z,q).$$
This coefficient occurs when we set $k_1 = k_2$ in the double sums~\eqref{eqn:double_sum1} and~~\eqref{eqn:double_sum2}. 
\end{proof}

We conclude the proof of Theorem~\ref{thm:gang112} by using the following result, showing that the whole generating function in Proposition~\ref{prop:generating_function_noncompact} is somewhat trivial. In particular this implies our main result.  The proof of this result will occupy most of Section~\ref{sec:proof}.

\begin{thm}\label{thm:trigonometry}
For every $r \in \Z$
\begin{equation}\label{eqn:generating_function_noncompact_equality}
\varphi_r (z q^{\frac12},q) \varphi_r (z q^{-\frac12},q) - \varphi_{r+1} (z,q) \varphi_{r-1} (z,q) \equiv \left(z q^{-\frac12}\right)^{r}.
\end{equation}
\end{thm}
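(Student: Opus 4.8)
\medskip
\noindent\textit{Proof idea.} The proof rests on the observation that $\varphi_r$ \emph{factorises}. Reorganising the double sum defining $\itet$ and writing $k=n+e$, one gets (in a suitably completed coefficient ring, convergence holding because the $q$-valuation of the $(n,k)$-summand tends to $+\infty$ when $k-n$ is kept fixed)
\begin{equation*}
\varphi_r(z)=\sum_{n,k\ge 0}(-1)^{n}\,\frac{q^{\,n^{2}+\frac{n(1+r)}{2}-\frac{k(k-r)}{2}}}{(q;q)_{n}\,(q;q)_{k}}\,z^{\,k-n}=A_r(z)\,B_r(z),\qquad A_r(z)=\sum_{n\ge 0}\frac{(-1)^{n}q^{\,n^{2}+\frac{n(1+r)}{2}}}{(q;q)_{n}}\,z^{-n},\quad B_r(z)=\sum_{k\ge 0}\frac{q^{-\frac{k(k-r)}{2}}}{(q;q)_{k}}\,z^{k}.
\end{equation*}
Since $A_r$ is a power series in $z^{-1}$ and $B_r$ one in $z$, each a function of a single monomial argument, the behaviour under $z\mapsto zq^{\pm1/2}$ is transparent: $A_r(zq^{\pm1/2})=A_{r\mp1}(z)$ and $B_r(zq^{\pm1/2})=B_{r\pm1}(z)$. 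First I would record these identities together with the elementary ratios $[A_{r+1}]_{z^{-n}}=q^{n}[A_{r-1}]_{z^{-n}}$ and $[B_{r-1}]_{z^{k}}=q^{-k}[B_{r+1}]_{z^{k}}$. (One could equally well replace the factorisation by the contour-integral representation of $\varphi_r$ arising from the Garoufalidis--Kashaev meromorphic extension~\cite{garoufalidis2019meromorphic}, which has the same formal features; this is the route that ultimately makes the hard step tractable.)

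From the shift relations, $\varphi_r(zq^{1/2})=A_{r-1}(z)B_{r+1}(z)$ and $\varphi_r(zq^{-1/2})=A_{r+1}(z)B_{r-1}(z)$, so \emph{both} bilinear terms on the left of~\eqref{eqn:generating_function_noncompact_equality} are built out of the \emph{same} four series $A_{r\pm1}$, $B_{r\pm1}$, in two different groupings; naively these groupings would cancel, but the product $A_{r-1}A_{r+1}B_{r-1}B_{r+1}$ does not converge, so a genuine residual survives. To isolate it, I would expand $[\varphi_r(zq^{1/2})\varphi_r(zq^{-1/2})-\varphi_{r+1}\varphi_{r-1}]_{z^{N}}$ as a quadruple sum over $n_1,n_2,k_1,k_2\ge 0$ with $k_1+k_2-n_1-n_2=N$; using the ratios above, the two groupings differ only through an antisymmetrising factor, and after symmetrising under $(n_1,k_1)\leftrightarrow(n_2,k_2)$ one obtains
\begin{equation*}
\bigl[\varphi_r(zq^{1/2})\varphi_r(zq^{-1/2})-\varphi_{r+1}\varphi_{r-1}\bigr]_{z^{N}}=\frac12\sum_{\substack{n_1,n_2,k_1,k_2\ge 0\\ k_1+k_2-n_1-n_2=N}}(q^{n_2}-q^{n_1})(q^{k_1}-q^{k_2})\,\frac{(-1)^{n_1+n_2}\,q^{\,n_1^{2}+n_2^{2}+\frac{(n_1+n_2)r}{2}-\frac{k_1(k_1-r+1)}{2}-\frac{k_2(k_2-r+1)}{2}}}{(q;q)_{n_1}(q;q)_{n_2}(q;q)_{k_1}(q;q)_{k_2}}.
\end{equation*}
The factor $(q^{n_2}-q^{n_1})(q^{k_1}-q^{k_2})$ vanishes on both diagonals $n_1=n_2$ and $k_1=k_2$, and this double antisymmetry is what reduces the sum to near-complete cancellation.

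The remaining step, which is the genuinely hard one, is to evaluate this double $q$-hypergeometric sum and show that it equals $\delta_{N,r}\,q^{-r/2}$. The plan here is to telescope: expanding $q^{n_2}-q^{n_1}=(1-q^{n_1})-(1-q^{n_2})$ and likewise in $k$, using $\tfrac{1-q^{m}}{(q;q)_{m}}=\tfrac1{(q;q)_{m-1}}$ to lower the Pochhammer indices, and re-indexing, the terms organise into pairs that cancel, leaving a boundary contribution that is in turn resummed by a classical $q$-summation (the $q$-binomial/Cauchy identity, together with the quadratic and pentagon relations~\eqref{eqn:quadratic_identity}--\eqref{eqn:pentagon_identity} for $\itet$); the net outcome is that everything collapses to a single term, located exactly at $N=r$. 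The auxiliary $q$-series identities that surface along the way are precisely Theorems~\ref{thm:new_thm_112}, \ref{thm:gang112} and Corollary~\ref{cor:linear_diagonal_relations}. I would also use Remark~\ref{rmk:symmetry_and_compact_version}: since $\varphi_r(z)=z^{r}\varphi_r(z^{-1})$, the left-hand side $D_r(z)$ of~\eqref{eqn:generating_function_noncompact_equality} satisfies $D_r(z^{-1})=z^{-2r}D_r(z)$, so once the telescoping shows that $D_r$ is supported in a single power of $z$, it must have the form $c_r z^{r}$, with the constant $c_r=q^{-r/2}$ falling out of the same computation. The whole difficulty is concentrated in this telescoping--resummation step; everything else is bookkeeping with the factorisation and the $z\mapsto z^{-1}$ symmetry.
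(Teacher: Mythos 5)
Your opening manoeuvre is correct and rather elegant: writing $\varphi_r$ as the double sum over $(n,k)$ with $k=n+e$ does produce the ``factorisation'' $\varphi_r=A_rB_r$ (valid in the sense that the inner $n$-sum for each fixed power of $z$ converges $q$-adically), the shift relations $A_r(zq^{\pm1/2})=A_{r\mp1}(z)$, $B_r(zq^{\pm1/2})=B_{r\pm1}(z)$ and the monomial ratios $[A_{r+1}]_{z^{-n}}=q^n[A_{r-1}]_{z^{-n}}$, $[B_{r-1}]_{z^k}=q^{-k}[B_{r+1}]_{z^k}$ are all verified by direct computation, and the resulting antisymmetrised double sum is a correct rewriting of $[\text{LHS}]_{z^N}$. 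That is a genuinely different point of departure from the paper, which never manipulates $\itet$ directly but instead obtains closed $_3\phi_3$ expressions for $\varphi_r$ by extracting diagonal residues from the Garoufalidis--Kashaev meromorphic generating function $\psi^0(z,w)$, and then recognises the left-hand side of \eqref{eqn:generating_function_noncompact_equality} as the product of two $q$-Pythagoras identities (Lemma~\ref{lem:q-pythagoras}), each equal to $1$. Yours is the ``elementary bilinear form'' route; the paper's is the ``special functions'' route. If the elementary route worked, it would be shorter and avoid the whole residue machinery of Section~\ref{sec:proof}.

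The gap is in the final step, and it is not a small one. You reduce the problem to showing that the antisymmetrised quadruple sum equals $\delta_{N,r}q^{-r/2}$, and then say the plan is to telescope using $\frac{1-q^m}{(q;q)_m}=\frac{1}{(q;q)_{m-1}}$ and resum with classical $q$-identities, with ``the auxiliary $q$-series identities that surface along the way \ldots precisely Theorems~\ref{thm:new_thm_112}, \ref{thm:gang112} and Corollary~\ref{cor:linear_diagonal_relations}.'' That is circular: in the paper, Theorems~\ref{thm:new_thm_112} and~\ref{thm:gang112} are derived \emph{from} Theorem~\ref{thm:trigonometry} via Proposition~\ref{prop:generating_function_noncompact}, and Corollary~\ref{cor:linear_diagonal_relations} is a byproduct of the very $_3\phi_3$ expressions the student route is trying to avoid. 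You cannot invoke them here. The $z\mapsto z^{-1}$ symmetry argument you offer is a consistency check, not a substitute: it only pins the support to $z^r$ \emph{after} one already knows the answer is a single monomial, and it does not produce the coefficient $q^{-r/2}$. Finally, a technical point deserving attention: the antisymmetrised quadruple sum is not absolutely convergent for $N\ne r$ (in the direction $k_1\to\infty$, $k_2=0$, $n_1\approx n_2\approx(k_1-N)/2$ the $q$-degree of the summand behaves like $k_1(r-N)$, which tends to $-\infty$ when $N>r$), so the ``rearrangement into a single quadruple sum'' is a degree-by-degree statement about finite sums and must be handled as such; any telescoping would have to respect that, which makes the argument significantly more delicate than the sketch suggests. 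Until the telescoping--resummation is actually carried out without appealing to the theorem's own consequences, the proposal is an incomplete (if promising) alternative, and the paper's residue plus $q$-Pythagoras argument remains the only complete proof.
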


Using Theorem~\ref{thm:trigonometry}, we can conclude that \eqref{eqn:generating_function_noncompact} is $\varepsilon(\underline{b})$ times the coefficient of $z^{b_2}$ in  $\left(z q^{-\frac12}\right)^{r}$, which is $\varepsilon(\underline{b})  \left(q^{-\frac12}\right)^{r} = (-1)^{b_2} \left(q^{-\frac12}\right)^{b_2}$ if $r=\frac{b_1+b_2}{2}=b_2$, \textit{i.e.}~$b_1=b_2$, and $0$ if $b_1\ne b_2$. This completes the proofs of Theorem~\ref{thm:gang112} and Theorem~\ref{thm:new_thm_112}.\qed
\bigskip

%&&&&&&&&&&&&&&&&&&&&&&&&&&&&&&&&&&&&&&&&&&&&&&&&&&&&&&&&&&&&&&&&&&&&&&&&&&&&&
%&&&&&&&&&&&&&&&&&&&&&&&&&&&&&&&&&&&&&&&&&&&&&&&&&&&&&&&&&&&&&&&&&&&&&&&&&&&&&

\subsection{Relative cusp index and boundary curves}\label{ssec:}~\\

The aim of this section is to describe how the relative cusp index changes as peripheral curves are varied. This will be used to establish the inductive step in the proof of Theorem~\ref{thm:main}.\\~\\ 
Consider the two matrices in $\mathrm{SL}_2(\Z)$ given by $$R = \left(\begin{array}{cc}1 & 0 \\
1 & 1\end{array}\right),\,\,\,L = \left(\begin{array}{cc}1 & 1 \\
0 & 1\end{array}\right).$$ 
A word $w$ in the alphabet $\{R,L\}$ gives a matrix $A_w = \left(\begin{array}{cc}a & b \\
c & d\end{array}\right) \in \mathrm{SL}_2(\Z)$, as in Section~\ref{sec:LSTs}.

We have a right action of $\mathrm{SL}_2(\Z)$ on the peripheral curves: $\alpha\mapsto \alpha\cdot w=w^{-1}\alpha$ where we represent the homology class $x\lambda+y\mu$ by its coordinate vector $\begin{pmatrix} x\\ y\end{pmatrix}$ with respect to the basis $\lambda,\mu$. This gives a corresponding action on the relative cusp index, where the effect of multiplying by $R$ or $L$ is given by attaching a single tetrahedron via a $R$ or $L$ move. Note that multiplication by the -Id matrix has no effect on the index, given the symmetry~\eqref{eqn:cusp_symmetry1}.

We can now write down the inductive step in the proof of Gang's relative formula for the standard cusp; again, we need to distinguish the right\slash left cases.

\begin{prop}\label{prop:inductive_step_cusp} 
Performing an $R$ move, so that $w \mapsto w\cdot R$, changes the curves and  the relative cusp index as follows: $$\alpha = \begin{pmatrix} x\\ y\end{pmatrix} \mapsto R^{-1} \alpha = \begin{pmatrix}x\\ -x+y\end{pmatrix} = \alpha',$$ 
\begin{equation}\label{enq:index_cusp_R_new}
\begin{gathered}
I_{\cusp}^{rel} (\alpha';b_1,b_2,b_3) = I_{\cusp}^{rel}  (R^{-1}\alpha;b_1,b_2,b_3) \\=  \sum_{\substack{k \in \Z\\k\equiv b_3 \mod{2}}} \left(-q^{\frac12}\right)^k \jtet\left(b_1,b_2,\frac{b_3+k}{2}\right) I_{\cusp}^{rel}(\alpha;k,b_2,b_1).
\end{gathered}
\end{equation}

Performing an $L$ move changes the curves and  the relative cusp index as follows: $w \mapsto w\cdot L$, $$\alpha = \begin{pmatrix} x\\ y\end{pmatrix}\mapsto L^{-1} \alpha = \begin{pmatrix} x-y\\y \end{pmatrix} = \alpha',$$
\begin{equation}\label{enq:index_cusp_L_new}
\begin{gathered}
I_{\cusp}^{rel} (\alpha';b_1,b_2,b_3) = I_{\cusp}^{rel}  (L^{-1}\alpha;b_1,b_2,b_3) \\=  \sum_{\substack{k \in \Z\\k\equiv b_3 \mod{2}}} \left(-q^{\frac12}\right)^k \jtet\left(b_1,b_2,\frac{b_3+k}{2}\right) I_{\cusp}^{rel}(\alpha;b_1,k,b_2).
\end{gathered}
\end{equation}
\end{prop}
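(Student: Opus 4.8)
The plan is to reduce~\eqref{enq:index_cusp_R_new} (and its $L$-analogue) to the pentagon identity~\eqref{eqn:pentagon_identity} by substituting the closed form for the relative cusp index from Proposition~\ref{relcuspindex}. Conceptually the two sides differ by layering a single tetrahedron onto the punctured-torus boundary of the standard cusp: performing the $R$ move glues a new tetrahedron $\Delta$ across the boundary edge $e_1$, so that $e_1$ becomes internal, two new faces of $\Delta$ form the new boundary torus, and the boundary triangulation changes by the $R^{-1}$-action on the $\lambda,\mu$-coordinates recorded in Section~\ref{sec:LSTs}, turning the filling class $\alpha=(x,y)$ into $\alpha'=R^{-1}\alpha=(x,-x+y)$. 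Applying the Gluing Theorem~\ref{thm:gluing} to the decomposition of $\cusp\cup\Delta$ into $\cusp$ and $\Delta$ along $\partial\cusp$ expresses the left-hand side as a sum over the single free half-edge coefficient $k$ assigned to $e_1$, of $(-q^{1/2})^{-\chi_0}$ (which equals $(-q^{1/2})^{k}$) times $\ind^{rel}_\cusp$ with the old boundary data $(k,b_2,b_1)$, times the relative index of $\Delta$; the latter is a single value $\jtet(b_1,b_2,\frac{b_3+k}{2})$ of the tetrahedral index, since a $Q$-normal class of one tetrahedron is pinned down modulo the tetrahedral solution $(1,1,1)$ by its three surrounding half-edge weights. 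The parity condition $k\equiv b_3\bmod 2$ is then forced by the half-integer normalisation in Proposition~\ref{relcuspindex}.

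To make this rigorous without relying on the geometric picture, I would instead verify~\eqref{enq:index_cusp_R_new} algebraically. Substituting the product-of-two-tetrahedral-indices formula from Proposition~\ref{relcuspindex} for $\ind^{rel}_\cusp(\alpha;k,b_2,b_1)$ and rewriting $\jtet(b_1,b_2,\frac{b_3+k}{2})=(-q^{1/2})^{-(b_3+k)/2}\itet(\cdots)$ via~\eqref{eqn:simmetries_Itet}, the right-hand side becomes a single sum over $k$ of a product of three tetrahedral indices weighted by a power of $q$ that is linear in $k$; reindexing $k=2e+c$ with $c\in\{0,1\}$ chosen by the parity of $b_3$ (so that $e$ ranges over all of $\Z$) and matching the shift parameters $x_1,x_2,x_3$ and the parameters $m_1,m_2$, this is precisely an instance of the pentagon identity~\eqref{eqn:pentagon_identity}, whose right-hand side is a product of two tetrahedral indices equal, again via~\eqref{eqn:simmetries_Itet} and~\eqref{eqn_add_tets}, to $\ind^{rel}_\cusp(\alpha';b_1,b_2,b_3)$.

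The main obstacle is not any new identity but the meticulous accounting: correctly tracking which edge of $\cusp$ is glued to which edge of $\Delta$, how the peripheral coordinates transform under $R^{-1}$ (respectively $L^{-1}$), and reconciling every $(-q^{1/2})$-prefactor, index shift and parity constraint so that the pentagon identity applies with no residual factors. The $L$ move is handled identically, layering across $e_2$ rather than $e_1$; alternatively it follows from the $R$ case together with the symmetry~\eqref{eqn:cusp_symmetry3} of the relative cusp index, which permutes the boundary edges and interchanges the two layering moves.
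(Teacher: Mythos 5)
Your proposal is correct and follows essentially the same route as the paper: both open with the layering-of-a-tetrahedron picture as motivation, then substitute the closed form from Proposition~\ref{relcuspindex}, convert $\jtet$ to $\itet$ via~\eqref{eqn:simmetries_Itet}, reindex over the parity class of $k$, and identify the resulting single sum as an instance of the pentagon identity~\eqref{eqn:pentagon_identity}. Your remark that the $L$ case can alternatively be deduced from the $R$ case via the $U$-symmetry~\eqref{eqn:cusp_symmetry3} is a small shortcut the paper does not take (it just says the $L$ proof is ``similar''), but it is sound.
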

\begin{proof}

\begin{figure}[!htbp]
\centering
\includegraphics[width=0.7\linewidth]{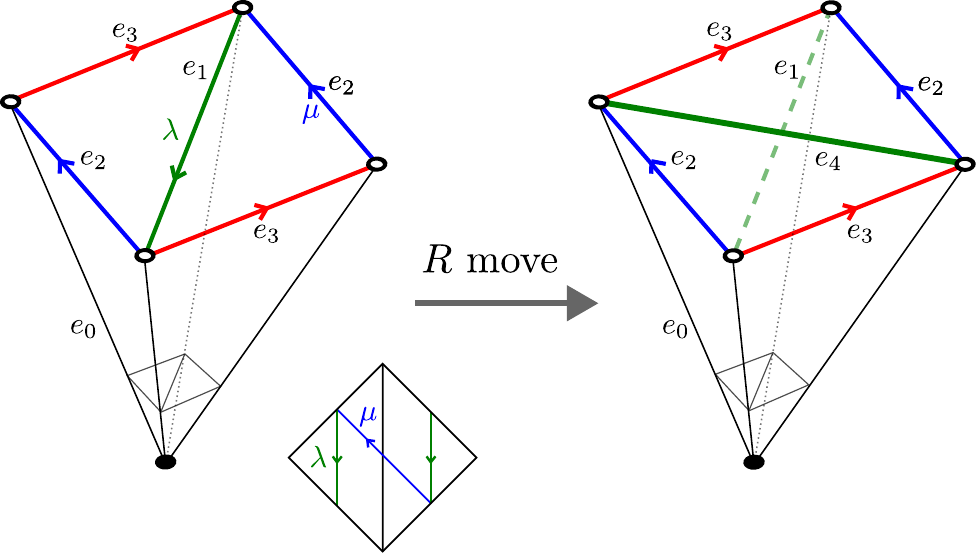}
\caption{Layering a tetrahedron on top of the standard cusp $\cusp$ to form a new triangulation $\cusp'$.}
\label{fig:cusp_standard_layer}
\end{figure}

We will prove equation~\eqref{enq:index_cusp_R_new}; the proof of equation~\eqref{enq:index_cusp_L_new} is similar. 
The intuitive idea of the proof is to first ``expand'' the standard cusp $\cusp$, by adding a new tetrahedron via an $R$ move to get a triangulation $\cusp'$, as shown in Figure~\ref{fig:cusp_standard_layer}. Computing the index for $\cusp'$ gives the right-hand side of equation~\eqref{enq:index_cusp_R_new}. We then apply the pentagon identity to $\cusp'$ to get the left-hand side of the equation.\\

We first study the right-hand side of equation~\eqref{enq:index_cusp_R_new}. From equation~\eqref{eqn:contribution_of_S} with $t_1 = t_2 = \epsilon \equiv b_3 \mod{2}$, we have 
\begin{equation}
\label{eqn:index_cusp_v0}
\begin{gathered}
\ind^{rel}_{\cusp} (x,y;b_1,b_2,b_3) \\= \left(-q^{\frac12}\right)^{\epsilon}\jtet\left( \frac{ b_1 + \epsilon-x}{2}, \frac{b_2 + \epsilon  - y}{2}, \frac{b_3 + \epsilon}{2}\right) \jtet\left( \frac{ b_1 + \epsilon+x}{2}, \frac{b_2 + \epsilon +y}{2}, \frac{b_3 + \epsilon}{2}\right).
\end{gathered}
\end{equation}

For integers $k \equiv b_3 \mod{2}$, we can write $k = 2k' + \epsilon$, where $k' \in \Z$ and $\epsilon \equiv b_3 \mod{2}$. Using this substitution and equation~\eqref{eqn:index_cusp_v0}, the right-hand side of equation~\eqref{enq:index_cusp_R_new}
can be written as
\begin{equation*}
\begin{gathered}
\sum_{k' \in \Z} \left(-q^{\frac12}\right)^{\epsilon} q^{k'} \jtet\left(b_1,b_2,\frac{b_3+\epsilon}{2} + k'\right) \ind_{\cusp}^{rel}(x,y;2k'+\epsilon,b_2,b_1) \\=
\sum_{k' \in \Z} q^{\epsilon} q^{k'} \jtet\left(b_1,b_2,\frac{b_3+\epsilon}{2} + k'\right) \jtet\left(k' + \epsilon + \frac{x}2, \frac{b_2 + \epsilon + y}2, \frac{b_1 + \epsilon}{2}\right) \\\jtet\left(k' + \epsilon - \frac{x}2, \frac{b_2 + \epsilon - y}2, \frac{b_1 + \epsilon}{2}\right)\\ = \sum_{k' \in \Z} q^{\epsilon} q^{k'}  \jtet\left(k' + \epsilon + \frac{x}2, \frac{b_2 + \epsilon + y}2, \frac{b_1 + \epsilon}{2}\right) \\\jtet\left(k' + \epsilon - \frac{x}2, \frac{b_2 + \epsilon - y}2, \frac{b_1 + \epsilon}{2}\right) \jtet\left(\frac{b_3+\epsilon}{2} + k', b_2,b_1\right).
\end{gathered}
\end{equation*}

By applying the first part of equation~\eqref{eqn:simmetries_Itet} to each term in the right-hand side, this becomes
\begin{equation*}
\begin{gathered}
\left(-q^{\frac12}\right)^{\epsilon}\sum_{k' \in \Z} q^{k'}  q^{-b_2}\itet\left(\frac{b_2 -b_1 + y}{2},k' + \frac{x - y -b_2 + \epsilon}{2}\right) \\\itet\left(\frac{b_2 -b_1 - y}{2},k' + \frac{-x + y -b_2 + \epsilon}{2}\right) \itet\left(b_2 -b_1,k' + \frac{b_3 -2b_2+ \epsilon}{2}\right).\\ 
\end{gathered}
\end{equation*}
Let us tidy up the variables by putting 
\begin{gather*}
e = k',\; m_1 = \frac{b_2 -b_1+y}{2},\; m_2 = \frac{b_2 - b_1 -y}{2},\;m_1 + m_2 = b_2 - b_1,\\\; x_1 = \frac{x-y-b_2 + \epsilon}{2},\; x_2 = \frac{-x + y -b_2 +\epsilon}{2},\; x_3 = \frac{b_3 -2b_2 + \epsilon}{2}.
\end{gather*} 
We can then use the pentagon relation in equation~\eqref{eqn:pentagon_identity} to obtain 
\begin{equation*}
\begin{aligned}
q^{-b_2}\left(-q^{\frac12}\right)^{\epsilon} \sum_{e \in \Z} q^{e} \itet (m_1, e +x_1) \itet (m_2, e +x_2) \itet (m_1+m_2, e+x_3)  \\=
q^{-b_2 } \left(-q^{\frac12}\right)^{\epsilon} q^{-x_3}\itet (m_1 - x_2 +x_3, x_1 -x_3) \itet (m_2 - x_1 + x_3, x_2 -x_3), \end{aligned}
\end{equation*}
Switching back the variables shows the right-hand side of equation~\eqref{enq:index_cusp_R_new} can be written as
\begin{equation}\label{eqn:proof_eqn}
\left(-q^{\frac12}\right)^{-b_3}  \itet \left(\frac{b_3 - b_1 + x}2 , \frac{b_2 -b_3 +x - y}2\right) \itet \left(\frac{b_3 -b_1 -x_1}2, \frac{b_2 - b_3 -x +y}2\right).
\end{equation}

Meanwhile, the left-hand side of 
equation~\eqref{enq:index_cusp_R_new} is 
\begin{equation}
\label{eqn:index_cusp_R_2}
\begin{gathered}
\ind^{rel}_{\cusp} (x,-x+y;b_1,b_2,b_3) \\= \left(-q^{\frac12}\right)^{\epsilon}\jtet\left( \frac{ b_1 + \epsilon-x}{2}, \frac{b_2 + \epsilon +x - y}{2}, \frac{b_3 + \epsilon}{2}\right) \jtet\left( \frac{ b_1 + \epsilon+x}{2}, \frac{b_2 + \epsilon -x + y}{2}, \frac{b_3 + \epsilon}{2}\right).
\end{gathered}
\end{equation}
Using the second part of equation~\eqref{eqn:simmetries_Itet}, this coincides with~\eqref{eqn:proof_eqn}. With this we are done, after noting that the same proof \textit{mutatis mutandis} works for the $L$-move. 
\end{proof}

Since the effect of an $R$- or $L$-move on the relative Gang-Yonekura formula for the cusp is obtained by applying the action in Proposition~\ref{prop:inductive_step_cusp} to each term, we obtain the following:
\begin{cor}
The Gang-Yonekura formula for the relative index of the standard cusp $GY(\alpha;\underline{b})$ transforms in the same way under $R$ and $L$ moves as the relative cusp index.
\end{cor}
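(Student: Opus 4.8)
The plan is to exploit that, by Proposition~\ref{prop:gang_cusp}, the quantity $GY(\alpha;\underline b)$ is a $\Z(\!(q^{\frac12})\!)$-linear combination
\[
GY(\alpha;\underline b)=\sum_{\gamma}c_\gamma(\alpha)\,\ind^{rel}_\cusp(\gamma;\underline b),
\]
where $\gamma$ runs over the peripheral classes $k\alpha$ and $k\alpha\pm2\beta$ ($k\in\Z$) --- that is, over $\{\gamma\in\hh_1(\partial\cusp;\Z):\alpha\cdot\gamma\in\{0,\pm2\}\}$ --- and the coefficient $c_\gamma(\alpha)$ equals $\tfrac12(-1)^{|\gamma|}(q^{\frac{|\gamma|}{2}}+q^{-\frac{|\gamma|}{2}})$ if $\alpha\cdot\gamma=0$ and $-\tfrac12(-1)^{|\gamma|}$ if $\alpha\cdot\gamma=\pm2$, so it depends only on the $\mathrm{SL}_2(\Z)$-invariants $|\gamma|$ and $\alpha\cdot\gamma$. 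One then applies the transformation law of Proposition~\ref{prop:inductive_step_cusp} to each summand and re-collects. (That the sum above converges in $\Z(\!(q^{\frac12})\!)$, with finitely many terms in each $q^{\frac12}$-degree, is guaranteed by Lemma~\ref{lem:growth_cusp_index}, since $\deg\ind^{rel}_\cusp(\gamma;\underline b)$ grows quadratically as $\gamma\to\infty$ along this family; and $GY(\alpha;\underline b)$ does not depend on the chosen dual class $\beta$, as replacing $\beta$ by $\beta+m\alpha$ only reindexes~\eqref{eqn:compact_version} by $k\mapsto k+2m$ while~\eqref{eqn:GY_for_cusp} is manifestly invariant under $\beta\mapsto-\beta$.)

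Fix $\Phi\in\{R^{-1},L^{-1}\}$. Since $\Phi\in\mathrm{SL}_2(\Z)$ preserves the algebraic intersection pairing on $\hh_1(\partial\cusp;\Z)$ and the number of components of an embedded curve, the map $\gamma\mapsto\Phi\gamma$ is a bijection of $\{\gamma:\alpha\cdot\gamma\in\{0,\pm2\}\}$ onto $\{\gamma':(\Phi\alpha)\cdot\gamma'\in\{0,\pm2\}\}$ with $c_{\Phi\gamma}(\Phi\alpha)=c_\gamma(\alpha)$. Hence
\[
GY(\Phi\alpha;\underline b)=\sum_{\gamma'}c_{\gamma'}(\Phi\alpha)\,\ind^{rel}_\cusp(\gamma';\underline b)=\sum_{\gamma}c_\gamma(\alpha)\,\ind^{rel}_\cusp(\Phi\gamma;\underline b).
\]

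Now substitute Proposition~\ref{prop:inductive_step_cusp}. For $\Phi=R^{-1}$, equation~\eqref{enq:index_cusp_R_new} turns the right-hand side above into a double sum over $(\gamma,k)$; interchanging the order of summation and recognising the inner sum $\sum_\gamma c_\gamma(\alpha)\,\ind^{rel}_\cusp(\gamma;k,b_2,b_1)$ as $GY(\alpha;k,b_2,b_1)$ gives
\[
GY(R^{-1}\alpha;b_1,b_2,b_3)=\sum_{k\in b_3+2\Z}\left(-q^{\frac12}\right)^{k}\jtet\!\left(b_1,b_2,\tfrac{b_3+k}{2}\right)GY(\alpha;k,b_2,b_1),
\]
which is exactly the relation~\eqref{enq:index_cusp_R_new} with $\ind^{rel}_\cusp$ replaced throughout by $GY$. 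The $L$-case is identical, using~\eqref{enq:index_cusp_L_new} and the edge substitution $(b_1,b_2,b_3)\mapsto(b_1,k,b_2)$ in place of $(b_1,b_2,b_3)\mapsto(k,b_2,b_1)$.

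The step requiring care --- and the main obstacle --- is the interchange of summation in the last paragraph: one must check that the double sum over $(\gamma,k)$ is, for each fixed power of $q^{\frac12}$, supported on only finitely many pairs. This follows by combining the quadratic growth of $\deg\ind^{rel}_\cusp$ in the curve variable (Lemma~\ref{lem:growth_cusp_index}; note that $h(\Phi\gamma)$ grows linearly in $|m|$ when $\gamma\in\{m\alpha,\,m\alpha\pm2\beta\}$) with the growth of $\deg\ind^{rel}_\cusp(\gamma;k,b_2,b_1)$ and of $\deg\jtet(b_1,b_2,\tfrac{b_3+k}{2})$ in the edge variable $k$ --- read off from Proposition~\ref{relcuspindex} and the piecewise-quadratic degree formula for $\itet$ recalled in the proof of Lemma~\ref{lem:growth_cusp_index} --- together with the explicit factor $\left(-q^{\frac12}\right)^{k}$. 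Equivalently, each iterated sum converges separately (the inner one by Proposition~\ref{prop:inductive_step_cusp}, the outer one by Lemma~\ref{lem:growth_cusp_index}), and a degree-by-degree Tonelli-type argument shows that the two orders of summation agree.
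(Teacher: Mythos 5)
Your proposal is correct and follows the same route as the paper: the paper's proof of this corollary is simply the one sentence preceding it, namely ``apply the transformation of Proposition~\ref{prop:inductive_step_cusp} to each term of $GY(\alpha;\underline b)$,'' which is exactly what you do. You go further than the paper by spelling out why the $\mathrm{SL}_2(\Z)$-invariance of $|\gamma|$ and $\alpha\cdot\gamma$ makes the coefficients $c_\gamma(\alpha)$ compatible with the relabelling $\gamma\mapsto\Phi\gamma$, and by addressing the degree-by-degree finiteness needed to justify the interchange of the $\gamma$-sum and the $k$-sum, a point the paper leaves implicit; this extra care is sound and does not change the underlying argument.
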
~\\

%&&&&&&&&&&&&&&&&&&&&&&&&&&&&&&&&&&&&&&&&&&&&&&&&&&&&&&&&&&&&&&&&&&&&&&&&&&&&&
%&&&&&&&&&&&&&&&&&&&&&&&&&&&&&&&&&&&&&&&&&&&&&&&&&&&&&&&&&&&&&&&&&&&&&&&&&&&&&

\subsection{Layered solid torus index and boundary curves}\label{ssec:LST_and_boundary}~\\

This section outlines how the relative index of a layered solid torus changes as the triangulation varies by right and left layering moves, as described in Section~\ref{sec:LSTs}.

Recall that each triangle in the Farey tessellation corresponds to a triangulation of a solid torus with three edges, whose slopes are given by the three ideal vertices of the triangle. We order the edges as described in Figure~\ref{fig:RL_edge_order}.
Note that the slopes $s_1, s_2,s_3$ are in anti-clockwise order on the circle at infinity. 

We start with a layered solid torus LST$(w)$ determined by a word $w$ in $R,L$, as in Case (1) of Section~\ref{sec:LSTs}. We obtain a new solid torus corresponding to the word $wR$ or $wL$ by attaching a tetrahedron
by a $R$ or $L$ move to the triangulation of the torus boundary as shown in Figure~\ref{fig:RL_layering}. 
This corresponds to moving to an adjacent triangle in the Farey tessellation, and the new slopes are labelled as shown in Figure~\ref{fig:RL_layering}.

\begin{figure}[!htbp]
\centering
\includegraphics[width=0.3\linewidth]{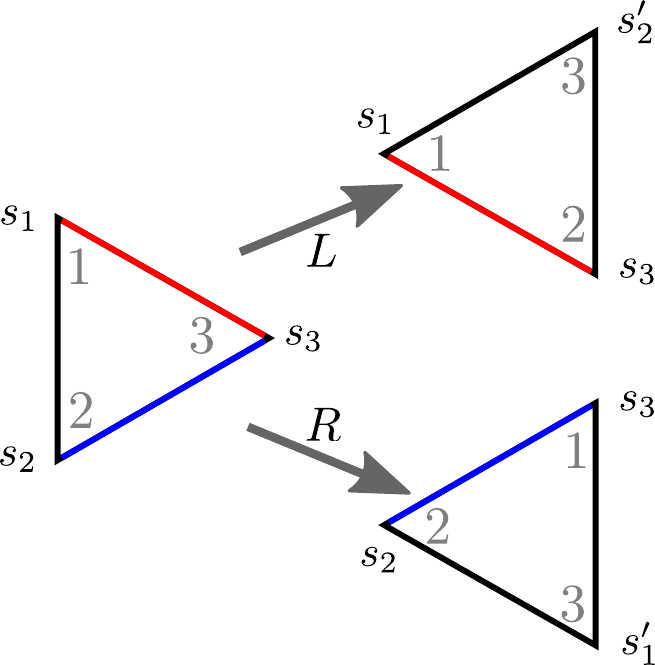}
\caption{Ordering of edge slopes for the triangulations of the torus.}
\label{fig:RL_edge_order}
\end{figure}

\begin{figure}[!htbp]
\centering
\includegraphics[width=0.65\linewidth]{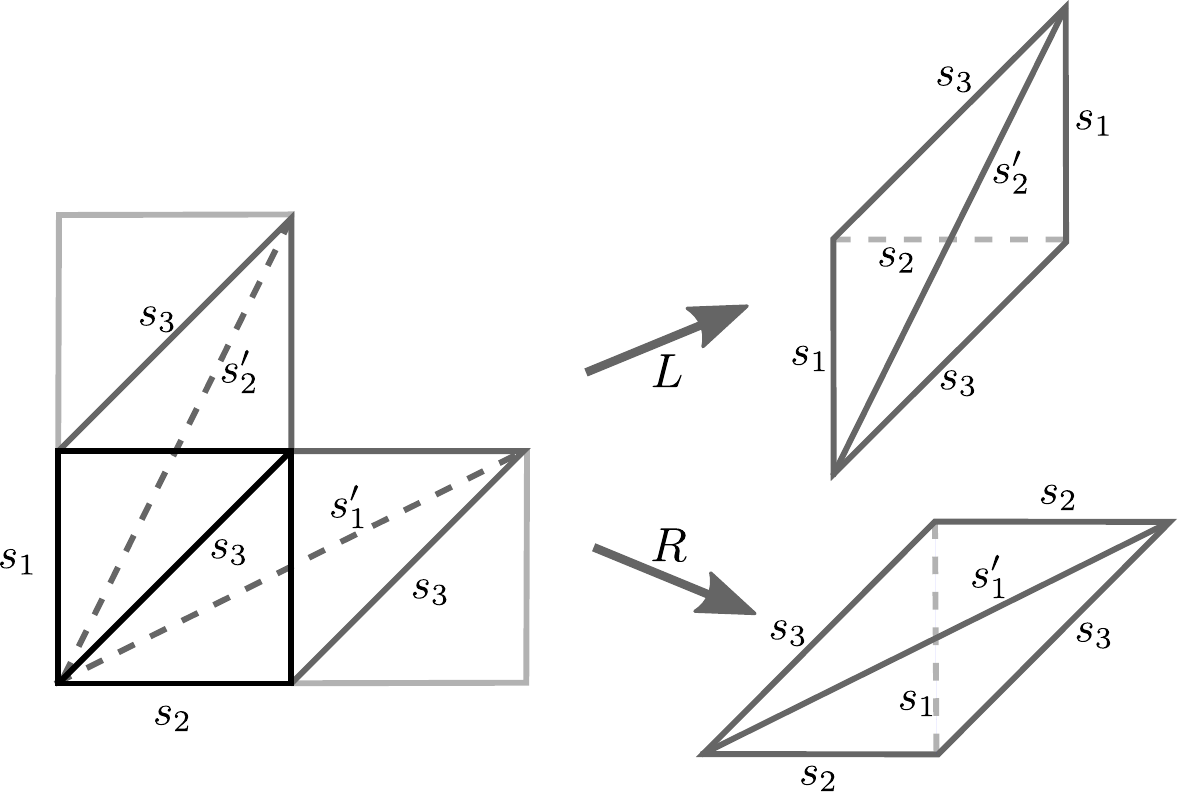}
\caption{Attaching a tetrahedron to a two-triangle torus via a right or left move.}
\label{fig:RL_layering}
\end{figure}

The following proposition shows how the relative index for a layered solid torus changes when a $R$ or $L$ move is performed.
\begin{prop}\label{prop:LST_change}
Consider the relative index of a layered solid torus {\rm LST}$(w)$ with boundary half-edge coefficients $\underline{b} = (b_1,b_2,b_3) \in \Z^3$. 
\begin{enumerate}
\item Performing an $R$ move changes the relative index as follows 
\begin{equation}\label{enq:index_cusp_R}
\ind_{\text{\rm LST}(wR)}^{rel} (\underline{b}) =  \sum_{\substack{k \in \Z\\k\equiv b_3 \mod{2}}} \left(-q^{\frac12}\right)^k \jtet\left(b_1,b_2,\frac{b_3+k}{2}\right) \ind_{\text{\rm LST}(w)}^{rel}(k,b_2,b_1).
\end{equation}
\item Similarly, performing an $L$ move has the following effect
\begin{equation}\label{enq:index_cusp_L}
\ind_{\text{\rm LST}(wL)}^{rel} (\underline{b}) =  \sum_{\substack{k \in \Z\\k\equiv b_3 \mod{2}}} \left(-q^{\frac12}\right)^k \jtet\left(b_1,b_2,\frac{b_3+k}{2}\right) \ind_{\text{\rm LST}(w)}^{rel}(b_1,k,b_2).
\end{equation}
\end{enumerate}
\end{prop}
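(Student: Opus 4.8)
The statement is the layered-solid-torus analogue of Proposition~\ref{prop:inductive_step_cusp}, and the natural strategy is to reduce it to that proposition by the gluing formula. The key observation is that $\text{LST}(wR)$ is obtained from $\text{LST}(w)$ by layering on a single tetrahedron via an $R$ move, exactly as the expanded cusp $\cusp'$ is obtained from $\cusp$ in the proof of Proposition~\ref{prop:inductive_step_cusp}. So first I would set up the decomposition: view $\text{LST}(wR)$ as $\text{LST}(w)$ glued along its once-punctured-torus boundary $\tri_0$ to a single tetrahedron $\Delta$ attached by the $R$ move. Applying the Gluing Theorem~\ref{thm:gluing} to this splitting expresses $\ind^{rel}_{\text{LST}(wR)}(\underline b)$ as a sum over admissible boundary edge coefficients $\beta_0$ of $\left(-q^{1/2}\right)^{-\chi_0(\beta_0)}$ times the relative index of $\text{LST}(w)$ with those coefficients, times the relative index of the single tetrahedron $\Delta$ with matching coefficients on the shared edges.

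Next I would compute the relative index of a single layered tetrahedron $\Delta$ attached by an $R$ move. This is a direct computation from the definition: a tetrahedron contributes a single $\jtet$ factor, and the $R$-layering prescribes how the three new boundary edges $b_1,b_2,b_3$ relate to the old boundary edges and to the one interior edge created by the fold/layering (whose coefficient becomes the summation variable $k$). Reading off the edge incidences from Figure~\ref{fig:RL_layering} and Figure~\ref{fig:RL_edge_order}, together with the parity constraint $k \equiv b_3 \bmod 2$ coming from the half-integer normal class structure (as in Proposition~\ref{relcuspindex}), the tetrahedron's relative index should come out as $\left(-q^{1/2}\right)^k \jtet\!\left(b_1, b_2, \tfrac{b_3+k}{2}\right)$, with the factor $\left(-q^{1/2}\right)^k$ accounting for the Euler characteristic contribution $\chi_0$ of the shared edge of weight $k$. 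The new boundary edges of $\text{LST}(w)$ after layering become $(k, b_2, b_1)$ in the ordering convention of Figure~\ref{fig:RL_edge_order}, which is precisely what appears in $\ind^{rel}_{\text{LST}(w)}(k,b_2,b_1)$. Substituting into the gluing formula then yields equation~\eqref{enq:index_cusp_R} directly. The $L$ case is identical \emph{mutatis mutandis}, with the edge permutation $(b_1, b_2, b_3) \mapsto (b_1, k, b_2)$ replacing $(b_1, b_2, b_3) \mapsto (k, b_2, b_1)$, reflecting that an $L$ move retains the first slope rather than the third.

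Alternatively — and this is how I would actually present it to keep the argument short — one observes that the formulas \eqref{enq:index_cusp_R} and \eqref{enq:index_cusp_L} are \emph{syntactically identical} to \eqref{enq:index_cusp_R_new} and \eqref{enq:index_cusp_L_new} in Proposition~\ref{prop:inductive_step_cusp}, with $\ind^{rel}_\cusp$ replaced by $\ind^{rel}_{\text{LST}}$. Since both the standard cusp and a layered solid torus are triangulated manifolds with a common two-triangle boundary $\tri_0$, and since attaching a tetrahedron by an $R$ or $L$ move is a local operation on a neighbourhood of $\tri_0$ that does not see the rest of the triangulation, the same gluing computation applies verbatim. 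Concretely: the relative index of any triangulation $\tri'$ obtained from $\tri''$ by layering one tetrahedron on the boundary depends only on the relative index of $\tri''$ (as a function of boundary edge coefficients) and on the local layering data, via exactly the sum appearing in Proposition~\ref{prop:inductive_step_cusp}. Thus \eqref{enq:index_cusp_R} and \eqref{enq:index_cusp_L} follow.

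\textbf{Main obstacle.} The only genuinely delicate point is bookkeeping: correctly tracking which of the three boundary edges is the ``new'' edge created by the layering, how the old edge weights are permuted into the new ordering of Figure~\ref{fig:RL_edge_order}, and pinning down the parity condition $k \equiv b_3 \bmod 2$ together with the sign-and-power factor $\left(-q^{1/2}\right)^k$ from $\chi_0$. Getting the $R$ versus $L$ permutation right — $(k,b_2,b_1)$ versus $(b_1,k,b_2)$ — requires care with orientation conventions, but once the single-tetrahedron computation underlying Proposition~\ref{prop:inductive_step_cusp} is in hand, this proposition is essentially a restatement of it for a different ambient triangulation, so no new ideas are needed.
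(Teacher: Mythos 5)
Your first line of attack — decompose $\text{LST}(wR)$ along the once-punctured torus into $\text{LST}(w)$ and the single layered tetrahedron, read off that tetrahedron's contribution as $\left(-q^{1/2}\right)^k \jtet\left(b_1,b_2,\tfrac{b_3+k}{2}\right)$ summed over the new interior edge weight $k$, and identify the parity constraint — is essentially the paper's proof, and the calculation you describe is correct. The paper does it by working directly with the definition of the relative index (each quad coordinate is a sum of edge and tetrahedral coefficients, the tetrahedral coefficient in the layered tetrahedron is forced to be an integer because opposite edges are identified, and the new interior edge weight $k\equiv b_3 \bmod 2$ is summed over), rather than by invoking Theorem~\ref{thm:gluing}; this avoids a technical snag with your framing, since that theorem as stated decomposes a triangulation $\tri$ of $M$ with \emph{no} exposed boundary and uses $1$-efficiency of $\tri$ for absolute convergence, whereas here you want to decompose a piece that itself has exposed boundary into two smaller such pieces. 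The required fact is purely local and elementary, so the spirit of your argument is right, but citing Theorem~\ref{thm:gluing} verbatim would not be justified.

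Your shortcut in the second paragraph misreads the proof of Proposition~\ref{prop:inductive_step_cusp}. That proof does not perform the layering computation you are trying to reuse: the paragraph about ``expanding'' $\cusp$ to $\cusp'$ is labelled as intuition, and the actual argument is a purely algebraic verification via the pentagon identity \eqref{eqn:pentagon_identity} that the right-hand side of \eqref{enq:index_cusp_R_new} equals $I^{rel}_\cusp(R^{-1}\alpha;\underline b)$ — the boundary \emph{curve} changes, not the triangulation. The layering computation is carried out for the first time precisely in the proof of Proposition~\ref{prop:LST_change}. So the two propositions are not one subsuming the other; they are the two independent halves (LST side and cusp side) needed for the induction in Proposition~\ref{prop:inductive_step}, and each requires its own argument. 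Your instinct that a unifying ``layering is local'' lemma could be stated once is sound, but it is not in the paper, and asserting that the computation is ``in hand'' from Proposition~\ref{prop:inductive_step_cusp} would be circular.

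One small further caveat: you cite Proposition~\ref{relcuspindex} for the parity constraint $k\equiv b_3\bmod 2$, but that proposition is about the standard cusp. In the LST the constraint has a different (and simpler) source — the identification of two pairs of opposite edges in each layered tetrahedron forces integer tetrahedral coefficients and congruent top/bottom edge weights — which is exactly what lets one set all tetrahedral coefficients to zero in the sum. Getting this from the edge identifications, rather than by analogy with the cusp, is part of the content of the proof.
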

\begin{proof}
We compute the relative index by taking all admissible integer linear combinations of half-edge and half-tetrahedral solutions
such that the corresponding quad coordinates are integers, modulo integer linear combinations of tetrahedral solutions.\\
Each quad coordinate is the sum of the edge coefficients on the two edges facing the quad plus the tetrahedral coordinate.
From Figure~\ref{fig:RL_layering} we see that in each ideal tetrahedron there exist two pairs of opposite edges that are identified to give a punctured torus. This implies that in each tetrahedron the tetrahedral coefficient must be an integer, and the top and bottom edge coefficients must be congruent modulo 2. In particular, to compute the relative index we can assume all tetrahedral coefficients are zero. 

First note that the boundary half-edge coefficients are related as follows. We start with the triple
$$(b_1,b_2,b_3) =(b(s_1),b(s_2),b(s_3)), $$
and after a right move we get
$$(b_1',b_2',b_3') = (b(s_3),b(s_2),b(s_1')).$$ 
After a left move we get instead 
$$(b_1',b_2',b_3') = (b(s_1),b(s_3),b(s_2')).$$ 
From Figure~\ref{fig:RL_layering}, the new relative index after an $R$ move can be seen to be given by 
\begin{equation}
\begin{gathered}
\ind^{rel}_{\text{\rm LST}(wR)}(b_1',b_2',b_3') = \ind^{rel}_{\text{\rm LST}(wR)}(b(s_3),b(s_2),b(s_1'))  \\=\sum_{\substack{b(s_1) \in \Z\\b(s_1)\equiv b(s_1')\mod{2}}} \left(-q^{\frac12}\right)^{b(s_1)} \jtet \left( \frac{b(s_1) + b(s_1')}2, b(s_2),b(s_3)\right) \ind^{rel}_{\text{\rm LST}(w)}(b(s_1),b(s_2),b(s_3))  \\=
\sum_{\substack{k \in \Z\\k\equiv b_3'\mod{2}}} \left(-q^{\frac12}\right)^k \jtet \left( \frac{k+ b_3'}2, b_2',b_1'\right) \ind^{rel}_{\text{\rm LST}(w)}(k,b_2',b_1'),
\end{gathered}
\end{equation}
where we sum over the new internal edge with weight $k = b(s_1)$. The invariance of $\jtet$ under permutations of its arguments gives the stated result.\\
The result for the $L$ move follows similarly.
\end{proof}
We can now state the main theorem relating the outcome of the relative Gang-Yonekura formula for the standard cusp to the appropriate layered solid torus.
\begin{prop}\label{prop:inductive_step}
Let $w$ be a word in $R,L$,  and $\alpha(w)$ be the corresponding filling slope. 
Then for all boundary half-edge coefficients $\underline{b} = (b_1,b_2,b_3)$, 
\begin{equation*}
GY(\alpha(w);\underline{b}) \equiv \ind_{\text{\rm LST}(w)}^{rel} (\underline{b}).
\end{equation*}
\end{prop}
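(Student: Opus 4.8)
The plan is to argue by induction on the length of the word $w$, exploiting the observation that the Gang--Yonekura quantity $GY(\alpha;\underline{b})$ and the layered-solid-torus relative index $\ind^{rel}_{\text{\rm LST}(w)}(\underline{b})$ obey \emph{exactly the same} recursion under $R$- and $L$-layering moves. For the solid torus this recursion is Proposition~\ref{prop:LST_change}; for $GY$ it is the corollary stated immediately after Proposition~\ref{prop:inductive_step_cusp}, which says that $GY$ transforms under an $R$- or $L$-move by the same formulas~\eqref{enq:index_cusp_R_new} and~\eqref{enq:index_cusp_L_new} as the relative cusp index. Since the two recursions are literally identical in form (with $\ind^{rel}_{\text{\rm LST}(w)}$ playing the role of $GY(\alpha(w);\cdot)$), the entire statement reduces to checking that the two sides coincide at the start of the induction.

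For the base case I would take $w$ to be the empty word, so $\text{\rm LST}(w)=\text{\rm LST}(1,1,2)$ is the degenerate layered solid torus obtained by folding the central two-triangle torus across an edge, as in Section~\ref{sec:LSTs}. In Case~$(1)$ the fold is across $e_3$, which forces $b_1=b_2$ and contributes $(-q^{\frac12})^{-b_1}$ to the formal Euler characteristic; the discussion around equation~\eqref{eqn:tria_alpha_index} (see also Appendix~\ref{sec:relative_degenerate}) identifies $\ind^{rel}_{\text{\rm LST}(1,1,2)}(\underline{b})=(-q^{\frac12})^{-b_1}\delta_{b_1,b_2}$, and the associated filling curve is $\alpha=-\lambda+\mu$. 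That this equals $GY(-\lambda+\mu;\underline{b})$ is precisely Theorem~\ref{thm:new_thm_112}. The two remaining folds (Cases~$(2)$ and~$(3)$) are carried to Case~$(1)$ by the order-three Farey symmetry $U$, so I would dispatch them by combining the symmetry~\eqref{eq:Usymmetry} of $GY$ with the equivalent identities~\eqref{eqn:GY_conjecture2} and~\eqref{eqn:GY_conjecture3}.

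For the inductive step, assume $GY(\alpha(w);\underline{b})=\ind^{rel}_{\text{\rm LST}(w)}(\underline{b})$ for all $\underline{b}\in\Z^3$ and consider the word $wR$. By Proposition~\ref{prop:LST_change}(1), $\ind^{rel}_{\text{\rm LST}(wR)}(\underline{b})$ is a sum over the new internal edge weight $k\equiv b_3\pmod 2$ of the terms $(-q^{\frac12})^{k}\jtet(b_1,b_2,\tfrac{b_3+k}{2})\,\ind^{rel}_{\text{\rm LST}(w)}(k,b_2,b_1)$. Substituting the inductive hypothesis turns each factor $\ind^{rel}_{\text{\rm LST}(w)}(k,b_2,b_1)$ into $GY(\alpha(w);k,b_2,b_1)$, and then the corollary after Proposition~\ref{prop:inductive_step_cusp} identifies the resulting sum as $GY(R^{-1}\alpha(w);\underline{b})=GY(\alpha(wR);\underline{b})$, since the $R$-move sends $\alpha(w)$ to $R^{-1}\alpha(w)=\alpha(wR)$. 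The identical computation with Proposition~\ref{prop:LST_change}(2) and equation~\eqref{enq:index_cusp_L_new} handles $wL$, closing the induction.

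The only real point of care is convergence: the defining sum for $GY$, the layering recursions of Propositions~\ref{prop:inductive_step_cusp} and~\ref{prop:LST_change}, and the interchange of two infinite summations implicit in the corollary after Proposition~\ref{prop:inductive_step_cusp} must all converge absolutely in $\Z(\!(q^{\frac12})\!)$. I would justify this with Lemma~\ref{lem:growth_cusp_index}: the minimal $q^{\frac12}$-degree of $\ind^{rel}_\cusp(k\alpha;\underline{b})$ grows quadratically in $k$, which dominates the linear contributions of the $q^{\pm k/2}$ and $\jtet(b_1,b_2,\tfrac{b_3+k}{2})$ factors, so every fixed power of $q^{\frac12}$ receives only finitely many nonzero contributions and the term-by-term manipulations are legitimate. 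I do not expect any genuine obstacle at this stage: all the analytic weight of the argument sits upstream in Theorem~\ref{thm:new_thm_112} (equivalently Theorem~\ref{thm:trigonometry}), which supplies the base case, while the inductive step here is purely formal once the two recursions are recognised as the same.
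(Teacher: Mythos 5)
Your argument is correct and follows the same route as the paper: induction on word length, with the base case supplied by Theorem~\ref{thm:new_thm_112} (via the LST$(1,1,2)$ computation) and the inductive step given by the matching $R$/$L$-layering recursions of Propositions~\ref{prop:inductive_step_cusp} and~\ref{prop:LST_change}, together with the corollary that $GY$ transforms the same way. The paper relegates the $U$-symmetry reduction of the other two fold cases to the proof of Theorem~\ref{thm:main_relative} rather than including it here, and leaves the convergence remark implicit; those are the only (cosmetic) differences from what you wrote.
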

\begin{proof}
The result of Propositions~\ref{prop:inductive_step_cusp} and~\ref{prop:LST_change} show that the effect of an $R$-move or $L$-move on the relative cusp index is the same as effect on the relative index for layered solid tori.

Furthermore the result holds for  $w = I$  by Theorem~\ref{thm:new_thm_112}. Thus the result holds for all $w$ by induction on the word's length. 
\end{proof}

If we let LST$(\alpha)$ denote the layered solid torus with meridian curve $\alpha$, as explained in Section~\ref{sec:LSTs}, then our main result for the   version of the Gang-Yonekura formula can be stated as follows. 
\begin{thm}\label{thm:main_relative}
For all slopes $\alpha \in \hh_1(\partial \cusp; \Z)$, and all $\underline{b} \in \Z^3$ we have
\begin{equation}
GY(\alpha;\underline{b}) \equiv \ind_{\text{\rm LST}(\alpha)}^{rel} (\underline{b}).
\end{equation}
\end{thm}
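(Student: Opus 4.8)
The plan is to obtain Theorem~\ref{thm:main_relative} from the inductive machinery already assembled, by reducing an arbitrary filling slope to one of three explicit base cases and then propagating along layering moves. First I would recall that, by the Corollary following Proposition~\ref{prop:inductive_step_cusp}, the quantity $GY(\alpha;\underline b)$ transforms under an $R$- or $L$-move in exactly the same way as the relative index of a layered solid torus transforms under the same move (Proposition~\ref{prop:LST_change}). Consequently, once the identity $GY(\alpha;\underline b)\equiv\ind^{rel}_{\text{\rm LST}(\alpha)}(\underline b)$ is established for a single triangulation, it holds for every triangulation obtained from it by a word $w$ in $R,L$, by induction on the length of $w$. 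This is precisely the content of Proposition~\ref{prop:inductive_step}, and it immediately gives the theorem for all slopes $\alpha$ in Case (1) of Section~\ref{sec:LSTs}, namely all rational slopes in $(-\infty,0)$.

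Next I would extend this to Cases (2) and (3). The cleanest route uses the order-$3$ symmetry $U$ of the Farey tessellation, which cyclically permutes the three families of layered solid tori while cyclically permuting the three boundary edges: on the left-hand side the corresponding statement for $GY$ is the symmetry~\eqref{eq:Usymmetry}, and on the right-hand side it identifies $\ind^{rel}_{\text{\rm LST}(\alpha)}(b_1,b_2,b_3)$ for $\alpha$ in Case (2) or (3) with the Case (1) relative index on a cyclically relabelled edge triple. Applying $U$ to the identity already proved for Case (1) then yields it for Cases (2) and (3). Equivalently, one can run the induction of Proposition~\ref{prop:inductive_step} separately within each family, taking as base cases the three realisations of LST$(1,1,2)$: the required base identities are Theorem~\ref{thm:new_thm_112} for the slope $-1$ together with equations~\eqref{eqn:GY_conjecture2} and~\eqref{eqn:GY_conjecture3} for the slopes $\frac12$ and $2$, matched against the folded-torus relative indices $(-q^{\frac12})^{-b_i}\delta_{b_i,b_j}$ as in~\eqref{eqn:tria_alpha_index}. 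The three families together exhaust every primitive slope except the three edge slopes $0,1,\infty$.

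Finally, the degenerate edge slopes $0,1,\infty$ correspond to $(0,1,1)$-fillings. Here I would not use the genuinely degenerate LST$(0,1,1)$, but instead the explicit non-minimal triangulations of Appendix~\ref{sec:relative_degenerate}; one computes their relative index directly and checks that it agrees with $GY(\alpha;\underline b)$. The triviality results of Section~\ref{sec:collar_eff} and Appendix~\ref{sec:gang011} then serve as an independent consistency check, and together with the previous two paragraphs cover all primitive $\alpha\in\hh_1(\partial\cusp;\Z)$.

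The main obstacle does not lie in the reduction above, which is essentially bookkeeping given the cited results, but in the base case Theorem~\ref{thm:new_thm_112}, which in turn rests on the $q$-series identity of Theorem~\ref{thm:trigonometry} --- both assumed here. Within the reduction itself, the only genuine subtlety is checking that the edge relabelling induced by the order-$3$ Farey symmetry $U$ acts compatibly on the two sides of the desired equality, which one verifies by tracking the edge labels of Figure~\ref{fig:RL_edge_order} through $U$.
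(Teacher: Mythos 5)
Your proposal is correct and follows essentially the same three-step route as the paper's own proof: Proposition~\ref{prop:inductive_step} handles Case (1) (slopes in $(-\infty,0)$), the order-$3$ symmetry $U$ transports the result to Cases (2) and (3), and the degenerate edge slopes $0,1,\infty$ are dispatched by Appendices~\ref{sec:gang011} and~\ref{sec:relative_degenerate}. The extra remarks you make — the option of rerunning the induction in each Farey sector from its own LST$(1,1,2)$ base case, and the need to verify that $U$ relabels boundary edges compatibly on both sides of the identity — are correct observations that the paper leaves implicit, but they do not change the underlying argument.
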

\begin{proof}
Proposition~\ref{prop:inductive_step} covers case (1) of Section~\ref{sec:LSTs}, so proves the result for all curves $\alpha$ with slopes in the interval $(-\infty,0)$. 
By using the symmetry $U$, this result also holds for all $\alpha$ with slopes in the intervals $(0,1)$ and $(1,\infty)$. The remaining cases, with slopes $0$, $1$ and $\infty$, correspond to fillings using LST$(0,1,1)$, and the theorem follows from the results in Appendix~\ref{sec:gang011} and Appendix~\ref{sec:relative_degenerate}. 
\end{proof}

With all necessary tools in place, we finally proceed to the proof of our main theorem:
\begin{proof}[Proof of the Main Theorem~\ref{thm:main}]
Theorem~\ref{thm:main_relative} establishes that the local form of the Gang-Yonekura formula holds for all slopes. As explained at the start of Section~\ref{sec:attaching_solid_tori}, combining this with the Gluing Theorem~\ref{thm:gluing}, which allows us to assemble local contributions into a `global' index computation, completes the proof of Theorem~\ref{thm:main}. 
\end{proof}

%&&&&&&&&&&&&&&&&&&&&&&&&&&&&&&&&&&&&&&&&&&&&&&&&&&&&&&&&&&&&&&&&&&&&&&&&&&&&&
%&&&&&&&&&&&&&&&&&&&&&&&&&&&&&&&&&&&&&&&&&&&&&&&&&&&&&&&&&&&&&&&&&&&&&&&&&&&&&

\section{Basic hypergeometric functions}\label{sec:hypergeometric}

The proof of our main Theorem~\ref{thm:main} relies crucially on the theory of basic hypergeometric functions. This section is devoted to recall, in a self-contained manner, all the definitions and results needed in what follows. The best reference for these results is Gasper and Rahman's book~\cite{GasperRahman}.\\

The basic hypergeometric series (also referred to as $q$-hypergeometric series) are a $q$-analogue of hypergeometric series. For $r,s \ge 0$, and $a_i,b_j \in \C$ they are defined as
\begin{equation}\label{eqn:definition_hypergeometric}
{_r\phi_s} \left[\begin{matrix} 
a_1 & a_2 & \ldots & a_{r} \\ 
b_1 & b_2 & \ldots & b_{s} \end{matrix} 
; q,z \right] = \sum_{n=0}^\infty  
\frac {(a_1, a_2, \ldots, a_{r};q)_n} {(b_1, b_2, \ldots, b_s,q;q)_n} \left( (-1)^n q^{\binom{n}{2} }  \right)^{1+s-r}z^n,
\end{equation}
where $(c_1, c_2, \ldots, c_{r};q)_n$ is a shorthand for $\displaystyle \prod_{i = 1}^{r} (c_i;q)_n$. \\We are moreover assuming that the coefficients $b_j$ are such that the denominator is never $0$. As we will be working under the assumption that $0<|q|<1$, the series in~\eqref{eqn:definition_hypergeometric} converge for all values of $z$ whenever $r \le s$, and for $|z|<1$ if $r = s+1$.\\

Let us describe some specific instances of $q$-analogues taking the form of $q$-hypergeometric functions; the first is a $1$-parameter family of $q$-analogues of the exponential function, first introduced in~\cite{Atakishiyev}. \\
For $\alpha \ge 0$ define
$$E_q^{(\alpha)} (z) = \sum_{n \ge 0} \frac{q^{\frac{\alpha n^2}{2}}}{(q)_n} z^n.$$
Two well-studied~\cite{GasperRahman} special cases are $E^{(0)}_q (z) = e_q(z)$ and $E^{(1)}_q (z) = E_q\left(q^{\frac12}z\right)$, which can be written as
$$e_q(z) = \sum_{n \ge 0} \frac{z^n}{(q)_n} ,\,\,\,\,\, E_q(z) = \sum_{n \ge 0} \frac{q^{\binom{n}{2}}}{(q)_n}z^n .$$

Note that by \cite[Sec.~1.3]{GasperRahman}, we have $$e_q(z) = {_1\phi_0}\left[  \begin{matrix} 0\\ -
\end{matrix}; q, z \right] = \frac{1}{(z)_\infty},$$ and $$E_q(z) = {_0\phi_0}\left[  \begin{matrix} -\\ -
\end{matrix}; q, - z \right] = (-z)_\infty.$$
Moreover, recall that we encountered the product of these exponentials before in equation~\eqref{enq:generating_function_tetra}, as the generating function for the tetrahedral index; in this notation we have
$$e_q \left(q^{-\frac{m}{2}} z\right) E_q \left(-q^{1-\frac{m}{2}} z^{-1}\right) = \sum_{e \in \Z} \itet (m,e) z^e.$$

As in~\cite{Nevanlinna}, denote by $$\qex (z) = E^{\left(\frac{1}{2}\right)}_q (z) = {_1\phi_1}\left[  \begin{matrix} 0\\ -q^{\frac12}
\end{matrix}; q^{\frac12}, -q^{\frac14} z \right] = \sum_{n \ge 0} \frac{q^{\frac{n^2}{4}}}{(q)_n} z^n, \,\, z \in \C.$$

This function satisfies the relation 
$$\qex (z) - \qex (qz) = z q^{\frac14} \qex \left(q^{\frac12} z\right).$$

The associated $q$-analogues of the sine and cosine functions are defined to obey a $q$-version of Euler's formula:
 $$\qex (\textit{i} z) = \qcos (z) + \textit{i} q^{\frac14} \qsin(z).$$
Explicitly, we can write
$$\qsin (z) = \sum_{n\ge 0} \frac{(-1)^n q^{n(n+1)}}{(q)_{2n+1}} z^{2n+1}, \,\,\,\, \qcos (z) = \sum_{n\ge 0} \frac{(-1)^n q^{n^2}}{(q)_{2n}} z^{2n}.$$
These formulae can be re-written more concisely as 
$$\qsin(z) = \frac{z}{1-q}
{_1\phi_1}\left[  \begin{matrix} 0\\ q^3
\end{matrix}; q^2, q^2 z^2 \right]
\,\,\,\,\, \qcos(z) = 
{_1\phi_1}\left[  \begin{matrix} 0\\ q
\end{matrix}; q^2, q z^2 \right].$$

The following relations are an immediate consequence of the definitions above:
$$\qsin(z) - \qsin(qz) = z \qcos\left(q^{\frac12} z\right)\,\,\,\,\,\,\,\,\,\qcos(z) - \qcos(qz) = -z q^{\frac12} \qsin\left(q^{\frac12} z\right)$$
$$\mathcal{E}_{q^{-1}} \left(q^{-\frac14}z\right) = \qex \left(-q^{\frac14} z\right)$$

Crucially for our purposes, certain products of these exponentials can be re-written in terms of $_3\phi_3$ functions:
\begin{prop}[\cite{Nevanlinna}]
For $u,v \in \C$:
\begin{gather*}
\begin{gathered}
\qex (u) \qex(-v) = \\{_3\phi_3} \left[ \begin{matrix} 0 & u^{-1} v q^{\frac12} & u v^{-1}q^{\frac12}\\ -q & q^{\frac12} & -q^{\frac12} \end{matrix}; q, uv q^{\frac12}\right] +   q^{\frac14} \frac{u-v}{1-q}  {_3\phi_3} \left[ \begin{matrix} 0 & u^{-1} v q & u v^{-1}q \\ -q & q^{\frac32} & -q^{\frac32} \end{matrix}; q, uv q\right]
\end{gathered}
\end{gather*}
\end{prop}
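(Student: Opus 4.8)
The plan is to prove the product formula by writing the left‑hand side as a Cauchy product of the two defining series and reorganising the resulting double sum according to the parity of the total degree. Since $0<|q|<1$, both series $\qex(u)=\sum_{m\ge 0}\frac{q^{m^2/4}}{(q;q)_m}u^m$ and $\qex(v)=\sum_{n\ge 0}\frac{q^{n^2/4}}{(q;q)_n}v^n$ converge for all $u,v\in\C$ and the product converges absolutely, so all the rearrangements used below are legitimate.

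First I would write
$$\qex(u)\,\qex(v)=\sum_{m,n\ge 0}\frac{q^{(m^2+n^2)/4}}{(q;q)_m(q;q)_n}\,u^m v^n,$$
and split the sum into the part with $m+n$ even and the part with $m+n$ odd. For the even part, set $m=k+i$, $n=k-i$ with $k\ge 0$ and $-k\le i\le k$; for the odd part, set $m=k+i+1$, $n=k-i$ with $k\ge 0$ and $-k-1\le i\le k$. The crucial point is that the quadratic form degenerates after this substitution: $m^2+n^2=2k^2+2i^2$ in the even case and $m^2+n^2=2k^2+2k+1+2i^2+2i$ in the odd case. Consequently, after factoring out the terms depending only on $k$, each inner sum over $i$ becomes a terminating sum of a single basic hypergeometric type in the variable $u/v$.

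Next I would evaluate the inner sums. In the even case, after pulling out $(uv)^k q^{k^2/2}$, the inner sum is $\sum_{i=-k}^{k}\frac{q^{i^2/2}}{(q;q)_{k+i}(q;q)_{k-i}}(u/v)^i$; reindexing by $j=k+i$ rewrites this, up to an explicit monomial prefactor, in the form $\frac{1}{(q;q)_{2k}}\sum_{j=0}^{2k}\binom{2k}{j}_q q^{\binom{j}{2}}\bigl(q^{1/2-k}u/v\bigr)^{j}$, which the $q$‑binomial theorem collapses to $\frac{1}{(q;q)_{2k}}(-q^{1/2-k}u/v;q)_{2k}$ and hence, after redistributing the half‑integer powers of $q$, to a constant multiple of $\frac{1}{(q;q)_{2k}}(u^{-1}vq^{1/2};q)_k(uv^{-1}q^{1/2};q)_k$. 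Using next the elementary identities $(q^{1/2};q)_k(-q^{1/2};q)_k=(q;q^2)_k$ and $(q;q)_k(-q;q)_k=(q^2;q^2)_k$, which collapse the product of the four denominator factors,
$$(-q;q)_k\,(q^{1/2};q)_k\,(-q^{1/2};q)_k\,(q;q)_k=(q;q)_{2k},$$
one matches the even part of $\qex(u)\qex(v)$, term by term in $k$, with the first ${}_3\phi_3$ on the right‑hand side. An entirely parallel computation handles the odd part: the extra $+1$ in the exponent contributes the overall $q^{1/4}$, the linear term $2i$ shifts the arguments of the Pochhammer symbols from $q^{1/2}$ to $q$ and the lower parameters from $q^{\pm1/2}$ to $q^{\pm3/2}$, and the prefactor $\frac{u-v}{1-q}$ arises once the contribution of the $\min(m,n)$‑diagonal is separated off and the remaining sum is recast as a ${}_3\phi_3$.

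The main obstacle is this last matching step. Identifying each half with the precise upper parameters, lower parameters and argument of the corresponding ${}_3\phi_3$ requires careful tracking of the half‑integer powers of $q$, a correct redistribution of those powers between the Pochhammer symbols, the denominator collapse to $(q;q)_{2k}$, and — in the odd case — the index shift that isolates the diagonal term cleanly so that the $(u-v)/(1-q)$ factor appears. An alternative route is to verify that both sides satisfy the same pair of $q$‑difference equations, one in $u$ and one in $v$, coming from $\qex(z)-\qex(qz)=zq^{1/4}\qex(q^{1/2}z)$, together with the boundary values $\qex(v)$ at $u=0$ and $\qex(u)$ at $v=0$; but because these equations involve $q^{1/2}$‑dilations rather than pure $q$‑dilations, propagating them through the ${}_3\phi_3$ side is itself delicate, so the direct expansion is likely the shorter path.
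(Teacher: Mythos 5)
Your plan — Cauchy product, parity split, the diagonal substitution $m=k+i$, $n=k-i$ (with a shift for odd degree), Gauss's $q$-binomial theorem, and the denominator collapse $(-q;q)_k(q^{1/2};q)_k(-q^{1/2};q)_k(q;q)_k=(q;q)_{2k}$ — is the natural direct route, and since the paper gives no proof (the proposition is cited from \cite{Nevanlinna}) there is no ``paper's proof'' to compare against. Up through the $q$-binomial evaluation everything checks: $m^2+n^2=2k^2+2i^2$ in the even case, and the inner sum is $\frac{q^{k^2/2}(u/v)^{-k}}{(q;q)_{2k}}\,(-q^{1/2-k}u/v;q)_{2k}$.

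The gap is in the final matching. Splitting the $2k$-fold product into the $k$ factors with negative exponents and the $k$ with positive exponents gives
\[
(-q^{1/2-k}u/v;q)_{2k}=(u/v)^k\, q^{-k^2/2}\,(-u^{-1}vq^{\frac12};q)_k\,(-uv^{-1}q^{\frac12};q)_k,
\]
with a \emph{plus} sign inside each Pochhammer factor (and a monomial, not constant, prefactor, though the monomial does cancel against the factors you pulled out). You assert it is a constant multiple of $(u^{-1}vq^{\frac12};q)_k(uv^{-1}q^{\frac12};q)_k$ — check $k=1$: these two products are not proportional. Carrying the signs through, your even part is
\[
\sum_{k\ge0}\frac{(-u^{-1}vq^{\frac12};q)_k\,(-uv^{-1}q^{\frac12};q)_k}{(q;q)_{2k}}\,(uv)^kq^{k^2/2}
={}_3\phi_3\!\left[\begin{matrix}0 & -u^{-1}vq^{\frac12} & -uv^{-1}q^{\frac12}\\ -q & q^{\frac12} & -q^{\frac12}\end{matrix};q,\,-uvq^{\frac12}\right],
\]
using $q^{k^2/2}=q^{\binom{k}{2}}q^{k/2}$ and the $(-1)^nq^{\binom{n}{2}}$ in the paper's ${}_r\phi_s$ normalisation of Section~\ref{sec:hypergeometric}; this differs by signs in both the upper parameters and the argument from the printed statement. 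A degree-$2$ check confirms your (correct) calculation and not the printed form: the coefficient of $uv$ in $\qex(u)\qex(v)$ is $+q^{1/2}/(1-q)^2$, whereas the $n=1$ term of the printed ${}_3\phi_3$ gives $-q^{1/2}/(1-q)^2$. Likewise, only the minus-sign form specialises via $(u,v)\mapsto(iu,-iv)$ to Corollary~\ref{cor:3phi3_and_trig}. So your method is sound and would succeed, but the step where you ``redistribute the half-integer powers of $q$'' is exactly where you skipped over a genuine discrepancy; the odd half is also only sketched and needs the same care, so as written this is a plan with a real hole rather than a proof.
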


\begin{cor}[\cite{Nevanlinna}]\label{cor:3phi3_and_trig}
For $u,v \in \C$:
\begin{equation}\label{eqn:product_coscos}
\qcos (u) \qcos(v) + q^{\frac12} \qsin(u)\qsin(v) = {_3\phi_3} \left[ \begin{matrix} 0 & u^{-1} v q^{\frac12} & u v^{-1}q^{\frac12}\\ -q & q^{\frac12} & -q^{\frac12} \end{matrix}; q, -uv q^{\frac12}\right] ,  
\end{equation}
and 
\begin{equation}\label{eqn:product_cossin}
\qsin(u) \qcos(v) - \qcos(u) \qsin(v) = \frac{u-v}{1-q} {_3\phi_3} \left[ \begin{matrix} 0 & u^{-1} v q & u v^{-1}q \\ -q & q^{\frac32} & -q^{\frac32} \end{matrix}; q, -uv q\right].
\end{equation}
\end{cor}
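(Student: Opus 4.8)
The plan is to obtain both identities by specialising the variables in the Nevanlinna product formula for $\qex(u)\qex(v)$ to purely imaginary arguments and then invoking the $q$-analogue of Euler's formula $\qex(iz)=\qcos(z)+i q^{1/4}\qsin(z)$, together with its companion $\qex(-iz)=\qcos(z)-i q^{1/4}\qsin(z)$ (which holds because $\qcos$ is even and $\qsin$ is odd in their defining power series). Concretely, I would substitute $u\mapsto iu$ and $v\mapsto -iv$ in the Proposition. The first structural observation is that under this substitution the two ${_3\phi_3}$ series on the right acquire \emph{real} parameters and \emph{real} argument when $u,v$ are real: $(iu)^{-1}(-iv)=-u^{-1}v$ and $(iu)(-iv)=uv$, so the upper parameters become $\mp u^{\pm1}v q^{\bullet}$ and both arguments become $uv q^{\bullet}$, while the lower parameters $-q,q^{1/2},-q^{1/2}$ (resp.\ $-q,q^{3/2},-q^{3/2}$) are untouched. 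Hence the first ${_3\phi_3}$ term is real, whereas the prefactor $q^{1/4}\frac{u-v}{1-q}$ of the second term picks up exactly one factor of $i$, since $iu-(-iv)=i(u+v)$, so that the whole second term is $i$ times a real quantity.

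Next I would expand the left side: $\qex(iu)\qex(-iv)=\bigl(\qcos u + i q^{1/4}\qsin u\bigr)\bigl(\qcos v - i q^{1/4}\qsin v\bigr)=\bigl(\qcos u\,\qcos v + q^{1/2}\qsin u\,\qsin v\bigr)+i q^{1/4}\bigl(\qsin u\,\qcos v - \qcos u\,\qsin v\bigr)$. Comparing the real parts of the two sides of the (substituted) Proposition then yields \eqref{eqn:product_coscos}, and comparing the coefficients of $i$ yields \eqref{eqn:product_cossin} after cancelling the common factor $q^{1/4}$. Equivalently, and perhaps more transparently, one may first rewrite the trigonometric functions via $\qcos z=\tfrac12\bigl(\qex(iz)+\qex(-iz)\bigr)$ and $q^{1/4}\qsin z=\tfrac{1}{2i}\bigl(\qex(iz)-\qex(-iz)\bigr)$, obtaining $\qcos u\,\qcos v + q^{1/2}\qsin u\,\qsin v=\tfrac12\bigl(\qex(iu)\qex(-iv)+\qex(-iu)\qex(iv)\bigr)$ and $\qsin u\,\qcos v - \qcos u\,\qsin v=\tfrac{1}{2iq^{1/4}}\bigl(\qex(iu)\qex(-iv)-\qex(-iu)\qex(iv)\bigr)$; applying the Proposition to each of the two products and noting that the antisymmetric $\frac{u-v}{1-q}$-terms \emph{cancel} in the first combination but \emph{reinforce} in the second immediately produces the two stated formulas.

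The only genuine work is the algebraic bookkeeping: one must track precisely how the substitution transforms each upper parameter, each (untouched) lower parameter, and the argument of both ${_3\phi_3}$ series, and check that the results coincide on the nose with the right-hand sides of \eqref{eqn:product_coscos} and \eqref{eqn:product_cossin}, keeping careful account of the signs and of the half-integral powers of $q$ and of the compensating factors $i$ and $q^{1/4}$. I expect this sign-and-parameter bookkeeping — not any analytic subtlety — to be the main (and essentially only) obstacle; once it is carried out, the Corollary is an immediate consequence of the Proposition and the $q$-Euler identity, both of which are already in hand.
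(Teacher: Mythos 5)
Your overall strategy — feed purely imaginary arguments into the Nevanlinna product formula and separate real and imaginary parts via the $q$-Euler identity — is the right one, and the second-paragraph reformulation via $\qcos z=\tfrac12(\qex(iz)+\qex(-iz))$, $q^{1/4}\qsin z=\tfrac{1}{2i}(\qex(iz)-\qex(-iz))$ is a clean way to organise it. However, the sign bookkeeping you defer is not merely tedious: carried out honestly on the Proposition \emph{as stated}, it does not yield the Corollary.

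Concretely, substituting $u\mapsto iu$, $v\mapsto -iv$ sends the upper parameters $u^{-1}vq^{1/2}, uv^{-1}q^{1/2}$ to $-u^{-1}vq^{1/2}, -uv^{-1}q^{1/2}$, sends the argument $uvq^{1/2}$ to $+uvq^{1/2}$, and sends the prefactor $u-v$ to $i(u+v)$. But the right-hand side of \eqref{eqn:product_coscos} has upper parameters $+u^{-1}vq^{1/2}, +uv^{-1}q^{1/2}$ and argument $-uvq^{1/2}$, and \eqref{eqn:product_cossin} has prefactor $\frac{u-v}{1-q}$, not $\frac{u+v}{1-q}$. These are genuinely different ${_3\phi_3}$ series: changing the sign of both nontrivial upper parameters and the argument multiplies the $n$-th term by $(-1)^n\frac{(-a;q)_n(-b;q)_n}{(a;q)_n(b;q)_n}$, which is not identically $1$ even when $ab=q$. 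So the identities you would actually derive are $\qcos u\qcos v+q^{1/2}\qsin u\qsin v={_3\phi_3}\!\left[\begin{smallmatrix}0,\ -u^{-1}vq^{1/2},\ -uv^{-1}q^{1/2}\\ -q,\ q^{1/2},\ -q^{1/2}\end{smallmatrix};q,\,uvq^{1/2}\right]$ and $\qsin u\qcos v-\qcos u\qsin v=\frac{u+v}{1-q}\,{_3\phi_3}\!\left[\begin{smallmatrix}0,\ -u^{-1}vq,\ -uv^{-1}q\\ -q,\ q^{3/2},\ -q^{3/2}\end{smallmatrix};q,\,uvq\right]$, which are not the Corollary and (if the Proposition were right as printed) would in fact be false.

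The root cause is that the Proposition as transcribed in the paper is off by a sign: it should read $\qex(u)\qex(-v)$ on the left (equivalently, the arguments should be $-uvq^{1/2}$ and $-uvq$ with the upper parameters' signs flipped). You can see this immediately by setting $u=v$: as written, the Proposition would equate $\qex(u)^2$ (whose $u^2$-coefficient is strictly positive, $\frac{q^{1/2}}{(1-q)^2}+\frac{2q}{(q)_2}$) with ${_2\phi_2}\!\left[\begin{smallmatrix}0,\ q^{1/2}\\ -q,\ -q^{1/2}\end{smallmatrix};q,\,u^2q^{1/2}\right]$ whose $u^2$-coefficient is negative — a contradiction. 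Once the Proposition is corrected to $\qex(u)\qex(-v)=\ldots$, the substitution $u\mapsto iu$, $v\mapsto iv$ (with \emph{no} extra sign on $v$) transforms each upper parameter trivially, flips the sign of both arguments, and turns the prefactor into $iq^{1/4}\frac{u-v}{1-q}$, after which the $q$-Euler expansion of $\qex(iu)\qex(-iv)$ reproduces \eqref{eqn:product_coscos} and \eqref{eqn:product_cossin} on the nose. So your plan is salvageable, but you must first catch the misprint in the Proposition; as written, your substitution $u\mapsto iu$, $v\mapsto -iv$ produces formulas that disagree with the stated Corollary in every parameter.
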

\begin{lem}[$q$-Pythagoras, \cite{Nevanlinna}]\label{lem:q-pythagoras}
If the two variables $u$ and $v$ in equation~\eqref{eqn:product_coscos} are related by $u = q^{\frac12} v$, then
\begin{equation}\label{eqn:q-pythagoras}
\qcos\left(q^{\frac12} v\right) \qcos(v) + q^{\frac12} \qsin\left(q^{\frac12} v\right) \qsin(v) \equiv 1.    
\end{equation}
\end{lem}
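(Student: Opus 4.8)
The plan is to derive this immediately from equation~\eqref{eqn:product_coscos} of Corollary~\ref{cor:3phi3_and_trig} by specialising the parameters. The left-hand side of~\eqref{eqn:q-pythagoras} is exactly the left-hand side of~\eqref{eqn:product_coscos} evaluated at $u = q^{\frac12} v$, so it suffices to show that the $_3\phi_3$ on the right-hand side collapses to $1$ under this specialisation.

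First I would track the effect of $u = q^{\frac12} v$ on the data of the $_3\phi_3$: the three numerator parameters $0,\ u^{-1} v q^{\frac12},\ u v^{-1} q^{\frac12}$ become $0,\ 1,\ q$ respectively, and the argument $-uv q^{\frac12}$ becomes $-q v^2$. The key observation is the parameter collision $u^{-1} v q^{\frac12} = 1$: since $(1;q)_n = \prod_{i=0}^{n-1}(1-q^i)$ has the vanishing factor $1-q^0 = 0$ whenever $n \ge 1$, every term with $n \ge 1$ in the defining series~\eqref{eqn:definition_hypergeometric} is zero. Hence the series reduces to its $n = 0$ term, and at $n=0$ all the $q$-shifted factorials are empty products equal to $1$, the factor $\bigl((-1)^n q^{\binom{n}{2}}\bigr)^{1+s-r}$ is $1$, and $z^n = 1$; so the sum equals $1$, which gives~\eqref{eqn:q-pythagoras}.

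There is essentially no obstacle here: the lemma is a one-line consequence of Corollary~\ref{cor:3phi3_and_trig} once one notices the parameter collision. The only points requiring (minor) care are bookkeeping against the convention in~\eqref{eqn:definition_hypergeometric} — checking that the exponent $1+s-r$ equals $1$ in this instance and contributes nothing at $n=0$, and noting that the remaining numerator parameter $0$ causes no trouble since $(0;q)_n = 1$ for all $n \ge 0$.
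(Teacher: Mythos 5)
Your proof is correct and is essentially the same as the paper's, which simply observes that the ${}_3\phi_3$ reduces to a single non-zero term equal to $1$; you have merely made the underlying mechanism (the parameter collision $u^{-1}vq^{1/2}=1$ forcing $(1;q)_n=0$ for $n\ge 1$) explicit. The bookkeeping you check — the exponent $1+s-r=1$ and the harmlessness of the numerator parameter $0$ — is right.
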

\begin{proof}
This follows at once from the previous corollary, noting that the corresponding $q$-hypergeometric function has a single non-zero term, equal to $1$.
\end{proof}

%&&&&&&&&&&&&&&&&&&&&&&&&&&&&&&&&&&&&&&&&&&&&&&&&&&&&&&&&&&&&&&&&&&&&&&&&&&&&&
%&&&&&&&&&&&&&&&&&&&&&&&&&&&&&&&&&&&&&&&&&&&&&&&&&&&&&&&&&&&&&&&&&&&&&&&&&&&&&

\section{Proof of the main result}\label{sec:proof}

We now turn to provide the last step needed in the proofs of Theorem~\ref{thm:trigonometry} and Theorem~\ref{thm:main}. This will be achieved by showcasing some unexpected connections between the generating functions defined in Section~\ref{ssec:initial_cusp_formula} and basic hypergeometric functions.

Our approach begins with a generating function (equation~\eqref{eqn:generating_all} below), which defines the \textit{meromorphic $3$D index} introduced in~\cite{garoufalidis2019meromorphic}. Using standard complex analysis techniques, we will extract from it the generating functions for the ``shifted diagonal'' tetrahedral index defined in equation~\eqref{eqn:seriesindexshifted}. \\

Let us begin by introducing the auxiliary meromorphic function 
$$ G_q(z) = \frac{(-qz^{-1};q)_\infty}{(z;q)_{\infty}},$$
defined for $z \in\C^*, 0<|q|<1$. The only poles of $G_q(z)$ are at $z = q^{-n}$ for all integers $n \ge 0$, and each such pole is simple. It is proved in~\cite[Lem.~2.2]{garoufalidis2019meromorphic} that $G_q(z)$ has a convergent Laurent series expansion in the annulus $0<|z|<1$. 

In order to get a cleaner statement in what follows, we introduce a slight variation on the usual tetrahedral index: $$I^\Delta (m,e)(q) = (-q)^e \itet (m,e)(q^2).$$

According to~\cite{garoufalidis2019meromorphic}, the generating function for the sum of $I^\Delta(m,e)$ over all lattice points $(m,e)\in \Z^2$ is the two-variable meromorphic function defined as
\begin{equation}\label{eqn:generating_all}
\begin{gathered}
\psi^0(z,w) = c(q) G_q\left(-qz\right) G_q\left(w^{-1}\right) G_q\left(wz^{-1}\right)  \\=c(q)\frac{\left(z^{-1};q\right)_\infty}{\left(-qz;q\right)_{\infty}} \frac{\left(-qw;q\right)_\infty}{\left(w^{-1};q\right)_{\infty}} \frac{\left(-qzw^{-1};q\right)_\infty}{\left(wz^{-1};q\right)_{\infty}},
\end{gathered}
\end{equation}
where $c(q) = \frac{(q;q)^2_\infty}{(q^2;q^2)_{\infty}} = \sum_{k \in \Z} (-q)^{k^2}$ is a normalising factor.  \\~\\ 
More precisely, as proved in~\cite[Thm.~2.5]{garoufalidis2019meromorphic}, for $1 < |w|<|z|<|q^{-1}|$ we have the absolutely convergent Laurent series expansion
\begin{equation}\label{eqn:generating_all_implicit}
\psi^0(z,w) = \sum_{e,m\in \Z} I^{\Delta}(m,e)(q) z^e w^m = \sum_{e,m\in \Z} (-q)^e \itet(m,e)(q^2) z^e w^m  .
\end{equation}

As stated above, our current goal is to extract from \eqref{eqn:generating_all_implicit} the generating function for the ``diagonal tetrahedral index'' $\itet(k-r,k)$, where $r \in \Z$ is fixed, as in equation~\eqref{eqn:seriesindexshifted}. This in turn will give us closed expressions we can use to prove equation~\eqref{eqn:generating_function_noncompact_equality}. \\

We rely on a standard technique from Stanley~\cite[Sec.~6.3]{Stanley}; note that the function 
we seek is essentially the coefficient of 
$s^r$ in the series (regarded as a function of $s$):
\begin{equation}\label{enq:find_diagonal}
\psi^0\left(s,\frac{x}{s}\right) = \sum_{e,m\in \Z} (-q^e) \itet (m,e)(q^2) x^m s^{e-m},
\end{equation}
where the expression is valid for $s\in \C$ such that $1<|x|^{\frac{1}{2}} < |s| < \min\{|x|,|q^{-1}|\}$.

Putting $e-m = r$ in equation~\eqref{enq:find_diagonal} and using the residue theorem gives 
\begin{equation}\label{rintegral}
\left[ \psi^0\left(s,\frac{x}{s}\right)\right]_{s^r} = \sum_{e \in \Z} (-q)^e \itet (e-r,e)(q^2) x^{e-r} = \frac{1}{2\pi \emph{i}}\int_{|s|=\rho} \psi^0\left( s,\frac{x}{s}\right) \frac{\mathrm{d}s}{s^{r+1}} \ ,
\end{equation}
where $1 < |x|^{\frac{1}{2}}< \rho < \min\{|x|,|q|^{-1}\}$.

In order to compute the integral on the right-hand side of equation~\eqref{rintegral}, we need to compute the residues of 
$\frac{1}{s^r} \psi^0\left(s,\frac{x}{s}\right)$ in the punctured disc $D_\rho \setminus\{0\}\subset \C^*$ centred at the origin whose boundary is the circle $|s| = \rho$.
Each of the three factors in equation~\eqref{eqn:generating_all} contributes with an infinite set of simple poles, which we divide in three categories below; note that the positions of the poles corresponding to the first two factors lie outside the disc of radius $\rho$, and converge to $\infty$. 
The poles of type $(\emph{iii})$, associated to the third factor, instead come in pairs and converge to the origin. In all cases, the poles are of order $1$, and the numbering below is over $n \ge 0$. 
\begin{itemize}
\item[(\emph{i})] $s = -q^{-n-1}$; 
$$Res\left(\frac{1}{s^r} \psi^0\left(s,\frac{x}{s}\right),-q^{-n-1}\right) = (-1)^{r+1} q^{(n+1)r} \frac{(-q^{-1-2n}x^{-1};q)_n  (q^{n+2} x;q)_n}{(-q;q)_n (q^{-n};q)_n}$$
\item[(\emph{ii})] $s = xq^{-n}$; 
$$Res\left(\frac{1}{s^r} \psi^0\left(s,\frac{x}{s}\right),xq^{-n}\right) = -x^{-r} q^{nr} \frac{(-q^{n}x^{-1};q)_n  (-q^{1-2n} x;q)_n}{(-q;q)_n (q^{-n};q)_n}$$
\item[(\emph{iii})] $s = \pm x^{\frac{1}{2}}q^{\frac{n}{2}}$; 
$$Res\left(\frac{1}{s^r} \psi^0\left(s,\frac{x}{s}\right),\pm x^{\frac{1}{2}} q^{\frac{n}{2}}\right) = \frac{1}{2 (\pm x^{\frac{1}{2}} q^{\frac{n}{2}})^r} \frac{(\pm q^{-\frac{n}{2}} x^{-\frac{1}{2}} ; q)_n ( \mp q^{1-\frac{n}{2}} x^{\frac{1}{2}} ; q)_n}{(-q; q)_n (q^{-n}; q)_n}.$$
\end{itemize}

Note that only the poles of type $(iii)$ lie in the punctured disc $D_\rho \setminus\{0\}$. 

We need the following technical lemma in order to apply Cauchy's residue theorem. 
\begin{lem}\label{lem:residue_to_0}
Let $|q|<1$ and choose $\rho_0>0$ such that the circles $|s|=\rho_0 |q|^n$ avoid the poles of $\psi^0\left(s,\frac{x}{s}\right)$ for all integers $n>0.$ 
Then $\rho_n=\rho_0 |q|^n \to 0$ as $n \to \infty$ and for each $m\in\Z$
$$ \lim_{n\rightarrow \infty} \frac{1}{2\pi \emph{i}} \int_{|s| = \rho_n}\psi^0\left(s,\frac{x}{s}\right) s^{m-1} \, \mathrm{d}s = 0.$$
\end{lem}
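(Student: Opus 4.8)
The plan is to substitute the product formula \eqref{eqn:generating_all} into the integral, estimate the integrand on the small circle $|s|=\rho_n$, and then conclude by the $ML$--inequality. Set $z_1=-qs$, $z_2=s/x$ and $z_3=x/s^2$, so that \eqref{eqn:generating_all} reads
\[
\psi^0\!\left(s,\tfrac{x}{s}\right)=c(q)\,G_q(z_1)\,G_q(z_2)\,G_q(z_3),\qquad\text{with }\ z_1z_2z_3=-q .
\]
On the circle $|s|=\rho_n=\rho_0|q|^n$ one has $|z_1|=\rho_0|q|^{n+1}$, $|z_2|=\rho_0|q|^n/|x|$ and $|z_3|=|x|\rho_0^{-2}|q|^{-2n}$: the first two tend to $0$ and the third to $\infty$, and --- this is the point --- on the $\log$ scale $|z_3|$ grows \emph{twice as fast} as $|z_1|$ and $|z_2|$ shrink. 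Since by hypothesis $|s|=\rho_n$ misses the poles of $\psi^0(s,x/s)$, the $ML$--estimate gives
\[
\left|\frac{1}{2\pi i}\int_{|s|=\rho_n}\psi^0\!\left(s,\tfrac{x}{s}\right)\frac{\mathrm{d}s}{s}\right|\ \le\ \max_{|s|=\rho_n}\bigl|\psi^0\!\left(s,\tfrac{x}{s}\right)\bigr| ,
\]
so it is enough to prove that this maximum tends to $0$ as $n\to\infty$.

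The main ingredient is a uniform growth bound for $G_q$. Applying the Jacobi triple product in the form $\sum_{n\in\Z}q^{n^2/2}w^n=(q;q)_\infty(-q^{1/2}w;q)_\infty(-q^{1/2}w^{-1};q)_\infty$ one obtains
\[
G_q(z)=\frac{\Theta(q^{-1/2}z)}{(q;q)_\infty\,(z^2;q^2)_\infty},\qquad \Theta(w):=\sum_{n\in\Z}q^{n^2/2}w^n .
\]
Writing $\tau:=-\ln|q|>0$, comparison with the dominant term gives $|\Theta(w)|\le C_0\exp\!\bigl((\ln|w|)^2/2\tau\bigr)$ for all $w\neq0$, while $(q;q)_\infty$ is a nonzero constant, $(z^2;q^2)_\infty$ stays between positive constants for $|z|$ bounded, and $|(z^2;q^2)_\infty|$ grows like $\exp\!\bigl((\ln|z|)^2/\tau\bigr)$, up to a factor $\exp\!\bigl(O(|\ln|z||)\bigr)$, as $|z|\to\infty$ while staying a fixed relative distance away from the points $q^{-k}$. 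Combining these (and absorbing the constant $(q;q)_\infty$), there are $C,C'$ depending only on $q$ so that, for such $z$,
\[
|G_q(z)|\le C\exp\!\Bigl(\tfrac{(\ln|z|)^2}{2\tau}+C'|\ln|z||\Bigr)\ \text{ if }|z|\le1,\qquad
|G_q(z)|\le C\exp\!\Bigl(-\tfrac{(\ln|z|)^2}{2\tau}+C'|\ln|z||\Bigr)\ \text{ if }|z|\ge1 .
\]
The small--$|z|$ bound applies verbatim to $z_1$ and $z_2$, which are uniformly small and far from every pole; the large--$|z|$ bound is what we need for $z_3$.

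Substituting $\ln|z_1|,\ln|z_2|=-n\tau+O(1)$ and $\ln|z_3|=2n\tau+O(1)$ into these bounds gives, uniformly for $|s|=\rho_n$,
\[
\log\bigl|\psi^0(s,\tfrac{x}{s})\bigr|\ \le\ \frac{(\ln|z_1|)^2+(\ln|z_2|)^2-(\ln|z_3|)^2}{2\tau}+O(n)\ =\ -\,n^2\tau+O(n),
\]
since $(n\tau)^2+(n\tau)^2-(2n\tau)^2=-2n^2\tau^2$. Hence $\max_{|s|=\rho_n}|\psi^0(s,x/s)|=O\!\bigl(e^{-n^2\tau+O(n)}\bigr)\to0$, which proves the lemma.

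The step I expect to be the main obstacle is the \emph{uniformity in $n$} of the growth bound for $G_q$ along the shrinking circles, i.e.\ the lower bound for $|(z^2;q^2)_\infty|$ at $z=z_3$. Here the hypothesis ``the circles avoid the poles'' is exactly what is needed: the poles of $\psi^0(s,x/s)$ are the points $s=\pm x^{1/2}q^{k/2}$, whose moduli form, on the $\log_{|q|}$ scale, a fixed translate of $\tfrac12\Z$; a circle $|s|=\rho_0|q|^n$ meets this set precisely when $\rho_n$ equals one of those moduli, and when it does not, translating by the integer $n$ on the $\log_{|q|}$ scale does not change the distance to the lattice, so \emph{all} the circles sit at one and the same relative distance $\delta>0$ from the pole set. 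This $\delta$ then controls, uniformly in $n$, how close $z_3=x/s^2$ comes to a pole $q^{-k}$ of $G_q$, which is what makes the $G_q$--bound uniform. With this in hand the remaining steps are routine theta--function and $q$--Pochhammer estimates.
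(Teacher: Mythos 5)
Your argument is correct, and it takes a genuinely different route from the paper's. The paper follows Gasper--Rahman's method from \cite[Sec.~4.10]{GasperRahman}: it writes $\psi^0(s,x/s)=c(q)\,(as^2,b_1 s^{-1},b_2 s^{-1};q)_\infty/(c_1 s,c_2 s,d s^{-2};q)_\infty$, sets $s=\delta q^N$, and uses the $q$-Pochhammer shift identities $(q^{\pm N}z;q)_\infty=(\ldots)\,(z;q)_\infty$ to peel off the $N$-dependence factor by factor, landing on the bound $O\bigl(\text{const}^N\,|q|^{N^2}\bigr)$. You instead rewrite $G_q(z)=\Theta(q^{-1/2}z)/\bigl[(q;q)_\infty(z^2;q^2)_\infty\bigr]$ via the Jacobi triple product and use the standard quadratic-exponential growth of $\Theta$ and of $(\cdot\,;q)_\infty$, which gives the same $e^{-n^2\tau+O(n)}$ decay after summing the three $\log$-quadratic contributions, with the crucial sign coming from $z_3=x/s^2$ having twice the $\log$-modulus of $z_1,z_2$. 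Both routes hinge on the same $|q|^{n^2}$ decay; yours has the advantage of making the \emph{uniformity in $n$} completely explicit via the clean $\log_{|q|}$-lattice argument (the hypothesis that the circles avoid the poles forces a fixed positive distance $\delta$ modulo $\tfrac12\Z$, hence a uniform lower bound on $|(z_3^2;q^2)_\infty|$), whereas the paper leaves that uniformity implicit in the boundedness of the finite Pochhammers $(c_j\delta;q)_N$ and $(q\delta^2 d^{-1};q)_{2N}$. Your theta-function bookkeeping is also arguably more transparent than the six-fold Pochhammer shuffle, at the cost of a small detour through the triple product identity.
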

\begin{proof} 
This can be proved by a slight modification of the argument in \cite[Sec.~4.10]{GasperRahman}. Let us write $$P(s) = \psi^0 \left(s, \frac{x}{s}\right) = c(q) \frac{(as^2, b_1 s , b_2 s ;q)_\infty }{(c_1 s, c_2 s, d s^{-2};q)_\infty},$$
where $a = -q x^{-1}, \, b_1 = 1,\, b_2 = -qx,\, c_1 = -q,\, c_2 = x^{-1}$ and $d = x$. Then, poles correspond to $s = c_j^{-1} q^{-n}$, for $j = 1,2$ and $n \ge 0$. 

For $m \in \Z$, consider the integral $$I_m = \frac{1}{2\pi \emph{i}} \int_{K} P (s) s^{m-1} \mathrm{d}s,$$
where $K$ is the positively oriented circle $|s| = \rho$, and $1< |x|^\frac12<\rho < \min\{|x|,|q|^{-1}\}$. 

Let $|q|< 1$, and pick $\delta_0 > 0$ such that $\delta_0 \neq |d q^n|$ and $\delta_0 \neq |c_j q^n|$, for $j = 1,2$; then put $\delta = \delta_0 e^{\emph{i} \theta}$, for $\theta \in \R$, and look at $s = \delta q^N$, $N \ge 1$. \\Call $C_N =\{s \in \C\,|\, |s| = \delta_0 |q|^N\}$, for a positive integer $N$. Then, for all $N  \ge 1$, $C_N$ does not intersect the poles of $P(s)$.  
As a consequence we can write 
$$\left| P(\delta q^N) (\delta q^N)^{m-1} \right| = \left| (\delta q^N)^{m-1} \frac{(q^{2N}a \delta, q^{-N} b_1 \delta^{-1}, q^{-N} b_2 \delta^{-1} ;q)_\infty}{(q^N c_1 \delta, q^N c_2 \delta, q^{-2N} d \delta^{-2} ;q)_\infty}\right|.$$

To proceed, we consider the following relations (\textit{cf.}~\cite[App.~1]{GasperRahman}):
\begin{itemize}
\item $ (q^{2N} a \delta^2;q)_\infty = \frac{(a\delta^2;q)_\infty}{(a\delta^2;q)_{2N}}$
\item $(b_j \delta^{-1};q)_\infty = (-1)^N b_j^{-N} \delta^{N} q^{\binom{N+1}{2}}\frac{(q^{-N} b_j \delta^{-1};q)_\infty}{(q\delta b_j^{-1};q)_N}$.
\item $(q^{N}c_j \delta ; q)_\infty = \frac{(c_j \delta ;q)_\infty }{(c_j \delta ; q)_N}$.
\item $(d \delta^{-2} ; q)_\infty = d^{-2N} q^{2N} q^{\binom{2N+1}{2}} \frac{(q^{-2N} d \delta^{-2};q)_\infty}{(q\delta^2 d^{-1};q)_{2N}}$.
\end{itemize}
Putting everything together, we get:
\begin{gather*}
\begin{gathered}
\left| P(\delta q^N) (\delta q^N)^{m-1}\right| = \\ \left| \frac{(a\delta^2, b_1\delta^{-1} , b_2 \delta^{-1};q)_\infty}{(c_1 \delta, c_2 \delta, d \delta^{-2}; q)_\infty}\right| \left| \frac{(c_1 \delta , c_2 \delta , q \delta b_1^{-1}, q\delta b_2^{-1};q)_N}{(a \delta^2;q)_{2N} (q \delta^2 d^{-1};q)_{2N}}\right| \left| \left( \frac{b_1 b_2 q^{m-1}}{d^2} \right)^N \right|\\ \delta^{m-1} \left| \delta^{4N-2N} q^{\binom{2N+1}{2}-2 \binom{N+1}{2}}\right|  = O \left( \left| \frac{b_1 b_2 q^{m-1}}{d^2}\right|^N \delta^{2N} |q|^{N^2}\right),
\end{gathered}
\end{gather*}
where the last equality follows by letting $N \rightarrow \infty$.

The length of the circle $C_N$ asymptotically behaves as $len (C_N) = O(q^N)$, therefore we can deduce 
$$I_m=\int_{K} P (s) s^{m-1} \mathrm{d}s = O \left( \left| \frac{b_1 b_2 q^{m}}{d^2}\right|^N \delta^{2N} |q|^{N^2}\right)\, ,$$ which tends to $0$ as $N$ goes to $\infty$.
\end{proof}

Using Lemma~\ref{lem:residue_to_0} with $m=-r$, we can now apply Cauchy's residue theorem on the annulus $\rho_n\le|s|\le\rho$ and take the limit as $n\to\infty$ to conclude that the integral in 
equation~\eqref{rintegral}
is the
sum of the residues of type $(\emph{iii})$: 
\begin{gather}\label{eqn:diagonal_not_simplified}
\begin{gathered}
\sum_{n =0}^\infty Res\left(\frac{1}{s^r}\psi^0\left(s,\frac{x}{s}\right), x^{\frac12} q^{\frac{n}{2}}\right) + Res\left(\frac{1}{s^r}\psi^0\left(s,\frac{x}{s}\right), -x^{\frac12} q^{\frac{n}{2}}  \right)  \\ 
= \frac{1}{2x^{\frac{r}{2}}} \sum_{n = 0}^\infty \frac{( q^{-\frac{n}{2}} x^{-\frac{1}{2}} ; q)_n ( - q^{1-\frac{n}{2}} x^{\frac{1}{2}} ; q)_n  + (-1)^r (- q^{-\frac{n}{2}} x^{-\frac{1}{2}} ; q)_n ( q^{1-\frac{n}{2}} x^{\frac{1}{2}} ; q)_n}{q^{\frac{nr}{2}}(-q; q)_n (q^{-n}; q)_n} 
\end{gathered}
\end{gather}

Our next step consists in identifying this series of residues with known $q$-hypergeometric series.
First note that the expression in equation~\eqref{eqn:diagonal_not_simplified} 
can be greatly simplified. The terms in the denominator can be paired as follows: 
\begin{equation}\label{simp_denom}
(-q;q)_n (q^{-n};q)_n =(-1)^nq^{-\binom{n+1}{2}} (q^2;q^2)_n,\end{equation}
by regrouping the factors. (Here, both sides are interpreted as $1$ when $n=0$.)

Similarly, the two products appearing in the numerator of~\eqref{eqn:diagonal_not_simplified} can be simplified as well. Indeed, 
\begin{equation}\label{simp_numer1}   
(q^{-\frac{n}{2}} x^{-\frac{1}{2}};q)_n  (-q^{1-\frac{n}{2}} x^{\frac{1}{2}};q)_n = (qx)^{\frac{n}{2}} (q^{-n}x^{-1};q^2)_n ,\end{equation} and 
\begin{equation}\label{simp_numer2}
(-q^{-\frac{n}{2}} x^{-\frac{1}{2}};q)_n  (q^{1-\frac{n}{2}} x^{\frac{1}{2}};q)_n = (qx)^{-\frac{n}{2}} (q^{n}x; q^{-2})_n = (-1)^n(qx)^{\frac{n}{2}} (q^{-n}x^{-1}; q^2)_n.\end{equation}

Altogether, we can rewrite the series in equation~\eqref{eqn:diagonal_not_simplified} as:
\begin{equation}\label{eqn:diagonal_simplified}
 \sum_{n = 0}^\infty \frac{(-1)^n + (-1)^{r}}{2} q^{\frac{n(n+2-r)}{2}}  x^{\frac{n-r}{2}} \frac{(q^{-n}x^{-1}; q^2)_n}{  (q^2;q^2)_n}.\end{equation}

It is possible to further simplify this expression and identify it as a $q$-hypergeometric function.  
We need to distinguish between two cases, depending on the parity of the diagonal shift $r$. 

\medskip
If $r \equiv 0 \, (\text{mod } 2)$, put $n = 2k$ and $r = 2t$ where $k,t \in \Z$. Then the sum in~\eqref{eqn:diagonal_simplified} becomes
\begin{equation}\label{evensum}
\sum_{k \ge 0} q^{2k(k+1-t)} x^{k-t}  \frac{(q^{-2k}x^{-1}; q^2)_{2k}}{(q^2;q^2)_{2k}} \ .
\end{equation}

The initial term is $a_0=x^{-t}$, and the ratio of two consecutive terms is
$$\frac{a_{k+1}}{a_k} = q^{4+4k-2t}x \frac{(q^2;q^2)_{2k}}{(q^2;q^2)_{2k+2}} \frac{(q^{-2k-2}x^{-1}; q^2)_{2k+2}}{(q^{-2k}x^{-1}; q^2)_{2k}} = -q^{2+2k-2t} \frac{(1-q^{2k}x^{-1})(1-q^{2k+2}x)}{(1-q^{2+4k})(1-q^{4+4k})}.$$
After the substitution $u = q^2$ we get
$$\frac{a_{k+1}}{a_k} = -u^{1+k-t} \frac{(1-u^{k}x^{-1})(1-u^{k+1}x)}{(1-u^{k+\frac{1}{2}})(1+u^{k+\frac{1}{2}}) (1-u^{k+1})(1+u^{k+1})},$$
therefore the $n$-th term $a_n$ can be written as 
$$(-1)^n x^{-t} \frac{u^{\binom{n}{2}}(u^{1-t})^n}{(u;u)_n} \frac{(x^{-1};u)_n (ux;u)_n (0;u)_n}{(-u;u)_n (u^{\frac{1}{2}};u)_n (-u^{\frac{1}{2}};u)_n}.$$
Thus we can identify the series~\eqref{evensum} as the $q$-hypergeometric function 
\begin{equation*}
x^{-t}  \,{_3\phi_3} \left[\begin{matrix} x^{-1} &  x q^2   & 0\\
-q^2 & q & -q 
 \end{matrix} 
; q^2, q^{2(1-t)} \right] 
\end{equation*}
and by \eqref{rintegral} this gives $\sum_{e \in \Z} x^{e-2t} (-q)^e  \itet (e-2t,e)(q^2)$.
After the replacement of $q$ by $q^\frac12$ and $t$ by $\frac{r}{2}$, we obtain
\begin{equation}\label{eqn:diagonal_qypergeometric_EVEN}
x^{\frac{r}{2}} {_3\phi_3} \left[\begin{matrix} x^{-1} &  x q  & 0\\
-q & q^\frac12 & -q^\frac12 
 \end{matrix} 
; q, q^{1-\frac{r}{2}} \right] = \sum_{e \in \Z} (-xq^\frac12)^e \itet (e-r,e).
\end{equation}

\medskip
If instead $r \equiv 1\, (\text{mod } 2)$, we apply to equation~\eqref{eqn:diagonal_simplified} the substitutions $n = 2k+1$ and $r = 2t+1$ to get:
\begin{equation}\label{oddsum}
- \sum_{k = 0}^\infty q^{(2k+1)(k-t+1)} x^{k-t} \frac{(q^{-2k-1}x^{-1};q^2)_{2k+1}}{(q^2;q^2)_{2k+1}}.\end{equation}

The initial term in this case is $-q^{1-t} x^{-t} \frac{(1-q^{-1}x^{-1})}{(1-q^2)} = \frac{(1-qx)}{q^t x^{t+1} (1-q^2)}$, and the ratio of two consecutive terms is
$$\frac{a_{k+1}}{a_k} = q^{5+4k-2t}x \frac{(q^2;q^2)_{2k+1}}{(q^2;q^2)_{2k+3}} \frac{(q^{-2k-3}x^{-1}; q^2)_{2k+3}}{(q^{-2k-1}x^{-1}; q^2)_{2k+1}} = -q^{2+2k-2t} \frac{(1-q^{2k+1}x^{-1})(1-q^{2k+3}x)}{(1-q^{4+4k})(1-q^{6+4k})}.$$

Using again the substitution $u = q^2$, this ratio becomes
$$-u^{k-t+1} \frac{(1-u^{k+\frac{1}{2}}x^{-1})(1-u^{k+\frac{3}{2}}x)}{(1-u^{k+1})(1+u^{k+1})(1-u^{k+\frac{3}{2}})(1+u^{k+\frac{3}{2}})}.$$

Therefore, the $n$-th term $a_n$ in this case is 
$$ (-1)^n \frac{(1-u^{\frac{1}{2}} x) (u^{1-t})^n u^{\binom{n}{2}}}{u^{\frac{t}{2}}x^{t+1}(1-u)} \frac{(u^{\frac{1}{2}} x^{-1};u)_n (u^{\frac{3}{2}} x ;u)_n (0;u)_n}{(u;u)_n (-u;u)_n (u^{\frac{3}{2}};u)_n (-u^{\frac{3}{2}}; u)_n}\ ,$$
and the corresponding $q$-hypergeometric function is
\begin{equation*}
\frac{(1-xq)}{x^{\frac{r+1}{2}} q^{\frac{r-1}{2}} (1-q^2)} {_3\phi_3} \left[\begin{matrix} x^{-1} q & xq^3  & 0\\
-q^2 & q^3 & -q^3 
 \end{matrix} 
; q^2, q^{3-r} \right]  = \sum_{e \in \Z} (-q)^e \itet (e-r,e)(q^2) x^{e-r}.  
\end{equation*}

As before, after replacing $q$ by $q^\frac12$ and $t$ by $\frac{r-1}{2}$, we get:
\begin{equation}\label{eqn:diagonal_qypergeometric_ODD}
x^{\frac{r-1}{2}} q^{\frac{1-r}{4}} 
\frac{(1-x q^\frac12)}{1-q} {_3\phi_3} \left[\begin{matrix} x^{-1} q^\frac12 & x q^{\frac{3}{2}}  & 0\\
-q & q^\frac{3}{2} & -q^{\frac{3}{2}} 
 \end{matrix} 
; q, q^{\frac{3-r}{2}} \right] 
 = \sum_{e \in \Z} (-q^\frac12 x)^e \itet (e-r,e) .
\end{equation}

To summarise the results we obtained so far in this section, we collect the sums of shifted diagonal terms from equations~\eqref{eqn:diagonal_qypergeometric_EVEN} and~\eqref{eqn:diagonal_qypergeometric_ODD} in a single function.\\

After applying the substitution $z = -x q^{\frac12}$, we see that 
\begin{equation}\label{eqn:phi_expression}
\varphi_r(z,q) = 
\begin{cases}
(-z q^{-\frac12})^{\frac{r}{2}}\,{_3\phi_3} \left[\begin{matrix} -z^{-1} q^\frac12 &  -zq^\frac12  & 0\\
-q & q^\frac12 & -q^\frac12 
 \end{matrix} 
; q, q^{1-\frac{r}{2}} \right]   & r \text{ even}\\
~\\
(-z)^{\frac{r-1}{2}} q^{\frac{1-r}{2}} 
\frac{(1+z)}{1-q}\, {_3\phi_3} \left[\begin{matrix} -z^{-1} q   & -z q  & 0\\
-q & q^\frac{3}{2} & -q^{\frac{3}{2}} 
 \end{matrix} 
; q, q^{\frac{3-r}{2}} \right]   & r \text{ odd.}
\end{cases}
\end{equation}

We can now provide the proof of Theorem~\ref{thm:trigonometry} from Section~\ref{ssec:GY_for_cusp}. 

\begin{proof}[Proof of Theorem~\ref{thm:trigonometry}]
We begin by identifying the functions $\varphi_r (z, q)$ with suitable combinations of the trigonometric $q$-analogues from~\cite{Nevanlinna} introduced in Section~\ref{sec:hypergeometric}. 

Indeed, using Corollary~\ref{cor:3phi3_and_trig}, it is immediate to realise that, if $r$ is even, 
\begin{equation*}\label{eqn:phi_trigonometric_even}
\varphi_r (z,q) = \left(-z q^{-\frac12}\right)^{\frac{r}{2}} \left[\qcos \left(z^{\frac12} q^{\frac{1-r}{4}}\right) \qcos\left(- z^{-\frac12} q^{\frac{1-r}{4}}\right) + q^{\frac12} \qsin\left(z^{\frac12} q^{\frac{1-r}{4}}\right)\qsin\left(- z^{-\frac12} q^{\frac{1-r}{4}}\right) \right],
\end{equation*}
while for $r$ odd
\begin{equation*}\label{eqn:phi_trigonometric_odd}
\varphi_r (z,q) = (-1)^{\frac{r-1}{2}} q^{\frac{1-r}{4}} z^{\frac{r}{2}}\left[ \qsin\left(z^{\frac12} q^{\frac{1-r}{4}} \right) \qcos\left(-z^{-\frac12} q^{\frac{1-r}{4}}\right) - \qcos\left(z^{\frac12} q^{\frac{1-r}{4}}\right) \qsin\left(-z^{-\frac12} q^{\frac{1-r}{4}} \right)\right].
\end{equation*}

Substituting the above expressions in equation~\eqref{eqn:generating_function_noncompact_equality} for any $r\in \Z$, the resulting expression simplifies to
\begin{multline*}
(z q^{-\frac12})^r \left(\qcos\left(-\frac{q^{-\frac{r}{4}}}{z^\frac12},q\right) \qcos\left(-\frac{q^{\frac{1}{2}-\frac{r}{4}}}{z^\frac12},q\right)+q^\frac12 \qsin\left(-\frac{q^{-\frac{r}{4}}}{z^\frac12},q\right) \qsin\left(-\frac{q^{\frac{1}{2}-\frac{r}{4}}}{z^\frac12},q\right)\right)\\ \times \left(\qcos\left(z^\frac12 q^{-\frac{r}{4}},q\right) \qcos\left(z^\frac12 q^{\frac{1}{2}-\frac{r}{4}},q\right)+q^\frac12 \qsin\left(z^\frac12 q^{-\frac{r}{4}},q\right) \qsin\left(z^\frac12 q^{\frac{1}{2}-\frac{r}{4}},q\right)\right).
\end{multline*}
Then, a simple application of the $q$-Pythagoras Lemma~\ref{lem:q-pythagoras} implies that both factors in the expression above are identically $1$.
\end{proof}

\begin{rmk}
We suspect equation~\eqref{eqn:generating_function_noncompact} is equivalent to this alternative version 
\begin{equation}\label{eqn:alternative}
\varphi_r(z q,q) \varphi_r(z^{-1} q,q) -q^r z^{-r}\varphi_{r-2}(z,q)\varphi_{r+2}(z,q) \equiv 1 ,
\end{equation}
but we were unable to prove the equivalence between the two generating functions. 
\end{rmk}

As a direct consequence of the computations that led to equation~\eqref{eqn:phi_expression}, we obtain the following ``linear''  relations (as opposed to the ``quadratic'' ones~\eqref{eqn:quadratic_identity}) between certain values of the tetrahedral index. These relations may be of independent interest, and are not directly implied by the linear relation used in~\cite[Sec.~11.1]{garoufalidis20163d}.
\begin{cor}\label{cor:linear_diagonal_relations}
The following identities hold. \\
If $r \in \Z$ is even, then:
\begin{align}
\sum_{e \in \Z} \left(-q^\frac12\right)^e \itet (e-r,e) & =1 ,\\
\sum_{e \in \Z} \left(-q^\frac12\right)^{-e} \itet (e-r,e) & = q^{-\frac{r}{2}} .
\end{align}
If $r \in \Z$ is odd, then:
\begin{align}
\sum_{e \in \Z} (-q)^e \itet (e-r,e) & =1 ,\\
\sum_{e \in \Z} (-q)^{-e} \itet (e-r,e) & = -q^{-r} \\
\sum_{e \in \Z} (-1)^e \itet (e-r,e) & = 0.
\end{align}
\end{cor}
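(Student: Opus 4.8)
## Proof proposal for Corollary~\ref{cor:linear_diagonal_relations}

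The plan is to extract these five scalar identities directly from the generating-function expression~\eqref{eqn:phi_expression} for $\varphi_r(z,q)$ by evaluating at suitable values of $z$, together with the already-established Theorem~\ref{thm:trigonometry}. Recall that by equations~\eqref{eqn:diagonal_qypergeometric_EVEN} and~\eqref{eqn:diagonal_qypergeometric_ODD} we have, for all $r \in \Z$,
\begin{equation*}
\varphi_r(z,q) = \sum_{e \in \Z} z^e \,\itet(e-r,e)(q),
\end{equation*}
so each of the five sums in the statement is simply a special value of $\varphi_r$ or of a closely related series: the sums $\sum_e (-q^{1/2})^{\pm e}\itet(e-r,e)$ are $\varphi_r(\mp q^{1/2},q)$ (taking $z = -q^{1/2}$ and $z = -q^{-1/2}$ respectively, using $(-q^{1/2})^{-1}=-q^{-1/2}$); the sums $\sum_e (-q)^{\pm e}\itet(e-r,e)$ are $\varphi_r(\mp q, q)$ with the appropriate sign bookkeeping; and $\sum_e (-1)^e \itet(e-r,e) = \varphi_r(-1,q)$.

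The key observation is that the two factors appearing in Theorem~\ref{thm:trigonometry}, and in its proof via the $q$-Pythagoras Lemma~\ref{lem:q-pythagoras}, are each the product of a $\qcos$--$\qcos$ plus $q^{1/2}\qsin$--$\qsin$ combination with arguments differing by a factor $q^{1/2}$; these are identically $1$. For the even case, $\varphi_r(z,q) = (-zq^{-1/2})^{r/2}\,C(z)$ where $C(z) = \qcos(z^{1/2}q^{(1-r)/4})\qcos(-z^{-1/2}q^{(1-r)/4}) + q^{1/2}\qsin(\cdots)\qsin(\cdots)$. Evaluating at $z=-q^{1/2}$ makes the two arguments $z^{1/2}q^{(1-r)/4}$ and $z^{-1/2}q^{(1-r)/4}$ (up to sign and the order-$3$/order-$4$ root conventions one must track carefully) differ precisely by a factor $q^{1/2}$, so that $C(-q^{1/2})$ is exactly the left-hand side of the $q$-Pythagoras identity and hence equals $1$; then $\varphi_r(-q^{1/2},q) = (-(-q^{1/2})q^{-1/2})^{r/2} = 1$. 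Evaluating instead at $z=-q^{-1/2}$ again arranges the arguments to differ by $q^{1/2}$, giving $C(-q^{-1/2})=1$ and $\varphi_r(-q^{-1/2},q) = (q^{-1})^{r/2} = q^{-r/2}$. For the odd case one uses the trigonometric expression $\varphi_r(z,q) = (-1)^{(r-1)/2}q^{(1-r)/4}z^{r/2}\,S(z)$ where $S(z)$ is the $\qsin\qcos - \qcos\qsin$ combination. Here the relevant evaluations are $z = -q$ and $z=-q^{-1}$: the substitution $u = z^{1/2}q^{(1-r)/4}$, $v = -z^{-1/2}q^{(1-r)/4}$ satisfies $u^2 = z q^{(1-r)/2}$, $v^2 = z^{-1}q^{(1-r)/2}$, and at $z = \pm q^{\pm 1}$ one gets $u^2/v^2 = z^2 = q^{\pm 2}$, i.e.\ $u = q^{\pm 1/2} v$ up to sign — precisely the hypothesis of the $q$-Pythagoras type simplification, but now applied to~\eqref{eqn:product_cossin}, whose $_3\phi_3$ collapses to its constant term. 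One must check that at these values the prefactor $(u-v)/(1-q)$ in~\eqref{eqn:product_cossin} and the prefactor $(1+z)/(1-q)$ in~\eqref{eqn:phi_expression} combine to give the stated constants $1$ and $-q^{-r}$. Finally, $\sum_e (-1)^e\itet(e-r,e) = \varphi_r(-1,q)$: for $r$ odd, $z=-1$ forces $z^{r/2}$ and the two trigonometric arguments to be related by $u = -v$ (since $z^{1/2}$ and $-z^{-1/2}$ coincide when $z=-1$), so $S(-1)$ contains $\qsin(v)\qcos(v)-\qcos(v)\qsin(v)=0$, giving $\varphi_r(-1,q)=0$.

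The main obstacle I expect is bookkeeping the branch conventions for $z^{1/2}$ and the various fractional powers $q^{1/4}, q^{1/2}$ that appear in the $\qsin$, $\qcos$ definitions and in~\eqref{eqn:phi_expression}, so that the arguments $u,v$ really do satisfy $u = q^{1/2}v$ (or $u=-v$) on the nose — a sign error here would change $1$ into $-1$ or $q^{-r/2}$ into $-q^{-r/2}$. A clean way to sidestep most of this is to avoid the trigonometric rewriting altogether for the "$=1$" identities: substitute $z = -q^{1/2}$ (for $r$ even) or $z=-q$ (for $r$ odd) directly into the $_3\phi_3$ in~\eqref{eqn:phi_expression} and observe that one of the numerator parameters becomes $-z^{-1}q^{1/2} = 1$ (respectively $-z^{-1}q = 1$), so that $(1;q)_n = 0$ for all $n\ge 1$ and the $_3\phi_3$ reduces to its $n=0$ term, which is $1$; the prefactor $(-zq^{-1/2})^{r/2}$ (resp.\ $(-z)^{(r-1)/2}q^{(1-r)/2}(1+z)/(1-q)$) then evaluates to exactly $1$. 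The remaining three identities (the $q^{-r/2}$, $-q^{-r}$, and $0$ cases) need the genuine $q$-Pythagoras collapse, and for those the argument-matching calculation has to be done, but it is the same computation already carried out inside the proof of Theorem~\ref{thm:trigonometry}, just at a single point rather than identically in $z$.
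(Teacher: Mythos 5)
Your final paragraph identifies the correct mechanism, and it is exactly what the paper does, but you then stop short and unnecessarily fall back on the heavier $q$-Pythagoras route for three of the five identities. In fact the ``$(1;q)_n=0$ for $n\geq 1$'' truncation, or the vanishing of the prefactor, handles \emph{all five} cases once you check \emph{both} numerator parameters, not just the first one. Concretely, using~\eqref{eqn:phi_expression}:
\begin{itemize}
\item $r$ even, $z=-q^{1/2}$: numerator parameter $-z^{-1}q^{1/2}=1$, $_3\phi_3=1$, prefactor $(-zq^{-1/2})^{r/2}=1$. Gives $1$.
\item $r$ even, $z=-q^{-1/2}$ (so that $z^{-e}=(-q^{1/2})^{-e}$): numerator parameter $-zq^{1/2}=1$, $_3\phi_3=1$, prefactor $(-zq^{-1/2})^{r/2}=(q^{-1})^{r/2}=q^{-r/2}$. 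Gives $q^{-r/2}$.
\item $r$ odd, $z=-q$: numerator parameter $-z^{-1}q=1$, $_3\phi_3=1$, prefactor $(-z)^{(r-1)/2}q^{(1-r)/2}\frac{1+z}{1-q}=q^{(r-1)/2}q^{(1-r)/2}\cdot 1=1$. Gives $1$.
\item $r$ odd, $z=-q^{-1}$: numerator parameter $-zq=1$, $_3\phi_3=1$, prefactor $(q^{-1})^{(r-1)/2}q^{(1-r)/2}\cdot\frac{1-q^{-1}}{1-q}=q^{1-r}\cdot(-q^{-1})=-q^{-r}$. Gives $-q^{-r}$.
\item $r$ odd, $z=-1$: the prefactor contains $1+z=0$, so the whole expression vanishes. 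Gives $0$.
\end{itemize}
This is precisely the paper's one-line proof: evaluate~\eqref{eqn:diagonal_qypergeometric_EVEN} or~\eqref{eqn:diagonal_qypergeometric_ODD} at $x=1,q^{-1},q^{1/2},q^{-3/2},q^{-1/2}$ (equivalently $z=-xq^{1/2}$ as above). Your first half --- the trigonometric/$q$-Pythagoras approach with careful branch tracking --- is not needed at all, and the worry you raise about sign bookkeeping is real enough that it is better avoided. The concrete gap in your proposal is that you did not check that the \emph{second} numerator parameter ($-zq^{1/2}$, resp.\ $-zq$) equals $1$ at the evaluation points for identities $(2)$ and $(4)$, and that the prefactor $(1+z)$ vanishes at $z=-1$ for identity $(5)$; once you do, the ``genuine $q$-Pythagoras collapse'' you anticipate is never invoked.
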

\begin{proof}
Evaluate the appropriate one of  equations~\eqref{eqn:diagonal_qypergeometric_EVEN} and \eqref{eqn:diagonal_qypergeometric_ODD} at $x = 1, q^{-1}, q^\frac12,q^{-\frac{3}{2}}$ and $q^{-\frac12}$ respectively.
\end{proof}

\begin{rmk}
The values chosen in the proof above are the smallest ones which make the corresponding $q$-hypergeometric series terminate. More generally, if $r$ is even, we get a linear relation for each value of $x = q^c$ for $c \in \Z$. By this, we mean that the infinite shifted and weighted series $$\sum_{e \in \Z}\left(-q^{\frac{1}{2} + c}\right)^e \itet (e-r,e)$$ will be equal to a quotient of two Laurent polynomials in $q$; moreover, choosing $c-1$ or $-c$ will result in the same polynomial. Similarly, for $r$ odd, if $x = q^{\frac{c}{2}}$.
\end{rmk}

%&&&&&&&&&&&&&&&&&&&&&&&&&&&&&&&&&&&&&&&&&&&&&&&&&&&&&&&&&&&&&&&&&&&&&&&&&&&&&
%&&&&&&&&&&&&&&&&&&&&&&&&&&&&&&&&&&&&&&&&&&&&&&&&&&&&&&&&&&&&&&&&&&&&&&&&&&&&&

\section{Large fillings}

One very interesting consequence of the Gang-Yonekura formula is that it 
allows us to understand the limiting behaviour of the $3$D index for manifolds obtained as $(a,b)$ Dehn fillings with $|a|+|b|$ large. 
Surprisingly, in contrast to Thurston's of hyperbolic Dehn filling theorem~\cite{thurston1982three}, we find that the limiting behaviour of the $3$D index generally depends on the \emph{direction} in which $(a,b)$ approaches $\infty$.

We will see that the limiting behaviour of the index $\ind_{\tri(a_n,b_n)}$ can be quite different for sequences $(a_n,b_n)\to \infty$ where the slopes $\frac{a_n}{b_n}$ approach: (i) a rational limit, and (ii) an irrational limit (see Figure~\ref{fig:farey_convergents}).

\medskip
First we look at the limiting behaviour of the index $\ind_{\tri(a_n,b_n)}$ for some sequences where the slopes $\frac{a_n}{b_n}$ approach a rational number. 

\begin{thm}\label{thm:index_asymptotics}
Let $M$ be a $3$-manifold with at least two cusps and $\tri$ a $1$-efficient triangulation of $M$ with a standard cusp $\cusp$ at $T \subset \bd M$. Let $\alpha, \beta$ be a pair of dual curves on $T$, and define $\alpha_n = \alpha + n\beta$ for $n \in \Z$. Then:
\begin{equation}
\lim_{n\rightarrow \infty} \ind_{\tri(\alpha_n)}(q) = \ind^0_\tri(q) - \ind_\tri^{2\beta} (q).
\end{equation}
\end{thm}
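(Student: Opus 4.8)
The idea is to feed the Gang--Yonekura formula~\eqref{eqn:GYformula} for the slopes $\alpha_n=\alpha+n\beta$ and take $n\to\infty$ term by term. First I would fix the basis $\mu,\lambda$ of $\hh_1(T;\Z)$ so that $\alpha\cdot\beta=1$; then every class $\gamma\in\hh_1(T;\Z)$ with $\alpha_n\cdot\gamma=0$ is of the form $\gamma=k\alpha_n$ for $k\in\Z$, and those with $\alpha_n\cdot\gamma=\pm2$ are of the form $\gamma=k\alpha_n\pm2\beta$, exactly as in the proof of Proposition~\ref{prop:gang_cusp}. Writing out~\eqref{eqn:GYformula} for $\tri(\alpha_n)$ gives
\begin{equation*}
\ind_{\tri(\alpha_n)}(q)=\sum_{k\in\Z}(-1)^{k}\left(q^{\frac{k}{2}}\ind_\tri^{k\alpha_n}(q)-\ind_\tri^{k\alpha_n+2\beta}(q)\right),
\end{equation*}
using the compact form~\eqref{eqn:compact_version} together with the symmetry $\ind_\tri^\gamma=\ind_\tri^{-\gamma}$ (which holds because $|\gamma|=|-\gamma|$ and the index only depends on the class). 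The key observation is that as $n\to\infty$ the terms with $k\neq0$ die: for $k\neq 0$ the boundary class $k\alpha_n=k\alpha+kn\beta$ has $h$-norm (hexagonal norm) growing linearly in $n$, so by Lemma~\ref{lem:growth_cusp_index}, or rather its gluing-formula consequence via Theorem~\ref{thm:gluing}, the minimal $q^{1/2}$-degree of $\ind_\tri^{k\alpha_n}(q)$ grows quadratically in $n$; hence these contributions tend to $0$ in the $q^{1/2}$-adic topology on $\Z(\!(q^{1/2})\!)$. Likewise $k\alpha_n+2\beta$ has $h$-norm $\to\infty$ unless $k=0$, so only $\ind_\tri^{0}(q)$ (from $k=0$ in the first sum) and $\ind_\tri^{2\beta}(q)$ (from $k=0$ in the second sum) survive, yielding the claimed limit $\ind^0_\tri(q)-\ind_\tri^{2\beta}(q)$.

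To make the degree-growth argument precise I would use the gluing decomposition of Section~\ref{sec:gluing_formula}: write $M$ as $M_1$ (the complement of the standard cusp $\cusp$) glued to $\cusp$ along a once-punctured torus, so that
\begin{equation*}
\ind_\tri^{\gamma}(q)=\sum_{\text{admissible }\beta_0}\left(-q^{\frac12}\right)^{-\chi_0(\beta_0)}\ind^{rel}_{\tri_1}(\gamma;\beta_0)\,\ind^{rel}_{\cusp}(\gamma;\beta_0),
\end{equation*}
and then invoke Lemma~\ref{lem:growth_cusp_index}: for fixed half-edge data $\underline b$, $\deg\ind^{rel}_{\cusp}(x,y;\underline b)\sim\tfrac14 h(x,y)^2$. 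Since $\ind^{rel}_{\tri_1}$ has minimal degree bounded below by a fixed constant independent of $\gamma$ once $\beta_0$ is fixed (or, more carefully, grows no slower than linearly while the cusp factor grows quadratically), and since for each fixed coefficient of $q^{j/2}$ only finitely many $\beta_0$ contribute, one gets that the coefficient of each fixed power $q^{j/2}$ in $\ind_\tri^{k\alpha_n}(q)$ vanishes for $n$ large. I would phrase this as: for each $j$ there is $N(j)$ such that $n\ge N(j)$ and $k\neq0$ imply the $q^{j/2}$-coefficient of $\ind_\tri^{k\alpha_n}(q)$ is zero. The one technical subtlety I would need to address is uniformity over $k$: although each individual $k\neq 0$ term vanishes for large $n$, I must sum over all $k$, so I need the degree lower bound to grow in $|k|$ as well as $n$; but Lemma~\ref{lem:growth_cusp_index} gives $\deg\sim\tfrac14 h(k\alpha+kn\beta)^2$ which is $\ge c\,k^2$ for $n$ in a bounded range and grows in both variables, so for fixed $j$ only finitely many pairs $(k,n)$ with $k\neq0$ can contribute to the $q^{j/2}$-coefficient, and all have $n$ bounded --- hence for $n$ large enough the whole tail sum over $k\neq0$ contributes nothing below degree $j$.

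The main obstacle, then, is not the structure of the argument but making the two-variable degree estimate rigorous at the level of the full index $\ind_\tri^\gamma$ rather than the relative cusp index: one must control how the minimal degree of $\ind^{rel}_{\tri_1}(\gamma;\beta_0)$ depends on $\gamma$ and $\beta_0$, and in particular rule out cancellation between the (quadratically growing) cusp factor and a (possibly negatively-growing) $M_1$ factor. I expect this is handled by the same observation used implicitly throughout the paper: the relative index $\ind^{rel}_{\tri_1}(\gamma;\beta_0)$ is a well-defined element of $\Z(\!(q^{1/2})\!)$ for each admissible $\beta_0$ by $1$-efficiency (Definition~\ref{def:relative_index}, Theorem~\ref{thm:gluing}), its minimal degree is bounded below by an affine function of the $\beta_0$-data, and the constraint $\partial S=\gamma$ forces $\beta_0$ and the cusp-side shear data to scale linearly with $h(\gamma)$, so the $\chi_0(\beta_0)$ correction and the $\tri_1$-side degree cannot outpace the $\tfrac14 h(\gamma)^2$ growth on the cusp side. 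Once this lower bound is in place, the term-by-term limit is immediate and the theorem follows.
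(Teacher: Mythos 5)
Your argument is correct in spirit and arrives at the right answer, but it runs in the opposite order to the paper's proof, and that reordering is precisely what creates the obstacle you flag at the end. You first write out the Gang--Yonekura formula for the \emph{full} index $\ind_{\tri(\alpha_n)}$ in terms of the classes $\ind_\tri^{k\alpha_n}$ and $\ind_\tri^{k\alpha_n\pm2\beta}$, and then try to estimate the minimal degree of $\ind_\tri^\gamma$ by decomposing it via the Gluing Theorem into a cusp factor and an $M_1$ factor. The difficulty you correctly identify --- controlling the $M_1$-side relative index $\ind^{rel}_{\tri_1}(0;\beta_0)$ and the $\chi_0(\beta_0)$ normalisation so that they cannot offset the quadratic growth of the cusp factor --- is genuine in this formulation, and your resolution of it remains a heuristic sketch rather than a proof.

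The paper avoids this difficulty entirely by applying Gang--Yonekura at the \emph{relative} level first. For each fixed boundary edge data $\underline{b}$, the compact relative formula gives
\[
GY(\alpha_n;\underline{b}) = \sum_{k\in\Z}(-1)^k\left(q^{\frac{k}{2}}\ind^{rel}_\cusp(k\alpha_n;\underline{b}) - \ind^{rel}_\cusp(k\alpha_n+2\beta;\underline{b})\right),
\]
and Lemma~\ref{lem:growth_cusp_index} applies \emph{directly} to each summand: for fixed $\underline{b}$ and $k\neq 0$, $h(k\alpha_n)$ and $h(k\alpha_n+2\beta)$ grow linearly in $n$, so the minimal degree of each term grows quadratically. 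Hence $GY(\alpha_n;\underline{b}) = \ind^{rel}_{\mathrm{LST}(\alpha_n)}(\underline{b})$ converges to $\ind^{rel}_\cusp(0;\underline{b})-\ind^{rel}_\cusp(2\beta;\underline{b})$ for each $\underline{b}$, and the Gluing Theorem~\ref{thm:gluing} is invoked only once, at the very end, to assemble the global statement. This decouples the degree estimate (which is self-contained, taking place entirely inside the relative cusp index) from the gluing step, and so never needs to control the $\tri_1$-side factor against $\gamma$. Your route can be made to work, but you would need to supply a genuine lower bound on $\deg\ind^{rel}_{\tri_1}(0;\beta_0)-\chi_0(\beta_0)$ uniform enough to beat the cusp-side quadratic --- exactly the step the paper sidesteps by choosing the other order of operations.
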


\begin{rmk}
Note that if $[\alpha]=(a,b)$ and $[\beta]=(c,d)$ then $\alpha_n$ has slope $\frac{a+n c}{b+n d}$ approaching $\frac{c}{d} \in \Q\cup\infty$ as $n \to \infty$.
It is striking that here the limit depends on the `asymptotic direction' in which $\alpha_n$ approaches infinity. Example~\ref{rational_limit_slope_WH}  below shows this explicitly.
\end{rmk}

\begin{proof}
The strategy of the proof is the following: we start by looking at the effect of taking increasing values of $n$ in the relative Gang-Yonekura formula~\eqref{eqn:compact_version} for the standard cusp filled along $\alpha_n$. We then just apply the gluing construction from Section~\ref{sec:gluing_formula}. 

Since $\beta$ is a dual curve to $\alpha_n$ for all $n$, equation~\eqref{eqn:compact_version} for $\alpha_n$ gives: 
\begin{equation}\label{eqn:compact_version_alpha_n}
GY(\alpha_n;\underline{b}) = \sum_{k\in\Z} (-1)^k \left(q^{\frac{k}{2}} \ind_\cusp^{rel}(k \alpha_n;\underline{b}) - \ind^{rel}_\cusp(k \alpha_n +2\beta;\underline{b}) \right).
\end{equation}

By Lemma~\ref{lem:growth_cusp_index}, each summand, for a fixed $k$, has a minimal degree which is bounded below by a quadratic function of $n$. \\Taking the limit for $n\to \infty$ thus only leaves the terms which do not depend on $n$, which occur when $k=0$: $\ind_\cusp^{rel}(0;\underline{b}) $ and $- \ind^{rel}_\cusp(2\beta;\underline{b})$. In other words, for $n$ large, the relative index for the LST corresponding to the slope $\alpha_n$ will coincide with $\ind_\cusp^{rel}(0;\underline{b}) - \ind^{rel}_\cusp(2\beta;\underline{b})$ up to a certain degree $D_n$. This degree $D_n$ tends to $\infty$ when $n\to \infty$, and therefore in the limit $GY(\alpha_n;\underline{b})$ coincides with $\ind_\cusp^{rel}(0;\underline{b}) - \ind^{rel}_\cusp(2\beta;\underline{b})$.
We can conclude by applying the Gluing Theorem~\ref{thm:gluing}. 
\end{proof}

\begin{ex}\label{rational_limit_slope_WH}
Using our code (see Section~\ref{sec:code}) we can quickly compute the terms up to order $q^{50}$ for the limit of the index of $L5a1(n,1)$ as $n \to \infty$:
\begin{gather*}
1 - q - q^{2} + 4 q^{3} + 6 q^{4} + 5 q^{5} - 11 q^{6} - 32 q^{7} - 51 q^{8} - 50 q^{9} + 10 q^{10} + 121 q^{11} + 299 q^{12}\\ + 467 q^{13} + 561 q^{14} + 394 q^{15} - 131 q^{16} - 1166 q^{17} - 2669 q^{18} - 4453 q^{19} - 6031 q^{20} \\- 6575 q^{21} - 5109 q^{22} - 337 q^{23} + 8629 q^{24} + 22467 q^{25} + 40390 q^{26} + 60556 q^{27} + 78364 q^{28} \\+ 87665 q^{29} + 79188 q^{30} + 42895 q^{31} - 32325 q^{32} - 154775 q^{33} - 328507 q^{34} - 548054 q^{35}\\ - 795333 q^{36} - 1035176 q^{37} - 1211893 q^{38} - 1249293 q^{39} - 1050566 q^{40} - 507845 q^{41}\\ + 490653 q^{42} + 2036211 q^{43} + 4182792 q^{44} + 6906360 q^{45} + 10081189 q^{46} + 13429158 q^{47} \\+ 16501230 q^{48} + 18635166 q^{49} + 18965699 q^{50} + O\left(q^{51}\right),
\end{gather*}
and similarly for $L5a1(1,n)$ the limit of the index as $n \to \infty$ is:
\begin{gather*}
1 - 4 q + q^{2} + 16 q^{3} + 22 q^{4} + q^{5} - 72 q^{6} - 158 q^{7} - 210 q^{8} - 118 q^{9} + 231 q^{10} + 848 q^{11}\\ + 1642 q^{12} + 2278 q^{13} + 2224 q^{14} + 725 q^{15} - 2815 q^{16} - 8778 q^{17} - 16657 q^{18} - 24902 q^{19}\\ - 30337 q^{20} - 28345 q^{21} - 13224 q^{22} + 20877 q^{23} + 78101 q^{24} + 158654 q^{25} + 255784 q^{26}\\ + 353319 q^{27} + 422739 q^{28} + 423649 q^{29} + 304160 q^{30} + 7643 q^{31} - 519030 q^{32} - 1309493 q^{33}\\ - 2360867 q^{34} - 3609277 q^{35} - 4909000 q^{36} - 6010263 q^{37} - 6548215 q^{38} - 6043485 q^{39} \\- 3925823 q^{40} + 415766 q^{41} + 7548590 q^{42} + 17873212 q^{43} + 31466485 q^{44} + 47887782 q^{45} \\+ 65975434 q^{46} + 83634586 q^{47} + 97673663 q^{48} + 103697217 q^{49} + 96141119 q^{50} + O\left(q^{51}\right).
\end{gather*}
In both cases the terms displayed agree with $\ind^0_\tri(q) - \ind_\tri^{2\beta} (q)$ for $\alpha = (0,1)$, $\beta = (1,0)$ and  $\alpha = (1,0)$, $\beta = (0,1)$ respectively.
\end{ex}

\begin{figure}
\centering
\includegraphics[width=0.36\linewidth]{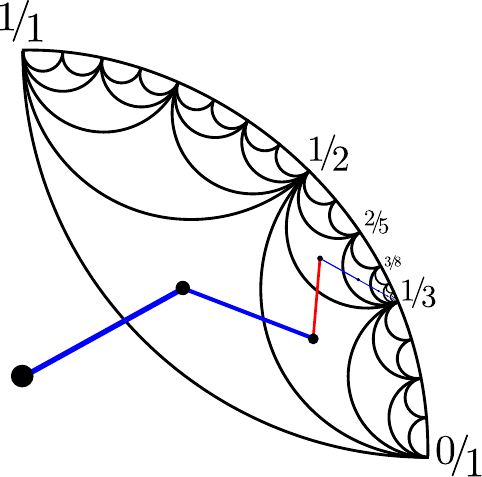}
\caption{The limiting behaviour of the slopes approximating a rational number in Theorem~\ref{thm:index_asymptotics} correspond to infinite paths on the Farey graph which are eventually right- or left-veering. All other paths (moving monotonically away from the centre) correspond instead to irrational numbers.}
\label{fig:farey_convergents}
\end{figure}

Next we look at the limiting behaviour of the index $\ind_{\tri(p_n,q_n)}$ for some special sequences where the slopes $p_n/q_n$ approach an irrational number $r$. 
For simplicity, we assume $r\in[0,1]$ (otherwise just need to add an initial $a_0 \in \Z$). 
Then $r$ has an infinite continued fraction expansion with convergents
\begin{equation}
\frac{p_n}{q_n} = 
    \cfrac{1}{a_1+\cfrac{1}{a_2+\cfrac{1}{a_3+\hspace{0.5cm}\ddots
\hspace{0.5cm}\raisebox{-3ex}{$\cfrac{1}{a_{n-1}+\cfrac{1}{a_n}}$} }}}
\end{equation}
where $a_i, p_i, q_i$ are positive integers. This gives a sequence of rationals $\frac{p_n}{q_n}$ 
with 
\begin{equation} \lim_{n\to\infty} \frac{p_n}{q_n}= r, \end{equation}
and such that 
\begin{equation}\label{dual_convergents}  p_n q_{n-1} - q_n p_{n-1}= \pm 1 \text{ for each } n. \end{equation}

\begin{thm}\label{thm:real_fillings}
Let $M$ be a $3$-manifold with at least two cusps, and $\tri$ a $1$-efficient triangulation of $M$ with a standard cusp $\cusp$ at $T \subset \bd M$.
Let $r \in \R\setminus\Q$ be an irrational number, with
continued fraction convergents $\frac{p_n}{q_n}$ as described above. 
Then 
$$\lim_{n \to \infty} \ind_{\tri(p_n,q_n)} (q) = \ind_\tri^0 (q).$$
\end{thm}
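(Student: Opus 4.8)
The plan is to follow the same strategy used in the proof of Theorem~\ref{thm:index_asymptotics}: track the behaviour of the relative Gang-Yonekura formula for the standard cusp along the sequence of slopes $\frac{p_n}{q_n}$, and then glue using Theorem~\ref{thm:gluing}. The essential difference from the rational case is that here the ``dual'' curve changes with $n$, so the bookkeeping is more delicate, but the degree-growth estimate of Lemma~\ref{lem:growth_cusp_index} should still force all but finitely many terms to disappear in the limit (up to any fixed degree).

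First I would set up notation: let $\alpha_n = p_n\mu + q_n\lambda$ (or the appropriate basis convention) be the filling curve realising the $n$-th convergent, and use \eqref{dual_convergents} to pick a dual curve $\beta_n = p_{n-1}\mu + q_{n-1}\lambda$, so that $\alpha_n\cdot\beta_n = \pm 1$. Then the compact relative formula \eqref{eqn:compact_version} gives
\begin{equation*}
GY(\alpha_n;\underline{b}) = \sum_{k\in\Z} (-1)^k \left(q^{\frac{k}{2}}\ind^{rel}_\cusp(k\alpha_n;\underline{b}) - \ind^{rel}_\cusp(k\alpha_n + 2\beta_n;\underline{b})\right).
\end{equation*}
The term with $k=0$ in the first sum is $\ind^{rel}_\cusp(0;\underline{b})$, which is independent of $n$; I claim every other term has minimal $q^{\frac12}$-degree tending to $\infty$ as $n\to\infty$. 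For the terms $k\alpha_n$ with $k\ne 0$ this is immediate from Lemma~\ref{lem:growth_cusp_index}, since the hexagonal norm $h(k\alpha_n)$ grows linearly in $n$ for fixed $k\ne0$ (the convergents $\frac{p_n}{q_n}$ have $p_n,q_n\to\infty$). The new point is the terms $k\alpha_n + 2\beta_n$: here I would use that $k\alpha_n + 2\beta_n$, written in coordinates, has hexagonal norm bounded below by a quantity growing with $n$ \emph{uniformly in $k$} except possibly for a bounded window of $k$'s near $k = -2\,\tfrac{\beta_n}{\alpha_n}$ — but since $\beta_n$ and $\alpha_n$ are \emph{not} proportional (they form a basis of $\hh_1(T;\Z)$), one cannot have $k\alpha_n + 2\beta_n$ small for any integer $k$ once $n$ is large. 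Concretely, $\beta_n/\alpha_n$ is irrational-ish in the sense that the closest integer multiple of $\alpha_n$ to $-2\beta_n$ stays a definite distance away, so $h(k\alpha_n + 2\beta_n)\to\infty$ uniformly over $k\in\Z$.

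Granting this, for any fixed degree bound $D$ there is $N(D)$ so that for $n\ge N(D)$ the series $GY(\alpha_n;\underline{b})$ agrees with $\ind^{rel}_\cusp(0;\underline{b})$ up to order $q^{D}$; hence $GY(\alpha_n;\underline{b})\to \ind^{rel}_\cusp(0;\underline{b})$ coefficientwise. By Theorem~\ref{thm:main_relative} this is the relative index of $\text{LST}(\alpha_n)$, and by the Gluing Theorem~\ref{thm:gluing} the full index $\ind_{\tri(p_n,q_n)}(q)$ is a (coefficientwise-convergent) sum over admissible $\beta_0$ of products of $\ind^{rel}_{\tri_1}(\omega;\beta_0)$ with $\ind^{rel}_{\text{LST}(\alpha_n)}(\beta_0)$ times a power of $-q^{\frac12}$; taking the limit inside this sum (justified since each fixed power of $q$ only sees finitely many $\beta_0$, by $1$-efficiency of $\tri_1$) replaces the LST factor by $\ind^{rel}_\cusp(0;\beta_0)$, and the resulting sum is exactly the gluing expression for $\ind^0_\tri(q)$ — i.e. the index of the \emph{unfilled} triangulation with trivial boundary data on $T$. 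This gives $\lim_{n\to\infty}\ind_{\tri(p_n,q_n)}(q)=\ind^0_\tri(q)$.

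The main obstacle I anticipate is the uniformity of the degree estimate over \emph{all} $k\in\Z$ for the shifted terms $k\alpha_n + 2\beta_n$: in the rational-limit case of Theorem~\ref{thm:index_asymptotics} the dual curve $\beta$ was fixed, so only finitely many $k$ needed attention, whereas here one must rule out cancellation at unboundedly large $k$ as $n$ grows. The key input is that $\{\alpha_n,\beta_n\}$ is an integral basis, so $\|k\alpha_n + 2\beta_n\|$ is bounded below by (roughly) the distance from $-2\beta_n$ to the line $\R\alpha_n$, which in turn is $\gtrsim 1/\|\alpha_n\|$ by the unimodularity $\alpha_n\cdot\beta_n = \pm1$ — wait, that bound \emph{shrinks}; the correct estimate instead combines the component \emph{along} $\alpha_n$ (controlled by $k$) with the transverse component (of size $\asymp \|\alpha_n\|^{-1}\cdot$[area]$=\|\alpha_n\|^{-1}$), and one checks the hexagonal norm of the sum still grows because either $|k|$ is large (first sum's estimate) or $|k|$ is small and then the vector is close to $2\beta_n$, whose norm grows like $\|\beta_n\|\asymp\|\alpha_n\|\to\infty$. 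Making this dichotomy precise, and checking the limit genuinely commutes with the $\beta_0$-sum in the gluing formula using the absolute convergence guaranteed by $1$-efficiency, is where the real work lies.
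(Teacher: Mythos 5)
Your strategy mirrors the paper's own proof, and the concern you flag about uniformity over $k$ in the shifted sum is legitimate: the paper's justification ("the entries $k\alpha_n$ and $k\alpha_n+2\beta_n$ are not bounded as $n\to\infty$, since the denominators $q_n$ form an increasing sequence") is a pointwise claim for each fixed $k$, whereas the coefficientwise limit genuinely needs a lower bound on $h(k\alpha_n+2\beta_n)$ that is uniform in $k$ and tends to $\infty$ with $n$. However, the dichotomy you propose does not quite close this gap. For $|k|$ small but nonzero, $k\alpha_n$ has norm comparable to, or larger than, $2\beta_n$ (both grow like $q_n$) and both vectors point in nearly the same direction $(r,1)$ as $n\to\infty$; so for $k=-1$ or $k=-2$ there could a priori be substantial cancellation, and "$k\alpha_n+2\beta_n$ is close to $2\beta_n$" is simply false. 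Moreover, as you yourself noticed, the transverse distance from $2\beta_n$ to the line $\R\alpha_n$ is $\asymp 1/\|\alpha_n\|\to 0$, so the continuous geometry alone cannot rule out a small lattice vector on that line.

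What actually gives the uniform bound is a direct estimate on the second coordinate. Writing $v_2(k)=kq_n+2q_{n-1}$, one has $\min_{k\in\Z}|v_2(k)|=\mathrm{dist}(2q_{n-1},\,q_n\Z)$. Since $\gcd(q_n,q_{n-1})=1$ and $q_n=a_nq_{n-1}+q_{n-2}$, one checks: if $a_n\ge 2$ then $2q_{n-1}\bmod q_n=2q_{n-1}$ and the distance is $\min(2q_{n-1},\,(a_n-2)q_{n-1}+q_{n-2})\ge q_{n-2}$; if $a_n=1$ then $2q_{n-1}\bmod q_n=q_{n-1}-q_{n-2}$ and the distance is $\min(q_{n-1}-q_{n-2},\,2q_{n-2})\ge q_{n-3}$. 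In all cases
\begin{equation*}
\min_{k\in\Z}\bigl|kq_n+2q_{n-1}\bigr| \;\ge\; q_{n-3}\;\longrightarrow\;\infty ,
\end{equation*}
and since $h(x,y)\ge|y|$ this yields $h(k\alpha_n+2\beta_n)\ge q_{n-3}$ uniformly in $k$. Feeding this into Lemma~\ref{lem:growth_cusp_index} (together with the obvious estimate $h(k\alpha_n)=|k|\,h(\alpha_n)$ to handle the $q^{k/2}$ prefactor) gives the needed uniform degree growth, and the remainder of your argument — passing the limit through the gluing sum using absolute convergence from $1$-efficiency of $\tri_1$ — is correct as written.
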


\begin{proof}
Let $\alpha_n=(p_n,q_n)$. Then $\beta_n=(p_{n-1},q_{n-1})$ is dual to $\alpha_n$ by~\eqref{dual_convergents}. Recall the compact relative version of the Gang-Yonekura formula for the standard cusp: 
$$GY(\alpha_n;\underline{b}) = \sum_k (-1)^k \left(q^{\frac{k}{2}} \ind_\cusp^{rel}(k \alpha_n;\underline{b}) - \ind^{rel}_\cusp(k \alpha_n +2\beta_n;\underline{b}) \right).$$
 
If $k \neq 0$, then the entries $k  \alpha_n$ and $k  \alpha_n + 2\beta_n$ are not bounded as $n\to\infty$, since the denominators 
$q_n$ form an increasing sequence of positive integers. Hence the minimal degree for the relative cusp indices $\ind_\cusp^{rel}(k \alpha_n;\underline{b})$ and $\ind^{rel}_\cusp(k \alpha_n +2\beta_n;\underline{b})$
approach $+\infty$ by Lemma~\ref{lem:growth_cusp_index}. 

Therefore we can restrict our analysis to $k=0$. In this case, we are left with 
$$\lim_{n \to \infty} \ind^{rel}_\cusp(0;\underline{b}) - \ind^{rel}_\cusp(2\beta_n;\underline{b}).$$
Now the entries of $\beta_n=(p_{n-1},q_{n-1})$ are not bounded in absolute value since $q_{n-1} \to \infty$.
 We can then conclude by applying Lemma~\ref{lem:growth_cusp_index} again and the gluing formula~\ref{thm:gluing}. 
\end{proof}

\begin{ex}\label{ex:golden_ratio}
It is well known that the ratios $\frac{F_{n+1}}{F_n}$ of consecutive Fibonacci numbers converge to the golden ratio $\phi$. Using our certified code (described in~Section~\ref{sec:code}) we can see Theorem~\ref{thm:real_fillings} in action, \textit{e.g.}~for the Whitehead link and the complement of the figure eight knot (\textit{cf.}~Section~\ref{sec:closed} for an interpretation of the latter indices). We display the results of the computations for the first $10$ coefficients in Tables~\ref{tab:whitehead_fibonacci} and~\ref{tab:fig8_fibonacci} below. Note that the coefficients considered here stabilise after $n =6$. 

\begin{center}
\begin{table}[!htbp]
\begin{tabular}{|c|c|c|}
\hline
$n$ & $\frac{F_{n+1}}{F_n}$ & $\ind_{L5a1(F_{n+1},F_n)}(q)$ \\
\hline
\multirow{2}{*}{$0$} & \multirow{2}{*}{$\frac{1}{0}$} & $O\left(q^{11}\right)$ \\
 & & \\ \hline
\multirow{2}{*}{$1$} & \multirow{2}{*}{$\frac{1}{1}$} & $1 - 2 q - 3 q^2 + 2 q^3 + 8 q^4 + 18 q^5 + 18 q^6 + 14 q^7$ \\
 & & $- 12 q^8 - 52 q^9 - 106 q^{10} + O\left(q^{11}\right)$ \\ \hline
\multirow{3}{*}{$2$} & \multirow{3}{*}{$\frac{2}{1}$} & $1 - q^{\frac12} - q - 2 q^{\frac32} - q^2 + 2 q^{\frac52} + 6 q^3 + 8 q^{\frac72} + 9 q^4 + 11 q^{\frac92}$ \\
 & & $+ 12 q^5 + 6 q^{\frac{11}2} - 5 q^6 - 17 q^{\frac{13}2} - 34 q^7 - 57 q^{\frac{15}2}$ \\
 & & $- 79 q^8 - 100 q^{\frac{17}2} - 118 q^9 - 124 q^{\frac{19}2} - 118 q^{10} + O\left(q^{\frac{21}2}\right)$ \\ \hline
\multirow{2}{*}{$3$} & \multirow{2}{*}{$\frac{3}{2}$} & $1 - 3 q + q^2 + 18 q^3 + 18 q^4 - 10 q^5 - 94 q^6 - 185 q^7$ \\
 & & $- 214 q^8 - 61 q^9 + 413 q^{10} + O\left(q^{11}\right)$ \\ \hline
\multirow{2}{*}{$4$} & \multirow{2}{*}{$\frac{5}{3}$} & $1 - 3 q + q^2 + 16 q^3 + 16 q^4 - 12 q^5 - 92 q^6 - 167 q^7$ \\
 & & $- 172 q^8 + 11 q^9 + 495 q^{10} + O\left(q^{11}\right)$ \\ \hline
\multirow{2}{*}{$5$} & \multirow{2}{*}{$\frac{8}{5}$} & $1 - 3 q + 3 q^2 + 18 q^3 + 14 q^4 - 29 q^5 - 123 q^6 - 198 q^7$ \\
 & & $- 151 q^8 + 160 q^9 + 834 q^{10} + O\left(q^{11}\right)$ \\ \hline
\multirow{2}{*}{$6$} & \multirow{2}{*}{$\frac{13}{8}$} & $1 - 3 q + q^2 + 16 q^3 + 16 q^4 - 11 q^5 - 87 q^6 - 160 q^7$ \\
 & & $- 161 q^8 + 16 q^9 + 482 q^{10} + O\left(q^{11}\right)$ \\ \hline
\multirow{2}{*}{$7$} & \multirow{2}{*}{$\frac{21}{13}$} & $1 - 3 q + q^2 + 16 q^3 + 16 q^4 - 11 q^5 - 87 q^6 - 160 q^7$ \\
 & & $- 161 q^8 + 16 q^9 + 482 q^{10} + O\left(q^{11}\right)$ \\ \hline
\end{tabular}
\caption{The value of the index on one cusp of the Whitehead link (using SnapPy's basis for L5a1) for surgery coefficients approximating the golden ratio.}
\label{tab:whitehead_fibonacci}
\end{table}
\end{center}

\begin{center}
\begin{table}[!htbp]
\begin{tabular}{|c|c|c|}
\hline
$n$ & $\frac{F_{n+1}}{F_n}$ &  $\ind_{m004(F_{n+1},F_n)(q)}$\\
\hline
\multirow{2}{*}{$1$} & \multirow{2}{*}{$\frac{1}{1}$} & $1 + O\left(q^{11}\right)$ \\
   && \\ \hline
\multirow{2}{*}{$2$} & \multirow{2}{*}{$\frac{2}{1}$} & $1 + O\left(q^{11}\right)$ \\
   && \\ \hline
\multirow{2}{*}{$3$} & \multirow{2}{*}{$\frac{3}{2}$} & $1 - 2 q - 3 q^2 - q^3 + 3 q^4 + 13 q^5 + 18 q^6$ \\
   && $+ 26 q^7 + 23 q^8 + 14 q^9 - 10 q^{10} + O\left(q^{11}\right)$ \\ \hline
\multirow{2}{*}{$4$} & \multirow{2}{*}{$\frac{5}{3}$} & $1 - 2 q - 3 q^2 + 6 q^4 + 16 q^5 + 20 q^6 + 21 q^7$ \\
   && $+ 6 q^8 - 23 q^9 - 69 q^{10} + O\left(q^{11}\right)$ \\ \hline
\multirow{2}{*}{$5$} & \multirow{2}{*}{$\frac{8}{5}$} & $1 - 3 q - 4 q^2 + 4 q^3 + 15 q^4 + 29 q^5 + 29 q^6$ \\
   && $+ 17 q^7 - 29 q^8 - 102 q^9 - 197 q^{10} + O\left(q^{11}\right)$ \\ \hline
\multirow{2}{*}{$6$} & \multirow{2}{*}{$\frac{13}{8}$} & $1 - 2 q - 3 q^2 + 2 q^3 + 8 q^4 + 18 q^5 + 18 q^6$ \\
   & & $+ 14 q^7 - 12 q^8 - 52 q^9 - 106 q^{10} + O\left(q^{11}\right)$ \\ \hline
\multirow{2}{*}{$7$} & \multirow{2}{*}{$\frac{21}{13}$} & $1 - 2 q - 3 q^2 + 2 q^3 + 8 q^4 + 18 q^5 + 18 q^6$ \\
   & & $+ 14 q^7 - 12 q^8 - 52 q^9 - 106 q^{10} + O\left(q^{11}\right)$ \\ \hline
\end{tabular}
\caption{The value of the index  the complement of $4_1$ (using SnapPy's basis for m004) for surgery coefficients approximating the golden ratio.}
\label{tab:fig8_fibonacci}
\end{table}
\end{center}

Note that the results displayed in Tables~\ref{tab:whitehead_fibonacci} and~\ref{tab:fig8_fibonacci} do indeed coincide with the first ten coefficients of $\ind_{L5a1}^{0}(q)$ and $\ind_{m004}^{0}(q)$ respectively.

If instead we look at the continued fraction expansion $$\pi = 3 + \cfrac{1}{7 + \cfrac{1}{15 + \cfrac{1}{1 + \cfrac{1}{292 + \cfrac{1}{\dots}}}}}$$
we obtain the sequence converging to 
$\ind_{m004}^0(q)$ shown in Table~\ref{tab:fig8_pi}.

\begin{center}
\begin{table}[!htbp]
\begin{tabular}{|c|c|c|}
\hline
$n$ & $\frac{p_n}{q_n}$ 
&  $\ind_{m004(p_n,q_n)(q)}$\\
\hline
\multirow{2}{*}{$1$} & \multirow{2}{*}{$\frac{3}{1}$} & $1 + O\left(q^{11}\right)$ \\
   && \\ \hline
\multirow{2}{*}{$2$} & \multirow{2}{*}{$\frac{22}{7}$} & $1 - 2 q - 4 q^2 + q^3 + 7 q^4 + 19 q^5 + 22 q^6 + 24 q^7$ \\
   & & $- 2 q^{\frac{15}2} + 4 q^8 - 2 q^{\frac{17}2} - 30 q^9 - 3 q^{\frac{19}2} - 81 q^{10} + O\left(q^{\frac{21}{2}}\right)$ \\ \hline
\multirow{2}{*}{$3$} & \multirow{2}{*}{$\frac{333}{106}$} & $1 - 2 q - 3 q^2 + 2 q^3 + 8 q^4 + 18 q^5 + 18 q^6 + 14 q^7$ \\
   &  & $- 12 q^8 - 52 q^9 - 106 q^{10} + O\left(q^{11}\right)$ \\ \hline
\multirow{2}{*}{$4$} & \multirow{2}{*}{$\frac{355}{113}$} & $1 - 2 q - 3 q^2 + 2 q^3 + 8 q^4 + 18 q^5 + 18 q^6 + 14 q^7$ \\
   &  & $- 12 q^8 - 52 q^9 - 106 q^{10} + O\left(q^{11}\right)$ \\ \hline
\multirow{2}{*}{$5$} & \multirow{2}{*}{$\frac{103993}{33102}$} & $1 - 2 q - 3 q^2 + 2 q^3 + 8 q^4 + 18 q^5 + 18 q^6 + 14 q^7$ \\
   & & $- 12 q^8 - 52 q^9 - 106 q^{10} + O\left(q^{11}\right)$ \\ \hline
\end{tabular}
\caption{Values of the index (for $q$-degree up to $10$) of surgeries on m004, with slopes $\frac{p_n}{q_n}$ determined by successive approximations of $\pi$.}
\end{table}
\label{tab:fig8_pi}
\end{center}
Note that in this case we get an almost immediate convergence for the first ten terms, since the numerators and denominators become quite big very quickly.
\end{ex}

%&&&&&&&&&&&&&&&&&&&&&&&&&&&&&&&&&&&&&&&&&&&&&&&&&&&&&&&&&&&&&&&&&&&&&&&&&&&&&
%&&&&&&&&&&&&&&&&&&&&&&&&&&&&&&&&&&&&&&&&&&&&&&&&&&&&&&&&&&&&&&&&&&&&&&&&&&&&&

\section{Examples}\label{sec:examples}

In this section we are going to collect some basic and interesting examples obtained by applying the Gang-Yonekura formula, either explicitly or using our code. 
We remark that, for all $1$-efficient triangulations considered, the index we obtain by direct computation on the filled manifold coincides with the value we obtain from applying equation~\eqref{eqn:GYformula} with our code, up to a chosen threshold. Further empirical validations of our code are presented in Section~\ref{sec:code}. 
There are many ways in which an equation such as~\eqref{eqn:GYformula} can fail to give a well-defined answer; there could be infinitely many terms with increasingly negative exponents. Otherwise 
if infinitely many summands provide non-trivial contributions to some coefficient, this might blow up, or oscillate. The only situation in which we can get a proper $q$-series is when there are only finitely many summands of degree less or equal of any fixed threshold. From this perspective, many computations in this and the next section do not yield well-defined $q$-series. In several cases however, it is very easy to isolate the `ill behaved' contributions; this process could potentially yield a `normalised' index, defined on not necessarily $1$-efficient triangulations. 

%&&&&&&&&&&&&&&&&&&&&&&&&&&&&&&&&&&&&&&&&&&&&&&&&&&&&&&&&&&&&&&&&&&&&&&&&&&&&&
%&&&&&&&&&&&&&&&&&&&&&&&&&&&&&&&&&&&&&&&&&&&&&&&&&&&&&&&&&&&&&&&&&&&&&&&&&&&&&

\subsection{Alternating torus knots}\label{ssec:alternating_torus}~\\

The following computations prove that the $3$D index of alternating torus knots -- for a specific triangulation -- is somewhat trivial. Note that, unlike with hyperbolic knots~\cite{garoufalidis20163d}, where the Epstein-Penner provides a `canonical' way of computing the index, to the best of the author's knowledge, this is not the case for the complement of torus knots (\textit{cf.}~Question (4) from Section~\ref{sec:questions}). 

The index (again for a specific triangulation) of the trefoil was computed in~\cite[Sec.~4.3]{dimofte20133} and~\cite[Sec.~11]{garoufalidis20163d}. It takes a particularly easy form; if $\tri_{(3,2)}$ denote the $2$-tetrahedra triangulation for the complement of $3_1 = T_{3,2}$, then 
$$\ind_{\tri_{(3,2)}}^{(x,y)}(q) = \delta_{0,x+6y}.
$$
A similar result was proved (and slightly extended) in Max Jolley's thesis~\cite{jolley}. It was conjectured in~\cite{dimofte20133} on physical grounds that a similar pattern should hold more generally:
\begin{con}[Sec.~4.3~\cite{dimofte20133}]
For coprime $p,q >0$ such that $pq \equiv 0 \mod{2}$, let $\tri_{(p,q)}$ be a $1$-efficient ideal triangulation of the complement of the $(p,q)$ torus knot. Then
$$\ind^{(x,y)}_{\tri_{(p,q)}}(q) = \delta_{0,x + pqy}.$$ 
\end{con}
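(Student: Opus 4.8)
The plan is to use, via Theorem~\ref{thm:pachner} and the conjectural triangulation-invariance, the freedom to pick a convenient $1$-efficient triangulation, and to choose one realized as a Dehn filling so that the Gang--Yonekura formula of Theorem~\ref{thm:main} applies. Concretely I would write $S^3\setminus T_{p,q}$ as $M(\alpha)$ for a fixed low-complexity $2$-cusped manifold $M$ with a standard cusp, and compute $\ind_{M(\alpha)}$ from~\eqref{eqn:GYformula} together with the refinement of $\ind_M^\gamma$ that also records the homology class carried on the surviving (torus-knot) cusp. The constant $pq$ appearing in the conjectured answer should be read as the slope of the cabling annulus on $\partial N(T_{p,q})$ — the defect between the Seifert framing and the surface framing — and the heuristic is that in a $1$-efficient triangulation of the small Seifert fibred space $S^3\setminus T_{p,q}$ the only $Q$-normal classes contributing are ``vertical'', so their boundary slopes are forced to be integer multiples of $pq$; this is what produces both the linear form $x+pqy$ and the fact that the answer is a Kronecker $\delta$ rather than an honest $q$-series. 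The known trefoil value $\ind^{(x,y)}_{\tri_{(3,2)}}(q)=\delta_{0,x+6y}$ from~\cite{dimofte20133,garoufalidis20163d} is the base case and the prototype.

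\textbf{The case $p=2$ (alternating torus knots).} Here $S^3\setminus T_{2,2k+1}$ is obtained from the trefoil by inserting $k-1$ full twists on the two strands of the standard $2$-braid, i.e.\ it is $M(\alpha_k)$ where $M=S^3\setminus(T_{2,3}\cup A)$, $A$ an unknot encircling the two strands, and $\alpha_k$ a slope on $\partial N(A)$ depending linearly on $k$. One checks $M$ is irreducible and atoroidal and, after a Pachner move if necessary (using Propositions~\ref{std_cusp1} and~\ref{std_cusp2}), admits a $1$-efficient triangulation with a standard cusp at $A$; by Theorem~\ref{thm:algorithm_efficiency} the filling $\tri(\alpha_k)$ is $1$-efficient for all but finitely many $k$, and those can be handled algorithmically. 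I would then compute the two-boundary-class refinement of $\ind_M$ — either directly from a small triangulation, or by a short induction anchored at $k=1$ and driven by Proposition~\ref{prop:inductive_step_cusp} — and check that it is again $\delta$-supported. Substituting into~\eqref{eqn:GYformula}, the sieve $\alpha_k\cdot\gamma\in\{0,\pm2\}$ together with this $\delta$-support annihilates all contributions except one, or makes them cancel in pairs so that the $q^{\pm|\gamma|/2}$ prefactors disappear, and what survives is exactly $\delta_{0,\,x+2(2k+1)y}$. This establishes the conjecture whenever $2\in\{p,q\}$, i.e.\ for every alternating torus knot.

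\textbf{General $(p,q)$, and the main obstacle.} For arbitrary coprime $p,q$ with $pq$ even I would instead invoke the Gluing Theorem~\ref{thm:gluing}: split a $1$-efficient $\tri_{(p,q)}$ along a normal punctured torus that separates a solid-torus piece $V'$ (one of the two solid tori of the Heegaard torus carrying $T_{p,q}$) from the cable space $C_{p,q}=V\setminus N(T_{p,q})$. The relative index of the solid-torus piece should be trivial in the sense of~\cite[Ex.~11.1]{garoufalidis20163d}, so the gluing sum collapses to essentially a single relative index of $C_{p,q}$; and $C_{p,q}$ itself carries a layered description indexed by the continued-fraction expansion of $q/p$, so its relative index ought to be accessible by the same inductive scheme (Proposition~\ref{prop:LST_change} and the pentagon identity) used for layered solid tori in the body of the paper, with $x+pqy$ emerging from tracking the fibre slope through each layering step. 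The main obstacle is twofold. First, one needs $1$-efficiency to persist along the entire inductive tower of auxiliary triangulations: Theorem~\ref{thm:algorithm_efficiency} only excludes finitely many slopes at each stage, but controlling that exceptional set uniformly in $p,q$ is delicate. Second, and more seriously, the relative index of a Seifert-fibred building block carrying an exceptional fibre is governed by $q$-hypergeometric identities that genuinely go beyond the $_3\phi_3$ relations and the $q$-Pythagoras Lemma~\ref{lem:q-pythagoras} that suffice for layered solid tori and for the $p=2$ case; producing and proving those identities is precisely what is missing, which is why the statement remains open for $p,q\ge 3$.
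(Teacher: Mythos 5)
Your plan for the alternating family is, in outline, the same as the paper's: realize $S^3\setminus T_{2,2n\pm1}$ as a Dehn filling on a two‑cusped link complement with a standard cusp at the encircling axis, and feed the two‑boundary‑class refinement of that link complement's index into the Gang--Yonekura formula of Theorem~\ref{thm:main}. But the step you defer --- ``compute the two-boundary-class refinement of $\ind_M$ \dots and check that it is again $\delta$-supported'' --- is exactly the content of the paper's proof, and nothing you cite supplies it. The paper takes an explicit SnapPy triangulation $\tri_L$ of the link complement, evaluates its full state sum in closed form, and obtains $\ind_{\tri_L}^{(2x_1,y_1,2x_2,y_2)}(q)=\delta_{x_1-y_1,0}\,\delta_{x_2-y_2,0}$ (equation~\eqref{eqn:index_of_link_torus}). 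The engine of that evaluation is a new identity, Lemma~\ref{lem:opposite_alpha_quadratic}, namely $\sum_{k\in\Z}q^{k}\,\itet(r,k+s)\,\itet(-r,k+t)=(-q^{\frac12})^{r}q^{-t}\delta_{t,r+s}$, which is \emph{not} the quadratic identity~\eqref{eqn:quadratic_identity} (the first arguments are opposite rather than equal) and is proved from the generating function~\eqref{enq:generating_function_tetra}; applying it twice collapses the double sum. Proposition~\ref{prop:inductive_step_cusp}, which you propose as the driver of ``a short induction'', governs the relative index of the standard cusp under layering moves and does not by itself yield the index of the two‑cusped manifold $M$, so your proposal is missing its central computation. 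The concluding sieve step is also stated too loosely: once the $\delta$-supported index is in hand, the paper simply lists the finitely many contributing pairs, and for generic $n$ the unique surviving term is $k=0$ with prefactor $\tfrac12(q^{0}+q^{0})=1$; nothing ``cancels in pairs'' there.

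On the general case, your concession that $p,q\ge 3$ is out of reach in fact matches the paper's actual scope: despite the sentence preceding the conjecture, the paper only proves the alternating family $T_{2n-1,2}$ (Proposition~\ref{prop:index_of_torus}), and the introduction describes this as a partial confirmation. Two assertions in your sketch of the general case are nevertheless doubtful: the relative index of the solid‑torus piece of your splitting cannot be dismissed as trivial in the sense of \cite[Ex.~11.1]{garoufalidis20163d}, since the paper stresses right after Theorem~\ref{thm:gluing} that relative indices of (layered) solid tori are in general non‑trivial; and the heuristic that only ``vertical'' $Q$-normal classes contribute in a $1$-efficient triangulation of the small Seifert fibred complement is unsupported. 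So as written, your text is a plausible plan for the case the paper actually proves, with the decisive lemma and closed‑form computation absent, and only a speculative outline beyond that.
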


In this section we are going to prove this conjecture.
Recall that the alternating torus knot $T_{2n-1,2}$ can be obtained by the surgery diagram shown in Figure~\ref{fig:torus_complement}.
\begin{figure}[!htbp]
\centering
\includegraphics[width=8.5cm]{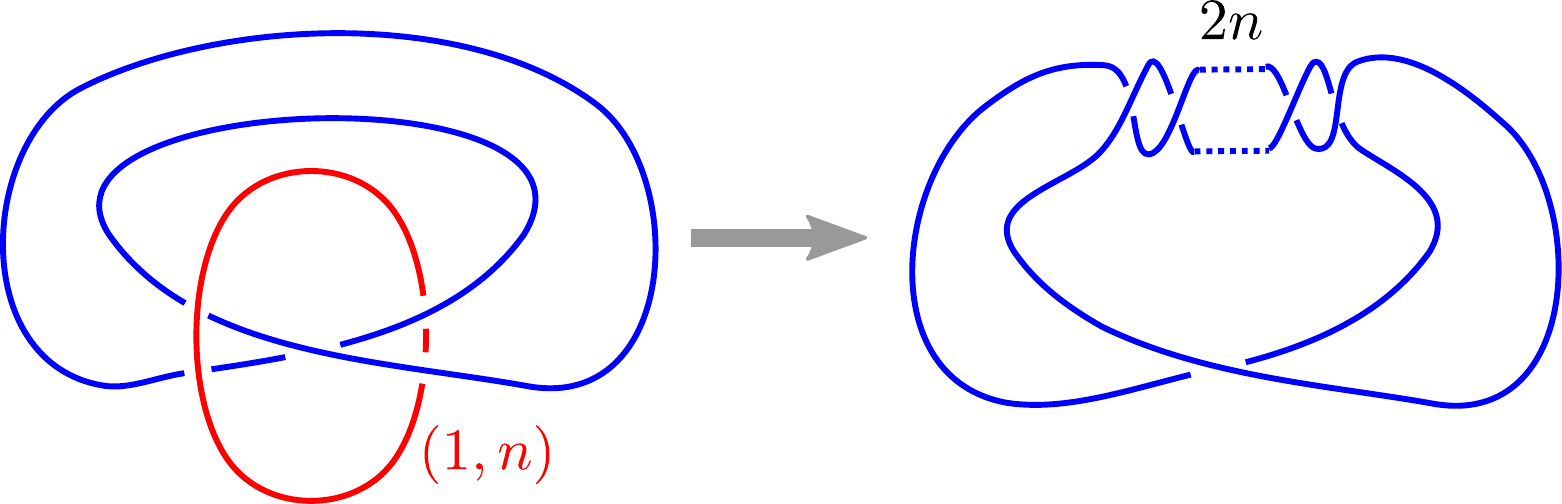}
\caption{Performing $\frac{1}{n}$-sloped Dehn surgery on the red unknotted component yields the knot $T_{2n-1,2}$.}
\label{fig:torus_complement}
\end{figure}

One possible  triangulation $\tri_L$ of the complement of the link $L$ in the left of Figure~\ref{fig:torus_complement} can be found using SnapPy; its gluing matrix $G_L$ (together with edge labellings and weights on its left) is: 
$$G_L = 
\left|
\begin{matrix}
k_1 & e_1\\ * & e_2\\ *  & e_3 \\k_2  & e_4\\ x_1 & 2m_1\\ y_1 & \ell_1'\\x_2 & 2m_2\\y_2 & \ell_2'
\end{matrix}
\right|
\left( \begin{matrix}
1 & 0 & 1 & 1 & 1 & 0 & 0 & 1 & 2 & 0 & 2 & 1\\ 
0 & 1 & 0 & 0 & 0 & 1 & 2 & 0 & 0 & 2 & 0 & 0\\ 
1 & 1 & 1 & 1 & 1 & 1 & 0 & 0 & 0 & 0 & 0 & 0\\ 
0 & 0 & 0 & 0 & 0 & 0 & 0 &  1 & 0 & 0 & 0 & 1\\ 
1 & 0 & 0 & 0 & 1 & 0 & -1 & 0 & 0 & 0 & 1 & 0\\ 
-1 & 0 & 0 & 0 & -1 & 0 & 1 & 0 & -1 & 1 & -1 & 1\\ 
0 & 0 & 1 & -1 & 0 & 0 & 0 & 0 & 0 & 0 & 0 & 0\\ 
0 & 0 & 0 & 0 & 0 & -1 & -1 & 0 & 0 & -1 & 0 & 0
\end{matrix} \right)
$$
Note that one cusp is standard. We can use this surgery description with Theorem~\ref{thm:main} to obtain the following:
\begin{prop}\label{prop:index_of_torus}
Let $\tri_{2n-1,2}$ be the triangulation obtained by replacing the standard cusp in $\tri_L$ with a layered solid torus with slope $\frac{1}{n}$. Then $\tri_{2n-1,2}$ is a triangulation for the complement of $T_{2n-1,2}$, and 
$$\ind_{\tri_{2n-1,2}}^{(x,y)}(q) = \delta_{0,x + 2(2n-1)y}.$$ 
\end{prop}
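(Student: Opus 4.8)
The plan is to apply Theorem~\ref{thm:main}, in the form that permits a prescribed homology class $\omega=(x,y)$ on the unfilled cusp, to the Dehn filling taking $\tri_L$ to $\tri_{2n-1,2}$ along the slope $\alpha_n$ corresponding to $\tfrac1n$-surgery on the red component, and then to evaluate the resulting Gang--Yonekura sum in closed form. First I would record the topological input — $\tfrac1n$-surgery on the red unknot of $L$ yields the complement of $T_{2n-1,2}$, so $\tri_{2n-1,2}$ is genuinely an ideal triangulation of that manifold. Next comes the $1$-efficiency hypothesis: by Theorem~\ref{thm:algorithm_efficiency} it can fail for at most finitely many $n$, and since $T_{2n-1,2}$-complements are (small) Seifert fibred, $1$-efficiency can be confirmed for the remaining small cases directly (e.g.\ with Regina), or for all $n$ at once by checking that the only closed and spun normal surfaces of $\tri_{2n-1,2}$ with $\chi\ge 0$ are the vertex-linking tori. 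This is the first point requiring care.

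With the hypotheses in place, the Gang--Yonekura formula~\eqref{eqn:GYformula} for $\tri_L$ along $\alpha_n$ writes $\ind^{(x,y)}_{\tri_{2n-1,2}}$ as $\tfrac12$ of a sum over $\gamma\in\hh_1(T;\Z)$ with $\alpha_n\cdot\gamma\in\{0,\pm2\}$ of the total indices $\ind^{((x,y),\gamma)}_{\tri_L}$, weighted by $(-1)^{|\gamma|}(q^{|\gamma|/2}+q^{-|\gamma|/2})$ or by $-(-1)^{|\gamma|}$. Since $\alpha_n\cdot\gamma=0$ forces $\gamma=k\alpha_n$ and $\alpha_n\cdot\gamma=\pm2$ forces $\gamma=k\alpha_n\pm2\beta_n$ for a dual curve $\beta_n$, this collapses to a single sum over $k\in\Z$. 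The task then is to make $\ind^{((x,y),\gamma)}_{\tri_L}$ explicit: I would split $\tri_L$ along the once-punctured torus bounding the standard cusp $\cusp$, apply the Gluing Theorem~\ref{thm:gluing}, use Proposition~\ref{relcuspindex} for the relative index of $\cusp$, and compute the relative index of the complementary two-tetrahedron piece directly from the gluing matrix $G_L$ as a finite sum of products of four tetrahedral indices $\itet$ in the $n-r=4-2=2$ free edge weights. Alternatively one can bypass $\tri_L$ entirely: the slopes $\tfrac1n$ lie along a single geodesic in the Farey graph, so by Theorem~\ref{thm:main_relative} together with the layering recursions of Propositions~\ref{prop:inductive_step_cusp} and~\ref{prop:LST_change}, $\ind^{rel}_{\mathrm{LST}(\alpha_n)}$ is built from the base case $\mathrm{LST}(1,1,2)$ (Theorem~\ref{thm:new_thm_112}) by $n-1$ identical moves, which yields a clean recursion for $\ind^{(x,y)}_{\tri_{2n-1,2}}$ with a simple inductive step.

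The heart of the argument is to collapse this multi-sum (over $k$ and the edge weights) to the single term $\delta_{0,\,x+2(2n-1)y}$. I expect this to proceed by repeated use of the identities of Section~\ref{sec:tretrahedral_index}: the pentagon relation~\eqref{eqn:pentagon_identity} to carry out the edge-weight summations, and the quadratic orthogonality~\eqref{eqn:quadratic_identity} — precisely the mechanism producing Kronecker deltas — to annihilate all but one value of $k$. As emphasised in Remark~\ref{rmk:other_slopes}, the sum converges only after these cancellations are engineered by hand, so tracking minimal $q$-degrees (in the spirit of Lemma~\ref{lem:growth_cusp_index}) is needed to justify the rearrangements. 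Two built-in checks guide the bookkeeping: the integer $2(2n-1)$ is twice the self-linking correction of the filling, and the outcome must specialise for $n=2$ to the known trefoil value $\ind^{(x,y)}_{\tri_{(3,2)}}=\delta_{0,x+6y}$ and for $n=1$ to the (trivial) solid-torus index $\delta_{0,x+2y}$.

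The main obstacle, beyond the $1$-efficiency point above, is therefore computational rather than conceptual: organising the triple sum so that the pentagon and quadratic identities apply cleanly, and verifying that exactly the claimed residual term survives. In effect this is a torus-knot analogue of the trefoil computation of~\cite[Sec.~11]{garoufalidis20163d}, but carrying the extra layered-solid-torus factor through, which is what makes the algebra more delicate.
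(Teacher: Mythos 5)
Your broad framework matches the paper's — recognise the surgery description, invoke the Gang--Yonekura formula~\eqref{eqn:GYformula}, and reduce to computing $\ind_{\tri_L}$ — but you are missing the crucial simplification that makes the whole argument short, and you predict the wrong identities. The paper does \emph{not} pass through the Gluing Theorem~\ref{thm:gluing}, nor the Farey/layering recursion, and it never invokes the pentagon identity~\eqref{eqn:pentagon_identity}. Instead it computes $\ind_{\tri_L}^{(2x_1,y_1,2x_2,y_2)}$ directly as a double sum in two free edge weights $k_1,k_2$ and then collapses that sum, \emph{before any Gang--Yonekura manipulation}, to the pure product of Kronecker deltas $\delta_{x_1-y_1,0}\,\delta_{x_2-y_2,0}$. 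The engine for this collapse is not the quadratic identity~\eqref{eqn:quadratic_identity} in the form you cite, which requires the two factors $\itet(m,\cdot)$ to have the \emph{same} first argument; here the two factors carry \emph{opposite} first arguments, and the paper proves a new companion lemma (Lemma~\ref{lem:opposite_alpha_quadratic}, derived by a short generating-function calculation from~\eqref{enq:generating_function_tetra}) precisely to handle this. Without that lemma your proposed route through pentagon plus ordinary quadratic identities does not close: the pentagon identity turns a sum of three tetrahedral indices into a product of two, and cannot by itself produce a Kronecker delta.

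Once one knows $\ind_{\tri_L}$ is a finite-support delta function, all the issues you flag as delicate simply disappear. The Gang--Yonekura sum over $k$ has only finitely many nonzero terms (the pairs $(n,k)=(n,0)$ and $(0,\pm1)$, $(1,\pm1)$), so there is no convergence question, no need to track minimal $q$-degrees via Lemma~\ref{lem:growth_cusp_index}, and no rearrangement to justify. Your concern about verifying $1$-efficiency of $\tri_{2n-1,2}$ for every $n$ is legitimate in principle — the hypothesis of Theorem~\ref{thm:main} — but it is not where the work lies, and the paper does not belabour it. The gap in your proposal, in short, is that you have not identified that the heavy lifting is in proving $\ind_{\tri_L}$ is already a delta, and you have not anticipated the specific ``opposite-first-argument'' quadratic lemma needed to prove that.
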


In order to prove this result, we will need the following lemma. Note that, despite being closely related, this result differs from the quadratic identity~\eqref{eqn:quadratic_identity}; here the first variables of $\itet$ are opposite to one another, rather than being equal.
\begin{lem}\label{lem:opposite_alpha_quadratic}
Let $r, s, t$ be integers; then
$$\sum_{k\in \Z} \itet(r,k+s) \itet(-r, k + t)q^{k} = \left(-q^\frac12\right)^{r }q^{-t} \delta_{t, r+s} $$
\end{lem}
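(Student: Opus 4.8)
The plan is to reformulate the identity as a coefficient-extraction problem for a product of $q$-exponential generating functions, and then to show that this product collapses to a monomial. First I would reduce to the case $r\geq 0$: the statement is invariant under $(r,s,t)\mapsto(-r,t,s)$, since the left-hand side is literally unchanged and, on the support $t=r+s$, one checks $(-q^{\frac12})^{r}q^{-t}=(-q^{\frac12})^{-r}q^{-s}$. Next, setting $e=k+s$ and $c=t-s$ rewrites the sum as $q^{-s}\sum_{e\in\Z}q^{e}\,\itet(r,e)\,\itet(-r,e+c)$, so the lemma becomes equivalent to the cleaner statement
\[
\sum_{e\in\Z}q^{e}\,\itet(r,e)\,\itet(-r,e+c)=\bigl(-q^{-\frac12}\bigr)^{r}\,\delta_{c,r}\qquad(r\geq 0),
\]
the factor $q^{-s}$ combining with $(-q^{-\frac12})^{r}$ to give $(-q^{\frac12})^{r}q^{-t}$ when $t=r+s$.

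To prove this reduced identity I would use the generating function obtained from the definition \eqref{eqn:tetrahedral_index} by summing over $e$ and applying Euler's two $q$-exponential identities, namely $\sum_{e\in\Z}\itet(m,e)z^{e}=f_{m}(z):=\dfrac{(q^{1-\frac m2}z^{-1};q)_{\infty}}{(q^{-\frac m2}z;q)_{\infty}}$, valid as a convergent Laurent series in a suitable annulus. Since the minimal $q^{\frac12}$-degree of $\itet(m,e)$ grows quadratically in $e$ (see Section~\ref{ssec:surface_index} and \cite{garoufalidis20163d}), the sum $\sum_{e}q^{e}\itet(r,e)\itet(-r,e+c)$ converges in $\Z(\!(q^{\frac12})\!)$, and a standard Hadamard-product argument identifies it with the coefficient of $u^{-c}$ in the product of Laurent series $f_{r}(qu)\,f_{-r}(u^{-1})$, whose two factors converge on a common annulus $|q|^{r/2}<|u|<|q|^{r/2-1}$.

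The heart of the argument is then the purely algebraic computation
\[
f_{r}(qu)\,f_{-r}(u^{-1})=\frac{(q^{-\frac r2}u^{-1};q)_{\infty}\,(q^{1+\frac r2}u;q)_{\infty}}{(q^{1-\frac r2}u;q)_{\infty}\,(q^{\frac r2}u^{-1};q)_{\infty}}=\bigl(-q^{-\frac12}\bigr)^{r}u^{-r}.
\]
For $r\geq 0$ each of the two ratios telescopes to a \emph{finite} product via $(a;q)_{\infty}=(a;q)_{r}\,(aq^{r};q)_{\infty}$, turning the right-hand side into $(q^{-\frac r2}u^{-1};q)_{r}\big/(q^{1-\frac r2}u;q)_{r}$; rewriting each factor $1-q^{-\frac r2+i}u^{-1}$ as $-q^{-\frac r2+i}u^{-1}(1-q^{\frac r2-i}u)$ pulls out the monomial $(-q^{-\frac12})^{r}u^{-r}$ and leaves a finite product which, because $\{\tfrac r2-i:0\le i\le r-1\}=\{1-\tfrac r2+i:0\le i\le r-1\}$, equals $(q^{1-\frac r2}u;q)_{r}$ and cancels against the denominator. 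Extracting $[u^{-c}]$ then finishes the proof, and the case $r<0$ is handled by the symmetry established in the first step.

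I expect the only genuinely delicate point to be spotting the telescoping cancellation above — equivalently, recognising that it is precisely the \emph{opposite-sign} pairing of $\itet(r,\cdot)$ against $\itet(-r,\cdot)$ (rather than the equal pairing of the quadratic identity \eqref{eqn:quadratic_identity}, or the three-factor pentagon \eqref{eqn:pentagon_identity}) that forces the two infinite products to collapse to a monomial; this also explains why the statement resembles but does not follow from the quadratic identity. The remaining technical points, namely convergence of the bi-infinite Cauchy product in $\Z(\!(q^{\frac12})\!)$ and its agreement with the Laurent expansion of the honest product function on the overlap annulus, are routine.
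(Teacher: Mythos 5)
Your proposal is correct and follows essentially the same route as the paper's own proof: both start from the two-variable generating function obtained by multiplying the single-variable series $\sum_e \itet(m,e)z^e$ from equation~\eqref{enq:generating_function_tetra}, both specialise the two variables so that the product telescopes to a finite $q$-Pochhammer ratio, and both identify that ratio with the monomial $(-q^{1/2}z^{-1})^r$ before extracting one Laurent coefficient. Your substitution $z_1=qu$, $z_2=u^{-1}$ is the paper's $z_1=z$, $z_2=q/z$ under $z=qu$, and the explicit reduction to $r\ge 0$ you add is harmless but unnecessary (the simplification $\frac{(q^{1-r/2}z^{-1})_r}{(q^{-r/2}z)_r}=(-q^{1/2}z^{-1})^r$ holds for all $r\in\Z$ under the convention $(a;q)_n=(a;q)_\infty/(aq^n;q)_\infty$, which is how the paper handles it).
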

\begin{proof}
From equation~\eqref{enq:generating_function_tetra}, we get that
$$\frac{(q^{1-\frac{r}{2}} z_1^{-1})_\infty (q^{1+\frac{r}{2}}z_2^{-1})_\infty}{ (q^{-\frac{r}{2}} z_1)_\infty (q^{\frac{r}{2}}z_2)_\infty} = z_1^s z_2^t \sum_{k_1,k_2\in \Z}  \itet(r,k+s) \itet(-r, k + t) z_1^{k_1} z_2^{k_2} .$$
Setting $z = z_1 = q z_2^{-1}$ yields
$$z^{s-t} q^{t}\sum_{k_1,k_2\in \Z}  \itet(r,k_1+s) \itet(-r, k_2 + t) z^{k_1-k_2} q^{k_2}= \frac{(q^{1-\frac{r}{2}} z^{-1})_\infty (q^{\frac{r}{2}} z)_\infty}{(q^{-\frac{r}{2}}z)_\infty (q^{1+\frac{r}{2}}z^{-1})_\infty}.$$
The right-hand side simplifies to $$\frac{(q^{1-\frac{r}{2}} z^{-1})_r}{(q^{-\frac{r}{2}}z)_r} = (-q^\frac12 z^{-1})^r.$$

Therefore, the left-hand side in the statement is the coefficient of $z^0$ in $q^{\frac{r}{2} -t} z^{t - r - s}$. Clearly this is equal to $(-1)^r q^{\frac{r}{2} - t} \delta_{t, r+s}$. 
\end{proof}

\begin{proof}[Proof of Proposition~\ref{prop:index_of_torus}]
Consider the gluing matrix $G_L$. The choice of edges $e_1$ and $e_4$ is admissible (in the sense of~\cite{garoufalidis20151}); the corresponding index is
\begin{gather*}
\ind_{\tri_L}^{(2x_1,y_1,2x_2,y_2)}(q) =\\ =\left(-q^\frac12 \right)^{y_1-y_2}\sum_{k_1,k_2 \in \Z} q^{k_1+k_2}
\jtet(k_1 + x_1 -y_1, 0 ,k_1+x_2 ) 
\jtet( k_1  -x_2, k_1+x_1-y_1, -y_2 )\\
\jtet( -x_1+y_1-y_2, k_1+k_2, 2k_1-y_1 )
\jtet(y_1-y_2 ,2k_1+x_1-y_1 , k_1+k_2+y_1) \\
= \left(-q^\frac12 \right)^{y_1}\sum_{k_1\in \Z} q^{-k_1} \itet(-x_1+y_1+x_2, -k_1-x_2)  \itet(-x_1+y_1-x_2,-k_1+x_2-y_2 )  \\ 
\left( \sum_{k_2 \in \Z} q^{k_2} \itet(2k_1 +x_1-2y_1+y_2, -k_1 + k_2+y_1 )  \itet(-2k_1-x_1 +2y_1-y_2, k_1+k_2+y_2 )\right).
\end{gather*}
By Lemma~\ref{lem:opposite_alpha_quadratic}, the inner product coincides with $q^{-y_1}\delta_{x_1-y_1,0}$.\\
So the index becomes (after the substitutions $x_1 =y_1$ and $k_1 \mapsto -k_1$)
$$
\left(-q^\frac12 \right)^{-y_1}\delta_{x_1-y_1,0} \sum_{k_1\in \Z} q^{k_1} \itet(x_2, k_1-x_2)  \itet(-x_2,k_1+x_2-y_2 ).
$$
A further application of Lemma~\ref{lem:opposite_alpha_quadratic} gives the total sum:
\begin{equation}\label{eqn:index_of_link_torus}
\ind_{\tri_L}^{(2x_1,y_1,2x_2,y_2)}(q) = \delta_{x_1-y_1,0} \delta_{x_2-y_2,0}.
\end{equation}

We can now apply the Gang-Yonekura formula to equation~\eqref{eqn:index_of_link_torus} in order to complete the proof. Replace the standard cusp with a LST with slope $\alpha_n = \mu + n\lambda$. The resulting triangulation $\tri_{2n-1,2}$ gives the complement of the knot $T_{2n-1,2} \subset S^3$. The dual of $\alpha_n$ is just $\beta = \lambda$.
Now, 
\begin{gather*}
\ind_{\tri_{2n-1,2}}^{(x_2,y_2)}(q) = \frac12\bigg(\sum_{k \in \Z} \left(q^k + q^{-k}\right) \ind_M (2k\mu+2kn \lambda)(q) \\  - \ind_M (2k\mu+(2kn+2)(q) \lambda) - \ind_M (2k\mu+(2kn-2) \lambda)(q)\bigg).
\end{gather*}

Applying equation~\eqref{eqn:index_of_link_torus} yields
$$\ind_{\tri_{2n-1,2}}^{(x_2,y_2)}(q) = \frac12\left(\sum_{k \in \Z} \left(q^k + q^{-k}\right) \delta_{k(2n-1),0} - \delta_{k(2n-1)+2,0} - \delta_{k(2n-1)-2,0}\right)\delta_{x_2-y_2,0}.$$
The only non-zero terms correspond to the pairs $(n,k) = (n,0)$, and $(n,k) = (0, \pm 1),(1, \pm 1)$. Therefore
$$\ind_{\tri_{2n-1,2}}^{(x_2,y_2)}(q) = \begin{cases}
\delta_{x_2,y_2} & \text{ if } n \neq 0,1\\
0 & \text{ otherwise.}
\end{cases}$$

Since the linking number of the two components in Figure~\ref{fig:torus_complement} is $2$, the image of the coefficients $(x_2,y_2)$ after the surgery is $ -2(2n-1)$.
\end{proof}~\\

%&&&&&&&&&&&&&&&&&&&&&&&&&&&&&&&&&&&&&&&&&&&&&&&&&&&&&&&&&&&&&&&&&&&&&&&&&&&&&
%&&&&&&&&&&&&&&&&&&&&&&&&&&&&&&&&&&&&&&&&&&&&&&&&&&&&&&&&&&&&&&&&&&&&&&&&&&&&&

\subsection{The Whitehead link}\label{ssec:whitehead}~\\

We begin this section by recalling that the $3$D index is conjectured (see \textit{e.g.}~\cite{dimofte20133, hodgson2021asymptotics}) to contain information on the volume of cusped hyperbolic manifolds. We can however provide examples of cusped manifold with many identical geometric invariants, but different $3$D index. These manifolds are those from~\cite{hodgson1992surgeries}, namely $(4,1)$ and $(4,-3)$ fillings on one cusp of the Whitehead's knot complement L5a1.\\ 

For L5a1$(4,1)$ \textit{i.e.}~m039 in the SnapPy census~\cite{SnapPy}, we get the index $$1 - q + q^{2} + 7 q^{3} + 10 q^{4} - q^{5} - 35 q^{6}- 78 q^{7} - 106 q^{8} - 75 q^{9} + 84 q^{10} + O\left(q^{11}\right),$$
while for L5a1$(4,-3)$ = m035 our computations give $$1 - 5 q + q^{2} + 27 q^{3} + 34 q^{4} + 3 q^{5} - 119 q^{6}- 270 q^{7} - 382 q^{8} - 287 q^{9} + 256 q^{10} + O\left(q^{11}\right).$$
Note that both manifolds have volume $ = 3.1772932786\ldots$, same homology, cusp volume and shape, Chern-Simons invariants, but different Alexander polynomials.
\begin{table}[!htbp]
\begin{tabular}{|c|c|c|c|}
\hline
Slope & Manifold & Index & Volume \\
\hline
\multirow{2}{*}{$(1,-5)$} & \multirow{2}{*}{s007} & $1 - 3 q + 11 q^{3} + 17 q^{4} + 5 q^{5}- 42 q^{6}$ & \multirow{2}{*}{3.4712} \\
 & & $ - 104 q^{7} - 150 q^{8} - 109 q^{9} + 95 q^{10}+ O\left(q^{11}\right)$ & \\
\hline
\multirow{2}{*}{$(1,-4)$} & \multirow{2}{*}{m062} & $1 - 3 q + 11 q^{3} + 17 q^{4} + 5 q^{5}- 42 q^{6}$ & \multirow{2}{*}{3.3667} \\
 & & $ - 104 q^{7} - 150 q^{8} - 109 q^{9} + 95 q^{10}+ O\left(q^{11}\right)$ & \\
\hline
\multirow{2}{*}{$(1,-3)$} & \multirow{2}{*}{m029} & $1 - 3 q + 11 q^{3} + 17 q^{4} + 5 q^{5}- 44 q^{6}$ & \multirow{2}{*}{3.1485} \\
 & & $ - 110 q^{7} - 166 q^{8} - 131 q^{9} + 77 q^{10}+ O\left(q^{11}\right)$ & \\
\hline
\multirow{2}{*}{$(1,-2)$} & \multirow{2}{*}{m007} & $1 - 3 q - 2 q^{2} + 9 q^{3} + 19 q^{4} + 25 q^{5}$ & \multirow{2}{*}{2.5689} \\
 & & $- 48 q^{7} - 132 q^{8} - 219 q^{9} - 255 q^{10}+ O\left(q^{11}\right)$ & \\
\hline
\multirow{2}{*}{$(1,-1)$} & \multirow{2}{*}{trefoil complement} & $1$ & \multirow{2}{*}{0} \\
 & & & \\
\hline
\multirow{2}{*}{$(1,0)$} & \multirow{2}{*}{solid torus} & $0$ & \multirow{2}{*}{0} \\
 & & & \\
\hline
\multirow{2}{*}{$(1,1)$} & \multirow{2}{*}{trefoil complement} & $1$ & \multirow{2}{*}{0} \\
 & & & \\
\hline
\multirow{2}{*}{$(1,2)$} & \multirow{2}{*}{m006} & $1 - 3 q - 2 q^{2} + 9 q^{3} + 19 q^{4} + 25 q^{5}$ & \multirow{2}{*}{2.5689} \\
 & & $- 48 q^{7} - 132 q^{8} - 219 q^{9} - 255 q^{10}+ O\left(q^{11}\right)$ & \\
\hline
\multirow{2}{*}{$(1,3)$} & \multirow{2}{*}{m030} & $1 - 3 q + 11 q^{3} + 17 q^{4} + 5 q^{5}- 44 q^{6}$ & \multirow{2}{*}{3.1485} \\
 & & $ - 110 q^{7} - 166 q^{8} - 131 q^{9} + 77 q^{10}+ O\left(q^{11}\right)$ & \\
\hline
\multirow{2}{*}{$(1,4)$} & \multirow{2}{*}{m061} & $1 - 3 q + 11 q^{3} + 17 q^{4} + 5 q^{5}- 42 q^{6}$ & \multirow{2}{*}{3.3667} \\
 & & $ - 104 q^{7} - 150 q^{8} - 109 q^{9} + 95 q^{10}+ O\left(q^{11}\right)$ & \\
\hline
\multirow{2}{*}{$(1,5)$} & \multirow{2}{*}{s006} & $1 - 3 q + 11 q^{3} + 17 q^{4} + 5 q^{5}- 42 q^{6}$ & \multirow{2}{*}{3.4712} \\
 & & $ - 104 q^{7} - 150 q^{8} - 109 q^{9} + 95 q^{10}+ O\left(q^{11}\right)$ & \\
\hline
\end{tabular}
\caption{The value of the index obtained by performing $\frac{1}{n}$ fillings on the first cusp in the Whitehead's link complement, using SnapPy's m129 geometric basis. Note that the entries are symmetric, \textit{cf.}~\cite{hodgson1992surgeries}.}
\label{tab:whitehead}
\end{table}
We further performed certified computations for the $\pm\frac{1}{n}$ fillings of one cusp in $L5a1$; these are well known to be the complements of twist knots. A suitable (non-minimal) $1$-efficient triangulation with a standard cusp for the Whitehead link complement is provided in~\cite[Sec.~4]{howie2020polynomials}. 
We verified that the $3$D index (with trivial boundary conditions) computed using~\eqref{eqn:GYformula} agrees with the index computed directly from a triangulation of the complement. Results are summarised in Table~\ref{tab:white_twist} (see also Section~\ref{sec:code}). 
\begin{table}[!htbp]
\centering
\begin{tabular}{|c|c|c|c|}
\hline
Twist & Knot name & $3$D index & Volume \\
\hline
\multirow{2}{*}{$K_{-5}$} & \multirow{2}{*}{$11_{552}$} & $1-4 q+q^{2}+16 q^{3}+22 q^{4}+$ & \multirow{2}{*}{3.5538} \\
 &   & $q^{5}-72 q^{6}-158 q^{7}-212 q^{8}-118 q^{9}+231 q^{10}+O\left(q^{11}\right)$ & \\
\hline
\multirow{2}{*}{$K_{-4}$} & \multirow{2}{*}{$9_{2}$} & $1-4 q+q^{2}+16 q^{3}+22 q^{4}+$ & \multirow{2}{*}{3.4867} \\
 &   & $q^{5}-74 q^{6}-158 q^{7}-210 q^{8}-116 q^{9}+231 q^{10}+O\left(q^{11}\right)$ & \\
\hline
\multirow{2}{*}{$K_{-3}$} & \multirow{2}{*}{$7_{2}$} & $1-4 q+q^{2}+16 q^{3}+20 q^{4}+$ & \multirow{2}{*}{3.3317} \\
 &   & $q^{5}-72 q^{6}-156 q^{7}-206 q^{8}-98 q^{9}+275 q^{10}+O\left(q^{11}\right)$ & \\
\hline
\multirow{2}{*}{$K_{-2}$} & \multirow{2}{*}{$5_{2}$} & $1-4 q-q^{2}+16 q^{3}+26 q^{4}+23 q^{5}-$ & \multirow{2}{*}{2.8281} \\
 &   & $34 q^{6}-122 q^{7}-239 q^{8}-312 q^{9}-221 q^{10}+O\left(q^{11}\right)$ & \\
\hline
\multirow{2}{*}{$K_{-1}$} & \multirow{2}{*}{$3_{1}$} & $1+O\left(q^{11}\right)$ & \multirow{2}{*}{0} \\
 &   &  & \\
\hline
\multirow{2}{*}{$K_{0}$} & \multirow{2}{*}{$\bigcirc$} & $O\left(q^{11}\right)$ & \multirow{2}{*}{0} \\
 &   &  & \\
\hline
\multirow{2}{*}{$K_{1}$} & \multirow{2}{*}{$4_{1}$} & $1-2 q-3 q^{2}+2 q^{3}+8 q^{4}+18 q^{5}+18 q^{6}+$ & \multirow{2}{*}{2.0299} \\
 &   & $14 q^{7}-12 q^{8}-52 q^{9}-106 q^{10}+O\left(q^{11}\right)$ & \\
\hline
\multirow{2}{*}{$K_{2}$} & \multirow{2}{*}{$6_{1}$} & $1-4 q+q^{2}+18 q^{3}+22 q^{4}+q^{5}-$ & \multirow{2}{*}{3.164} \\
 &   & $78 q^{6}-178 q^{7}-254 q^{8}-188 q^{9}+167 q^{10}+O\left(q^{11}\right)$ & \\
\hline
\multirow{2}{*}{$K_{3}$} & \multirow{2}{*}{$8_{1}$} & $1-4 q+q^{2}+16 q^{3}+22 q^{4}+3 q^{5}-$ & \multirow{2}{*}{3.4272} \\
 &   & $72 q^{6}-158 q^{7}-212 q^{8}-118 q^{9}+227 q^{10}+O\left(q^{11}\right)$ & \\
\hline
\multirow{2}{*}{$K_{4}$} & \multirow{2}{*}{$10_{1}$} & $1-4 q+q^{2}+16 q^{3}+22 q^{4}+q^{5}-$ & \multirow{2}{*}{3.5262} \\
 &   & $72 q^{6}-156 q^{7}-210 q^{8}-118 q^{9}+229 q^{10}+O\left(q^{11}\right)$ & \\
\hline
\multirow{2}{*}{$K_{5}$} & \multirow{2}{*}{$12a_{803}$} & $1-4 q+q^{2}+16 q^{3}+22 q^{4}+q^{5}-$ & \multirow{2}{*}{3.5739} \\
 &   & $72 q^{6}-158 q^{7}-210 q^{8}-116 q^{9}+231 q^{10}+O\left(q^{11}\right)$ & \\
\hline
\end{tabular}
\caption{From left to right:  the twist knots $K_n$, their Rolfsen name, their total $3$D index (computed as $(1,n)$-filling on the first cusp of L5a1), and their volume.}
\label{tab:white_twist}
\end{table}

%&&&&&&&&&&&&&&&&&&&&&&&&&&&&&&&&&&&&&&&&&&&&&&&&&&&&&&&&&&&&&&&&&&&&&&&&&&&&&
%&&&&&&&&&&&&&&&&&&&&&&&&&&&&&&&&&&&&&&&&&&&&&&&&&&&&&&&&&&&&&&&&&&&&&&&&&&&&&

\subsection{3D index of the ``magic'' manifold}\label{ssec:magic}~\\

Let $N$ be s776, a.k.a. the ``magic manifold''~\cite{martelli2006dehn}, \emph{i.e.}~the complement of the $3$-chain link $6^3_1$. 

We performed certified computations (\textit{cf.}~Section~\ref{sec:code}) of the series we obtain as index for surgeries on a cusp in $N$, up to $q$-degree $10$. The results, summarised in Table~\ref{tab:magic1} show that (at least up to the degrees computed), the index given by applying the Gang-Yonekura formula to $N$ coincides with the index (with trivial boundary conditions) computed on the corresponding filled census manifold (see also~\cite{thompson2025triangulations}). 
\begin{table}[!htbp]
\begin{tabular}{|c|c|c|c|}
\hline
Slope & Census name & $3$D Index & Volume\\
\hline
\multirow{2}{*}{$(-1,2)$} & \multirow{2}{*}{m203} & $1-4 q+4 q^{2}+24 q^{3}+11 q^{4}-52 q^{5}-170 q^{6}$ & \multirow{2}{*}{4.0597} \\
 & & $-216 q^{7}-34 q^{8}+516 q^{9}+1466 q^{10}+O\left(q^{11}\right) $ & \\
\hline
\multirow{2}{*}{$(1,1)$} & \multirow{2}{*}{m129} & $1-3 q+q^{2}+16 q^{3}+16 q^{4}-11 q^{5}-87 q^{6}$ & \multirow{2}{*}{3.6638} \\
 & & $-160 q^{7}-161 q^{8}+16 q^{9}+482 q^{10}+O\left(q^{11}\right)$ & \\
\hline
\multirow{2}{*}{$(-4,1)$} & \multirow{2}{*}{m125} & $1-3 q+q^{2}+16 q^{3}+16 q^{4}-11 q^{5}-87 q^{6}$ & \multirow{2}{*}{3.6638} \\
 & & $-160 q^{7}-161 q^{8}+16 q^{9}+482 q^{10}+O\left(q^{11}\right) $ & \\
\hline
\multirow{2}{*}{$(-5,2)$} & \multirow{2}{*}{m202} & $1-4 q+4 q^{2}+24 q^{3}+11 q^{4}-52 q^{5}-170 q^{6}$ & \multirow{2}{*}{4.0597} \\
 & & $-216 q^{7}-34 q^{8}+516 q^{9}+1466 q^{10}+O\left(q^{11}\right)$ & \\
\hline
\multirow{2}{*}{$(-1,3)$} & \multirow{2}{*}{m391} & $1-4 q+7 q^{2}+22 q^{3}-17 q^{4}-102 q^{5}-159 q^{6}$ & \multirow{2}{*}{4.8511} \\
 & & $+33 q^{7}+607 q^{8}+1376 q^{9}+1682 q^{10}+O\left(q^{11}\right) $ & \\
\hline
\multirow{2}{*}{$(2,1)$} & \multirow{2}{*}{m295} & $1-2 q+2 q^{2}+12 q^{3}-2 q^{4}-43 q^{5} -94 q^{6}$ & \multirow{2}{*}{4.4153} \\
 & &$ -63 q^{7}+154 q^{8}+567 q^{9}+1054 q^{10}+O\left(q^{11}\right)$ & \\
\hline
\multirow{2}{*}{$(-2,3)$} & \multirow{2}{*}{m359} &$ 1-3 q+4 q^{2}+17 q^{3}-8 q^{4}-69 q^{5}-121 q^{6}$ & \multirow{2}{*}{4.7254} \\
 & & $-12 q^{7}+380 q^{8}+959 q^{9}+1342 q^{10}+O\left(q^{11}\right)$ & \\
\hline
\multirow{2}{*}{$(1,2)$} & \multirow{2}{*}{m367} &$ 1-4 q+7 q^{2}+23 q^{3}-16 q^{4}-106 q^{5}-176 q^{6}$ & \multirow{2}{*}{4.7494} \\
 & & $+584 q^{8}+1440 q^{9}+1963 q^{10}+O\left(q^{11}\right)$ & \\
\hline
\multirow{2}{*}{$(-1,4)$} & \multirow{2}{*}{s602} & $1-4 q+7 q^{2}+22 q^{3}-14 q^{4}-100 q^{5}-164 q^{6}$ & \multirow{2}{*}{5.0826} \\
 & & $+6 q^{7}+542 q^{8}+1274 q^{9}+1607 q^{10}+O\left(q^{11}\right)$ & \\
\hline
\multirow{2}{*}{$(-2,5)$} & \multirow{2}{*}{s661} & $1-3 q+7 q^{2}+18 q^{3}-28 q^{4}-111 q^{5} -128 q^{6}$ & \multirow{2}{*}{5.1549} \\
 & & $+146 q^{7}+794 q^{8}+1450 q^{9}+1241 q^{10}+O\left(q^{11}\right)$ & \\
\hline
\end{tabular}
\caption{Filling slope for the first cusp of the magic manifold, corresponding census manifolds as verified in~\cite{thompson2025triangulations}, their index (up to $q$-degree $10$) and volume.}
\label{tab:magic1}
\end{table}

We further computed the result of the Gang-Yonekura formula for all exceptional single fillings on $N$ (\textit{cf.}~~\cite{martelli2006dehn}), and listed them Table~\ref{tab:magic2}. 
\begin{table}[!htbp]
\begin{tabular}{|c|c|c|}
\hline
Slope & Manifold & Index \\
\hline
$(1,0)$ & $T \times [0,1]$ & $O\left(q^{11}\right) $ \\
\hline
$(-3,1)$ & $(A,(2,1))\cup_{\begin{pmatrix}
0 & 1\\ 1 & 0
\end{pmatrix}} (A,(2,1))$ & \texttt{undefined} \\
\hline
$(-2,1)$ & $(P,(3,1))$ & $1+O\left(q^{11}\right)$ \\
\hline
$(-1,1)$ & $(P,(2,1))$ & $1 + O\left(q^{11}\right)$ \\
\hline
$(0,1)$ & $(D,(2,1),(3,1)) \cup_{\begin{pmatrix}
1 & 1\\ -1 & 0
\end{pmatrix}} (P\times S^1)$ &\texttt{undefined} \\
\hline
\end{tabular}
\caption{Exceptional surgeries on the magic manifold, corresponding manifolds, and their index (up to $q$-degree $10$). Entries whose index is \texttt{undefined} have non-absolutely convergent sums from the Gang-Yonekura formula.}
\label{tab:magic2}
\end{table}

%&&&&&&&&&&&&&&&&&&&&&&&&&&&&&&&&&&&&&&&&&&&&&&&&&&&&&&&&&&&&&&&&&&&&&&&&&&&&&
%&&&&&&&&&&&&&&&&&&&&&&&&&&&&&&&&&&&&&&&&&&&&&&&&&&&&&&&&&&&&&&&&&&&&&&&&&&&&&

\section{Closed manifolds}\label{sec:closed}

A rigorous definition of the $3$D index of closed manifolds does not currently exists. However, it is still possible to improperly apply the Gang-Yonekura formula to a singly-cusped manifold, and interpret the resulting $q$-series as a definition of the index for the filled manifold. While this approach lacks a formal foundation, our computations suggest that the resulting series exhibit consistent structural patterns.

This extension to closed manifolds was first proposed by Gang, who conjectured in~\cite{gang2018quantum} that the following identities hold:
\begin{equation}\label{conj:gang_closed}
\ind_M(\alpha) = \begin{cases}
1 + P(q) & \text{with } P \in q\Z[\![q]\!] \text{ if $M(\alpha)$ is hyperbolic}\\
0,1, \text{ divergent} & \text{ if $M$ is not hyperbolic}.
\end{cases}
\end{equation}

In the light of the computations presented in this section, we amend this conjecture in a few ways in~\eqref{conj:new_closed} below.\\~\\

%&&&&&&&&&&&&&&&&&&&&&&&&&&&&&&&&&&&&&&&&&&&&&&&&&&&&&&&&&&&&&&&&&&&&&&&&&&&&&
%&&&&&&&&&&&&&&&&&&&&&&&&&&&&&&&&&&&&&&&&&&&&&&&&&&&&&&&&&&&&&&&&&&&&&&&&&&&&&

\subsection{Dehn filling the trefoil knot}\label{sec:filling_trefoil}~\\

Using~Section~\ref{ssec:alternating_torus} we can explicitly compute the value of the Gang-Yonekura formula for certain closed manifolds, namely the fillings on the trefoil knot complement. Recall from~Proposition~\ref{prop:index_of_torus} that the index of $T_{3,2}$, with boundary condition $\gamma = (x,y)$ is the delta function 
\begin{equation}
\label{trefoil_index}
\ind_M^\gamma(q) = \delta_{i(\gamma,\beta),0},
\end{equation}
where $\beta$ is the curve representing the homology class $-6\mu + \lambda$, with respect to the standard basis for $\hh_1(\partial M;\Z)$ given by the meridian $\mu$ and  the Seifert longitude $\lambda$.

Let $\alpha$ a simple closed curve on $\bd M$.
Then we can prove the following: 
\begin{prop}\label{prop:surgery_on_torus}
Let $M_{T_{3,2}}$ denote the complement of the trefoil knot, and  $\tri_{(3,2)}$ be the triangulation of $M_{T_{3,2}}$, as  in Proposition~\ref{prop:index_of_torus}. Then 
\begin{equation}\ind_{M_{T_{3,2}}(\alpha)}(q) = 
\begin{cases} 
\text{undefined, if } \alpha=\pm \beta,\\
  0, \text{ if } i(\alpha,\beta)=\pm1, \\
  2, \text{ if }  i(\alpha,\beta)=\pm2, \\  
  1,   \text{ if }  |i(\alpha,\beta)| \ge 3 .
\end{cases}    
\end{equation}
\end{prop}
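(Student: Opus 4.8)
The plan is to feed the known boundary index~\eqref{trefoil_index} of the trefoil complement into the Gang-Yonekura formula~\eqref{eqn:GYformula}, with $T=\partial M_{T_{3,2}}$ and filling curve $\alpha$, and then carefully examine when the resulting sum is absolutely convergent. Since $\ind_M^\gamma(q) = \delta_{i(\gamma,\beta),0}$, the summand in~\eqref{eqn:GYformula} is nonzero only for those $\gamma$ with $i(\gamma,\beta)=0$, i.e.\ $\gamma = k\beta$ for $k\in\Z$; for such $\gamma$ one has $|\gamma|=|k|$ and $\ind_\tri^{k\beta}(q)=1$. Substituting, the Gang-Yonekura formula collapses to
\begin{equation*}
\ind_{M_{T_{3,2}}(\alpha)}(q) = \frac12\left(\sum_{\substack{k\in\Z\\ i(\alpha,k\beta)=0}}(-1)^k\left(q^{|k|/2}+q^{-|k|/2}\right) - \sum_{\substack{k\in\Z\\ i(\alpha,k\beta)=\pm2}}(-1)^k\right),
\end{equation*}
where $i(\alpha,k\beta) = k\, i(\alpha,\beta)$. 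Let $d = i(\alpha,\beta)$. The first sum ranges over $k$ with $kd=0$, the second over $k$ with $kd = \pm2$.

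First I would treat the degenerate case $\alpha = \pm\beta$, i.e.\ $d=0$. Then every $k\in\Z$ satisfies $kd=0$, so the first sum is $\sum_{k\in\Z}(-1)^k(q^{|k|/2}+q^{-|k|/2})$, which contains infinitely many terms $(-1)^k q^{-|k|/2}$ with unbounded negative exponent; this is not a Laurent series in $q^{1/2}$, so the index is undefined. (The second sum is empty since $kd=\pm2$ has no solutions.) This gives the first case.

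Next, for $d\neq0$ the only solution of $kd=0$ is $k=0$, contributing $\tfrac12(q^0+q^0)=1$ to the first sum. The second sum is over $k$ with $kd=\pm2$, which has a solution iff $d$ divides $2$, i.e.\ $|d|\in\{1,2\}$. If $|d|=1$, then $k=\pm2$ both occur, each contributing $(-1)^{\pm2}=1$, so the second sum equals $2$ and $\ind_{M_{T_{3,2}}(\alpha)} = \tfrac12(2 - 2) = 0$. If $|d|=2$, then $k=\pm1$ both occur, each contributing $(-1)^{\pm1}=-1$, so the second sum equals $-2$ and $\ind_{M_{T_{3,2}}(\alpha)} = \tfrac12(2-(-2)) = 2$. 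If $|d|\ge3$, the second sum is empty and $\ind_{M_{T_{3,2}}(\alpha)} = \tfrac12\cdot 2 = 1$. Assembling these four cases yields the statement.

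The main obstacle, and the point that requires care rather than routine bookkeeping, is the justification that Theorem~\ref{thm:main} (and hence formula~\eqref{eqn:GYformula}) genuinely applies here: one needs the filled triangulation $\tri_{(3,2)}(\alpha)$ to be $1$-efficient, which holds for all but finitely many $\alpha$ by Theorem~\ref{thm:algorithm_efficiency}, and separately one must interpret the formula as a formal identity of $q$-series and argue that the ``undefined'' case $\alpha=\pm\beta$ is exactly the one where absolute convergence fails. For the non-$1$-efficient exceptional slopes one either checks them directly against a triangulation of the closed filled manifold or appeals to the discussion in Section~\ref{sec:closed} that the Gang-Yonekura sum still produces the stated value; I expect these finitely many checks (and confirming that none of them coincides with $\pm\beta$ in a way that changes the answer) to be the only delicate part.
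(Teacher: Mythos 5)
Your computation is essentially identical to the paper's proof: substitute $\ind_{\tri}^\gamma = \delta_{i(\gamma,\beta),0}$ into the Gang-Yonekura formula, note that only $\gamma = k\beta$ contribute, and case-split on $d = i(\alpha,\beta)$. The paper writes the four cases using the kernel form $(-1)^{i(\alpha^*,\gamma)}$ while you use $(-1)^{|\gamma|}=(-1)^{|k|}$; these agree (as the exposition preceding equation~\eqref{eqn:GYformula} makes explicit), so the arithmetic comes out the same in every case.

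Your closing caveat, however, is aimed at the wrong concern. Theorem~\ref{thm:main} requires $M$ to have \emph{at least two} boundary tori, whereas the trefoil complement has one, so the filled manifold $M_{T_{3,2}}(\alpha)$ is closed. For closed Dehn fillings the paper does not invoke $1$-efficiency of $\tri(\alpha)$ at all; rather, as stated at the start of Section~\ref{sec:closed}, equation~\eqref{eqn:GYformula} is being applied \emph{definitionally} to the singly-cusped manifold, and the proposition simply evaluates that formula. So there are no exceptional slopes to check separately: the ``undefined'' case is precisely the one where the formal sum fails to be a Laurent series, exactly as you computed, and in the other cases the value of the sum \emph{is} the assertion.
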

This should be compared with the topology of the Dehn filled manifolds, as determined by Moser~\cite{moser1971elementary} for the torus knot $T_{3,2}$:

\begin{itemize}
\item $M_{T_{3,2}}(\alpha)$ is the connected sum of two lens spaces if $\alpha=\pm\beta$,
\item $M_{T_{3,2}}(\alpha)$ is a lens space if $i(\alpha,\beta)=\pm1$,
\item $M_{T_{3,2}}(\alpha)$ is Seifert fibre space over base $S^2(2,3,2)$ if  $i(\alpha,\beta)=\pm2$,
\item $M_{T_{3,2}}(\alpha)$ is Seifert fibre space over base $S^2(2,3,k)$ if $|i(\alpha,\beta)| =k \ge 3$.
\end{itemize}

\begin{proof}
By equation~\eqref{trefoil_index}, the only terms contributing to~\eqref{eqn:GYformula} are of the form $\gamma=k \beta$, where $k \in \Z$ and $i(\alpha,\gamma)=0,+2,-2$. Since 
$ \ind_\tri(\gamma)=1$ for these values of $\gamma$, we have
\begin{equation}
\ind_{M_{T_{3,2}}(\alpha)} =\frac{1}{2}  \sum_{\gamma \in \Z \beta} 
(-1)^{ i(\alpha^*,\gamma)}
\left( \delta_{i(\alpha,\gamma),0}\left(q^{\frac{1}{2} i(\alpha^*,\gamma)}+q^{-\frac{1}{2}i(\alpha^*,\gamma)}\right)  
- \delta_{i(\alpha,\gamma),2}-\delta_{i(\alpha,\gamma),-2}
\right)
\end{equation}
Now  $\gamma=k\beta$ (with $k\in\Z$) contributes to the sum if 
\begin{equation}\label{keqn}
i(\alpha,\gamma)=k \,i(\alpha,\beta)=0 \text{ or } \pm2.
\end{equation}
We consider 4 cases:
\begin{enumerate}
\item[(0)] If $i(\alpha,\beta)=0$ then each $k \in \Z$ contributes. Since $\alpha$ gives a primitive element of $\hh_1(\bd M_{T_{3,2}};\Z)$ and  $\ind_{M_{T_{3,2}}(\alpha)} =\ind_{M_{T_{3,2}}(-\alpha)}$ 
we may assume $\alpha=\beta$. Then $i(\alpha^*,\gamma)=k \,i(\beta^*,\beta)=-k$ and $i(\alpha,\gamma)=k \, i(\alpha,\beta)=0$, so we have 
$$
\ind_{M_{T_{3,2}}(\alpha)} =\frac{1}{2}  \sum_{k \in \Z} 
(-1)^{k} \left(q^{\frac{k}{2}}+q^{-\frac{k}{2}}\right) . 
$$
Hence the sum is not well-defined (since it has infinitely many terms with negative exponents).
\item[(1)] If $i(\alpha,\beta)=\pm 1$, 
then equation~\eqref{keqn} shows that  only $k=0,2,-2$ contribute. Now we may assume that
$i(\alpha,\beta)=-1$ so $\alpha = c \beta + \beta^*$ where $c \in \Z$. Then we can take $\alpha^*=\beta$, so
$i(\alpha^*,\gamma)=k \, i(\beta,\beta)=0$ and $i(\alpha,\gamma)=-k$.
Hence
\begin{align*}
\ind_{M_{T_{3,2}}(\alpha)} &=\frac{1}{2}  \sum_{k=-2,0,2} 
(-1)^{ 0}\left( \delta_{-k,0}\left(q^{\frac{1}{2} 0}+q^{-\frac{1}{2}0}\right)  
- \delta_{-k,2}-\delta_{-k,-2}
\right) \\
&= \frac{1}{2}(2-1-1)=0.
\end{align*}
\item[(2)]  If $i(\alpha,\beta)=\pm 2$, then equation~\eqref{keqn} shows that  only $k=0,1,-1$ contribute.
Now we may assume that
$i(\alpha,\beta)=-2$ so $\alpha = (2c-1) \beta + 2\beta^*$ where $c \in \Z$. 
Then we can take $\alpha^*=c \beta+\beta^*$ so $i(\alpha^*,\gamma)=-k$ and $i(\alpha,\gamma)=-2k$.
Hence
\begin{align*}
\ind_{M_{T_{3,2}}(\alpha)} &=\frac{1}{2}  \sum_{k=-1,0,1} 
(-1)^{-k}\left( \delta_{-2k,0}\left(q^{\frac{1}{2} k}+q^{-\frac{1}{2}k}\right)  
- \delta_{-2k,+2}-\delta_{-2k,-2}
\right) \\
&= \frac{1}{2}(2+1+1)=2.
\end{align*}
\item[(3)]  If $|i(\alpha,\beta)|\ge 3$, then equation~\eqref{keqn} shows that  only $k=0$ contributes to the index sum, hence
$$\ind_{M(\alpha)} =1.$$
\end{enumerate}
\end{proof}

%&&&&&&&&&&&&&&&&&&&&&&&&&&&&&&&&&&&&&&&&&&&&&&&&&&&&&&&&&&&&&&&&&&&&&&&&&&&&&
%&&&&&&&&&&&&&&&&&&&&&&&&&&&&&&&&&&&&&&&&&&&&&&&&&&&&&&&&&&&&&&&&&&&&&&&&&&&&&

\subsection{Figure eight knot}\label{sec:filling_fig8}~\\

The next simplest case is the complement of the figure eight knot $4_1$, with its canonical triangulation $\tri$, \textit{i.e.}~m004 in SnapPy notation. Besides providing extensive computations below, we will prove one exact result, using the $q$-hypergeometric approach from Section~\ref{sec:proof}.

In general, it would be interesting to prove that, for knot complements, the $(1,0)$-filling yields a trivial $3$D index. In this instance, proving that the Gang-Yonekura formula for the index of $S^3 = \text{meridional Dehn filling on }m004$ is zero is equivalent to 
\begin{equation}\label{eqn:surgery_on_4_1}
0 =  \sum_{x \in \Z}(q^x + q^{-x}) \ind_{m004}^{2x \mu}(q) - \ind_{m004}^{2x\mu + \lambda}(q) - \ind_{m004}^{2x\mu  -\lambda}(q ).
\end{equation}

The index of m004 in our notation is given by
$$\ind_{m004}^{2x \mu +y \lambda }(q) = \sum_{k \in \Z} \itet (k-x,k) \itet (k+y,k-x+y),$$
for example, using Example 4.1 and Remark 8.4 of \cite{garoufalidis20163d}.

Now, it is possible to rewrite this in terms of the $\varphi_r$ generating function defined in~
\eqref{eqn:seriesindexshifted}.
The index is the coefficient of $z^0$ in the product $z^y \varphi_x (z) \varphi_{-x} (z)$:
\begin{equation}\label{eqn:fig8_hyper}
\ind_{m004}^{2x \mu +y \lambda} (q) = \left[z^y \varphi_x (z) \varphi_{-x} (z) \right]_{z^0}.
\end{equation}

This implies that $\varphi_x (z) \varphi_{-x} (z)$ is the generating function for the index of m004 for a fixed meridional coefficient, summed over all multiples of the longitude, indexed by powers of $z$. More explicitly:
\begin{equation}\label{eqn:generating_4_1}
\varphi_x (z) \varphi_{-x} (z) = \sum_{k \in \Z} \ind^{2x \mu +k \lambda}_{m004} (q) z^{-k}.
\end{equation}

With this approach, we can give the first example of an \textit{exact} computation 
of the Gang-Yonekura formula for Dehn filling on a cusped hyperbolic manifold:
\begin{thm}\label{thm:4_1_exact}
The $3$D index of the longitudinal surgery on the figure eight knot is $1$. In other words, $$\ind_{m004(0,1)}(q) \equiv 1.$$
\end{thm}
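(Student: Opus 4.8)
The plan is to evaluate the Gang--Yonekura formula~\eqref{eqn:GYformula} for $m004$ with filling curve $\alpha=\lambda$ (the $(0,1)$ slope), using $\beta=\mu$ as a dual curve, and to recognise the resulting infinite combination of indices $\ind^\gamma_{m004}$ as a single specialisation of the generating functions $\varphi_r(z,q)$ of~\eqref{eqn:seriesindexshifted}; Theorem~\ref{thm:trigonometry} then collapses everything to $1$.

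\textbf{Step 1 (unwinding the formula).} Since $m004$ is the exterior of a knot in $S^3$, we have $K_T=\{2x\mu+y\lambda\}$, and for $\alpha=\lambda$ the condition $\lambda\cdot\gamma=0$ holds exactly for $\gamma=k\lambda$, while $\lambda\cdot\gamma=\pm2$ holds exactly for $\gamma=k\lambda\pm2\mu$ with $k\in\Z$; all such $\gamma$ lie in $K_T$, so no nonzero contribution is lost. Using $(-1)^{|\gamma|}=(-1)^k$ as in the proof of Proposition~\ref{prop:gang_cusp}, I would rewrite~\eqref{eqn:GYformula} as
\begin{equation*}
\ind_{m004(0,1)}(q)=\frac12\sum_{k\in\Z}(-1)^k\left[\left(q^{k/2}+q^{-k/2}\right)\ind^{k\lambda}_{m004}(q)-\ind^{2\mu+k\lambda}_{m004}(q)-\ind^{-2\mu+k\lambda}_{m004}(q)\right].
\end{equation*}

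\textbf{Step 2 (into generating functions).} By~\eqref{eqn:generating_4_1}, $\varphi_0(z,q)^2=\sum_k\ind^{k\lambda}_{m004}(q)\,z^{-k}$ and $\varphi_1(z,q)\varphi_{-1}(z,q)=\sum_k\ind^{2\mu+k\lambda}_{m004}(q)\,z^{-k}$; the case $x=-1$ of the same identity, combined with $\varphi_{-1}\varphi_1=\varphi_1\varphi_{-1}$, gives $\ind^{-2\mu+k\lambda}_{m004}=\ind^{2\mu+k\lambda}_{m004}$. Specialising $\varphi_0(z,q)^2$ at $z=-q^{1/2}$ and at $z=-q^{-1/2}$ and adding, using the symmetry $\varphi_0(z,q)=\varphi_0(z^{-1},q)$ from Remark~\ref{rmk:symmetry_and_compact_version}, turns the first sum into $2\,\varphi_0(-q^{1/2},q)\varphi_0(-q^{-1/2},q)$; specialising $\varphi_1(z,q)\varphi_{-1}(z,q)$ at $z=-1$ turns each of the last two sums into $\varphi_1(-1,q)\varphi_{-1}(-1,q)$. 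After multiplying by $\tfrac12$ this yields
\begin{equation*}
\ind_{m004(0,1)}(q)=\varphi_0(-q^{1/2},q)\,\varphi_0(-q^{-1/2},q)-\varphi_1(-1,q)\,\varphi_{-1}(-1,q).
\end{equation*}

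\textbf{Step 3 (conclusion).} The right-hand side is exactly the left-hand side of Theorem~\ref{thm:trigonometry} with $r=0$ evaluated at $z=-1$, whose value is $\left(zq^{-1/2}\right)^{0}=1$. Hence $\ind_{m004(0,1)}(q)\equiv1$.

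The delicate point is not the algebra, which is a three-line computation, but the analytic bookkeeping in Step~2: one must check that the doubly-infinite Gang--Yonekura sum may be legitimately reorganised into, and then specialised from, the series $\varphi_r(z,q)$. This is the same kind of estimate used throughout Section~\ref{sec:proof} and behind Lemma~\ref{lem:growth_cusp_index} — the minimal $q^{1/2}$-degree of $\itet(m,e)$ grows quadratically in $(m,e)$, so every series in sight has each coefficient given by a finite sum, and the substitutions $z\mapsto-q^{\pm1/2}$ and $z\mapsto-1$ are valid term by term; that is where any remaining care is needed.
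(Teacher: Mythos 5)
Your proposal is correct and follows essentially the same route as the paper's proof: both specialise the Gang--Yonekura sum to the generating-function identity $\varphi_0(-q^{1/2},q)\,\varphi_0(-q^{-1/2},q)-\varphi_1(-1,q)\,\varphi_{-1}(-1,q)$ via equation~\eqref{eqn:generating_4_1}, and then conclude by Theorem~\ref{thm:trigonometry} at $r=0$, $z=-1$. The only cosmetic difference is that the paper invokes the compact form~\eqref{eqn:compact_version} directly, whereas you re-derive it from~\eqref{eqn:GYformula} using the symmetries of the $\varphi_r$; the convergence remark you add at the end is the correct justification, as in Lemma~\ref{lem:growth_cusp_index}.
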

\begin{proof}
The Gang-Yonekura formula applied to longitudinal surgery on $4_1$ can be re-written, using the compact form~\eqref{eqn:compact_version}, as: 
$$\ind_{m004(0,1)}(q) = \sum_{k \in \Z} \left( \left(-q^\frac{1}{2}\right)^{k} \ind_{m004}^{k \lambda}(q) - (-1)^k\ind_{m004}^{2\mu+k \lambda}(q) \right) .$$
In turn, using equation~\eqref{eqn:generating_4_1}, this is equivalent to 
\begin{equation}\label{eqn:functional_m004}
\ind_{m004(0,1)}(q) = \varphi_0 \left(-q^{\frac12}\right)^2  - \varphi_1 (-1) \varphi_{-1} (-1) .
\end{equation}
Then we can use equation~\eqref{eqn:phi_expression} and Corollary~\ref{cor:3phi3_and_trig} to conclude, or note that equation~\eqref{eqn:functional_m004} is just the generating function from Theorem~\ref{thm:trigonometry}, evaluated in $z = -1, \,r=0$.
\end{proof}
Note that this computation is consistent with Conjecture~\ref{conj:new_closed}, and the certified computation from Table~\ref{tab:n1_fig8}, since $m004(0,1)$ is topologically the torus bundle with monodromy $\begin{pmatrix}
2 & 1\\ 1 &1\end{pmatrix}$.\\

We collect here the some computations for surgeries on the figure eight knot, again using SnapPy's triangulation m004 of the knot's complement as a starting point. It is interesting to note that the index takes a very special form for  all exceptional surgeries of $4_1$, as predicted by~\cite{gang2018quantum}.   
\begin{table}[!htbp]
\begin{tabular}{|c|c|}
\hline
Slope & $3$D index\\
\hline
\multirow{2}{*}{$(1,0)$} & $ O\left(q^{11}\right)$ \\
 & \\
\hline
\multirow{2}{*}{$(0,1)$} & $1 + O\left(q^{11}\right)$ \\
 & \\
\hline
\multirow{2}{*}{$(1,1)$} & $1 + O\left(q^{11}\right)$ \\
 & \\
\hline
\multirow{2}{*}{$(2,1)$} & $1 + O\left(q^{11}\right)$ \\
 & \\
\hline
\multirow{2}{*}{$(3,1)$} & $1 + O\left(q^{11}\right)$ \\
 & \\
\hline
\multirow{2}{*}{$(4,1)$} & $\infty - 2q^2 - 2q^3 - 4q^4 - 4q^5  $ \\
 & $- 6q^6 - 4q^7 - 8q^8 - 6q^9 - 8q^{10} + O\left(q^{11}\right)$ \\
\hline
\multirow{2}{*}{$(5,1)$} & $1 - q - 2 q^2 - q^3 - q^4 + q^5 + 2 q^6 + 7 q^7$ \\
 & $+ 8 q^8 + 12 q^9 + 14 q^{10} + O\left(q^{11}\right)$ \\
\hline
\multirow{2}{*}{$(6,1)$} & $1 - t - q^{\frac32} - q^2 - q^\frac52 + q^\frac92 + 3 q^5 + 4 q^{\frac{11}{2}} + 4 q^6+ 6 q^{\frac{13}{2}}$ \\
 & $ + 8 q^7 + 7 q^\frac{15}2 + 7 q^8 + 9 q^\frac{17}2 + 8 q^9 + 6 q^\frac{19}2 + 4 q^{10} + O\left(q^{\frac{21}2}\right)$ \\
\hline
\multirow{2}{*}{$(7,1)$} & $1 - q^2 + q^4 + 3 q^5 + 3 q^6 + 6 q^7$ \\
 & $+ 4 q^8 + 2 q^9 - 4 q^{10} + O\left(q^{11}\right)$ \\
\hline
\multirow{2}{*}{$(8,1)$} & $1 - q^2 + 2 q^4 + 5 q^5 + 6 q^6 + 8 q^7$ \\
 & $+ 4 q^8 - 2 q^9 - 14 q^{10} + O\left(q^{11}\right)$ \\
\hline
\multirow{2}{*}{$(9,1)$} & $1 - q^2 + 2 q^5 + 2 q^6 + 4 q^7 + q^8$ \\
  & $- q^9 - 6 q^{10} + O\left(q^{11}\right)$ \\
\hline
\multirow{2}{*}{$(10,1)$} & $1 - q^2 + q^\frac52 + q^\frac72 + q^\frac92 + 2 q^5 + 2 q^\frac{11}2 + 2 q^6 + q^\frac{13}2 $ \\
  & $+ 4 q^7- q^\frac{15}2 + 2 q^8 - 5 q^\frac{17}2 - 9 q^\frac{19}2 - 4 q^{10} + O\left(q^{\frac{21}2}\right)$ \\
\hline
\end{tabular}
\caption{Integer (and $\infty$) slope surgeries on m004, and their $3$D index.  The first $6$ are the exceptional surgeries for $4_1$ (up to reflection).}
\label{tab:n1_fig8}
\end{table}

There are a few observation to make on the computations in Table~\ref{tab:n1_fig8}; all of them, except for the $(4,1)$ one, were performed with our `certified' code. The slope $(4,1)$ case is special amongst the exceptional surgeries; the filled manifold contains an embedded Klein bottle. Each $n$ multiple of this surface contributes to the index as $1 + P_n(q)$, where $P_n(q) \in \Z[\![q^{\frac12}]\!]$ has a minimal degree monotonically increasing with $n$. In particular this implies that -- with the exception of the constant term -- the $3$D index of the filled manifold stabilises to a well defined $q$-series. 
\begin{rmk}
This `constant term blow-up' phenomena is not limited to this specific case, but appears to be related to the presence of embedded non-peripheral tori or Klein bottles. A similar behaviour will be observed in Section~\ref{sec:toroidal}.
\end{rmk}

The second phenomena we'd like to remark on is that experimentally all $(4k+2)$-sloped surgeries on $4_1$ produce $q$-series with half-integer exponents; note that, by an improper application of Theorem~\ref{thm:index_asymptotics}, we expect the minimal degree where an half-integer appears to increase with $k$.
We include for completeness the first $100$ terms of the $3$D index for the asymptotic integer fillings: 
\begin{align*}
&\lim_{n\to +\infty}\ind_{m004(n,1)}(q)  = 1 - q^2 + 2q^5 + 2q^6 + 4q^7 + 2q^8 - 4q^{10} - 10q^{11} - 17q^{12} - 24q^{13}\\
&- 25q^{14} - 28q^{15} - 15q^{16} + 35q^{18} + 70q^{19} + 129q^{20} + 178q^{21} + 242q^{22} + 278q^{23}\\
&+ 304q^{24} + 274q^{25} + 206q^{26} + 48q^{27} - 186q^{28} - 520q^{29} - 948q^{30} - 1462q^{31} - 2042q^{32}\\
&- 2628q^{33} - 3200q^{34} - 3618q^{35} - 3856q^{36} - 3708q^{37} - 3154q^{38} - 1920q^{39} - 48q^{40}\\
&+ 2780q^{41} + 6424q^{42} + 11152q^{43} 
+ 16663q^{44} + 23122q^{45} + 29957q^{46} + 37142q^{47}\\
&+ 43801q^{48} + 49610q^{49} + 53354q^{50} + 54430q^{51} + 51299q^{52} + 43104q^{53} 
+ 28205q^{54}\\
&+ 5636q^{55} - 26077q^{56} - 67642q^{57} - 119920q^{58} - 182950q^{59} - 256297q^{60} - 338780q^{61}\\
&- 428018q^{62} - 520922q^{63} 
- 612466q^{64} - 697218q^{65} - 766899q^{66} - 813444q^{67}\\
&- 825271q^{68} - 791876q^{69} - 698796q^{70} - 534168q^{71} - 281964q^{72} + 69106q^{73} 
+ 534061q^{74}\\
&+ 1120540q^{75} + 1838929q^{76} + 2688730q^{77} + 3670844q^{78} + 4771390q^{79} + 5975816q^{80}\\
&+ 7250856q^{81} + 8560557q^{82} + 9846340q^{83} + 11045311q^{84} + 12069286q^{85} + 12824816q^{86}\\
&+ 13192476q^{87} + 13048660q^{88} + 12245696q^{89} + 10635543q^{90} 
+ 8052452q^{91} + 4337295q^{92}\\ 
&- 675356q^{93} - 7132498q^{94} - 15170436q^{95} - 24890707q^{96} - 36360340q^{97} - 49591008q^{98} \\
&- 64530416q^{99} - 81046208q^{100} +O\left(q^{101}\right)
\end{align*}
We note that this computation, using our certified code took $\sim 58$s on a ThinkPad with $32$Gb of memory, without any parallelisation.

\begin{table}[!htbp]
\begin{tabular}{|c|c|}
\hline
\multirow{2}{*}{$(1,2)$} & $1 - 2 q - 3 q^{2} + 3 q^{4} + 10 q^5 + 14 q^6$ \\
& $+ 22 q^7 + 20 q^8 + 14 q^9 - 2 q^{10} + O\left(q^{11}\right)$ \\
\hline
\multirow{2}{*}{$(1,3)$} & $1 - 2 q - 3 q^{2} + q^{3} + 6 q^{4} + 13 q^5 + 16 q^6$ \\
& $ + 17 q^7 + 4 q^8 - 20 q^9 - 55 q^{10} + O\left(q^{11}\right)$ \\
\hline
\multirow{2}{*}{$(1,4)$} & $1 - 2 q - 3 q^{2} + q^{3} + 6 q^{4} + 13 q^{5}+16 q^6 $ \\
& $+ 17 q^7 + 4 q^8 - 20 q^9 - 54 q^{10} + O\left(q^{11}\right)$ \\
\hline
\end{tabular}
\caption{The $\frac{1}{n}$-sloped surgeries on m004, and their $3$D index. We included only these three, as the coefficients for the first ten $q$-degrees stabilise already for the $(1,4)$-surgery.}
\label{tab:1n_fig8}
\end{table}

%&&&&&&&&&&&&&&&&&&&&&&&&&&&&&&&&&&&&&&&&&&&&&&&&&&&&&&&&&&&&&&&&&&&&&&&&&&&&&
%&&&&&&&&&&&&&&&&&&&&&&&&&&&&&&&&&&&&&&&&&&&&&&&&&&&&&&&&&&&&&&&&&&&&&&&&&&&&&

\subsection{A toroidal example}\label{sec:toroidal}~\\

It is of course interesting to probe the index's behaviour on non-hyperbolic closed manifolds. Let $\tri$ 
be the triangulation with isosignature \texttt{cPcbbbdei} of the cusped manifold $P$ obtained by gluing two tetrahedra, as explained in~\cite[Ex.~11.4]{garoufalidis20163d}; this manifold was also considered in~\cite[Sec.~4]{garoufalidis2019meromorphic}, where its meromorphic $3$D index was computed. This manifold is toroidal, and the triangulation is \textit{not} $1$-efficient. 

Using Regina, we see that
$$\ind_\tri^{(x,y)} (q) = \sum_{k \in \Z}\itet(x,2y-k) \itet(-x,-k).$$
Therefore, by equation~\eqref{enq:generating_function_tetra}, 
\begin{equation}\label{eqn:index_toroidal}
\ind_\tri^{(x,y)} (q) = \left[ z^{2y}\frac{(q^{1-\frac{x}{2}} z)_\infty}{(q^{-\frac{x}{2}} z^{-1})_\infty} \frac{(q^{1+\frac{x}{2}} z^{-1})_\infty}{(q^{\frac{x}{2}} z)_\infty}\right]_{z^0} = \left[z^{2y} \frac{(q^{1-\frac{x}{2}} z)_{x-1}}{(q^{-\frac{x}{2}} z^{-1})_{x+1}}\right]_{z^0}.
\end{equation}

Since $(a;q^{-1})_n = (a;q)_n (-a)^n q^{-\binom{n}{2}}$, we see that 
$$z^{2y} \frac{(q^{1-\frac{x}{2}} z)_{x-1}}{(q^{-\frac{x}{2}} z^{-1})_{x+1}} = 
\frac{(-1)^{x+1} z^{x+2y+1}}{(1-q^{\frac{x}{2}}z)(1-q^{-\frac{x}{2}}z)}.$$ A simple residue computation provides (for $x \neq 0$) $$ \ind_\tri^{(x,y)} (q) = \frac{(-1)^{x}q^{\frac{|x||x+2y+1|}{2}}}{(1-q^{|x|})}$$ as the coefficient of $z^0$.

If we (improperly) apply equation~\eqref{eqn:GYformula} for non-zero fillings on the cusp in $P$, we observe an interesting phenomenon; the closed index of $(1,0)$-surgery appears to be constantly $0$. We display in Table~\ref{tab:tab1_toric} the values we get for the index on $(n,1)$-surgery on $\tri$, $n = 0, \ldots,10$, computed with the non-certified version of our code (\textit{cf.}~Section~\ref{sec:code}):

\begin{table}[!htbp]
\centering
\begin{tabular}{|c|c|}
\hline
\multirow{2}{*}{$(1,0)$} & $ O\left(q^{11}\right)$ \\
 &  \\
\hline
\multirow{2}{*}{$(0,1)$} & divergent, with infinitely many  \\
 &  terms with negative exponents\\
\hline
\multirow{2}{*}{$(1,1)$} & $1+O\left(q^{11}\right)$ \\
 &  \\
\hline
\multirow{2}{*}{$(2,1)$} & $\infty - 2 q - 4 q^2 - 4 q^3 - 6 q^4 - 4 q^5$ \\
 & $- 8 q^6 -4q^7 - 8q^8 - 6q^9 -8q^{10} +O\left(q^{11}\right)$ \\
\hline
\multirow{2}{*}{$(3,1)$} & $\infty - q - 3 q^2 - 3 q^3 - 5 q^4 - 4 q^5$ \\
 & $- 7 q^6 - 4 q^7 - 8 q^8 - 6 q^9 - 8 q^{10} + O\left(q^{11}\right)$ \\
\hline
\multirow{2}{*}{$(4,1)$} & $\infty - q - q^{\frac{3}2} - 3 q^2 - q^{\frac{5}2} - 3 q^3 - 2 q^{\frac{7}2}- 5 q^4 - 2 q^{\frac{9}2} - 3 q^5 - 2 q^{\frac{11}2} - 7 q^6$ \\
 & $ - 2 q^{\frac{13}2} - 3 q^7 - 3 q^{\frac{15}2} - 7 q^8 - 2 q^{\frac{17}2} - 4 q^9 - 2 q^{\frac{19}2} - 7 q^{10}  + O\left(q^{\frac{21}2}\right)$ \\
\hline
\multirow{2}{*}{$(5,1)$} & $\infty - q - 3 q^2 - 3 q^3 - 5 q^4 - 3 q^5$ \\
 & $- 7 q^6 - 3 q^7 - 7 q^8 - 5 q^9 - 7 q^{10} + O\left(q^{11}\right)$ \\
\hline
\multirow{2}{*}{$(6,1)$} & $\infty - q - 3 q^2 - 3 q^3 - 4 q^4 - 3 q^5$ \\
 & $- 6 q^6 - 2 q^7 - 5 q^8 - 5 q^9 - 5 q^{10} + O\left(q^{11}\right)$ \\
\hline
\multirow{2}{*}{$(7,1)$} & $\infty - q - 3 q^2 - 3 q^3 - 5 q^4 - 3 q^5$ \\
 & $- 7 q^6 - 3 q^7 - 7 q^8 - 5 q^9 - 7 q^{10} + O\left(q^{11}\right)$ \\
\hline
\multirow{2}{*}{$(8,1)$} & $\infty - q - 3 q^2 - 3 q^3 - 5 q^4 - 3 q^5 - 7 q^6 - 3 q^7 - q^{\frac{15}2}$ \\
 & $- 7 q^8 - 5 q^9 - 7 q^{10} + 3 q^{\frac{21}2} + O\left(q^{11}\right)$ \\
\hline
\multirow{2}{*}{$(9,1)$} & $\infty - q - 3 q^2 - 3 q^3 - 5 q^4 - 3 q^5$ \\
 & $- 7 q^6 - 3 q^7 - 7 q^8 - 5 q^9 - 7 q^{10} + O\left(q^{11}\right)$ \\
\hline
\multirow{2}{*}{$(10,1)$} & $\infty - q - 3 q^2 - 3 q^3 - 5 q^4 - 3 q^5$ \\
 & $- 7 q^6 - 3 q^7 - 7 q^8 - 5 q^9 - 7 q^{10} + O\left(q^{11}\right)$ \\
\hline
\end{tabular}
\caption{Integer slope surgeries on the toroidal triangulation \texttt{cPcbbbdei}. $(0,1)$-surgery produces a manifold whose index has infinitely many terms with negative exponents.   }
\label{tab:tab1_toric}
\end{table}~\\
We list the indices for $(-n,1)$-surgery on $\tri$, $n = 1, \ldots,10$ in Table~\ref{tab:tab2_toric}. 
\begin{table}[!htbp]
\centering
\begin{tabular}{|c|c|}
\hline
\multirow{2}{*}{$(-1,1)$} & $ O\left(q^{11}\right)$ \\
 &  \\
\hline
\multirow{2}{*}{$(-2,1)$} & $\infty + O\left(q^{11}\right)$ \\
 &  \\
\hline
\multirow{2}{*}{$(-3,1)$} & $\infty - 2 q - 4 q^2 - 4 q^3 - 6 q^4 - 4 q^5$ \\
 & $- 8 q^6 - 4 q^7 - 8 q^8 - 6 q^9 - 8 q^{10} + O\left(q^{11}\right)$ \\
\hline
\multirow{2}{*}{$(-4,1)$} & $\infty - q^\frac12 - q - 2 q^{\frac{3}2} - 3 q^2 - 2 q^{\frac{5}2} - 2 q^3 - 2 q^{\frac{7}2}
- 5 q^4 - 3 q^{\frac{9}2} - 2 q^5 - 2 q^{\frac{11}2}$ \\ &$  - 6 q^6 - 2 q^{\frac{13}2} - 2 q^7 - 4 q^{\frac{15}2} - 7 q^8 - 2 q^{\frac{17}2} - 3 q^9 - 2 q^{\frac{19}2} - 6 q^{10} - 2 q^{\frac{21}2} + O\left(q^{11}\right)$ \\
\hline
\multirow{2}{*}{$(-5,1)$} & $\infty - q - 3 q^2 - 3 q^3 - 5 q^4 - 3 q^5$ \\
 & $- 8 q^6 - 3 q^7 - 7 q^8 - 6 q^9 - 8 q^{10} + O\left(q^{11}\right)$ \\
\hline
\multirow{2}{*}{$(-6,1)$} & $\infty - q - 2 q^2 - 3 q^3 - 3 q^4 - 2 q^5$ \\
 & $- 6 q^6 - 2 q^7 -4 q^{8} -5q^{9} -4q^{10}+ O\left(q^{11}\right)$ \\
\hline
\multirow{2}{*}{$(-7,1)$} & $\infty - q - 3 q^2 - 3 q^3 - 5 q^4 - 3 q^5$ \\
 & $- 7 q^6 - 3 q^7 - 7 q^8 - 5 q^9 - 7 q^{10} + O\left(q^{11}\right)$ \\
\hline
\multirow{2}{*}{$(-8,1)$} & $\infty - q - 3 q^2 - 3 q^3 - 5 q^4 - q^{9/2} - 3 q^5 - 7 q^6 - 3 q^7$ \\
 & $- 2 q^{15/2} - 7 q^8 - q^{17/2} - 5 q^9 - 7 q^{10} + O\left(q^{11}\right)$ \\
\hline
\end{tabular}
\caption{Negative integer slope fillings on \texttt{cPcbbbdei}; we only included values until $-8$, as the coefficients up to $q$-degree $10$ stabilise for larger values. The entry for $(-2,1)$ means that the computed index consists of an infinite constant term, and the rest of the $q$-series is trivial (up to the chosen threshold).}
\label{tab:tab2_toric}
\end{table}
Finally, we note that the value for $(\pm 1,n)$ surgeries $(n >1)$, appears to give an index uniformly equal to $1$, but we were not able to prove this.

In most of the examples above the constant term blows up to infinity, while the rest of the index stabilises to a well-defined $q$-series. It is possible to give a nice geometric explanation of this phenomenon, hinging on the index's surface interpretation from Section~\ref{ssec:surface_index}. There are only two fundamental surfaces in $\tri$ which are not tetrahedral solutions (or peripheral tori): one embedded torus $S_1 = [0,0,1,0,0,1]$, and one immersed surface $S_2 = [1, 1, 0, 1, 1, 0]$ with $\chi = -2$. Crucially, the sum of these two surfaces is a tetrahedral solution. Therefore, in the computation of the index, only these two surfaces contribute non-trivially; further, any linear combination $n_1 S_1 + n_2 S_2$ with $n_1 \ge n_2$, provides an equivalent contribution as $(n_1-n_2)S_1$ (the case with $n_1 \le n_2$ is symmetric).

This implies that $\ind_\tri^{tot} (q) $ is simply a sum over $k S_1$ and $k S_2$ for $k \ge 0$, without `mixed' terms.
Now, the contribution of $kS_1$ to the index is given by $J_\Delta (0,0,k)^2 = 1 - 2q^k+\ldots$, while the minimal degree contribution for $kS_2$ is $2k(k-1)$. Therefore, the splitting torus contributes with infinitely many $1$s to the constant term, while all other coefficients are given by a finite sum.

\begin{rmk}
Proving that $(1,1)$-surgery on $\tri$ gives an index constantly equal to $1$ is equivalent to proving that
$$\frac12 \sum_{k \in \Z} \itet (k,0) \left( 2\itet (k,0) - \itet (k+2,0) - \itet (k-2,0)\right) \equiv 1,$$
or 
$$ \sum_{k \in \Z} q^{-k}\itet (k,0) \left( q\itet (k-1,0) +  q^{-1}\itet (k+1,0)-\itet (k,0) \right) \equiv 1.$$
We obtain the second expression by repeatedly applying equation~\eqref{eqn:index_3term_adjacent}. Now, by the quadratic identity~\ref{eqn:quadratic_identity}, $$\sum_{k \in \Z} q^{-k}\itet (k,0) \itet (k,0) \equiv 1$$ so it remains to show that the remaining terms cancel out. This final step also follows directly from the quadratic identity.
\end{rmk}~\\

%&&&&&&&&&&&&&&&&&&&&&&&&&&&&&&&&&&&&&&&&&&&&&&&&&&&&&&&&&&&&&&&&&&&&&&&&&&&&&
%&&&&&&&&&&&&&&&&&&&&&&&&&&&&&&&&&&&&&&&&&&&&&&&&&&&&&&&&&&&&&&&&&&&&&&&&&&&&&

\subsection{Exceptional surgeries on m016}~\\

We conclude this section by adding computations of the $3$D index for the exceptional surgeries on m016, a.k.a.~the complement of the $(-2,3,7)$-pretzel knot or $12n_{242}$;  see Table~\ref{tab:m016exceptional}. 
It is interesting to compare these computations with Conjecture~\ref{conj:new_closed} and the discussion in Section~\ref{sec:code} below. 

\begin{table}[!htbp]
\centering
\begin{tabular}{|c|c|c|}
\hline
Slope & $3$D index & Manifold\\
\hline 
\multirow{2}{*}{$(0,1)$} & $ O\left(q^{11}\right)$ & \multirow{2}{*}{$L(18,5)$} \\
 &  & \\
\hline
\multirow{2}{*}{$(1,0)$} & $  O\left(q^{11}\right)$ & \multirow{2}{*}{$S^3$} \\
 &  & \\
\hline
\multirow{2}{*}{$(1,1)$} & $1 + O\left(q^{11}\right) $ & \multirow{2}{*}{SFS} \\
 &  & \\
\hline
\multirow{2}{*}{$(-1,1)$} & $O\left(q^{11}\right) $ & \multirow{2}{*}{$L(19,7)$} \\
 &  & \\
\hline
\multirow{2}{*}{$(-1,2)$} & $\infty - 2 q - 4 q^2 - 4 q^3 - 6 q^4 - 4 q^5 $ & \multirow{2}{*}{graph manifold} \\
 & $- 8 q^6 - 4 q^7 - 
 8 q^8 - 6 q^9 - 8 q^{10} +  O\left(q^{11}\right)$ & \\
\hline
\multirow{2}{*}{$(2,1)$} & $\infty -4 q - 6 q^2 - 4 q^3 - 8 q^4 - 4 q^5$ & \multirow{2}{*}{graph manifold} \\
 & $ -12 q^6 - 4 q^7 - 10 q^8 - 8 q^9 - 8 q^{10} + O\left(q^{11}\right)$ & \\
\hline
\multirow{2}{*}{$(-2,1)$} & $\infty-2 q - 4 q^2 - 4 q^3 - 6 q^4 - 4 q^5  $ & \multirow{2}{*}{graph manifold} \\
 & $- 8 q^6 - 4 q^7 - 8 q^8 - 
 6 q^9 - 8 q^{10} + O\left(q^{11}\right)$ & \\
\hline
\end{tabular}
\caption{Exceptional surgeries for m016. Slopes are determined from SnapPy's geometric basis. The last three entries have non-absolutely converging index. 
Here m016$(1,1)$ is the SFS $\left[S^2: \left(2,\frac32,-\frac53\right) \right]$,  m016$(2,1)$ is the graph manifold $[D: \left(2,2\right)] \cup_m [D: \left(2,3\right)]$, where $m$ is the matrix $
\begin{pmatrix}
1&1\\0&1\end{pmatrix}$, m016$(-1,2)$ is $[D: \left(2,3\right)] \cup_{m'} [D: \left(2,\frac32\right)]$, where $m'$ is the matrix $
\begin{pmatrix}
-1&1\\0&1\end{pmatrix}$, m016$(-2,1)$ is $[D: \left(2,2\right)] \cup_{m''} [D: \left(2,\frac32\right)]$, where $m''$ is the matrix $
\begin{pmatrix}
0&1\\1&1\end{pmatrix}$.
}
\label{tab:m016exceptional}
\end{table}

%&&&&&&&&&&&&&&&&&&&&&&&&&&&&&&&&&&&&&&&&&&&&&&&&&&&&&&&&&&&&&&&&&&&&&&&&&&&&&
%&&&&&&&&&&&&&&&&&&&&&&&&&&&&&&&&&&&&&&&&&&&&&&&&&&&&&&&&&&&&&&&&&&&&&&&&&&&&&

\section{Computational aspects}\label{sec:code}

Before going into the description of how our code works, we showcase some of the computational results obtained with it, complementing those from previous sections. These provide further independent validation of the main results in this paper, and form the basis for many interesting observations and conjectures.
We refer to Section~\ref{sec:availability} for the complete dataset and code.\\

We used our `certified' code to compute the value of the $3$D index for all $1$-cusped manifolds in the census of orientable cusped hyperbolic $3$-manifolds with fewer than $5$ tetrahedra, and boundary curves $(p,q)$ with $|p|,|q| \le 6$, with respect to SnapPy's geometric basis. We further applied the Gang-Yonekura formula for all $(p,q)$ filling on these manifolds, for $|p|,|q|\le 6$. We then cross-referenced the results with the list of equivalent surgery description of manifolds in the Hodgson-Weeks census of closed hyperbolic $3$-manifolds available at~\cite{neil_list}. The indices we computed did coincide in all cases. A sample of the results for the first three equivalent surgery descriptions is given below.
\begin{itemize}
\item m003(-3,1) = m003(2,1) = m006(2,1) = m029(-2,1) \\Volume = $0.942707$, homology $\Z/5 \oplus \Z/5$, length of shortest geodesic	$0.58460$, $3$D index $$ 1 - q -3 q^2 -3 q^3 -2 q^4  +3 q^6 + 9 q^7 + 13 q^8 + 19 q^9 + 23 q^{10}+O\left(q^{11}\right) $$
\item m003(-2,3) = m003(-1,3) = m004(-5,1) = m004(5,1) = m011(1,2) = m015(3,1) = m019(2,1)  \\
Volume = $0.98136$, homology $\Z/5$, length of shortest geodesic	$0.57808$, $3$D index $$ 1 - q -2 q^2 - q^3 - q^4  +q^5 + 2 q^6 + 7 q^7 + 8 q^8 + 12 q^9 + 14 q^{10} +O\left(q^{11}\right)$$
\item m007(3,1) = m010(-1,2) = m207(1,1) \\ Volume = $1.01494$, homology $\Z/3 \oplus \Z/6$, length of shortest geodesic	$0.83144$, $3$D index 
\begin{gather*}
1 - q -2 q^{\frac32} -2 q^2 -2 q^\frac52 -2 q^3 -2 q^\frac72 -2 q^4 + 2 q^5 + 2 q^\frac{11}{2} + 3 q^5\\ + 6 q^\frac{13}{2} + 9 q^7 + 10 q^\frac{15}2 + 11 q^8 + 14 q^\frac{17}2 + 17 q^9 + 18 q^\frac{19}2 + 17 q^{10} + O\left(q^{\frac{21}{2}} \right)
\end{gather*}
\end{itemize}
The analysis provided by these computations provides a strong empirical confirmation that the $3$D index is well-defined -- even for closed manifolds, independent of the choice of triangulations and surgery descriptions. 

We performed a very similar analysis for the $15$ $2$-cusped hyperbolic $3$-manifolds with minimal triangulations having at most $5$ tetrahedra. We looked at all the fillings on either cusp, again with coefficients $|p|,|q|\le 6$. We then compared the resulting manifolds using SnapPy's \texttt{identify()} function. Just as before, equivalent manifolds obtained via different surgery descriptions had the same index 
(up to our chosen threshold of $q^{\frac12}$-degree $20$).\\ 

The next set of data we present consists of the value of the $3$D index on certain closed non-hyperbolic manifolds with \emph{finite fundamental group}. These are obtained via filling on cusped census manifolds, and the database we are using was compiled by C.~Hodgson and J.~Weeks in 1990 using snappea and Magma.

As an example of the data analysed, the $3$D index of $(1,0)$-filling on m003 appears to be constantly $0$; note that m003$(1,0)$ is the lens space $L(10,3)$. If instead we perform $(-2,1)$-surgery on m003, the resulting manifold is a Seifert fibred space with base orbifold the $(2,3,3)$-sphere and fundamental group of order $120$; its $3$D index appears to be constantly $1$. 
For these two examples, our rigorous code implies absolute convergence of the Gang-Yonekura formula~\eqref{eqn:GYformula},  and that the index computations are as indicated above, up to $q^{\frac12}$-degree $20$. 
See Section~\ref{sec:availability} for a complete list of the $3$D indices we computed, and the code used to generate them.\\

We also computed further examples where applying equation~\eqref{eqn:GYformula} does not yield an absolutely convergent sum. The resulting index was computed using a version of our code which is based on the `edge coefficients' approach (as explained in Section~\ref{ssec:edge_index}) rather than the `surface' approach.  The first such example with finite $\pi_1$ is $(1,1)$ filling on m006, which topologically is the Seifert fibred space with base the $(2,2,n)$-sphere. Its index computed using the non-rigorous edge coefficients approach seems to be identically $2$.  \\

All of the examples of fillings with non-absolutely convergent  Gang-Yonekura sums that we have computed so far hint at a general `unreasonable effectiveness' of the Gang-Yonekura formula. Namely, even in cases where there is no absolute convergence, the result of applying equation~\eqref{eqn:GYformula} verbatim always appears to yield a consistent result. As partial support for this naive claim, we noticed that a pattern for \textit{closed} manifolds emerges quickly, which seems to partially align with and refine Gang's conjecture given in~\eqref{conj:gang_closed}. 
\begin{con}\label{conj:new_closed} Let $M$ be a closed $3$-manifold obtained by Dehn filling on a 1-cusped $3$-manifold $N$ with a $1$-efficient triangulation $\tri$. Then applying the Gang-Yonekura formula gives a well-defined valued for the index $\ind_M(q)$ of $M$, and 
\begin{equation}
\ind_{M}(q) = \begin{cases}
1 +  P(q), \,\,\, P(q) \in q^\frac12\Z[\![q^{\frac12}]\!] & \parbox[t]{.5\textwidth}{if $M$ is a hyperbolic manifold }\\
0 & \parbox[t]{.5\textwidth}{ if $M$ is a lens space }\\
1 & \parbox[t]{.5\textwidth}{ if $M$ is a SFS with base orbifold  the $(2,3,5)$, $(2,3,4)$ or the $(2,3,3)$-sphere, or  a SFS with hyperbolic base or a manifold with Solv geometry }\\
2 & \parbox[t]{.5\textwidth}{ if $M$ is a SFS with base orbifold the $(2,2,n)$-sphere}\\
\end{cases}
\end{equation}
Above we are including $S^3$ and $S^2 \times S^1$ as lens spaces.
\end{con}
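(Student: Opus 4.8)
The plan is to analyse $\ind_M(q)$ by applying the Gang-Yonekura formula~\eqref{eqn:GYformula} to the singly-cusped manifold $N$ with its $1$-efficient triangulation $\tri$ and filling curve $\alpha$, and then to split into cases according to the geometric decomposition of $M = N(\alpha)$. The first task is \emph{well-definedness}: one must show that for each fixed power of $q^{\frac12}$ only finitely many boundary classes $\gamma$ with $\alpha\cdot\gamma\in\{0,\pm2\}$ contribute, and that their contributions do not accumulate. Via the Gluing Theorem~\ref{thm:gluing} and Theorem~\ref{thm:main_relative}, the dependence on $\gamma = k\alpha$ (or $k\alpha\pm2\beta$) is governed by $GY(\alpha;\underline b)$, whose terms have minimal degree bounded below by a quadratic function of $k$ by Lemma~\ref{lem:growth_cusp_index}. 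The genuinely new input required is a lower bound on the minimal degree of the relative index $\ind^{rel}_{\tri_1}(\omega;\beta_0)$ as $\omega$ grows; I would obtain this from a Neumann--Zagier-type estimate on $\chi$ and $\delta$ for $Q$-normal classes in $\tri_1$ with prescribed large boundary, so that the sum in~\eqref{eqn:GYformula} is term-by-term absolutely convergent whenever the filled triangulation is $1$-efficient.

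Granting well-definedness, I would treat the cases in increasing order of difficulty. For \textbf{lens spaces} (including $S^3$ and $S^2\times S^1$), $M$ contains no closed essential surface, so one expects every contributing $Q$-normal class to be tetrahedral modulo vertex-linking surfaces, forcing $\ind_M(q)=0$; concretely, one picks a $0$-efficient triangulation of the lens space, checks that its only normal $Q$-classes of non-negative $\chi$ are vertex-linking, and invokes the identification $\ind_{\tri(\alpha)}=\ind_M$ from Theorem~\ref{thm:main} together with the invariance results of Theorem~\ref{thm:pachner} (and, where applicable, Theorem~\ref{thm:trivial_slope}). For \textbf{small Seifert fibred spaces} — which, by the discussion following Thurston hyperbolisation in the introduction, are precisely the non-hyperbolic manifolds admitting $1$-efficient triangulations — the GY output equals $\ind_{\tri}$ for such a triangulation by Theorem~\ref{thm:main}, and one computes directly: for base orbifolds $S^2(2,3,3), S^2(2,3,4), S^2(2,3,5)$ (finite $\pi_1$, no essential tori) I expect a single non-tetrahedral fundamental normal surface contributing one copy of $1$, while for $S^2(2,2,n)$ an embedded torus is present and, by the mechanism analysed in Section~\ref{sec:toroidal} (each multiple of a splitting torus contributes a $1$ to the constant term), one obtains the value $2$. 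The SFS-with-hyperbolic-base and Sol cases should reduce, through their canonical Seifert/JSJ structure, to a finite list of normal-surface computations all yielding $1$.

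The \textbf{main obstacle} is the hyperbolic case, for two linked reasons. First, there is as yet no independent definition of $\ind_M(q)$ for closed hyperbolic $M$ against which to check the GY output, so one must take the GY value as the definition and instead prove the structural assertion $\ind_M(q) \in 1 + q^{\frac12}\Z[\![q^{\frac12}]\!]$; showing the constant term equals \emph{exactly} $1$ amounts to showing that among all $\gamma$ with $\alpha\cdot\gamma\in\{0,\pm2\}$ precisely one net contribution of degree $0$ survives the alternating cancellations in~\eqref{eqn:GYformula}, which seems to require a refinement of Garoufalidis--Kashaev's meromorphic extension~\cite{garoufalidis2019meromorphic} in the filled setting together with minimal-degree control of $\ind_\tri^\gamma$ in terms of the Thurston norm of $\gamma$, e.g.\ a bound $\deg\ind_\tri^\gamma \gtrsim \|\gamma\|^2$. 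Second, the half-integer powers visible in the tables for $(4k+2)$-fillings show that the integrality of the higher coefficients must be interpreted in the limit (or under $1$-efficiency of the filled triangulation), so I would first aim for a \emph{conditional} statement assuming $1$-efficiency of $\tri(\alpha)$ and the above norm bound, and only afterwards attempt to remove these hypotheses using the asymptotic results of Theorems~\ref{thm:index_asymptotics} and~\ref{thm:real_fillings}.
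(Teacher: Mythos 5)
You are attempting to prove what the paper explicitly presents as a \emph{conjecture} (it appears inside a \texttt{con} environment, not a \texttt{thm}); the paper offers no proof, only the empirical evidence of Sections~\ref{sec:closed} and~\ref{sec:code}, together with the one exact but very special computation in Proposition~\ref{prop:surgery_on_torus}.  So there is no argument in the paper to compare against, and your plan must be judged on its own terms.  Read that way, it is a research programme rather than a proof, and several of its steps either contradict the paper's own data or rest on hypotheses that cannot be formulated in the setting at hand.

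Concretely: (a) In the $(2,2,n)$-SFS case you invoke ``the mechanism analysed in Section~\ref{sec:toroidal} (each multiple of a splitting torus contributes a $1$ to the constant term)'' and conclude the answer is $2$; but that very mechanism is shown in Tables~\ref{tab:tab1_toric}--\ref{tab:tab2_toric} and~\ref{tab:m016exceptional} to make the constant term $\infty$, not a finite integer. The value $2$ that the paper reports for, e.g., m006$(1,1)$ is obtained by the explicitly \emph{non-rigorous} edge-weights regularisation described in Section~\ref{sec:code}, with the GY sum itself not absolutely convergent there.  (b) Your well-definedness step appeals to ``$1$-efficiency of the filled triangulation $\tri(\alpha)$''; but since $N$ has exactly one cusp, $\tri(\alpha)$ triangulates a \emph{closed} manifold, and Definition~\ref{def:1eff} (and Theorem~\ref{index_well_defined}) concern cusped triangulations only. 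There is no notion in the paper of a ``$1$-efficient'' closed triangulation that would force convergence of~\eqref{eqn:GYformula}, and the examples m004$(4,1)$, m016$(2,1)$, m016$(-2,1)$ show that the GY sum for closed fillings is, in fact, sometimes divergent.  So the very first claim of the conjecture (well-definedness) is itself the most speculative point and cannot be obtained by quadratic degree bounds alone.  (c) For lens spaces you argue via normal-surface theory \emph{inside} $M$, but the GY sum is over $Q$-normal classes in the \emph{cusped} manifold $N$ with boundary $\gamma$; proving the alternating sum $\sum_\gamma \pm\,\ind_\tri^\gamma$ vanishes for every $N$ and lens-space-producing $\alpha$ is a statement about cancellation among the nonzero $q$-series $\ind_\tri^\gamma$, which incompressibility or $0$-efficiency of $M$ does not deliver.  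The paper settles this only for the trefoil (Proposition~\ref{prop:surgery_on_torus}), where $\ind_\tri^\gamma$ is a $\delta$-function; that argument has no obvious extension to fillings of hyperbolic one-cusped $N$.  None of this makes the conjecture false, but it means your ``proof'' is correctly read as a sketch of why the conjecture is plausible together with the obstacles to proving it — which is also the paper's own stance.
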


\begin{rmk}
Despite looking quite daunting, equation~\eqref{eqn:GYformula} can be effectively used in many cases to cut the computational time of the total index. Indeed, if a high-complexity cusped manifold is obtained by filling a low-complexity one along a ``long'' surgery curve, then it is likely that the Gang-Yonekura formula will provide an answer faster than a direct computation. 
A striking example is provided by the computations in Section~\ref{ssec:whitehead}.
Furthermore, if the surgery curve is ``long enough'', then it is possible to use an effective version of Theorem~\ref{thm:index_asymptotics} to further cut down the time needed to compute low-degree terms. 
\end{rmk}

\begin{ex}
In this section and throughout the following ones we stress the necessity of considering half-integer multiples of edge, tetrahedral and peripheral solutions. These are needed to obtain consistent results: for example, $(-2,1)$-surgery on m412 gives m009. The presence of non-peripheral $\Z_2$ homology implies the possible presence of half-integer exponents of $q$. 
Indeed, applying our implementation of the Gang-Yonekura formula from Theorem~\ref{thm:main} yields (\textit{cf.}~Section~\ref{sec:code}) 
\begin{equation*}
\begin{gathered}
\ind_{m009}^0(q) = \\1 - q^{\frac12} - q - 2 q^{3/2} - q^2 + 2 q^{5/2} + 6 q^3 + 8 q^{7/2} +  9 q^4 + 11 q^{9/2} + 12 q^5 +\\ 6 q^{11/2} - 5 q^6 - 17 q^{13/2} -  34 q^7 - 57 q^{15/2} - 79 q^8 - 100 q^{17/2} - 118 q^9 -  124 q^{19/2} - 118 q^{10}
+\ldots
\end{gathered}
\end{equation*}
Without half integer multiples of solutions we would have recovered only half of $\ind_{m009}^0(q)$ after the filling (compare with the `even' and `odd' $3$D index from~\cite[Sec.~11.5]{garoufalidis20163d}).
\end{ex}~\\

%&&&&&&&&&&&&&&&&&&&&&&&&&&&&&&&&&&&&&&&&&&&&&&&&&&&&&&&&&&&&&&&&&&&&&&&&&&&&&
%&&&&&&&&&&&&&&&&&&&&&&&&&&&&&&&&&&&&&&&&&&&&&&&&&&&&&&&&&&&&&&&&&&&&&&&&&&&&&

\subsection{Computational methods}\label{ssec:computation}~\\

Generally, computing the $3$D index is a fairly challenging computational task, and explicit, exact computations can't usually be performed. 
So we aim to compute the $3$D index to some prescribed accuracy $q^{\frac12}$-degree $\le D$. See Appendix~\ref{computation:fig8} for some explicit examples of index computations for the figure eight knot complement.
The computational cost is roughly exponential in the number of tetrahedra; depending on the type of approach used (surfaces or edge weights), there are further complications that make a naive implementation unfeasible. \\

To carry out the surface approach described in Section~\ref{ssec:surface_index}, we proceed as in~\cite[Sec~8.1]{garoufalidis20163d} and enumerate the integer solutions to the $Q$-matching equations modulo integer linear combinations of tetrahedral solutions by choosing the unique \emph{minimal non-negative coset representative} $S$ with minimum quad coordinate zero in each tetrahedron. 

Let $\tri$ be a $1$-efficient ideal triangulation with $n$ tetrahedra of a cusped $3$-manifold $M$ with $r\ge 1$ cusps. 
From $\tri$, we first use \texttt{gluing\_equations()} command in SnapPy to compute the $(3n+2r)\times 3n$ matrix $G_\tri$ representing its gluing equations (containing SnapPy's choices for meridian and longitude for each cusp). Multiplying $G_\tri$ on the right by the  
diagonal matrix $C = \oplus^n C_0$, with 
\[C_0 = \begin{pmatrix}
0 & 1 & -1\\
-1 & 0 & 1\\
1 & -1 & 0
\end{pmatrix},\]
yields the matrix $L_\tri$ 
representing the $Q$-matching equations associated to the edges of $\tri$ 
and the leading trailing deformations corresponding to the meridians and longitudes, see~\cite{garoufalidis20163d} and equation~\eqref{Q-matching-eqns}. 

Next, we use the \texttt{HilbertBasis} command in Normaliz~\cite{Normaliz} to 
find a (finite) set of fundamental solutions 
$\{F_1, \ldots, F_m\}$ for the polyhedral cone of non-negative integer solutions to the linear $Q$-matching equations. 
For each solution $F_i$, we can easily compute its formal Euler characteristic as explained in 
Section~\ref{ssec:Q-normal}: we solve the linear system $G_M^T\cdot y = F_i$, and assign a weight $-2$ to all coefficients in $y$ corresponding to edge solutions, and weight $-1$ to the coefficient of each tetrahedral solution. Then the formal Euler characteristic $\chi(F_i)$ is just the sum of these weights.

We then enumerate all non-negative integer linear combinations of these fundamental solutions giving terms in the index sum of $q^{\frac12}$ degree at most $D$. 
If $$S=\sum_{i=1}^m n_i F_i \text{ with } n_i \in \Z \text{ and all } n_i \ge 0$$
is a minimal non-negative coset representative, then, by~\cite[Eqn.~(33)]{garoufalidis20163d}, its index contribution $I(S)$ has $q^{\frac12}$-degree 
$$\deg(S) = -\chi(S) + \delta(S),$$
where $\chi(S)$ is the formal Euler characteristic and $\delta(S)$ is the double curve count defined in Section~\ref{ssec:Q-normal}.

Now $$\chi(S)= \sum_{i=1}^m \chi(F_i) n_i$$ 
is a linear function of the coefficients $n_i$, and
$$\delta(S) = \sum_{i,j=1}^m \delta(F_i,F_j) n_i n_j $$
is a quadratic function of the coefficients $n_i$.

Note that all $d_{ij} =\delta(F_i,F_j)\ge 0$ for each $i$. Further, if $\tri$ is a \emph{$1$-efficient triangulation}  then for each $i$ we have either: $d_{ii}>0$ or $d_{ii}=0$ and $c_i=-\chi(F_i)> 0$. 
This gives a \emph{finite} set of linear combinations $\sum_{i=1}^m n_i F_i$ with 
$$ \sum_{i=1}^m c_i n_i + \sum_{i,j=1}^m d_{ij}  n_i n_j  \le D.$$ 
From these we remove any tetrahedral solutions to obtain minimal non-negative coset of representatives, remove any duplicates, then add up the index contributions to obtain the index up to $q^{\frac12}$-degree $D$.\\

In fact,  the process just described gives the \emph{total index} $\ind_{\tri}^{\text{tot}} (q) =\sum_\omega \ind_{\tri}^{\omega} (q)$, summed over all boundaries $\omega \in \hh_1(\bd M;\Z)$. There are also several refinements of interest, where we sum over normal surfaces $S$ with additional boundary conditions, such as:
\begin{enumerate}
\item $ \bd S = 0$, 
 \item $\bd S = \alpha$,  
 \item $\bd S \cdot \alpha =0$, 
\item $\bd S \cdot \alpha =\pm 2$. 
\end{enumerate}
The last two cases arise when we compute the index of Dehn filled manifolds using the Gang-Yonekura formula~\eqref{eqn:GYformula}.\\ 

By using the Neumann-Zagier symplectic relations, described in Section~\ref{sec:$3$-man}, we see that these conditions give extra linear equations to be satisfied.  In cases (2) and (4), we input the inhomogeneous linear system using the command \texttt{inhom\_equations} in Normaliz. The output consists of a Hilbert basis for the corresponding homogeneous cone together with an additional list of ``generators''.  The desired inhomogeneous cone is then the union of the translates of the homogeneous cone by these generators.  Then the process for index computation proceeds as described above. 
 
In the Gang-Yonekura formula, there is an additional complication that  increases computation time:  we also need to keep track of the \emph{decrease} in $q^{\frac12}$-degree by $|\gamma|$ for the terms in ~\eqref{eqn:GYformula} with factors $q^{-\frac{|\gamma|}{2}}$, which arise from curves $\gamma$ with $\gamma\cdot\alpha =0$.

The size of the computations needed to enumerate the low degree terms in the index sums can be reduced considerably using a built-in function in Normaliz. If we are only considering solutions with trivial boundary conditions we have a homogeneous polyhedral cone, which can be decomposed as a union of simplicial cones.  This procedure is implemented in Normaliz by the \texttt{UnimodularTriangulation} command. Then in enumerating low degree terms we can restrict to only computing linear combinations of vertices of each simplicial cone. If the boundary conditions are not homogeneous, then the set of solutions has the structure of an affine polyhedral cone. This can be decomposed into a  union of copies of the previous simplicial cones, each shifted by a single fundamental solution whose boundary is the desired one.

While there is efficient software built for this task~\cite{Normaliz, regina} the size of the resulting outputs is often too big to be of practical use.  To reduce the size of computations and memory required we can also divide up the problem into subproblems, where we specify a choice of quad in each tetrahedron whose corresponding quad coefficient is zero. This also has the advantage of reducing the number of tetrahedral solutions to be removed. However, for a triangulation with $n$ tetrahedra there are $3^n$ quad choices, so there is trade-off between the number of sub-problems and size of computation for each sub-problem.\\

We also remark that a naive approach using edge weights following Section~\ref{ssec:edge_index} has a big problem as well; what one could do in practice is to sum all possible weight functions up to a certain threshold (by considering a suitably large cube centred in the lattice $\Z^{n-r}$). 
This process is however not guaranteed to output the exact sum for the desired $3$D index up to a given degree unless we have rigorous degree bounds.
This is difficult due to the different growth behaviours exhibited along different \textit{rays} in the lattice $\Z^{n-r}$. Rays corresponding to multiples of embedded $Q$-normal surfaces will only provide contributions whose minimal exponent is linear in the distance from the origin (\textit{cf.}~Section~\ref{ssec:surface_index}). Another issue is choosing the best centre for the large cube; the best choice depends greatly on the boundary conditions imposed.  
The surface approach is more successful since the minimal degree of the contribution from each solution of the $Q$-matching equations is easily estimated. Thus, it allows for \textit{exact} computations of the index, up to a certain degree.\\ 

On the other hand, the edge weight approach is particular useful for obtaining explicit multi-sums representing the $3$D index, even when convergence is not guaranteed (as in some examples in Sections~\ref{sec:examples} and~\ref{sec:closed}).
We conclude this section by describing some of the computational details.~\\

The index with zero boundary conditions can be written 
\begin{equation}
\label{index0} \ind_{\tri}^{0} (q) = \sum_{[S] \in \mathcal{N}(\tri;\Z)/\mathbb{T}} \left(-q^{\frac12}\right)^{-\chi(S)} \jtet(S).
\end{equation}

We can express this more explicitly as follows. 
Denote by $\mathcal{N}(\tri;\Z)= \{ S \in \mathcal{Q}(\tri;\Z) : \bd S = 0\}$ the set of closed integer $Q$-normal classes. 
Recall that  $\mathcal{N}(\tri;\Z) \supseteq \Z E + \Z T$ is a finite index extension of the integer span of the edge solutions and tetrahedral solutions, and is the intersection of $\Q E + \Q T$ with $\Z^{3n}$. The projection map $\Z^3 \to \Z^2$ given by $(a,b,c) \mapsto (a-c,b-c)$ extends to a projection map
$$\pi :   \Z^{3n} \to \Z^{2n}$$
with kernel $\mathbb{T}$ and the image of $\Z E +\Z T$ is a lattice $\latt$ of rank $n-r$ in $\Z^{2n}$. 
Then $\latt$ is contained in a maximal rank $n-r$ lattice $\overline{\latt} \subset \Z^{2n}$ such that $\pi^{-1}(\overline{\latt})= \mathcal{N}(\tri;\Z)$.
We can use the Hermite decomposition to obtain a basis $\bar b_1, \ldots, \bar b_{n-r}$ for $\overline{\latt}$ and lift this to $b_1, \ldots, b_{n-r}\in \mathcal{N}(\tri;\Z)$, whose cosets $[b_i]$ form a basis for  $\mathcal{N}(\tri;\Z)/\mathbb{T}$. \\If $e_i=\chi(b_i)$ then the index formula (\ref{index0}) can be re-written:
\begin{equation}
\label{eqn:index1} 
\ind_{\tri}^{0} (q) = \sum_{c_1,\ldots,c_{n-r} \in\Z} \left(-q^{\frac12}\right)^{-\sum_i c_i e_i } \,\jtet \left(\sum_i c_i b_i\right),
\end{equation}
where the sums range over $i=1, \ldots, n-r$.~\\

In general, given $\gamma \in \Ker\left( \hh_1(\partial M;\Z) \rightarrow \hh_1(M;\Z_2)\right)$ we compute $\ind_{\tri}^{\gamma} (q)$ as follows. Start with an integer linear combination of peripheral solutions $\Gamma_2$ corresponding to $\gamma$; this gives a $Q$-normal class with $\bd \Gamma_2 =2\gamma$. By solving a system of linear equations with $\Z_2$ coefficients, we can adjust $\Gamma_2$ by adding an integer linear combination of edge and tetrahedral solutions to obtain a $Q$-normal class $\Gamma_2'$ with all quad coefficients {\em even}. Then $\Gamma = \frac{1}{2}\Gamma_2' \in \mathcal{Q}(\tri;\Z)$ is a $Q$-normal class with $\bd \Gamma = \gamma$. If $e_\Gamma=\chi(\Gamma)$ then we have
\begin{equation}
\label{index_gamma}
\ind_{\tri}^{\gamma} (q)= \sum_{c_1,\ldots,c_{n-r} \in\Z} \left(-q^{\frac12}\right)^{-e_\Gamma-\sum_i c_i e_i } \,\jtet \left(\sum_i c_i b_i+\Gamma\right),\end{equation}
using the notation from $(\ref{eqn:index1})$.\\

%&&&&&&&&&&&&&&&&&&&&&&&&&&&&&&&&&&&&&&&&&&&&&&&&&&&&&&&&&&&&&&&&&&&&&&&&&&&&&
%&&&&&&&&&&&&&&&&&&&&&&&&&&&&&&&&&&&&&&&&&&&&&&&&&&&&&&&&&&&&&&&&&&&&&&&&&&&&&

\subsection{Code and data availability}\label{sec:availability}~\\

All the data presented in this paper is completely reproducible. The code used to create the datasets is available at the GitHub repository~\cite{github}.
On the same repository we further provide the following datasets in \texttt{csv} format:
\begin{itemize}[leftmargin=70pt]
\item[\texttt{data\_1\_cusp}] A list of one-cusped census manifolds, together with the value of the index for boundary conditions $(p,q)$ for $|p|,|q| \le 10$
\item[\texttt{closed\_index}] A list of equivalent surgery descriptions for closed hyperbolic manifolds with $\le 4$ tetrahedra and surgery coefficients $(p,q)$ for $|p|,|q| \le 6$, their indices and other hyperbolic invariants
\item[\texttt{cusped\_index}] A list of single Dehn fillings on two cusped manifolds with $\le 5$ tetrahedra and surgery coefficients $(p,q)$ for $|p|,|q| \le 6$, their indices and other hyperbolic invariants
\item[\texttt{finite\_pi1}] A list of values of the index for $100$ non-hyperbolic closed manifolds with finite fundamental group
\end{itemize}

While writing this paper, we developed and tested several versions of code to assist with different aspects of index computation. Most of the code is in Mathematica~\cite{Mathematica}, but we also made large use of Sage~\cite{sage}, Normaliz~\cite{Normaliz}, Regina~\cite{regina} and SnapPy~\cite{SnapPy}. With the exception of the first, all these resources are freely available.

%&&&&&&&&&&&&&&&&&&&&&&&&&&&&&&&&&&&&&&&&&&&&&&&&&&&&&&&&&&&&&&&&&&&&&&&&&&&&&
%&&&&&&&&&&&&&&&&&&&&&&&&&&&&&&&&&&&&&&&&&&&&&&&&&&&&&&&&&&&&&&&&&&&&&&&&&&&&&

\section{Open questions}\label{sec:questions}

\begin{enumerate}
\item Prove that the Gang-Yonekura formula gives well-defined $3$D-index for closed $3$-manifolds, independent of all choices, \textit{e.g.}~surgery descriptions and triangulations used. Also give a direct definition of this index, not involving Dehn filling on a cusped manifold.

\item Extend the main results of this paper to Dehn filling on a non-standard cusp, and to Dehn filling on multiple cusps. 

\item Is there a purely topological interpretation of the coefficients of the $3$D index? 
(Work of Dunfield-Garoufalidis-Hodgson-Rubinstein~\cite{DGHR} gives an interpretation of the coefficient of $q^1$ in the $3$D index as a count of closed genus 2 surfaces in the manifold.) 

\item In~\cite{garoufalidis20151}, the index is promoted to a topological invariant of hyperbolic cusped manifolds by considering a specific class of canonical triangulations. Is there a `preferred triangulation' for small Seifert fibred spaces? 

\item Are there examples of closed $3$-manifolds whose index takes constant integer values other than $0,1,2$? 

\item Note that the ``closed index'' from Section~\ref{sec:closed} appears to vanish on lens spaces, but diverges on the connected sum of two lens spaces (\textit{cf.}~Proposition~\ref{prop:surgery_on_torus}). Is this part of a more general phenomena? That is, can the index somehow pick out reducible surgeries? 
Similarly for toroidal manifolds, as in Section~\ref{sec:toroidal}.

\item Geography and botany problems for the index: which $q$-series appear as index of some manifold, and how many manifolds share the same index? 

\item Is there a topological or physical interpretation of the relative index? 
\end{enumerate}

\appendix

%&&&&&&&&&&&&&&&&&&&&&&&&&&&&&&&&&&&&&&&&&&&&&&&&&&&&&&&&&&&&&&&&&&&&&&&&&&&&&
%&&&&&&&&&&&&&&&&&&&&&&&&&&&&&&&&&&&&&&&&&&&&&&&&&&&&&&&&&&&&&&&&&&&&&&&&&&&&&

\section{Warm-up: the (0,1,1) case}\label{sec:gang011}

After establishing the inductive step required for the proof of Theorem~\ref{thm:main}, we turned to identifying a suitable base case. The natural candidate would be the triangle in the Farey tessellation determined by $\left(\frac{0}{1},\frac{1}{0}, \frac{1}{1}\right)$However, in light of the results from Section~\ref{sec:collar_eff}, we expect this case to yield a trivial outcome.  Indeed the index we get after performing this filling, corresponding to the attachment of LST$(0,1,1)$, on a $1$-efficient triangulation is $0$. This is because, by Theorem~\ref{thm:collar_eff}, the manifold we obtain is either not $1$-efficient or a solid torus. The latter case is expected to produce an index equal to $0$ (\textit{cf.}~\cite[Sec.~11.1]{garoufalidis20163d}). 

This suggests that the triangle $\left(\frac{0}{1},\frac{1}{0}, \frac{1}{1}\right)$ is not a ``good'' starting point for an inductive argument; if the relative index for this slope is trivial, this would imply the triviality of all further ones. 
We can nonetheless prove that our local formulation (as given in equations~\eqref{eqn:GY_for_cusp} or \eqref{eqn:compact_version}) of the Gang-Yonekura formula holds for LST$(0,1,1)$ as well. In other words, equation~\eqref{eqn:GY_for_cusp} gives a relative index $GY(\mu;\underline{b})$ which is constantly $0$.
\begin{thm}\label{thm:trivial_slope}
For all $\underline{b} = (b_1,b_2,b_3) \in \Z^3$, 
\begin{equation}
GY(\mu;\underline{b}) = 0.
\end{equation}
\end{thm}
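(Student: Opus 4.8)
The plan is to evaluate the left-hand side of the relative Gang--Yonekura formula directly for the filling curve $\alpha = \mu$, using the explicit product formula for the relative cusp index from Proposition~\ref{relcuspindex}. First I would fix a dual curve $\beta$ to $\mu$; the natural choice is $\beta = \lambda$, so that $\mu \cdot \lambda = \pm 1$. Then Proposition~\ref{prop:gang_cusp} (or its compact form~\eqref{eqn:compact_version}) gives
\begin{equation*}
GY(\mu;\underline{b}) = \sum_{k\in\Z} (-1)^k \left( q^{\frac{k}{2}} \ind^{rel}_\cusp(k\mu;\underline{b}) - \ind^{rel}_\cusp(k\mu + 2\lambda;\underline{b}) \right).
\end{equation*}
Using~\eqref{eq:GY_b3=0} I would reduce to the case $b_3 = 0$, and then parametrise: writing $\alpha = \mu$ as $x\lambda + y\mu$ we have $(x,y) = (0,1)$, so $k\mu$ corresponds to $(0,k)$ and $k\mu + 2\lambda$ to $(2,k)$. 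Proposition~\ref{relcuspindex} then expresses each summand as a product of two $\itet$ factors.

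The key computational step is to recognise the resulting sums as the ``diagonal'' generating functions $\varphi_r$ from~\eqref{eqn:seriesindexshifted}, exactly as in the proof of Proposition~\ref{prop:generating_function_noncompact}. Concretely, $\ind^{rel}_\cusp(0,k;b_1,b_2,0)$ involves $\itet(\frac{-b_1}{2},\frac{b_2-k}{2})\itet(\frac{-b_1}{2},\frac{b_2+k}{2})$, and summing against $z^k$ produces a product of two copies of a $\varphi$-type series evaluated at arguments differing by powers of $q^{\frac12}$; similarly the $k\mu + 2\lambda$ terms produce $\varphi_{r+1}\varphi_{r-1}$-type products with shifted first arguments. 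After the appropriate change of summation variable, $GY(\mu;\underline b)$ becomes (up to an overall sign/monomial factor) the coefficient of a fixed power of $z$ in an expression of the shape
\begin{equation*}
\varphi_r(zq^{\frac12},q)\,\varphi_r(zq^{-\frac12},q) - \varphi_{r+1}(z,q)\,\varphi_{r-1}(z,q),
\end{equation*}
which by Theorem~\ref{thm:trigonometry} equals $(zq^{-\frac12})^r$ — a single monomial in $z$. Extracting the relevant coefficient, and checking that in the $\alpha = \mu$ case the power of $z$ one needs to extract is \emph{not} the exponent $r$ appearing in that monomial (because of the intrinsic asymmetry of this degenerate slope compared to the $-\lambda + \mu$ case treated in Theorem~\ref{thm:new_thm_112}), one obtains $GY(\mu;\underline b) = 0$.

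Alternatively — and this may be cleaner — one can invoke the $SL_2(\Z)$-equivariance of $GY$ already established: by the symmetries~\eqref{eq:GYsymmetry1} and~\eqref{eq:Usymmetry}, together with the behaviour of $GY$ under $R$ and $L$ moves (the Corollary after Proposition~\ref{prop:inductive_step_cusp}), it suffices to relate $GY(\mu;\underline b)$ to an already-known value. Here the point is that the slope of $\mu$ lies at the boundary $0$, $1$, or $\infty$ of the three intervals in Section~\ref{sec:LSTs} and corresponds to $A = \mathrm{Id}$ in case (3), i.e.\ to attaching $\text{LST}(0,1,1)$; combining Proposition~\ref{prop:inductive_step} with Theorem~\ref{thm:collar_eff}, such a filling on a $1$-efficient triangulation either fails $1$-efficiency or yields a solid torus, whose relative index is trivial by~\cite[Ex.~11.1]{garoufalidis20163d}. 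Making this precise requires verifying the relative index of $\text{LST}(0,1,1)$ equals $0$ for every choice of half boundary edge coefficients, which one does via the explicit triangulations of Appendix~\ref{sec:relative_degenerate}.

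\textbf{Main obstacle.} The genuine difficulty is bookkeeping the parity and the exact power of $z$ to be extracted: unlike Theorem~\ref{thm:new_thm_112}, where the target coefficient $z^{b_2}$ coincides with the monomial exponent $r = (b_1+b_2)/2$ precisely when $b_1 = b_2$, here the relevant diagonal shift and the target exponent are arranged so that they can \emph{never} coincide, forcing the answer to vanish identically rather than on a diagonal. Carefully tracking the substitutions $e = \frac{b_2 \pm k}{2}$, $r = \frac{b_1 + b_2}{2}$ (and their odd-$b_3$ analogues via~\eqref{eqn_add_tets}) through the specialisation $z_1 = q^{\pm\frac12}z$, $z_2 = q^{\mp\frac12}z$, and confirming the mismatch of exponents in all congruence classes, is where the real work lies; everything else is a routine application of Proposition~\ref{relcuspindex} and Theorem~\ref{thm:trigonometry}.
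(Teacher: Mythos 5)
Your proposal offers two routes, and both contain genuine flaws, though the first can be salvaged with a correction.

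\textbf{Route 1 misidentifies the generating function.} You assert that $GY(\mu;\underline{b})$ becomes the coefficient of a fixed power of $z$ in $\varphi_r(zq^{\frac12})\varphi_r(zq^{-\frac12}) - \varphi_{r+1}(z)\varphi_{r-1}(z)$ and then invoke Theorem~\ref{thm:trigonometry}. This is incorrect. In the $\alpha = -\lambda + \mu$ case the factor $\itet\left(\frac{k-b_1}{2},\frac{k+b_2}{2}\right)$ lies on a \emph{fixed diagonal} $e - m = \frac{b_1+b_2}{2}$ as $k$ varies, which is exactly what makes $\varphi_r$ relevant. In the $\alpha = \mu$ case, the factor $\itet\left(\frac{-b_1}{2},\frac{k+b_2}{2}\right)$ lies on a fixed \emph{horizontal} line $m = -\frac{b_1}{2}$; the diagonal shift $e - m$ grows with $k$, so there is no single $r$ and the sum cannot be recast via $\varphi_r$. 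The correct generating function is the horizontal one $\Psi_m(w) = \sum_e \itet(m,e)w^e$ from equation~\eqref{enq:generating_function_tetra}. Running your double-sum argument with $\Psi_m$ (and $a = -\frac{b_1}{2}$) one finds
\begin{equation*}
GY(\mu;\underline{b}) = \pm\left[\Psi_a(zq^{-\frac12})\Psi_a(zq^{\frac12}) - \Psi_{a-1}(z)\Psi_{a+1}(z)\right]_{z^{b_2}},
\end{equation*}
and the bracketed expression is \emph{identically zero}: writing each $\Psi_m$ as the ratio of infinite $q$-Pochhammer products from~\eqref{enq:generating_function_tetra}, the two products coincide factor by factor. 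So the vanishing comes from a trivial infinite-product identity, not from Theorem~\ref{thm:trigonometry} or from an exponent mismatch against $(zq^{-1/2})^r$. Your ``main obstacle'' paragraph — carefully tracking why the target $z$-exponent never equals $r$ — is chasing a mechanism that isn't there.

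\textbf{Route 2 is circular.} You propose deducing $GY(\mu;\underline{b}) = 0$ from the triviality of $\ind^{rel}_{\text{LST}(0,1,1)}(\underline{b})$ (computed in Appendix~\ref{sec:relative_degenerate}). But the implication $GY(\mu;\underline{b}) = \ind^{rel}_{\text{LST}(0,1,1)}(\underline{b})$ is precisely the degenerate case of Theorem~\ref{thm:main_relative}, and the proof of that theorem explicitly defers this case to the appendices — including Theorem~\ref{thm:trivial_slope} itself. The $R/L$-move and $U$-symmetry machinery does not bridge the gap either: $U$ permutes the three boundary slopes $\{0, 1, \infty\}$ among themselves, and $R/L$ moves take the base slope $-1$ \emph{deeper} into the Farey tree, never to the boundary. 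So there is no inductive chain connecting $GY(\mu;\cdot)$ to the established base case $GY(-\lambda+\mu;\cdot)$.

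\textbf{Comparison with the paper's proof.} The paper proves Proposition~\ref{prop:gang011} by a purely elementary manipulation: the three-term relations~\eqref{eqn:index_3term_adjacent} and~\eqref{eqn:index_3term_consecutive} are used to rewrite the $\ind^{rel}_\cusp(2,k;\underline{b})$ terms, a shifted copy is added, and the resulting sum telescopes to zero. This avoids generating functions entirely. Your corrected Route 1 (using $\Psi_m$ rather than $\varphi_r$) is actually cleaner and more conceptual — the vanishing is immediate from an exact cancellation of infinite products — and would be a nice alternative, but it is not what you wrote.
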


By equation~\eqref{eq:GY_b3=0}, it suffices to prove the result when $b_3=0$, and by Proposition~\ref{prop:gang_cusp} and Remark~\ref{parity_of_k} we just need to show that $GY(\mu;b_1,b_2,0) =0$
for all $b_1,b_2 \in\Z$ satisfying  $(b_1,b_2) \equiv (0,0) \mod{2}$ or $(b_1,b_2)\equiv (0,1) \mod{2}$. \\
More explicitly, using the expression for $GY(\mu;\underline{b})$ given by Proposition~\ref{prop:gang_cusp} and the formula for relative cusp index in~\eqref{eqn:cusp_index2} we will show the following: 
\begin{prop}\label{prop:gang011}
For all integers $b_1, b_2 \in \Z$ with $(b_1,b_2) \equiv (0,0) \mod{2}$ or $(b_1,b_2)\equiv (0,1) \mod{2}$
we have 
\begin{equation}\label{eqn:simple_relation_generalised}
\begin{gathered}
0 = \sum_{\substack{k \in \Z\\ k\equiv b_2 \mod{2}}}
(-1)^k\left[ \left( q^\frac{k}{2} + q^{-\frac{k}{2}}\right) \itet\left(-\frac{b_1}{2},\frac{k+b_2}{2}\right) \itet\left(-\frac{b_1}{2},\frac{-k+b_2}{2} \right) \right. -\\
 \itet\left(\frac{-2-b_1}{2},\frac{k+b_2}{2} \right) \itet\left(\frac{2-b_1}{2},\frac{-k+b_2}{2} \right) -\\\left. \itet\left(\frac{2-b_1}{2},\frac{k+b_2}{2} \right) \itet\left(\frac{-2-b_1}{2},\frac{-k+b_2}{2} \right)   \right].
\end{gathered}
\end{equation}
\end{prop}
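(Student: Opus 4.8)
\textbf{Proof proposal for Proposition~\ref{prop:gang011}.}

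The plan is to reduce this to the generating-function identity of Theorem~\ref{thm:trigonometry}, exactly as was done for Theorem~\ref{thm:gang112} via Proposition~\ref{prop:generating_function_noncompact}. The left-hand side of~\eqref{eqn:simple_relation_generalised} is, up to the sign $\varepsilon(\underline{b})$, of the same shape as the sum in~\eqref{eqn:generalised_sum}: in the present case the filling curve is $\alpha=\mu$ and the dual curve is $\beta=-\lambda$, so $k\alpha = k\mu$ corresponds (in the $\itet$ variables) to replacing $\ind^{rel}_\cusp(-k,k;\underline b)$ by $\ind^{rel}_\cusp(0,k;\underline b)$, and $k\alpha\pm 2\beta = k\mu\mp2\lambda$ corresponds to $\ind^{rel}_\cusp(\mp2,k;\underline b)$. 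Plugging the formula~\eqref{eqn:cusp_index2} for the relative cusp index and taking $t_1=t_2=0$ (here $b_3=0$), one checks that the bracket in~\eqref{eqn:simple_relation_generalised} is precisely $\ind^{rel}_\cusp(\mu;\underline b)$ written out via~\eqref{eqn:simmetries_Itet}, with the first argument of each $\itet$ being $-b_1/2$ (resp.\ $(\mp2-b_1)/2$) and the parity constraints $b_1\equiv0\pmod 2$, $k\equiv b_2\pmod 2$ exactly matching Remark~\ref{parity_of_k}. So the first step is this bookkeeping: identify the sum with $\varepsilon(\underline b)$ times the $GY$-sum for $\alpha=\mu$, $\beta=-\lambda$.

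Next I would run the generating-function argument of Proposition~\ref{prop:generating_function_noncompact} in this new frame. Write $e=\tfrac{k+b_2}{2}$, $e'=\tfrac{-k+b_2}{2}$, and observe that $\itet(-b_1/2,e)=\itet(e-r',e)$ is \emph{not} quite of the diagonal-shift form unless we massage it; instead, the cleaner route is to note that for $\alpha=\mu$ the relevant second argument is shifted, and the double sum factoring the square term is $\varphi_{?}(z_1,q)\varphi_{?}(z_2,q)$ with an appropriate index. Concretely, since $\itet(-b_1/2, e)$ has \emph{fixed} first coordinate $m=-b_1/2$, the natural generating function here is not $\varphi_r$ but the column generating function $\sum_e \itet(m,e)z^e$, i.e.\ equation~\eqref{enq:generating_function_tetra}. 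Thus the square term assembles, after the substitutions $z_1=q^{1/2}z$, $z_2=q^{-1/2}z$, into a product of two copies of $\tfrac{(q^{1-m/2}z^{-1})_\infty}{(-q^{-m/2}z)_\infty}$ evaluated at shifted arguments, and the cross term (with $m\to m\pm1$) assembles similarly with $z_1=z_2=z$. The coefficient of $z^{b_2}$ of the resulting expression (taking $k_1=k_2$ in the double sums, as in Proposition~\ref{prop:generating_function_noncompact}) is the quantity we want to show vanishes.

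The key identity to invoke is then the symmetry of the tetrahedral generating function under $z\mapsto 1/z$ combined with the $\itet(m,e)=\itet(-e,-m)$ symmetry: this is precisely what lets one rewrite the ``fixed-$m$'' generating function in terms of the $\varphi_r$'s and feed it into Theorem~\ref{thm:trigonometry}. Alternatively — and this is probably the cleanest ending — one applies the symmetry~\eqref{eq:Usymmetry} of the relative $GY$-formula: $GY(x,y;b_1,b_2,b_3) = GY(-y,x-y;b_3,b_1,b_2)$. Applied to $\alpha=\mu$, i.e.\ $(x,y)=(0,1)$, this gives $GY(\mu;b_1,b_2,0) = GY(-1,-1;0,b_1,b_2) = GY(-\lambda-\mu;0,b_1,b_2)$, which by~\eqref{eq:GYsymmetry1} equals $GY(\lambda+\mu;0,b_1,b_2)$; but $\lambda+\mu$ is the LST$(0,1,1)$ slope in a \emph{different} one of the three cases, and the already-established Theorem~\ref{thm:new_thm_112} (in the equivalent form~\eqref{eqn:GY_conjecture2} or~\eqref{eqn:GY_conjecture3}) would handle it — \emph{except} that the relevant $\delta$-symbol forces $b_1\equiv b_3$, which need not hold. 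So the symmetry reduction alone does not close the argument, and one genuinely must carry out the generating-function computation.

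\textbf{Main obstacle.} The hard part will be the reduction of the ``fixed first coordinate'' sums to the $\varphi_r$ generating functions in a way that the $q$-Pythagoras Lemma~\ref{lem:q-pythagoras} (via Theorem~\ref{thm:trigonometry}) applies cleanly and gives \emph{zero} rather than a monomial. Unlike the case of Theorem~\ref{thm:gang112}, where the shifted diagonal $\itet(e-r,e)$ is exactly what $\varphi_r$ tracks, here the relevant sums run over $e$ with $m$ fixed, so one must either (i) use the $\itet(m,e)=\itet(-e,-m)$ symmetry to transpose into diagonal-shift form and then match with~\eqref{eqn:phi_expression}, or (ii) re-derive the analogue of Proposition~\ref{prop:generating_function_noncompact} directly for the column generating function and verify that the trigonometric factors again combine, this time producing an identically-zero result because the ``sieve'' coefficients $(q^{k/2}+q^{-k/2})$ vs.\ the two $\pm2$ terms cancel when $b_1\equiv0$. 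I expect route (i): after transposition, the sum becomes $\pm\varepsilon(\underline b)$ times the coefficient of some power of $z$ in $\varphi_{r}(zq^{1/2},q)\varphi_{r}(zq^{-1/2},q) - \varphi_{r+1}(z,q)\varphi_{r-1}(z,q)$ for a value of $r$ determined by $b_1,b_2$, and Theorem~\ref{thm:trigonometry} shows this equals $(zq^{-1/2})^r$; the vanishing then follows because the power of $z$ one is extracting is $\neq r$ whenever $b_1\neq 0$, and $b_1\equiv 0\pmod 2$ together with $b_1\neq 0$ (the case $b_1=0$ being handled separately, or being vacuous since then the slope is degenerate in a controlled way) forces the mismatch. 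Pinning down exactly which coefficient is extracted, and checking that $b_1=0$ does not produce a spurious nonzero contribution, is the delicate computational core.
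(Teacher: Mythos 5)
Your proposal takes a genuinely different route from the paper's own proof, which relies on the three-term relations~\eqref{eqn:index_3term_adjacent} and~\eqref{eqn:index_3term_consecutive} to massage the summand and concludes with a telescoping argument. You instead seek a generating-function identity in the spirit of Proposition~\ref{prop:generating_function_noncompact} and Theorem~\ref{thm:trigonometry}. You correctly identify that for $\alpha=\mu$ the relevant object is the column generating function
$\psi_m(z):=\sum_e \itet(m,e)z^e = \frac{(q^{1-m/2}z^{-1};q)_\infty}{(-q^{-m/2}z;q)_\infty}$
from~\eqref{enq:generating_function_tetra}, not the diagonal one $\varphi_r$. However, your route (i) is a dead end: the symmetry $\itet(m,e)=\itet(-e,-m)$ sends a fixed-$m$ column to a fixed-$e$ row, never to a shifted diagonal $\itet(e-r,e)$, so~\eqref{eqn:phi_expression} and Theorem~\ref{thm:trigonometry} cannot be brought to bear. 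Your worries about which $z$-power to extract, and about a possible degeneracy at $b_1=0$, are symptoms of forcing the wrong reduction.

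All of this dissolves once you commit to route (ii). With $m=-b_1/2$, the double-sum argument of Proposition~\ref{prop:generating_function_noncompact} shows, mutatis mutandis, that the left-hand side of~\eqref{eqn:simple_relation_generalised} equals $\varepsilon(\underline{b})$ times the coefficient of $z^{b_2}$ in
\[
\psi_m(q^{1/2}z)\,\psi_m(q^{-1/2}z)\;-\;\psi_{m+1}(z)\,\psi_{m-1}(z).
\]
Using the closed product form of $\psi_m$, each of the two products is
\[
\frac{(q^{\frac12-\frac{m}{2}}z^{-1};q)_\infty\,(q^{\frac32-\frac{m}{2}}z^{-1};q)_\infty}{(-q^{\frac12-\frac{m}{2}}z;q)_\infty\,(-q^{-\frac12-\frac{m}{2}}z;q)_\infty},
\]
so the difference vanishes identically --- there is not even a residual monomial as in Theorem~\ref{thm:trigonometry}, no use of Lemma~\ref{lem:q-pythagoras}, and no case distinction on $b_1$. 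Thus your plan, with route (i) excised and route (ii) completed by the observation above, gives a correct and arguably cleaner proof than the paper's telescoping argument; but as written, the step you favoured (reduction to $\varphi_r$) would fail, and the step that actually closes the argument is left undone.
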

\begin{proof}
We start by changing~\eqref{eqn:simple_relation_generalised} into its compact form. From~\eqref{eqn:compact_version} we have  
\begin{equation}\label{eq:GYmu_compact}
GY(\mu;\underline{b}) 
= \sum_{k\in\Z} (-1)^k \left(q^{\frac{k}{2}} \ind_\cusp^{rel}(0,k;\underline{b}) - \ind^{rel}_\cusp(2,k;\underline{b}) \right),
\end{equation}
and the non-zero terms only occur when $k \equiv b_2 \mod 2$.

Using equation~\eqref{eqn:cusp_index2} and the symmetry~\eqref{eqn:tetrahedral_index_symmetries} gets us to the equivalent compact form:
\begin{equation}\label{eq:compact_011_identity}
\begin{gathered}
0 = \sum_{\substack{k \in \Z\\ k\equiv b_2 \mod{2}}}
(-1)^k \left[ q^{\frac{k}{2}} \itet \left(\frac{k-b_2}{2},\frac{b_1}{2} \right)\itet \left(\frac{-k-b_2}{2}, \frac{b_1}{2}\right)   \right.\\- \left. \itet \left(\frac{k-b_2}{2}, \frac{b_1}{2}-1\right) \itet \left(\frac{-k-b_2}{2},\frac{b_1}{2} +1 \right)  \right].
\end{gathered}
\end{equation}

Using equation~\eqref{eqn:index_3term_adjacent}, with $m=\frac{k-b_2}{2}, e= \frac{b_1}{2}$, 
we can write
$$ q^{\frac{-k+b_2}{4}} \itet\left(\frac{k-b_2}{2},\frac{b_1}{2}-1\right) = \itet\left(\frac{k-b_2}{2},\frac{b_1}{2}\right) - q^{\frac{b_1}{4}}\itet\left(\frac{k-b_2}{2} -1,\frac{b_1}{2}\right),$$
and similarly
$$q^{\frac{k+b_2}{4}} \itet\left(\frac{-k-b_2}{2},\frac{b_1}{2}+1\right) = \itet\left(\frac{-k-b_2}{2},\frac{b_1}{2}\right) - q^{\frac{b_1}{4}}\itet\left(\frac{-k-b_2}{2} +1,\frac{b_1}{2}\right).$$
Multiplying these expressions together gives
\begin{equation}\label{eqn:intermediate_step_011}
\begin{gathered}
q^{\frac{b_2}{2}} \ind^{rel}_\cusp (2,k;\underline{b}) =  \ind^{rel}_\cusp (0,k;\underline{b}) + q^{\frac{b_1}{2}} \ind^{rel}_\cusp (0,k-2;\underline{b}) \\
-q^{\frac{b_1}{4}} \left[ \itet\left(\frac{k-b_2}{2}-1,\frac{b_1}{2}\right) \itet\left(\frac{-k-b_2}{2},\frac{b_1}{2}\right) + \itet\left(\frac{k-b_2}{2},\frac{b_1}{2}\right) \itet\left(\frac{-k-b_2}{2}+1,\frac{b_1}{2}\right)\right] .
\end{gathered}
\end{equation}

We can then take equation~\eqref{eqn:intermediate_step_011} and add it to a copy of itself, with the $k$ variable shifted by $-2$ to give:
\begin{equation}\label{eqn:proof011}
\begin{gathered}
q^{\frac{b_2}{2}}\left( \ind^{rel}_\cusp (2,k;\underline{b}) + \ind^{rel}_\cusp (2,k-2;\underline{b})\right) \\ 
-\ind^{rel}_\cusp (0,k;\underline{b}) - \left(1 + q^{\frac{b_1}{2}} \right) \ind^{rel}_\cusp (0,k-2;\underline{b}) - q^{\frac{b_1}{2}} \ind^{rel}_\cusp (0,k-4;\underline{b}) \\= - q^{\frac{b_1}{4}} \left[ \itet\left(\frac{k-b_2}{2}-1,\frac{b_1}{2}\right) \left( \itet\left(\frac{-k-b_2}{2},\frac{b_1}{2}\right) + \itet\left(\frac{-k-b_2}{2}+2,\frac{b_1}{2}\right)\right) \right.\\ 
+\left.
\itet\left(\frac{-k-b_2}{2}+1,\frac{b_1}{2}\right) \left( \itet\left(\frac{k-b_2}{2},\frac{b_1}{2}\right) + \itet\left(\frac{k-b_2}{2}-2,\frac{b_1}{2}\right)\right)\right] . 
\end{gathered}
\end{equation}

We then apply equation~\eqref{eqn:index_3term_consecutive} to the two sums within the round parentheses on the right-hand side of equation~\eqref{eqn:proof011}, to obtain 
\begin{equation*}
\begin{gathered}
\text{RHS~of~\eqref{eqn:proof011}}\\ = 
- q^{\frac{b_1}{4}}  \left[ \left(q^{\frac{b_1}{4}} + q^{-\frac{b_1}{4}} -q^{\frac{k+b_2}{2} - \frac{b_1}{4} -1}\right)\itet\left(\frac{k-b_2}{2}-1,\frac{b_1}{2}\right)\itet\left(\frac{-k-b_2}{2}+1,\frac{b_1}{2}\right)\right.\\
+\left. \left(q^{\frac{b_1}{4}} + q^{-\frac{b_1}{4}} -q^{\frac{-k+b_2}{2} - \frac{b_1}{4} +1}\right)\itet\left(\frac{-k-b_2}{2}+1,\frac{b_1}{2}\right)\itet\left(\frac{k-b_2}{2}-1,\frac{b_1}{2}\right) \right]  \\=
 \ind_\cusp^{rel} (0,k-2;\underline{b})\left[q^{\frac{b_2}{2}}\left( q^{\frac{k-2}{2}} + q^{\frac{-k+2}{2}}\right) - 2q^{\frac{b_1}{2}}-2\right] .    
\end{gathered}
\end{equation*}

Putting everything together, this implies 

\begin{gather*}
\left( q^{\frac{k-2}{2}} + q^{\frac{-k+2}{2}} \right)\ind^{rel}_\cusp (0,k-2;\underline{b}) - \ind^{rel}_\cusp (2,k;\underline{b}) - \ind^{rel}_\cusp (2,k-2;\underline{b}) \\=
-q^{-\frac{b_2}{2}} \left[ \ind^{rel}_\cusp (0,k;\underline{b}) + q^{\frac{b_1}{2}} \ind^{rel}_\cusp (0,k-4;\underline{b}) - \left( 1 + q^{\frac{b_1}{2}}\right)\ind^{rel}_\cusp (0,k-2;\underline{b})\right].
\end{gather*}
The sum over all $k \in \Z$ with $k \equiv b_2 \mod 2$ of the right-hand side in this last equation is $0$, by an elementary telescoping argument. 
The corresponding sum of the left-hand side gives, after re-indexing and using the symmetry in~\eqref{eq:GYsymmetry1}, $\pm 2$ times the compact form of $GY(\mu;\underline{b})$ in~\eqref{eq:GYmu_compact}. Therefore, we have proved equation~\eqref{eq:compact_011_identity} and the proposition. 
\end{proof}

%&&&&&&&&&&&&&&&&&&&&&&&&&&&&&&&&&&&&&&&&&&&&&&&&&&&&&&&&&&&&&&&&&&&&&&&&&&&&&
%&&&&&&&&&&&&&&&&&&&&&&&&&&&&&&&&&&&&&&&&&&&&&&&&&&&&&&&&&&&&&&&&&&&&&&&&&&&&&

\section{Sanity check: the relative index of degenerate LSTs}\label{sec:relative_degenerate}

As noted in Sections~\ref{sec:$3$-man} and~\ref{sec:attaching_solid_tori}, the minimal layered solid tori corresponding to the triples $(0,1,1)$ and $(1,1,2)$ are particularly degenerate. We thus compute the relative index of these LSTs by choosing two non-minimal representatives, as described in~\cite{jaco2006layered}. We note that these triangulations are the ones provided by Regina~\cite{regina}.

We start from LST$(1,1,2)$, which is presented by Regina with a triangulation $\tri(1,1,2)$ whose gluing equations (after orienting the triangulation and renumbering the edges) are:
$$ 
\begin{matrix}
\frac12 b_1 & E_1\\ \frac12 b_2 & E_2\\ \frac12 b_3 & E_3 \\\frac12 k & E_4
\end{matrix}
\left( \begin{array}{ccc|ccc}
0 &0 &2 &1 &2 &0 \\
1 &0 &0 &0 &0 &0 \\
0 &2 &0 &0 &0 &2 \\
1 &0 &0 &1 &0 &0 
\end{array}
\right)$$

The relative index can be written
\begin{equation*}
\ind^{rel}_{\tri(1,1,2)}(b_1,b_2,b_3) = \sum_{\substack{k \in \Z\\k \equiv b_1 \equiv b_2}} \left( -q^\frac12\right)^k \jtet \left( \frac{k + b_2}{2}, b_3,b_1\right) \jtet \left( \frac{k + b_1}{2}, b_1,b_3\right).
\end{equation*}
A simple application of the quadratic identity~\eqref{eqn:quadratic_identity} yields
$$\ind^{rel}_{\tri(1,1,2)} (b_1,b_2,b_3) = \left( -q^\frac12\right)^{-b_1} \delta_{b_1,b_2} .$$
And of course this is consistent with Theorem~\ref{thm:new_thm_112}.\\
~\\

For LST$(0,1,1)$, Regina gives a non-degenerate triangulation $\tri(0,1,1)$ with $3$ tetrahedra and two internal edges. Its gluing equations (after orienting the triangulation) are: 
$$ 
\begin{matrix}
\frac12 b_1 & E_1\\ \frac12 b_2 & E_2\\ \frac12 b_3 & E_3 \\\frac12 k_1 & E_4\\\frac12 k_2 & E_5
\end{matrix}
\left( \begin{array}{ccc|ccc|ccc}
1 &0 &0 &0 &0 &0 & 0 & 0 & 0 \\
0 &2 &0 &1 &0 &0 & 0 & 0 & 0 \\
0 &0 &2 &0 &0 &2 & 1 & 2 & 0 \\
1 &0 &0 &0 &2 &0 & 0 & 0 & 2 \\
0 &0 &0 &1 &0 &0 & 1 & 0 & 0 
\end{array}
\right)$$
In order to get integer entries in the $\jtet$ functions for this triangulation, we need the following conditions: $k_1 \equiv b_1 \mod{2}$, and $k_2 \equiv b_2 \equiv b_3 \mod{2}$.
We can then write $\ind^{rel}_{\tri(0,1,1)}(b_1,b_2,b_3)$ as
\begin{gather*}
\sum_{\substack{k_1,k_2 \in \Z\\k_1 \equiv b_1 \mod{2}\\k_2 \equiv b_2 \equiv b_3 \mod{2}}} \left( -q^\frac12\right)^{k_1 + k_2} \jtet \left( \frac{k_1 + b_2}{2}, b_2,b_3\right) \jtet \left( \frac{k_2 + b_2}{2}, k_1,b_3\right) \jtet \left( \frac{k_2 + b_3}{2},b_3,k_1\right) \\=
\sum_{\substack{k_1 \in \Z\\k_1 \equiv b_1 \mod{2}}} \left( -q^\frac12\right)^{k_1 } \jtet \left( \frac{k_1 + b_2}{2}, b_2,b_3\right) \\\sum_{\substack{k_2 \in \Z\\k_2 \equiv b_2 \equiv b_3 \mod{2}}} \left( -q^\frac12\right)^{k_2}
\jtet \left( \frac{k_2 + b_2}{2}, k_1,b_3\right) \jtet \left( \frac{k_2 + b_3}{2},b_3,k_1\right) \\ 
= \sum_{\substack{k_1 \in \Z\\k_1 \equiv b_1 \mod{2}}} \left( -q^\frac12\right)^{k_1 }  \left( -q^\frac12\right)^{-b_2 } \jtet \left( \frac{k_1 + b_2}{2}, b_2,b_3\right) \delta_{b_2,b_3},
\end{gather*}
where the last equality follows from an application of the quadratic identity~\eqref{eqn:quadratic_identity}. This later quantity is manifestly $0$ if $b_2 \neq b_3$, and equal to $$\sum_{\substack{k_1 \in \Z\\k_1 \equiv b_1 \mod{2}}}  \left( -q^\frac12\right)^{k_1 -b_2} \jtet \left( \frac{k_1 + b_2}{2}, b_2,b_3\right)$$
otherwise.
Note that we can conclude that the total sum is $0$ in this case as well, using \textit{e.g.}~\cite[Ex.~11.1]{garoufalidis20163d}.

This gives
$$\ind^{rel}_{\tri(0,1,1)}(b_1,b_2,b_3)=0, \text{ for all } (b_1,b_2,b_3) \in \Z^3.$$

Combining this with result from Appendix~\ref{sec:gang011} shows that the relative version of the Gang-Yonekura formula for filling the standard cusp~\ref{thm:main_relative} does indeed hold for these degenerate cases as well.

%&&&&&&&&&&&&&&&&&&&&&&&&&&&&&&&&&&&&&&&&&&&&&&&&&&&&&&&&&&&&&&&&&&&&&&&&&&&&&
%&&&&&&&&&&&&&&&&&&&&&&&&&&&&&&&&&&&&&&&&&&&&&&&&&&&&&&&&&&&&&&&&&&&&&&&&&&&&&

\section{Computational example: figure eight knot complement}\label{computation:fig8}

We illustrate the rigorous computation methods for the figure eight knot complement using its canonical triangulation $\tri$ with two tetrahedra, with edges and peripheral curves labelled as in ~\cite[Ex.~4.1]{garoufalidis20163d}. 

Here the gluing matrix is given by
\begin{equation}
G_\tri = \begin{pmatrix}
2& 1 & 0 & 2 & 1 & 0 \\
0 & 1 & 2 & 0 & 1 & 2 \\
0 & 0 & 1 & -1 & 0 & 0 \\
0 & 0 & 0 & 2 & 0 & -2
\end{pmatrix} 
\end{equation}
where the rows correspond to edge solutions $E_1,E_2$ and peripheral solutions $M,L$.

Recalling that the homological boundaries of the $Q$-normal classes $M$ and $L$ are given by $2\mu$ and $2\lambda$, 
it follows that the  $3$D-index in our notation is given by
$$\ind_{\tri}^{2x \mu +y \lambda }(q) = \sum_{k\in\Z} q^k \jtet(2k-x+y,k,-y)
= \sum_{k \in \Z} \itet (k-x,k) \itet (k+y,k-x+y).$$

To compute the index up to $q$-degree $D$ using Normaliz~\cite{Normaliz}, we proceed as explained in Section~\ref{ssec:computation}: right-multiplication of $G_\tri$ by the skew-symmetric,  block diagonal matrix 
$$C =  \begin{pmatrix}
0 & 1 & -1 & 0 & 0 &0\\
-1 & 0 & 1& 0 & 0 &0\\
1 & -1 & 0& 0 & 0 &0 \\
0 & 0 &0 & 0 & 1 & -1\\
0 & 0 &0 &-1 & 0 & 1\\
0 & 0 &0 &1 & -1 & 0
\end{pmatrix} 
$$
gives the matrix of \emph{leading-trailing deformations}
\begin{equation}
L_\tri = \begin{pmatrix}
-1& 2 & -1 & -1 & 2 & -1 \\
1 & -2 & 1 & 1 & -2 & 1 \\
1 & -1 & 0 & 0 & -1 & 1 \\
0 & 0 & 0 & -2 & 4 & -2
\end{pmatrix} .
\end{equation}
Here the first two rows give the $Q$-matching equations $\L(E_1), \L(E_2)$ for the two edges, while the last two rows give the leading trailing deformations $\L(M), \L(L)$ for the meridian $\mu$ and longitude $\lambda$. 
 
\begin{example} To compute the index $\ind_{\tri}^{(0,0)}(q)$, we need to find the integer $Q$-normal classes $S$ with boundary $\bd S = 0$. Using the Neumann-Zagier symplectic relations, the condition $\bd S = 0$ gives two additional linear equations
$$  \L(M) \cdot S = 0 \text{ and }  \L(L) \cdot S = 0,$$
where $\cdot$ is the dot product.

We first use Normaliz to decompose the polyhedral cone
$$\{ x \in \Z^6 \mid L_\tri  x=0, x\ge 0 \},$$
into a union of simplicial cones, using the input file:

\begin{verbatim}
amb_space	6
equations	4
-1	2	-1	-1	2	-1
1	-2	1	1	-2	1
1	-1	0	0	-1	1
0	0	0	-2	4	-2
UnimodularTriangulation
\end{verbatim}

This gives output consisting of $4$ vertices
\begin{verbatim}
{{0,0,0,1,1,1},{0,1,2,0,1,2},{1,1,1,0,0,0},{2,1,0,2,1,0}}
\end{verbatim}
and cones over the two $2$-simplices with vertex sets
\begin{verbatim}
{{1,2,3},{1,3,4}}
\end{verbatim}

Discarding the tetrahedral solutions corresponding to vertices $1$ and $3$, gives us $2$ rays from the origin spanned by vertices $2$ and $4$.

Then to get all terms in the index sum up to $q^\frac12$-degree $D$ we look at multiples of vertices $2$ and $4$ with non-negative integer coefficients, estimate degrees, eliminate any tetrahedral solutions that arise,  discard duplicates, then add up the contributions to the index. For example, taking $D=20$ gives
$$\ind_{\tri}^{0}(q) =1-2 q-3 q^2+2 q^3+8 q^4+18 q^5+18 q^6+14 q^7-12 q^8-52 q^9-106
   q^{10}+O\left(q^{\frac{21}{2}}\right)$$
      
\end{example}

\begin{example} 
To compute the index $\ind_{\tri}^{(x,y)}(q)$, we need to find the integer $Q$-normal classes $S$ with boundary 
$\alpha = x \mu + y \lambda$. Using the Neumann-Zagier symplectic relations gives additional linear equations
$$  \L(M) \cdot S = - y \text{ and }  \L(L) \cdot S = + x.$$

For example, for $(x,y)=(4,1)$  
we first compute the corresponding homogeneous cone as in the previous example,
and then compute additional translates (module generators) giving the inhomogeneous cone using the Normaliz input file:
\begin{verbatim}
amb_space	6
inhom_equations	4
-1	2	-1	-1	2	-1	0
1	-2	1	1	-2	1	0
1	-1	0	0	-1	1	1
0	0	0	-2	4	-2	-4
nonnegative
\end{verbatim}

The output gives a Hilbert basis for the homogeneous cone (recession cone) as  in the previous example above, and additional translations (module generators):
\begin{verbatim}
{{0,0,2,0,1,0},{1,0,1,2,2,0},{2,0,0,4,3,0}}
\end{verbatim}

For example, taking $D=20$ gives
$$
\ind_{\tri}^{(4,1)}(q) =-q^{\frac{1}2}+q^{\frac{5}2}+4 q^{\frac{7}2}+7 q^{\frac{9}2}+7 q^{\frac{11}2}+3 q^{\frac{13}2}-12 q^{\frac{15}2}-31 q^{\frac{17}2}-62
   q^{\frac{19}2}+O\left(q^{\frac{21}2}\right)
   $$
\end{example}

\begin{example} 
To compute the Gang-Yonekura formula for Dehn filling along $\alpha = x \mu + y \lambda$
we need to find integer $Q$-normal classes $S$ of two types:
\begin{enumerate}
\item  
the intersection number $\bd S \cdot \alpha =0$, and 
\item  
the intersection number $\bd S \cdot \alpha =\pm 2$.
\end{enumerate}
From the Neumann-Zagier symplectic relations, these give additional linear equations: 
\begin{enumerate}
\item $\L(A) \cdot S =0$, and
\item $\L(A) \cdot S = \pm 2$,
\end{enumerate}
where $A = x M + y L$ is the holonomy vector for $\alpha$.\\
~\\
For example, when $\alpha=0\mu+1\lambda$:
case $1$ can be studied using the Normaliz input file

\begin{verbatim}
amb_space	6
equations	3
-1	2	-1	-1	2	-1
1	-2	1	1	-2	1
0	0	0	-2	4	-2
UnimodularTriangulation
\end{verbatim}
and case $2$ can be studied using the two input files\\ 

\begin{minipage}{0.45\textwidth}
\begin{verbatim}
amb_space	6
inhom_equations	3
-1	2	-1	-1	2	-1	0
1	-2	1	1	-2	1	0
0	0	0	-2	4	-2	2
nonnegative
\end{verbatim}
\end{minipage}
\hfill
\begin{minipage}{0.45\textwidth}
\begin{verbatim}
amb_space	6
inhom_equations	3
-1	2	-1	-1	2	-1	0
1	-2	1	1	-2	1	0
0	0	0	-2	4	-2	-2
nonnegative
\end{verbatim}
\end{minipage}

\medskip\noindent
First we obtain a simplicial cone decomposition for the homogeneous cone given by $(1)$:
\begin{verbatim}
vertices
{{0,0,0,0,1,2},{0,0,0,2,1,0},{0,1,2,0,0,0},{2,1,0,0,0,0},{0,0,0,1,1,1},
{1,1,1,0,0,0}}
simplices
{{2,4,5,6},{2,3,5,6},{1,4,5,6},{1,3,5,6}}
\end{verbatim}
After removing tetrahedral solutions this gives $4$ cones over $2$-dimensional simplices.

\noindent
We then find translates of this cone giving the inhomogeneous cones arising in case $(2)$:
\begin{verbatim}
translations
{{0,1,1,0,0,1},{0,1,1,1,0,0},{1,1,0,0,0,1},{1,1,0,1,0,0}}
{{0,0,1,0,1,1},{0,0,1,1,1,0},{1,0,0,0,1,1},{1,0,0,1,1,0}}
\end{verbatim}

Then to get all terms in the index sum up to $q^\frac12$-degree $D$, we look at linear combinations of these vertices with non-negative integer coefficients, estimate degrees, eliminate any tetrahedral solutions that arise, discard duplicates, then add up the contributions to the index.

For example, taking $D=20$ gives
$$\ind_{\tri(0,1)}(q) =1+O\left(q^{\frac{11}{2}}\right).$$
(In fact, we proved in Theorem~\ref{thm:4_1_exact} that the result is exactly $1$.)

\end{example}

\newpage


\begin{thebibliography}{GHHR16}

\bibitem[Ata96]{Atakishiyev}
NM~Atakishiyev.
\newblock On a one-parameter family of $q$-exponential functions.
\newblock {\em Journal of Physics A: Mathematical and General}, 29(10):L223,
  1996.

\bibitem[BBP23]{regina}
Benjamin~A. Burton, Ryan Budney, William Pettersson, et al.
\newblock Regina: Software for low-dimensional topology.
\newblock \url{ http://regina-normal.github.io/}, 1999--2023.

\bibitem[BISvdO]{Normaliz}
W.~Bruns, B.~Ichim, C.~S\"oger, and U.~von~der Ohe.
\newblock Normaliz algorithms for rational cones and affine monoids.
\newblock \url{https://www.normaliz.uni-osnabrueck.de}.

\bibitem[BJR21]{BJR}
Birch Bryant, William Jaco, and J.~Hyam Rubinstein.
\newblock Efficient triangulations and boundary slopes.
\newblock {\em Topology Appl.}, 297:Paper No. 107689, 18, 2021.

\bibitem[Bur25]{benprivate}
Ben Burton.
\newblock Census of $1$-efficient triangulations of cusped $3$-manifolds.
\newblock private communication, 2025.

\bibitem[CDGW]{SnapPy}
Marc Culler, Nathan~M. Dunfield, Matthias Goerner, and Jeffrey~R. Weeks.
\newblock Snap{P}y, a computer program for studying the geometry and topology
  of $3$-manifolds.
\newblock Available at \url{http://snappy.computop.org}.

\bibitem[CHR25]{github}
Daniele Celoria, Craig~D. Hodgson, and J.~Hyam Rubinstein.
\newblock 3{D}index.
\newblock \url{https://github.com/dceloriamaths/3DIndex}, 2025.

\bibitem[DGG13]{dimofte20133}
Tudor Dimofte, Davide Gaiotto, and Sergei Gukov.
\newblock 3-manifolds and 3d indices.
\newblock {\em Advances in theoretical and mathematical physics},
  17(5):975--1076, 2013.

\bibitem[DGG14]{dimofte2014gauge}
Tudor Dimofte, Davide Gaiotto, and Sergei Gukov.
\newblock Gauge theories labelled by three-manifolds.
\newblock {\em Communications in Mathematical Physics}, 325(2):367--419, 2014.

\bibitem[DGHR]{DGHR}
Nathan Dunfield, Stavros Garoufalidis, Craig~D Hodgson, and J~Hyam Rubinstein.
\newblock Counting genus two surfaces in 3-manifolds.
\newblock in preparation.

\bibitem[EP88]{epstein1988euclidean}
David~BA Epstein and Robert~C Penner.
\newblock Euclidean decompositions of noncompact hyperbolic manifolds.
\newblock {\em Journal of Differential Geometry}, 27(1):67--80, 1988.

\bibitem[FG11]{FuterGueritaud}
David Futer and Fran\c{c}ois Gu\'eritaud.
\newblock From angled triangulations to hyperbolic structures.
\newblock In {\em Interactions between hyperbolic geometry, quantum topology
  and number theory}, volume 541 of {\em Contemp. Math.}, pages 159--182. Amer.
  Math. Soc., Providence, RI, 2011.

\bibitem[FKB08]{frohman2008quantum}
Charles Frohman and Joanna Kania-Bartoszynska.
\newblock The quantum content of the normal surfaces in a three-manifold.
\newblock {\em Journal of Knot Theory and Its Ramifications},
  17(08):1005--1033, 2008.

\bibitem[Gan18]{gang2018quantum}
Dongmin Gang.
\newblock Quantum approach to {D}ehn surgery problem.
\newblock {\em arXiv preprint arXiv:1803.11143}, 2018.

\bibitem[Gar16]{garoufalidis20163d_angle}
Stavros Garoufalidis.
\newblock The 3{D} index of an ideal triangulation and angle structures.
\newblock {\em The Ramanujan Journal}, 40:573--604, 2016.

\bibitem[GHH08]{GHH08}
Oliver Goodman, Damian Heard, and Craig Hodgson.
\newblock Commensurators of cusped hyperbolic manifolds.
\newblock {\em Experiment. Math.}, 17(3):283--306, 2008.

\bibitem[GHHR16]{garoufalidis20163d}
Stavros Garoufalidis, Craig~D Hodgson, Neil~R Hoffman, and J~Hyam Rubinstein.
\newblock The 3d-index and normal surfaces.
\newblock {\em Illinois Journal of Mathematics}, 60(1):289--352, 2016.

\bibitem[GHRS15]{garoufalidis20151}
Stavros Garoufalidis, Craig~D Hodgson, J~Hyam Rubinstein, and Henry Segerman.
\newblock 1--efficient triangulations and the index of a cusped hyperbolic
  3--manifold.
\newblock {\em Geometry \& Topology}, 19(5):2619--2689, 2015.

\bibitem[GK19]{garoufalidis2019meromorphic}
Stavros Garoufalidis and Rinat Kashaev.
\newblock A meromorphic extension of the 3{D} index.
\newblock {\em Research in the Mathematical Sciences}, 6(1):8, 2019.

\bibitem[GR04]{GasperRahman}
George Gasper and Mizan Rahman.
\newblock {\em Basic hypergeometric series}, volume~96.
\newblock Cambridge University Press, 2004.

\bibitem[GS10]{gueritaud2010canonical}
Fran{\c{c}}ois Gu{\'e}ritaud and Saul Schleimer.
\newblock Canonical triangulations of {D}ehn fillings.
\newblock {\em Geometry \& Topology}, 14(1):193--242, 2010.

\bibitem[GvdV23]{garoufalidis2023fkb}
Stavros Garoufalidis and Roland van~der Veen.
\newblock The {FKB} invariant is the 3d index.
\newblock {\em Quantum Topol.}, 13:525--538, 2023.

\bibitem[GW22]{garoufalidis2022periods}
Stavros Garoufalidis and Campbell Wheeler.
\newblock Periods, the meromorphic 3d-index and the {T}uraev--{V}iro invariant.
\newblock {\em arXiv preprint arXiv:2209.02843}, 2022.

\bibitem[GY18]{gang2018symmetry}
Dongmin Gang and Kazuya Yonekura.
\newblock Symmetry enhancement and closing of knots in 3d/3d correspondence.
\newblock {\em Journal of High Energy Physics}, 2018(7):1--58, 2018.

\bibitem[GY24]{garoufalidis20243d}
Stavros Garoufalidis and Tao Yu.
\newblock The 3d-index of the 3d-skein module via the quantum trace map.
\newblock {\em arXiv preprint arXiv:2406.04918}, 2024.

\bibitem[HKS21]{hodgson2021asymptotics}
Craig~D Hodgson, Andrew~J Kricker, and Rafa{\l}~M Siejakowski.
\newblock On the asymptotics of the meromorphic 3{D}-index.
\newblock {\em arXiv preprint arXiv:2109.05355}, 2021.

\bibitem[HMP20]{howie2020polynomials}
Joshua~A Howie, Daniel~V Mathews, and Jessica~S Purcell.
\newblock A-polynomials, {P}tolemy equations and {D}ehn filling.
\newblock {\em arXiv preprint arXiv:2002.10356}, 2020.

\bibitem[HMW92]{hodgson1992surgeries}
Craig~D Hodgson, G~Robert Meyerhoff, and Jeffrey~R Weeks.
\newblock Surgeries on the {W}hitehead link yield geometrically similar
  manifolds.
\newblock {\em Topology}, 90(195-206):15, 1992.

\bibitem[Hof]{neil_list}
Neil Hoffman.
\newblock Data for manifolds in the closed census.
\newblock
  \url{https://math.okstate.edu/people/nhoffman/smalltriangulations.html}.
\newblock Accessed: 05-2025.

\bibitem[Jol16]{jolley}
Max Jolley.
\newblock The 3d index for torus knots.
\newblock Honours thesis, Monash University, October 2016.

\bibitem[JR03]{jaco20030}
William Jaco and J~Hyam Rubinstein.
\newblock 0-efficient triangulations of 3-manifolds.
\newblock {\em Journal of Differential Geometry}, 65(1):61--168, 2003.

\bibitem[JR06]{jaco2006layered}
William Jaco and J~Hyam Rubinstein.
\newblock Layered-triangulations of 3-manifolds.
\newblock {\em arXiv preprint math/0603601}, 2006.

\bibitem[JS03]{jaco2003decision}
William Jaco and Eric Sedgwick.
\newblock Decision problems in the space of {D}ehn fillings.
\newblock {\em Topology}, 42(4):845--906, 2003.

\bibitem[KR04]{kangrubinstein}
Ensil Kang and J.~Hyam Rubinstein.
\newblock Ideal triangulations of 3-manifolds. {I}. {S}pun normal surface
  theory.
\newblock In {\em Proceedings of the {C}asson {F}est}, volume~7 of {\em Geom.
  Topol. Monogr.}, pages 235--265. Geom. Topol. Publ., Coventry, 2004.

\bibitem[Mos71]{moser1971elementary}
Louise Moser.
\newblock Elementary surgery along a torus knot.
\newblock {\em Pacific Journal of Mathematics}, 38(3):737--745, 1971.

\bibitem[MP06]{martelli2006dehn}
Bruno Martelli and Carlo Petronio.
\newblock Dehn filling of the ``magic'' 3-manifold.
\newblock {\em Communications in analysis and geometry}, 14(5):969--1026, 2006.

\bibitem[Neu92]{neumann92}
Walter~D. Neumann.
\newblock Combinatorics of triangulations and the {C}hern-{S}imons invariant
  for hyperbolic {$3$}-manifolds.
\newblock In {\em Topology '90 ({C}olumbus, {OH}, 1990)}, volume~1 of {\em Ohio
  State Univ. Math. Res. Inst. Publ.}, pages 243--271. de Gruyter, Berlin,
  1992.

\bibitem[NZ85]{NZ85}
Walter~D. Neumann and Don Zagier.
\newblock Volumes of hyperbolic three-manifolds.
\newblock {\em Topology}, 24(3):307--332, 1985.

\bibitem[SAG24]{sage}
W.\thinspace{}A. Stein et~al.
\newblock {\em {S}age {M}athematics {S}oftware ({V}ersion 10.5)}.
\newblock The Sage Development Team, 2024.
\newblock \url{ http://www.sagemath.org}.

\bibitem[Sta99]{Stanley}
Richard~P Stanley.
\newblock {\em Enumerative {C}ombinatorics, {V}olume 2}.
\newblock Cambridge University Press, 1999.

\bibitem[{\v{S}}ta16]{Nevanlinna}
Franti{\v{s}}ek {\v{S}}tampach.
\newblock Nevanlinna extremal measures for polynomials related to
  $q$-$1$-{F}ibonacci polynomials.
\newblock {\em Advances in Applied Mathematics}, 78:56--75, 2016.

\bibitem[Tho25]{thompson2025triangulations}
Em~K Thompson.
\newblock Triangulations of the magic manifold and families of census knots.
\newblock {\em arXiv preprint arXiv:2503.06198}, 2025.

\bibitem[Thu78]{Thu78notes}
William~P Thurston.
\newblock \emph{The Geometry and Topology of Three-Manifolds}.
\newblock Princeton University lecture notes, available at
  \url{http://library.msri.org/books/gt3m/}, 1978.

\bibitem[Thu82]{thurston1982three}
William~P Thurston.
\newblock Three-dimensional manifolds, {K}leinian groups and hyperbolic
  geometry.
\newblock {\em Bulletin of the American Mathematical Society}, 6(3):357--381,
  1982.

\bibitem[Til08]{tillmann}
Stephan Tillmann.
\newblock Normal surfaces in topologically finite 3-manifolds.
\newblock {\em Enseign. Math. (2)}, 54(3-4):329--380, 2008.

\bibitem[Tol98]{tollefson1998normal}
Jeffrey~L Tollefson.
\newblock Normal surface {Q}-theory.
\newblock {\em Pacific Journal of Mathematics}, 183(2):359--374, 1998.

\bibitem[{Wol}]{Mathematica}
{Wolfram Research{,} Inc.}
\newblock Mathematica, {V}ersion 14.2.
\newblock Champaign, IL, 2024.

\end{thebibliography}
\end{document}